\renewcommand{\MR}[1]{} 
\DeclareFontFamily{OT1}{pzc}{}
\DeclareFontShape{OT1}{pzc}{m}{it}{ <-> s*[1.0] pzcmi7t }{}
\DeclareMathAlphabet{\mathpzc}{OT1}{pzc}{m}{it}
\let\emptyset\varnothing
\numberwithin{equation}{section}
\newtheorem{thm}{Theorem}[section]
\newtheorem{prop}[thm]{Proposition}
\newtheorem{lem}[thm]{Lemma}
\newtheorem{cor}[thm]{Corollary}
\newtheorem{conj}[thm]{Conjecture}
\newtheorem{problem}[thm]{Problem}
\theoremstyle{definition} 
\newtheorem{eg}[thm]{Example}
\newtheorem{dfn}[thm]{Definition}
\newtheorem{alphatheorem}{Theorem}
\newtheorem{alphaconjecture}[alphatheorem]{Conjecture}
\theoremstyle{remark}
\newtheorem{rem}[thm]{Remark}
\newcommand{\beq}{\begin{equation}}
\newcommand{\eeq}{\end{equation}}
\newcommand{\be}{\begin{equation*}}
\newcommand{\ee}{\end{equation*}}
\newcommand{\bA}{\mathbb{A}}
\newcommand{\bC}{\mathbb{C}}
\newcommand{\bZ}{\mathbb{Z}}
\newcommand{\bN}{\mathbb{N}}
\newcommand{\mc}{\mathcal}
\newcommand{\cW}{\mathcal{W}}
\newcommand{\hf}{\tfrac12}
\newcommand{\g}{\mathfrak{g}}
\newcommand{\gl}{\mathfrak{gl}}
\newcommand{\fksl}{\mathfrak{sl}}
\newcommand{\fka}{\mathfrak{a}}
\newcommand{\fkv}{\mathfrak{v}}
\newcommand{\fkw}{\mathfrak{w}}
\newcommand{\id}{{\mathrm{id}}}   
\newcommand{\gr}{{\mathrm{gr}}}
\newcommand{\ParNepsilon}{\operatorname{Par}_\epsilon(N)}
\newcommand{\ParN}{\operatorname{Par}(N)}
\newcommand{\wtl}{\widetilde}
\newcommand{\gge}{\geqslant}
\newcommand{\lle}{\leqslant}
\newcommand{\la}{\lambda}
\newcommand{\C}{{\mathbb C}}
\newcommand{\W}{{\mathbf W}}
\newcommand{\I}{{\mathbb I}}
\newcommand{\iI}{{}^\imath\I}
\newcommand{\Z}{{\mathbf Z}}
\newcommand{\Qbar}{\overline{Q}}
\newcommand{\Qfbar}{\overline{Q}^f}
\newcommand{\iEVW}{E_{V,W}^\imath}
\newcommand{\Hom}{{\mathrm{Hom}}}
\newcommand{\Y}{{\mathcal{Y}}}
\newcommand{\Yt}{{^{\imath}\mathcal{Y}}}
\newcommand{\yt}{{}^{\imath} \mathscr{Y}}
\newcommand{\U}{\mathscr{U}}
\newcommand{\iW}{{}^\imath \cW}
\newcommand{\iWbar}{{}^\imath \overline{\cW}}
\newcommand{\Tt}{T^\imath}
\newcommand{\Qt}{Q^\imath}
\newcommand{\dfo}{\eth}
\newcommand{\bv}{\mathbf{v}}
\newcommand{\bw}{\mathbf{w}}
\newcommand{\gkloa}{\mathsf{a}}
\newcommand{\gklob}{\mathsf{b}}
\newcommand{\gkloc}{\mathsf{c}}
\newcommand{\gklod}{\mathsf{d}}
\newcommand{\ow}{w}
\newcommand{\ox}{y}
\newcommand{\arxiv}[1]{\href{http://arxiv.org/abs/#1}{\tt arXiv:\nolinkurl{#1}}}
\newcommand{\purple}[1]{{\color{purple}#1}}
\begin{document}
\pagestyle{myheadings}
\setcounter{page}{1}

\title[Shifted twisted Yangians and affine Grassmannian $\mathrm{i}$slices]{Shifted twisted Yangians and affine Grassmannian $\mathrm{i}$slices}

\author{Kang Lu}
\author{Weiqiang Wang}
\address{Department of Mathematics, University of Virginia, 
Charlottesville, VA 22903, USA}\email{kang.lu.math@gmail.com, ww9c@virginia.edu}
\author{Alex Weekes}
\address{Département de mathématiques, Université de  Sherbrooke, Sherbrooke, QC, Canada}
\email{alex.weekes@usherbrooke.ca}

\subjclass[2020]{Primary 17B37, 17B63, 14M15.}
\keywords{Twisted Yangians, Poisson algebras, affine Grassmannian slices}

\begin{abstract}
In a prequel we introduced the shifted iYangians ${}^\imath \mathcal Y_\mu$ associated to quasi-split Satake diagrams of type ADE and even spherical coweights $\mu$, and constructed the iGKLO representations of ${}^\imath \mathcal Y_\mu$, which factor through truncated shifted iYangians ${}^\imath \mathcal Y_\mu^\lambda$. In this paper, we show that ${}^\imath \mathcal Y_\mu$  quantizes the involutive fixed point locus ${}^\imath\mathcal W_\mu$ arising from affine Grassmannians of type ADE, and supply strong evidence toward the expectation that ${}^\imath \mathcal Y_\mu^\lambda$ quantizes a top-dimensional component of the affine Grassmannian islice ${}^\imath\overline{\mathcal W}_\mu^\lambda$. We identify the islices ${}^\imath\overline{\mathcal W}_\mu^\lambda$ in type AI with suitable nilpotent Slodowy slices of type BCD, building on the work of Lusztig and Mirkovi\'c-Vybornov in type A. We propose a framework for producing ortho-symplectic (and hybrid) Coulomb branches from split (and nonsplit) Satake framed double quivers, which are conjectured to relate closely to the islices ${}^\imath\overline{\mathcal W}_\mu^\lambda$ and the algebras ${}^\imath \mathcal{Y}_\mu^\la$. 
\end{abstract}
	
\maketitle

\setcounter{tocdepth}{2}
\tableofcontents

\thispagestyle{empty}

\section{Introduction}

A connection between truncated shifted Yangians and affine Grassmannian slices $\overline{\cW}_\mu^\la$, for dominant coweights $\mu$, was established in \cite{KWWY14}. A key input here is what has become known as the GKLO representations of shifted Yangians, inspired by \cite{GKLO}, whose images are known as truncated shifted Yangians $Y_\mu^\la$. These algebras  can be viewed as a vast generalization of the construction of Brundan-Kleshchev \cite{BK06}, who showed that truncated shifted Yangians $Y_\mu^\la$ in type A (with $\la =N\varpi_1^\vee$) are isomorphic to finite W-algebras of $\gl_N$, quantizing an isomorphism between certain affine Grassmannian slices and nilpotent Slodowy slices \cite{WWY20,MV22}.

The above connection extends naturally in the framework of Coulomb branches (see \cite{BFN19,NW23}). It is shown that a more general class of truncated shifted Yangians $Y_\mu^\la$ quantize the generalized affine Grassmannian slices $\overline{\cW}_\mu^\la$, for arbitrary (not necessarily dominant) coweights $\mu$. Moreover, the spaces $\overline{\cW}_\mu^\la$ are identified as Coulomb branches of cotangent type associated with framed quiver gauge theories (with symmetrizers). This story also admits a $q$-deformation: it is shown in \cite{FT19} that shifted affine quantum groups are mapped homomorphically into the quantized K-theoretic Coulomb branches of framed quiver gauge theories.

\vspace{2mm}
Building on the algebraic foundation in the prequel \cite{LWW25sty}, the goal of this paper is to formulate an $\imath$-fication of the above connections, a  framework in which new families of algebras provide a quantization of new Poisson varieties arising from the classical geometric setting. We formulated shifted iYangians and their iGKLO representations in \cite{LWW25sty}. In this paper we shall formulate and establish interconnections among 

$\blacktriangleright$ shifted iYangians 

$\blacktriangleright$ iGKLO representations 

$\blacktriangleright$ affine Grassmannian islices 

$\blacktriangleright$ type BCD nilpotent Slodowy slices  

$\blacktriangleright$   (conjecturally) iCoulomb branches. 

\vspace{2mm}
Some of the main players such as affine Grassmannian {\em islices} and iCoulomb branches are new, and there was little expectation in literature that shifted iYangians of quasi-split ADE type and their truncations admit geometric interpretations in such a generality as formulated in this paper. The connection between algebra and geometry is made through the iGKLO representations of shifted iYangians. Let us explain the main results of the paper. 

\subsection{Shifted iYangians and iGKLO}

This subsection provides a quick overview of the main constructions in the prequel \cite{LWW25sty}. 

Twisted Yangians admit R-matrix forms \cite{Ols92, MR02, GuayR16}, and more recently, twisted Yangians in Drinfeld presentations have been constructed in \cite{LWZ25, LWZ25degen} for split type and then for quasi-split type \cite{LZ24}. The Drinfeld presentations are determined by the underlying finite type quasi-split Satake diagrams $(\I,\tau)$, where $\tau$ is a diagram involution of the Dynkin diagram $\I$; the cases with $\tau=\id$ are called split. The identification of the two definitions of twisted Yangians (via R-matrix and via Drinfeld presentation) is only established in type AI and AIII; mostly we follow the ones in Drinfeld presentation in this paper and will refer to them as iYangians.

Associated to an arbitrary quasi-split Satake diagram $(\I,\tau)$, we have a symmetric pair $(\g, \g^{\omega_\tau})$, where $\omega_\tau =\omega_0\circ\tau$ for the Chevalley involution $\omega_0$, and the corresponding iYangian $\Yt=\Yt_0$. Generalizing the iYangians in Drinfeld presentation, we introduce the shifted iYangians $\Yt_\mu$ in Definition \ref{def:qsplit}, where $\mu$ is an arbitrary even spherical weight (see Definition \ref{def:spherical}). A dominant family of shifted iYangians $\Yt_\mu$ of type AI, for $\mu$ dominant and even (automatically spherical since $\tau=\id$), has played a fundamental role in \cite{LPTTW25}. 

The iGKLO homomorphisms $\Phi_\mu^\la$ from $\Yt_\mu$ to a ring of difference operators were constructed in \cite{LWW25sty}, and a truncated shifted iYangian (TSTY) of type $(\I,\tau)$ is by definition the image of the homomorphism $\Phi_\mu^\la$ and denoted by $\Yt_\mu^\la$.

\subsection{Twisted Yangians quantize loop symmetric spaces}

One can axiomize the notion of a Yangian $\U_\hbar(\g[z])$ as a quantization of the current algebra $U(\g[z])$; see Definition \ref{def:Yangian:axioms}. Following the quantum duality principle \cite{Dr87, G02}, one defines a subalgebra $\U_\hbar(\g[z])'$ of such a Yangian. Inspired by \cite{KWWY14,S16,FT19b}, under a general technical Assumption (\ref{eq: QDP1}), one shows that $\U_\hbar(\g[z])'$ quantizes the Poisson group $G_1[\![z^{-1}]\!]$; see Proposition \ref{prop: QDP Yangian1}. 

Let $\omega$ be an involution and an isometry on $\g$. It induces an involution on $\g[z]$ and a Poisson involution $\sigma$ in \eqref{eq:sigma} on $G_1[\![z^{-1}]\!]$. One can axiomize a twisted Yangian $\U_\hbar(\g[z]^\omega)$, which is a coideal subalgebra of $\U_\hbar(\g[z])$ and quantizes the twisted current algebra $U(\g[z]^\omega)$ (see Definition~ \ref{def:tYaxioms}); one also defines its subalgebra $\U_\hbar(\g[z]^\omega)'$ following the quantum duality principle. Under a technical Assumption (\ref{eq: QDP2}), by applying a general recipe of Dirac reduction (see \cite{Xu03} or \cite{F94}) at the loop group level, we obtain an isomorphism of Poisson homogeneous  spaces
$G_1[\![z^{-1}]\!]/ G_1[\![z^{-1}]\!]^\omega \stackrel{\sim}{\rightarrow}  G_1[\![z^{-1}]\!]^\sigma$, which is compatible with the natural Poisson algebra of $G_1[\![z^{-1}]\!]$; see \eqref{eq: exp map} and Proposition~ \ref{prop:loopSymSpace}. 
     
\begin{alphatheorem}  [Theorem \ref{thm:tY:G1sigma}]
        \label{thm:tY:G1:Intr}
$\U_\hbar(\g[z]^\omega)'$ quantizes the Poisson symmetric space $G_1[\![z^{-1}]\!]/ G_1[\![z^{-1}]\!]^\omega$ or equivalently the affine scheme $G_1[\![z^{-1}]\!]^\sigma$ with its (doubled) Dirac Poisson structure, and the inclusion $\U_\hbar(\g[z]^\omega) \subset \U_\hbar(\g[z])$ quantizes the map $G_1[\![z^{-1}]\!] \rightarrow G_1[\![z^{-1}]\!]^\sigma$ defined by $g \mapsto g \sigma(g)$. The left coideal structure on $\U_\hbar(\g[z]^\omega)$ quantizes the left action of $G_1[\![z^{-1}]\!]$ on $G_1[\![z^{-1}]\!]^\sigma$ given by $g\cdot p = g p \sigma(g).$
\end{alphatheorem}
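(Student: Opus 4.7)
The plan is to adapt the proof of Proposition~\ref{prop: QDP Yangian1} to the coideal setting, applying the quantum duality principle directly to the inclusion $\U_\hbar(\g[z]^\omega) \subset \U_\hbar(\g[z])$. First I would define $\U_\hbar(\g[z]^\omega)'$ via the QDP recipe, as the $\bC[\hbar]$-subalgebra generated by $\hbar^{-k}$ times the $k$-th power of a suitable augmentation ideal in $\U_\hbar(\g[z]^\omega)$, and verify flatness under Assumption~(\ref{eq: QDP2}). Next I would compute the semiclassical limit $\U_\hbar(\g[z]^\omega)' / \hbar \U_\hbar(\g[z]^\omega)'$ and identify it, as a commutative algebra, with $\mathcal{O}(G_1[\![z^{-1}]\!]^\sigma)$; this identification should follow from the general duality correspondence combined with the isomorphism $G_1[\![z^{-1}]\!]/G_1[\![z^{-1}]\!]^\omega \stackrel{\sim}{\to} G_1[\![z^{-1}]\!]^\sigma$ of Proposition~\ref{prop:loopSymSpace}.

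The main step, and the expected principal obstacle, is matching the Poisson bracket on this semiclassical limit with the doubled Dirac bracket on $G_1[\![z^{-1}]\!]^\sigma$. I would expand the coideal relations modulo $\hbar^2$ and compare with the explicit Dirac reduction formula from \eqref{eq: exp map} and Proposition~\ref{prop:loopSymSpace}: the $\omega$-fixed subalgebra $\g[z]^\omega$ supplies the second-class constraint data, and the classical $r$-matrix of $\g[z]$, when restricted in the QDP sense to $\g[z]^\omega$, should induce exactly the doubled Dirac bracket. The delicate point is verifying that the ``doubling'' factor in the Dirac formula agrees with the factor produced by the quadratic $\hbar$-term of the coideal coproduct, and that the Poisson morphism property is preserved under passing to the fixed-point locus.

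For the geometric interpretation of the inclusion $\U_\hbar(\g[z]^\omega) \subset \U_\hbar(\g[z])$: after passing to primes, this becomes a Poisson algebra morphism $\mathcal{O}(G_1[\![z^{-1}]\!]^\sigma) \to \mathcal{O}(G_1[\![z^{-1}]\!])$ at the classical level, hence dual to a map $G_1[\![z^{-1}]\!] \to G_1[\![z^{-1}]\!]^\sigma$. I would identify it with $g \mapsto g\sigma(g)$ using the commutative square relating the QDP inclusions to the projection--isomorphism $G_1[\![z^{-1}]\!] \twoheadrightarrow G_1[\![z^{-1}]\!]/G_1[\![z^{-1}]\!]^\omega \stackrel{\sim}{\to} G_1[\![z^{-1}]\!]^\sigma$ of \eqref{eq: exp map}, whose composite is precisely $g \mapsto g\sigma(g)$. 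Here the universality of the QDP construction (applied to the classical inclusion $\g[z]^\omega \hookrightarrow \g[z]$) is what forces the identification.

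Finally, for the coaction statement: the left coideal structure $\Delta\colon \U_\hbar(\g[z]^\omega) \to \U_\hbar(\g[z]) \otimes \U_\hbar(\g[z]^\omega)$ restricts to the primes, and its classical limit endows $\mathcal{O}(G_1[\![z^{-1}]\!]^\sigma)$ with the structure of a comodule algebra over $\mathcal{O}(G_1[\![z^{-1}]\!])$, equivalently a left $G_1[\![z^{-1}]\!]$-action on $G_1[\![z^{-1}]\!]^\sigma$. Compatibility with the quantized map $g \mapsto g\sigma(g)$, via the coassociativity of the untwisted coproduct on $\U_\hbar(\g[z])'$, then forces this action to be $g\cdot p = g p \sigma(g)$: if $p = h\sigma(h)$, compatibility gives $g\cdot p = (gh)\sigma(gh) = g h \sigma(h)\sigma(g) = gp\sigma(g)$, which by transitivity determines the action on all of $G_1[\![z^{-1}]\!]^\sigma$ and completes the proof.
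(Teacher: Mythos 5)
Your overall architecture --- pass to the classical limit of the Gavarini-type subalgebra, identify it with functions on $G_1[\![z^{-1}]\!]^\sigma$ via Proposition~\ref{prop:loopSymSpace}, and then read off the map and coaction --- matches the paper's. But there are two genuine gaps.

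First, the definition. You propose to define $\U_\hbar(\g[z]^\omega)'$ as the $\bC[\hbar]$-algebra generated by $\hbar^{-k}$ times powers of an augmentation ideal. That recipe is the standard Drinfeld--Gavarini construction for \emph{Hopf} algebras, but $\U_\hbar(\g[z]^\omega)$ is only a left coideal subalgebra, with no antipode and no independent coproduct, so the equivalence of the augmentation-ideal description with the coproduct condition is not available for free. The paper defines $\U_\hbar(\g[z]^\omega)'$ (just before Theorem~\ref{thm: QDP Yangian2}) by a modified Gavarini condition taking values in $\U_\hbar(\g[z])^{\otimes(n-1)}\otimes \U_\hbar(\g[z]^\omega)$, following Ciccoli--Gavarini. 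With your definition you would need to re-prove that the resulting object is a coideal and prove its agreement with the intersection $\U_\hbar(\g[z]^\omega)\cap \U_\hbar(\g[z])'$ --- the latter identification is the paper's Theorem~\ref{thm: QDP Yangian2}(1)--(2), proved via the PBW lifts from Assumptions~\eqref{eq: QDP1} and \eqref{eq: QDP2}.

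Second, the central identification of the classical limit is not ``formal from the general duality correspondence'' in this infinite-dimensional graded setting; this is precisely why the paper gives a direct argument. Concretely, one must show the orthogonal complement $C^\perp\subset U(z^{-1}\g[z^{-1}])$ of the classical limit $C$ equals the left ideal $J$ generated by $(z^{-1}\g[z^{-1}])^\omega$. The inclusion $J\subseteq C^\perp$ is easy, but the reverse requires comparing the Hilbert series of $U(z^{-1}\g[z^{-1}])/J$ (computed from the $\pm1$-eigenspace decomposition of $\g[z^{-1}]$) against that of $C$ (coming from the PBW basis in $\{\hbar y_\eta\}$). Your proposal skips this content entirely, yet it is the non-trivial part.

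Finally, your plan to recover the doubling factor by expanding the coideal coproduct modulo $\hbar^2$ is conceptually misdirected: the Poisson bracket on the semiclassical limit of a subalgebra comes from the \emph{commutator} (so it is automatically the restriction of the Poisson bracket on $\bC[G_1[\![z^{-1}]\!]]$), not from the $\hbar^2$ term of the coproduct. The factor of $2$ never enters a quantization computation; it is already baked into the geometric isomorphism of Proposition~\ref{prop:loopSymSpace}(2)--(3), which cites Xu's result that the quotient-space identification is Poisson only once the Dirac bracket is doubled. So the correct proof cites that proposition rather than attempting an $\hbar^2$ expansion. Your final step on the coaction (using $p=h\sigma(h)$ and transitivity) is fine and matches the paper's Proposition~\ref{prop:loopSymSpace}(2).
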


\subsection{Shifted iYangians as a quantization of $\iW_\mu$}

Let $G$ be the adjoint group of the simple Lie algebra $\g$. 
Following \cite{FKPRW}, for any coweight $\mu$, the closed subscheme $\cW_\mu$ of $G(\!(z^{-1})\!)$ defined in \eqref{eq:Wmu} is quantized by the shifted Yangian $Y_\mu(\g)$. 
Associated to the quasi-split Satake diagram $(\I,\tau)$, the involution $\omega_\tau=\omega_0\circ\tau$ of $\g$ leads to an anti-involution $\sigma$ on the loop algebra \eqref{eq:sigma:gz} as well as on the loop group $G(\!(z^{-1})\!)$. We show that $\sigma$ preserves the subscheme $\cW_\mu$ if and only $\mu$ is even spherical. In this case, $\sigma$ restricts to a Poisson involution on $\cW_\mu$, and hence the $\sigma$-fixed point locus of $\cW_\mu$, denoted $\iW_\mu$, inherits a Poisson structure from $\cW_\mu$ by (doubled) Dirac reduction. Our next main result is that the Poisson algebra $\C[\iW_\mu]$ is quantized by the shifted iYangian $\Yt_\mu$.

\begin{alphatheorem}  [Proposition \ref{prop:iWmu}, Theorem \ref{thm:iWmuPoisson}]
   \label{thm:iWmuPoisson:Intr}
Let $\mu$ be an even spherical coweight.
\begin{enumerate}
    \item For $\mu = \mu_1 + \tau \mu_1$, there is an isomorphism of $\Qt\times \bZ$--graded Poisson algebras 
    $\gr^{F_{\mu_1}^\bullet} \Yt_\mu \cong \bC[\iW_\mu],$ 
    which matches the corresponding generators.

    \item The coordinate ring $\bC[\iW_\mu]$ is the Poisson algebra generated by $h_i^{(r)}, b_i^{(s)}$ for $i \in \I$, $r \in \bZ$ and $s \gge 1$, with explicit defining Poisson relations. % \eqref{eq:csty0}--\eqref{eq:csty7}.

    \item $\bC[\iW_\mu]$ is a polynomial algebra with explicit PBW generators.

    \item For any antidominant weight $\nu$ such that $\nu+\tau\nu$ is even, the shift map $\iota_{\mu,\nu}^\tau: \iW_{\mu+\nu+\tau\nu} \rightarrow \iW_\mu$ is quantized by the shift homomorphism $\iota_{\mu,\nu}^\tau : \Yt_\mu \rightarrow \Yt_{\mu+\nu+\tau\nu}$.
    \end{enumerate}
\end{alphatheorem}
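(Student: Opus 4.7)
The plan is to deduce the theorem from the general quantum-duality/Dirac-reduction framework of Theorem~A (Theorem~\ref{thm:tY:G1sigma}), applied in the geometric setting of the generalized slices $\cW_\mu \subset G_1[\![z^{-1}]\!]$, which are already known to be quantized by the shifted Yangian $Y_\mu$ via \cite{KWWY14, FKPRW}. The shifted iYangian $\Yt_\mu$ should then quantize the $\sigma$-fixed locus $\iW_\mu$ exactly as $\U_\hbar(\g[z]^\omega)'$ quantizes $G_1[\![z^{-1}]\!]^\sigma$.

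First I would establish Proposition~\ref{prop:iWmu}. Using the explicit defining congruences of $\cW_\mu$ in generalized Gauss-type coordinates and the formula \eqref{eq:sigma:gz} for $\sigma$, $\sigma$-stability of $\cW_\mu$ reduces to a direct computation on the factor $z^\mu$ appearing in the generic element of $\cW_\mu$: invariance amounts to $\mu + \omega_\tau(\mu)$ lying in the appropriate coroot lattice, i.e.\ to $\mu$ being even spherical. Once $\sigma$ acts as a Poisson involution on $\cW_\mu$, the doubled Dirac reduction of Section~2 endows $\iW_\mu$ with a Poisson structure inherited from $\cW_\mu$.

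For part (1), the filtration $F_{\mu_1}^\bullet$ associated to the splitting $\mu = \mu_1 + \tau\mu_1$ is the natural Kazhdan-type filtration constructed in \cite{LWW25sty} on the generators $h_i^{(r)}, b_i^{(s)}$; its semiclassical limit sends these generators to functions on $\iW_\mu$ obtained by pulling back matrix coefficients from $\cW_\mu$. The iYangian relations established in the prequel then degenerate to Poisson relations, yielding a graded Poisson homomorphism $\gr^{F_{\mu_1}^\bullet}\Yt_\mu \to \bC[\iW_\mu]$. Surjectivity follows because the classical $h$'s and $b$'s pulled back from $\cW_\mu$-coordinates generate $\bC[\iW_\mu]$, which is guaranteed by the Dirac reduction picture in Step~1 together with the analogous statement for $\cW_\mu$. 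Injectivity is by a PBW argument: $\gr^{F_{\mu_1}^\bullet}\Yt_\mu$ is a polynomial algebra on its PBW generators \cite{LWW25sty}, and these images are algebraically independent in $\bC[\iW_\mu]$, verified by a dimension count using the embedding $\iW_\mu \hookrightarrow \cW_\mu$ and the polynomiality of $\bC[\cW_\mu]$. Parts (2) and (3) then follow by transporting, respectively, the presentation and the PBW basis of $\gr^{F_{\mu_1}^\bullet}\Yt_\mu$ through this isomorphism. For (4), the geometric shift $\iota^\tau_{\mu,\nu}$ is induced from the ambient shift on $\cW_\bullet$ (insertion of a $z^{\nu+\tau\nu}$ factor, made $\sigma$-symmetric by construction), while the algebraic shift homomorphism is defined on generating series in \cite{LWW25sty}; compatibility is checked by comparing their effect on $h_i(u)$ and $b_i(u)$.

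The main obstacle is the identification in part (1): matching $\gr^{F_{\mu_1}^\bullet}\Yt_\mu$ with $\bC[\iW_\mu]$ requires a careful synchronization of the PBW/filtration data on the non-commutative side with the Dirac-reduced Poisson structure on the geometric side, and in particular the algebraic independence of the classical images needs an explicit parametrization of $\iW_\mu$ (or a transverse argument inside $\cW_\mu$) rather than an abstract comparison of Hilbert series. Once this is in place, the remaining parts are essentially bookkeeping on generators.
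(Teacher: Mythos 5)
Your opening plan — to deduce the theorem from the quantum-duality/Dirac-reduction framework of Theorem~A — does not go through without a further unproven hypothesis. Theorem~\ref{thm:tY:G1sigma} is formulated for an \emph{axiomatic} twisted Yangian $\U_\hbar(\g[z]^\omega)$ in the sense of Definition~\ref{def:tYaxioms} (a coideal subalgebra of a Yangian satisfying the QDP assumptions). To invoke it here you would need to know that the Drinfeld-presented iYangian $\Yt_0$ of the relevant quasi-split type is such a twisted Yangian, i.e.\ embeds as a coideal in a Yangian with the right classical limit and QDP structure. As the paper flags explicitly (Remark~\ref{rem:tYsame} and the remark following Theorem~\ref{thm:iWmuPoisson}), this identification is presently conjectural outside of split type (excluding $G_2$) and type AIII, so a proof resting on Theorem~A would only cover those cases. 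The paper's own proof carefully avoids this dependence.

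Even setting that aside, the central technical step in your sketch is taken for granted where the paper must work. You write that injectivity of $\gr^{F_{\mu_1}^\bullet}\Yt_\mu\to\bC[\iW_\mu]$ follows ``because $\gr^{F_{\mu_1}^\bullet}\Yt_\mu$ is a polynomial algebra on its PBW generators \cite{LWW25sty}''. But the filtration $F_{\mu_1}^\bullet$ is introduced only in this paper (\S\ref{ssec:filtrations on stY}); the prequel has no such filtration. Worse, $F_{\mu_1}^\bullet$ is defined purely as a \emph{vector space} filtration by spans of PBW monomials, and it is a genuinely nontrivial claim (Proposition~\ref{prop:algfilt}) that it is an algebra filtration with commutative associated graded, independent of the PBW basis used. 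The paper proves this by introducing a second, tautologically algebraic filtration $G_{\mu_1}^\bullet$ (via a free Lie algebra presentation) and showing $F_{\mu_1}^\bullet = G_{\mu_1}^\bullet$ in Proposition~\ref{prop: equal filtrations}, via a three-stage reduction: the case $\mu=0$ by a dimension estimate, then $\mu$ antidominant via an embedding into ${}^\imath\widetilde{\cY}\hookrightarrow \Yt_0\otimes\bC[\xi_i]$, and finally general $\mu$ via the shift homomorphisms of Lemma~\ref{lem:shiftqs} and the strictness statement in Lemma~\ref{lem:strictlyfiltered}. This is the bulk of the actual argument, and your sketch silently presupposes its conclusion. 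Without it, you cannot even assert that $\gr^{F_{\mu_1}^\bullet}\Yt_\mu$ is a ring, let alone a polynomial ring. The remaining parts of your outline — checking that the images of $h_i^{(r)}, b_i^{(s)}$ in $\bC[\iW_\mu]$ satisfy the listed Poisson relations (Lemma~\ref{lem:iWmuDrinfeld}), that these elements give a PBW polynomial structure on $\bC[\iW_\mu]$ by the $S(V(1))$ argument of Lemma~\ref{lem:symdirac} (Lemma~\ref{lem:iWmuPBW}), and that the abstract Poisson presentation $\yt_\mu$ maps isomorphically onto $\bC[\iW_\mu]$ (Lemma~\ref{lem:iWmu_pres}) — are broadly in the spirit of what the paper does, but the filtration-matching step is the missing keystone, not bookkeeping.
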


Theorem \ref{thm:tY:G1:Intr} and Theorem \ref{thm:iWmuPoisson:Intr}(1) complement each other, as the iYangians $\Yt_\mu$ are expected to be compatible with the ones used in Theorem \ref{thm:tY:G1:Intr}; see Remark \ref{rem:tYsame}. 

\subsection{Affine Grassmannian islices}

For coweights $\la \gge \mu$ with $\la $ dominant, the (generalized) affine Grassmannian slice $\overline{\cW}_\mu^\la$ in  \eqref{eq:Wmula} as a closed subscheme of $\cW_\mu$ was introduced in \cite{BFN19}. For $\mu$ dominant, $\overline{\cW}_\mu^\la$ are the usual affine Grassmannian slices between spherical Schubert varieties in the affine Grassmannian. The Poisson involution $\sigma$ on $\cW_\mu$ preserves $\overline{\cW}_\mu^\la$ if and only if $\la$ is $\tau$-invariant, and the affine Grassmannian islice $\iWbar_\mu^\la$ is defined to be the $\sigma$-fixed point locus of $\overline{\cW}_\mu^\la$; cf. \eqref{eq:islices}. Thus $\iWbar_\mu^\la$ inherits a Poisson structure from $\overline{\cW}_\mu^\la$ by (doubled) Dirac reduction.  Some basic properties of $\iWbar_\mu^\la$ such as its symplectic leaves can be derived from the counterparts for $\overline{\cW}_\mu^\la$; see Theorem \ref{thm:fixedWlm}.

\begin{rem}
    The varieties $\iWbar_\mu^\la$ are not twisted affine Grassmannian slices in the sense of \cite[\S 3.9]{BF17} or \cite{PR08}. In fact, in the cases where twisted affine Grassmannian slices are defined as fixed points for an involution (instead of a map of order 3), the involutions used are what we denote by $\omega$ in \S \ref{ssec:loop sym spaces}. As such, these twisted affine Grassmannian slices are fixed points under anti-Poisson involutions, and thus are naturally Lagrangians inside ordinary affine Grassmannian slices.  This clarifies and refutes an expectation from \cite[\S 1.1]{TT24}.
\end{rem}

Recall the filtration on $\Yt_\mu$ appearing in Theorem \ref{thm:iWmuPoisson:Intr}(1) and \eqref{eq:mu1},
and there is also a filtration on $\mc A_{\bm z=0}$ as in \eqref{filter:A}. With \cite{KWWY14,BFN19,W19} in mind, one is tempted to claim that the surjective iGKLO homomorphism $\Phi_\mu^\la:\Yt_\mu[\bm z]\twoheadrightarrow \Yt_\mu^\la$ arising from Theorem \ref{thm:GKLOquasisplit} quantizes the geometric embedding $\iWbar_\mu^\la \hookrightarrow \iW_\mu$. However, one complication arises from the fact that the islices $\iWbar_\mu^\la$ may be reducible and may not be normal (as it happens to fixed point loci of irreducible normal varieties). Such examples do occur in the setting of nilpotent Slodowy slices of type BCD, which turn out to be affine Grassmannian islices of type AI by Theorem \ref{thm:sliceBCD:Intr}. Recall notation $\iI$ from \eqref{eq:iI} and $\fkv_i$ from \eqref{ell_theta}.

\begin{alphatheorem}  [Theorems \ref{thm:ctgklo} and \ref{thm:ctgklo:deform}]   
\label{thm:islices:Intro}
    \begin{enumerate}
    \item The open subscheme $U_\mu^\la \subset \overline{\cW}_\mu^\la$ from \eqref{eq:U} is invariant under $\sigma$. Its fixed point locus ${}^\imath U_\mu^\la$ is non-empty if and only if the parity condition \eqref{parity} holds, and in this case we have 
    $\dim {}^\imath U_\mu^\la = 2 \sum_{i \in \iI} \fkv_i.$

    \item The homomorphism $\Phi_\mu^{\la, \bm z =0} : \Yt_\mu \rightarrow \mc A_{\bm z=0}$ is filtered, and the associated graded map
    $\operatorname{gr} \Phi_\mu^{\la, \bm z =0} : \bC[ \iW_\mu ]\longrightarrow \operatorname{gr} \mc A_{\bm z =0}$
    defines the closure $\overline{C}_\mu^\la \subseteq \iWbar_\mu^\la$, i.e.,~the kernel of this map is the defining ideal of $\overline{C}_\mu^\la$. Here $C_\mu^\la \subseteq {}^\imath U_\mu^\la$ is a top-dimensional irreducible component. In case when $\iWbar_\mu^\la$ is irreducible, $\overline{C}_\mu^\la$ is equal to $\iWbar_\mu^\la$.
    \end{enumerate}   
\end{alphatheorem}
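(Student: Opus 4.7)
The plan is to handle the two parts separately: Part (1) by explicit coordinate analysis on $U_\mu^\la$, and Part (2) by a filtered-quantization argument that bootstraps from the non-involutive case of \cite{KWWY14, BFN19, W19}.

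\textbf{Part (1).} The open subscheme $U_\mu^\la$ from \eqref{eq:U} admits a factorization-type parametrization: elements are described by a tuple of monic polynomials $Q_i(z)$ of degree $\fkv_i$ at each node $i \in \I$ together with residue-type data at their roots. Under the anti-involution $\sigma$ attached to $\omega_\tau$ (see \S\ref{ssec:loop sym spaces}), the datum at node $i$ is transformed to a datum at node $\tau(i)$, so at a split orbit $\{i,\tau(i)\}$ with $\tau(i)\ne i$ the $\sigma$-fixed locus is cut out by identifying the data at $\tau(i)$ with the image of that at $i$, while at a fixed node $\tau(i)=i$ the action becomes an involution on the $Q_i$-data whose fixed locus is palindromic. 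Invariance $\sigma(U_\mu^\la)=U_\mu^\la$ is immediate from these formulas, non-emptiness of ${}^\imath U_\mu^\la$ reduces to the palindromic parity condition \eqref{parity} at each fixed node, and the dimension count $2\sum_{i\in\iI}\fkv_i$ follows by summing contributions of split orbits (free data on one representative, hence $2\fkv_i$ dimensions) and of fixed nodes (palindromic halving of the $2\fkv_i$-dimensional stratum, keeping the factor of $2$ from residue/root pairs).

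\textbf{Part (2).} First verify that $\Phi_\mu^{\la,\bm z=0}$ respects filtrations by inspecting the images of the generators $h_i^{(r)}, b_i^{(s)}$ under the explicit iGKLO formulas of Theorem \ref{thm:GKLOquasisplit}, comparing $F_{\mu_1}^\bullet$ on $\Yt_\mu$ (from Theorem \ref{thm:iWmuPoisson:Intr}(1)) with the filtration \eqref{filter:A} on $\mc A_{\bm z=0}$. Passing to associated graded then produces a Poisson homomorphism
\[
    \operatorname{gr} \Phi_\mu^{\la,\bm z=0} : \bC[\iW_\mu] \longrightarrow \operatorname{gr} \mc A_{\bm z=0}.
\]
The non-involutive GKLO of \cite{KWWY14, BFN19, W19} gives the surjection $\bC[\cW_\mu] \twoheadrightarrow \bC[\overline{\cW}_\mu^\la]$; by construction of the iGKLO in \cite{LWW25sty} and its compatibility with $\sigma$, the above graded map factors set-theoretically through $\bC[\iWbar_\mu^\la]$. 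Direct inspection of $\operatorname{gr} \mc A_{\bm z=0}$ as a polynomial ring in the natural graded GKLO-coordinates, of Krull dimension $2\sum_{i\in\iI}\fkv_i$, shows that the reduced image is an integral domain of this dimension; combined with Part (1), this image must be the closure of a top-dimensional irreducible component $C_\mu^\la \subseteq {}^\imath U_\mu^\la$, yielding $\overline{C}_\mu^\la$. When $\iWbar_\mu^\la$ is irreducible, dimension and containment force $\overline{C}_\mu^\la=\iWbar_\mu^\la$.

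\textbf{Main obstacle.} The delicate step is the equality $\ker \operatorname{gr}\Phi_\mu^{\la,\bm z=0} = I(\overline{C}_\mu^\la)$ rather than mere containment. One direction is set-theoretic and follows from reduction to the non-involutive case; the reverse requires that the graded image be reduced and of the correct Hilbert series, which rests on the explicit polynomial description of $\operatorname{gr} \mc A_{\bm z=0}$ and on a careful $\sigma$-equivariance argument at the level of the Dirac-reduced Poisson structure — subtle because $\sigma$ is a Poisson anti-involution, so some twisting is unavoidable. A related difficulty, when $\iWbar_\mu^\la$ has multiple top-dimensional components, is to identify explicitly which one is hit by the iGKLO image; this should be controllable from the combinatorics of the parameters $(Q_i)$ surviving the $\sigma$-fixing procedure.
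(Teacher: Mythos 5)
Your outline follows the same overall route as the paper — birational GKLO-type coordinates in Part (1), a filtered-to-graded comparison in Part (2) — but both parts have genuine gaps. In Part (1), you work ``directly'' on $U_\mu^\la$ with roots and residues of monic polynomials, but the object that actually carries these coordinates is the finite étale cover $X_\mu^\la \to U_\mu^\la$ introduced in Definition \ref{def:X} and Theorem \ref{prop:gklobirat}, i.e., the fiber product of $U_\mu^\la$ with $\bA^{|\lambda-\mu|}\to\bA^{(\lambda-\mu)}$. The $\sigma$-action, the explicit relations defining ${}^\imath X_\mu^\la$, the Poisson brackets and the dimension count all live on $X_\mu^\la$; they are transported to $U_\mu^\la$ only via the finite, faithfully flat map ${}^\imath X_\mu^\la \to {}^\imath U_\mu^\la$ (Lemma \ref{lem:iXU:flat}). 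Your sketch omits this covering entirely, so the conclusion about $\dim {}^\imath U_\mu^\la$ and non-emptiness is not actually justified. You also misstate the source of the parity obstruction: \eqref{parity} is a condition on \emph{pairs} of adjacent nodes $i\ne j$, not a per-node ``palindromic'' constraint. The real point (Lemma \ref{lem:fixedcoloureddiv}) is that if two adjacent fixed nodes both have $\bv_i,\bv_j$ odd then both $\gkloa_i(z)$ and $\gkloa_j(z)$ are forced to vanish at $z=0$, which violates the open condition defining $U_\mu^\la$ in \eqref{eq:U}. (A further minor slip: in the ambient $U_\mu^\la$ the polynomials $\gkloa_i$ have degree $\bv_i$, not $\fkv_i$.)

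In Part (2), the step ``the graded map factors set-theoretically through $\bC[\iWbar_\mu^\la]$ by $\sigma$-compatibility of the iGKLO'' is precisely what needs to be proved and cannot be absorbed as a remark. The paper resolves it by building a second, purely geometric Poisson homomorphism $\bC[\iW_\mu]\to\bC[{}^\imath X_\mu^\la]\to\operatorname{gr}\mc A_{\bm z=0}$ (Lemma \ref{lem:classicalgklo2}, the classical iGKLO), and then checking that it agrees with $\operatorname{gr}\Phi_\mu^{\la,\bm z=0}$ on the Poisson generators $h_i^{(r)},b_i^{(s)}$ by inspecting leading terms in Theorem \ref{thm:GKLOquasisplit}. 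That verification is the crux of Part (2); its absence is a genuine gap. Finally, a conceptual correction that bears on the whole setup: $\sigma$ is a \emph{Poisson} involution on $G(\!(z^{-1})\!)$ (being the composite of the anti-Poisson maps $\omega$ and $\operatorname{inv}$), not a Poisson anti-involution. That is exactly what makes the Dirac-reduced Poisson structure on $\iW_\mu$ and $\iWbar_\mu^\la$ available in the first place, so your remark that ``$\sigma$ is a Poisson anti-involution, so some twisting is unavoidable'' misdiagnoses where the delicacy actually lies.
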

In particular, if $\iWbar_\mu^\la$ is irreducible then it has dimension $2\sum_{i\in\iI}\fkv_i$, and we expect that this is true more generally.  It is quite plausible that $\iWbar_\mu^\la$ are often irreducible, though we do not know of a general criterion. We expect that $\overline{C}_\mu^\la$ is quantized by the TSTY $\Yt_\mu^\la$; see Conjecture \ref{conj:TSTY:islices}. This boils down to a technical issue of identifying two filtrations on $\Yt_\mu^\la$.

\subsection{Nilpotent Slodowy slices of type BCD}

In this subsection, we specialize to $G = \mathrm{PGL}_N$ and $\tau=\id$, which correspond to the Satake diagram of type AI. 

Building on Lusztig's isomorphism \cite{Lu81} between the nilpotent cone 
$\mathcal{N}_{\fksl_N}$ and the affine Grassmannian slice $\overline{\cW}_0^{N \varpi_1^\vee}$, Mirkovi\'c-Vybornov \cite{MV22} established a general identification between nilpotent Slodowy slices and affine Grassmannian slices of type A:
${\mathbb{O}}_{\pi_1} \cap \mathcal{S}_{\pi_2} \cong {\cW}_\mu^\la,$ and $\overline{\mathbb{O}}_{\pi_1} \cap \mathcal{S}_{\pi_2} \cong \overline{\cW}_\mu^\la.$ It was mentioned in \cite[Footnote 1]{MV22} that ``These observations clearly do not literally extend beyond type A". We offer a proper generalization of the MV isomorphism to classical type. 

Consider an involution $\sigma_\epsilon$ of $\fksl_N$ sending $X \mapsto - J_\epsilon^{-1} X^T J_\epsilon$, where $J_\epsilon$ is any invertible $N\times N$ matrix which is symmetric if $\epsilon=+$ and skew-symmetric if $\epsilon=-$. The fixed point subalgebra $\fksl_N^\epsilon = (\fksl_N)^{\sigma_\epsilon}$
is $\mathfrak{so}_N$ if $\epsilon=+$, and $\mathfrak{sp}_N$ if $\epsilon=-$. By the classic results of Gerstenhaber and of Hesselink (see Proposition \ref{prop:cloc}), intersecting the nilpotent cone, nilpotent orbits, and nilpotent orbit closures of $\fksl_N$ with $\fksl_N^\epsilon$ gives rise to the corresponding nilpotent cone, nilpotent orbits (if nonempty), and nilpotent orbit closures in the classical Lie algebra $\fksl_N^\epsilon$ compatibly. 

Given an orthogonal/symplectic partition $\pi_2$ of $N$ (for $\epsilon=+/-$), following \cite[\S 4.2]{T23} we can choose $J_\epsilon$ in \eqref{eq:tinvo} for an involution $\sigma_\epsilon$ of $\fksl_N$ so that there exists an $\fksl_2$-triple $\{e,h,f\} \subset \fksl_N^\epsilon$ (i.e., fixed pointwise by $\sigma_\epsilon$) and $e$ has Jordan form $\pi_2$. Then the involution $\sigma_\epsilon$ restricts to an involution on the Slodowy slice $ \mathcal{S}_{\pi_2}$, and the corresponding $\sigma_\epsilon$-fixed point locus can be identified with the Slodowy slice $ \mathcal{S}_{\pi_2}^\epsilon$ in $\fksl_N^\epsilon$. Using Topley's results, we show that the Mirkovi\'c-Vybornov isomorphism is compatible with taking fixed point loci on both sides. We refer to \eqref{eq:3Par} for notation $\ParN_{\lle n-1}$ and $\ParNepsilon^\diamond_{\lle n}$ on subsets of symplectic/orthogonal partitions.
    
\begin{alphatheorem}  [Theorem \ref{thm:loci=sliceBCD}]
\label{thm:sliceBCD:Intr}
Let $\tau =\id$. Let $\la \gge \mu$ be dominant coweights for $\mathrm{PGL}_n$ with $\mu$ even, corresponding to partitions $\pi_1 \in \ParN_{\lle n-1}$ and $\pi_2 \in\ParNepsilon^\diamond_{\lle n}$ with $\pi_1 \unrhd \pi_2$. Then we have Poisson isomorphisms
    \[
    \iW_\mu^{\la}\cong {\mathbb{O}^{\epsilon}_{\pi_1}} \cap \mathcal{S}_{\pi_2}^\epsilon, \quad \quad \iWbar_\mu^{\la}\cong \overline{\mathbb{O}}^{\epsilon}_{\pi_1} \cap \mathcal{S}_{\pi_2}^\epsilon.
    \]
    Moreover,
    \begin{enumerate}
        \item $\iW_\mu^\la$ is non-empty if and only if $\pi_1 \in \ParNepsilon$.  
        \item The variety $\iWbar_\mu^\la$ is the closure of its stratum $\iW_\mu^{\la'} \cong \mathbb{O}^\epsilon_{\pi_1'} \cap \mc S^\epsilon_{\pi_2}$, where the coweight $\la'$ corresponds to  the unique maximal element $\pi_1' \in \ParNepsilon$ satisfying $\pi_1\unrhd \pi_1'$.  
    \end{enumerate}
\end{alphatheorem}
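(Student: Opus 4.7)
The plan is to deduce both isomorphisms from the type A Mirkovi\'c--Vybornov (MV) isomorphism by showing that it intertwines two natural involutions, and then passing to fixed-point loci. On the slice side, the relevant involution is $\sigma_\epsilon : X \mapsto -J_\epsilon^{-1} X^T J_\epsilon$ on $\fksl_N$, with $J_\epsilon$ chosen as in Topley's construction so that the $\fksl_2$-triple $\{e,h,f\}$ adapted to $\pi_2$ lies in $\fksl_N^\epsilon$; on the affine-Grassmannian side it is the loop-group anti-involution $\sigma$ arising from $\omega_\tau = \omega_0$ (recall $\tau = \id$). The main step is to verify that the MV isomorphism $\overline{\mathbb{O}}_{\pi_1} \cap \mathcal{S}_{\pi_2} \stackrel{\sim}{\rightarrow} \overline{\cW}_\mu^\la$ is $(\sigma_\epsilon, \sigma)$-equivariant, and similarly for the open-stratum version $\mathbb{O}_{\pi_1} \cap \mathcal{S}_{\pi_2} \stackrel{\sim}{\rightarrow} \cW_\mu^\la$.

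The main obstacle is precisely this equivariance check. I would approach it by unwinding the explicit block-matrix form of the MV map and verifying that Topley's normalization of $J_\epsilon$ (arranged so that the Jordan blocks of $e$ of sizes $\pi_2$ transform symmetrically) translates, through the MV formulas, into the transpose-type anti-involution on the loop group defining $\sigma$ on $\overline{\cW}_\mu^\la$. This is essentially a careful bookkeeping exercise rather than a conceptual one, and it is where most of the genuine work of the proof will sit. An alternative, possibly cleaner, strategy is to verify the intertwining on a Zariski-dense open subscheme (for instance the regular stratum) and extend by continuity, using reducedness on both sides.

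Once equivariance is established, taking $\sigma$-fixed points on the right gives $\iWbar_\mu^\la$ and $\iW_\mu^\la$ by definition, while $\sigma_\epsilon$-fixed points on the left give
\[
(\overline{\mathbb{O}}_{\pi_1} \cap \mathcal{S}_{\pi_2})^{\sigma_\epsilon} \;=\; \overline{\mathbb{O}}_{\pi_1}^{\sigma_\epsilon} \cap \mathcal{S}_{\pi_2}^{\sigma_\epsilon}.
\]
Proposition \ref{prop:cloc} (Gerstenhaber--Hesselink) identifies $\overline{\mathbb{O}}_{\pi_1}^{\sigma_\epsilon}$ with $\overline{\mathbb{O}}_{\pi_1} \cap \fksl_N^\epsilon = \overline{\mathbb{O}}^\epsilon_{\pi_1}$, while Topley's result gives $\mathcal{S}_{\pi_2}^{\sigma_\epsilon} = \mathcal{S}_{\pi_2}^\epsilon$. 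The Poisson structures on the two sides match because each is obtained from the ambient type A Poisson structure by (doubled) Dirac reduction against a Poisson involution, and matching the involutions matches the Dirac brackets. Part (1) follows at once: the open stratum corresponds under MV to $\mathbb{O}_{\pi_1} \cap \mathcal{S}_{\pi_2}$, whose $\sigma_\epsilon$-fixed locus is nonempty iff $\mathbb{O}_{\pi_1} \cap \fksl_N^\epsilon \neq \emptyset$, i.e.\ iff $\pi_1 \in \ParNepsilon$ by the Gerstenhaber--Hesselink criterion.

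For part (2), when $\pi_1 \notin \ParNepsilon$ the open stratum on the right is empty, but the fixed locus $\overline{\mathbb{O}}_{\pi_1} \cap \fksl_N^\epsilon$ is still a finite union of classical-type nilpotent orbits $\mathbb{O}^\epsilon_{\pi'}$ indexed by $\epsilon$-partitions $\pi' \unlhd \pi_1$. A standard consequence of the Gerstenhaber--Hesselink classification is that there is a unique maximal such partition $\pi_1'$---the $\epsilon$-collapse of $\pi_1$---whose orbit is dense in the fixed locus. Translating this dominance back into the dominance order on coweights under the MV dictionary produces the coweight $\la'$ asserted in the theorem, so $\iWbar_\mu^\la$ is the closure of its stratum $\iW_\mu^{\la'} \cong \mathbb{O}^\epsilon_{\pi_1'} \cap \mathcal{S}_{\pi_2}^\epsilon$, as claimed.
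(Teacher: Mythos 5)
Your overall strategy — establish that the Mirkovi\'c--Vybornov isomorphism intertwines $\sigma_\epsilon$ with the loop-group involution $\sigma$, pass to fixed-point loci, then invoke Gerstenhaber--Hesselink (Proposition~\ref{prop:cloc}) to interpret the left-hand sides and to deduce Parts (1) and (2) — matches the paper's architecture exactly. Where you diverge is in the key equivariance step. You propose a \emph{direct} calculation: unwinding the explicit block-matrix form of the MV map and tracking Topley's $J_\epsilon$. The paper instead establishes equivariance \emph{indirectly} (Proposition~\ref{prop:onlypi2}): it identifies the classical MV isomorphism with the classical limit of the Brundan--Kleshchev isomorphism $Y_\mu^{N\varpi_1^\vee} \cong W(\mathfrak{gl}_N,\pi_2)$ via \cite{WWY20}, and then cites Topley's already-proven fact \cite[Proposition 4.7]{T23} that the classical BK isomorphism is $\sigma_\epsilon$-equivariant. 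The indirect route buys you avoidance of the normalization headaches in the hands-on computation — in particular the fact (flagged by the paper's own remark after Proposition~\ref{MViso}) that the transverse slice appearing in Mirkovi\'c--Vybornov and Cautis--Kamnitzer is generally \emph{not} Slodowy's $\mathcal{S}_{\pi_2}$, and one must compose with a further isomorphism of MV-type slices. Your direct approach would need to account for this; your proposed density/reducedness fallback is sound in principle but would still require the right $\sigma$-stable slice representative to be pinned down first. Your use of Lemma~\ref{lem:fixedfp} to split the fixed locus of the intersection into intersection of fixed loci, and your reading of the Poisson compatibility via doubled Dirac reduction, both match the paper.
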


It follows by Theorem \ref{thm:sliceBCD:Intr} that the normalizations of affine Grassmannian islices $\iWbar_\mu^{\la}$ of type AI are always symplectic singularities; cf., e.g., \cite[\S1.2]{FJLS17}; they may not always be irreducible though \cite{KP82} (cf. \cite[\S1.6.2]{FJLS17}).

A class of iquiver varieties was formulated by Li \cite{Li19} as fixed point loci of Nakajima's quiver varieties for ADE quivers. Moreover, certain iquiver varieties (generalizing the cotangent bundles of flag varieties of type B/C; cf. \cite{BKLW18}) are identified with involutive fixed point loci of nilpotent Slodowy slices of type A (which are claimed to be nilpotent Slodowy slices of type BCD if nonempty) and their partial resolutions \cite{Li19}; this identification is built on the isomorphism between Nakajima quiver varieties and nilpotent Slodowy slices of type A (a conjecture of Nakajima \cite{Nak94} proved by Maffei \cite{Maf05}).

\subsection{iCoulomb branches}

Let $Q=(\I,\Omega)$ be an ADE quiver. Let $(V_i; W_i)_{i\in \I}$ be a representation of its framed double quiver $\Qfbar$. Associated to such datum we have a symplectic vector space $E_{V,W}$ \eqref{E_VW} with the action of a gauge group $GL(V)$ and a flavor symmetric group $GL(W)$. This gives rise to Coulomb branch of cotangent type $\mc M_C(V,W)$ \cite{BFN19}. 

Let $(\I,\tau)$ be a Satake diagram as before. We impose $\tau$-symmetry and ``not-$2$-odd" parity conditions \eqref{symmetricVW:I1} on the dimension vectors of $V, W$, and fix a bipartite partition of $\I$ in \eqref{bipartite}. Given such datum, we associate with new vector spaces $V_i^\imath, W_i^\imath$, for $i\in \iI$, in \eqref{VWimath}. We explain in details the $\imath$-fication process on quivers and representations by reducing to the cases of the rank one and two Satake double quivers; see Tables \ref{tab:framed Satake:rk1} to \ref{tab:framed osp:rk1:VWi} in Section \ref{sec:Coulomb} for diagrammatic illustrations.

Then we formulate a new symplectic vector space $\iEVW$ in \eqref{iEVW} with the actions of a new gauge group $G^\imath(V^\imath)$ and a new flavor symmetry group $G^\imath(W^\imath)$; see Lemma \ref{lem:iEVW}, and we have a Coulomb branch $\mathcal M_C^\imath(V^\imath,W^\imath)$ (not of cotangent type in general) following \cite{BDFRT}. 
The component groups of 
$G^\imath(V^\imath)$ and $G^\imath(W^\imath)$ are classical type; in case of split Satake diagrams, all component groups are orthogonal or symplectic. Recall that generalized affine Grassmannian slices $\overline{\cW}_\mu^\lambda$ are realized as Coulomb branches $\mc M_C(V, W)$ \cite{BFN19}. 

\begin{alphaconjecture}
    [Conjecture \ref{conj:iCBr:islices}] \begin{enumerate}
    \item 
    The iCoulomb branch $\mathcal M_C^\imath(V^\imath,W^\imath)$ is a normalization of a top-dimensional component of the affine Grassmannian islice ${}^\imath\overline{\cW}_\mu^\la$;
    \item 
    Truncated shifted iYangians are (subalgebras of) quantized Coulomb branches.  
\end{enumerate}
\end{alphaconjecture}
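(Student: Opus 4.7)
The plan is to connect the iCoulomb branch to the islice via a folding/fixed-point principle that parallels the BFN realization $\mc M_C(V, W) \cong \overline{\cW}_\mu^\la$. The input data $(V^\imath, W^\imath, \iEVW, G^\imath)$ is constructed in Section \ref{sec:Coulomb} precisely to implement the Satake involution $\omega_\tau$ on the ambient double-quiver Coulomb branch; this suggests that $\mc M_C^\imath(V^\imath, W^\imath)$ should arise from a Dirac-type reduction of $\mc M_C(V, W)$ along the Poisson involution $\sigma$.

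For part (1), I would identify the BFN-style space of triples defining $\mc M_C^\imath(V^\imath, W^\imath)$ with the $\sigma$-fixed sub-ind-scheme of the triples for $\mc M_C(V, W)$, so that the induced map of Coulomb branches realizes the closed embedding $\iWbar_\mu^\la \hookrightarrow \overline{\cW}_\mu^\la$ on a top-dimensional component. The key quantitative check is a dimension match
\[
\dim \mc M_C^\imath(V^\imath, W^\imath) \;=\; 2 \sum_{i \in \iI} \fkv_i \;=\; \dim {}^\imath U_\mu^\la,
\]
where the first equality should follow from the generalized monopole formula of \cite{BDFRT} applied to $\iEVW$, and the second is Theorem \ref{thm:islices:Intro}(1). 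To promote this to a normalization of a top-dimensional component, I would invoke the expected normality of iCoulomb branches together with the observation that normalization commutes with passage to irreducible components, and single out $\overline{C}_\mu^\la$ from Theorem \ref{thm:islices:Intro}(2) as the target on the islice side.

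For part (2), the strategy is to show that both the TSTY $\Yt_\mu^\la$ (via Theorem \ref{thm:islices:Intro} and Conjecture \ref{conj:TSTY:islices}) and the quantized iCoulomb branch furnish filtered quantizations of the same Poisson algebra $\bC[\overline{C}_\mu^\la]$, and then apply a rigidity argument. The iGKLO homomorphism $\Phi_\mu^\la$ lands in a ring of difference operators, which should be compared to the convolution-algebra realization of the quantized iCoulomb branch in localized $T$-equivariant homology. Once the leading symbols of generators are matched, as is built into the iGKLO construction, standard filtered-deformation techniques and uniqueness of quantizations of conical symplectic singularities in the spirit of Losev should yield the inclusion of $\Yt_\mu^\la$ into the quantized iCoulomb branch as the subalgebra generated by the matching flavor-equivariant classes.

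The principal obstacle is that $\iEVW$ is not of cotangent type for generic Satake diagrams, and the Coulomb branch formalism for general symplectic representations of reductive gauge groups is much less developed than in the cotangent case: flatness of the $\hbar$-deformation, normality, well-behaved monopole formulas, and a serviceable explicit collection of generators all need to be verified or adapted for $\iEVW$ before the comparison with $\overline{C}_\mu^\la$ can be made rigorous. A secondary obstacle is the reducibility and possible non-normality of $\iWbar_\mu^\la$, already visible in type AI via Theorem \ref{thm:sliceBCD:Intr} and the Kraft--Procesi classification: pinning down which components of $\iWbar_\mu^\la$ are targeted by a given iquiver datum will likely require enriching the parity condition \eqref{parity} and the bipartition \eqref{bipartite} with finer combinatorial labels, and may only be tractable one Satake type at a time.
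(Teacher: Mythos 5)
This statement is an open conjecture in the paper (Conjecture~\ref{conj:iCBr:islices}), not a theorem; the paper offers no proof, only supporting evidence. There is therefore no ``paper proof'' to compare your proposal against.

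That said, your proposal accurately reconstructs the supporting evidence the paper does give and the strategy the paper itself gestures at. The dimension count you invoke is exactly Remark~\ref{rem:SameDimension}: $\dim{}^\imath U_\mu^\la = 2\sum_{i\in\iI}\fkv_i$ equals twice the rank of $G^\imath(V^\imath)$, which is the expected dimension of $\mc M_C^\imath(V^\imath, W^\imath)$, and $\dim{}^\imath\bA^{|\la|} = \sum_{i\in\iI}\fkw_i$ equals the rank of the flavor group. Your suggestion that $\mc M_C^\imath(V^\imath,W^\imath)$ should arise by Dirac reduction of $\mc M_C(V,W)$ along $\sigma$ is essentially the paper's own Problem at the end of Section~\ref{sec:Coulomb}, which the authors raise as an open question rather than assert. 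And your honest list of obstacles --- the underdeveloped state of Coulomb branch theory for non-cotangent-type $\iEVW$ (flatness, normality, monopole formulas), and the reducibility / non-normality of $\iWbar_\mu^\la$ --- matches the concerns raised in Remarks~\ref{rem:physics} and \ref{rem:subtle}, where the authors caution that the conjecture ``as formulated is perhaps too general to take literally'' and point to ambiguities such as the bipartition \eqref{bipartite} and the $O$/$SO$ choice for component groups.

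One point to be careful about: the paper's remarks indicate that the conjecture may need refinement even at the level of its statement (for instance, the discrepancy with the Cabrera--Hanany--Zhong computations noted in Remark~\ref{rem:subtle}), so any attempted proof along your lines should be prepared to modify the target on the islice side, or enrich the combinatorial datum on the iquiver side, rather than prove the conjecture verbatim. Your proposal correctly anticipates this. In short: you have not proved the statement, but since nobody has, that is the expected outcome; your strategy is consistent with the authors' own and your identification of the gaps is accurate.
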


Here is some numerology: the rank vectors of the component groups of $G^\imath(V^\imath)$ and $G^\imath(W^\imath)$ are the dimension vectors $(\fkv_i)_{i\in\iI}$ and $(\fkw_i)_{i\in\iI}$ in \eqref{ell_theta}--\eqref{varsigma} which arise from truncated shifted iYangians and affine Grassmannian islices. In particular, the iCoulomb branch $\mathcal M_C^\imath(V^\imath,W^\imath)$ and the affine Grassmannian islice ${}^\imath\overline{\cW}_\mu^\la$ have the same dimension. We refer to Remarks \ref{rem:physics} and \ref{rem:subtle} for more discussions on related physics predictions and subtleties on the conjecture.  

\subsection{Related works and perspectives}

This paper provides a new framework where a family of algebras (shifted iYangians and TSTY's) associated with quasi-split Satake diagrams quantize Poisson varieties which are naturally constructed. The ordinary (truncated) shifted Yangians and affine Grassmannian slices can be viewed to be associated with Satake diagrams of diagonal type. These new algebras are expected to admit rich representation theories which will be very interesting to develop. On the other hand, it will also be exciting to see if the normalizations of (top-dimensional components of) affine Grassmannian islices $\iWbar_\mu^{\la}$ provide new symplectic singularities. For type AI, these are indeed well-studied symplectic singularities thanks to the identification with nilpotent Slodowy slices of type BCD in Theorem~ \ref{thm:sliceBCD:Intr}. 

There are many papers in the mathematical physics literature on Coulomb branches with type A or non-type A gauge groups, and their connections to nilpotent Slodowy slices of classical types; see \cite{GaioW09, CDT13,CHMZ15,HK16,CHZ17,CHananyK19} for samples and references therein. Finkelberg, Hanany and Nakajima \cite{FHN25} have also been working on ortho-symplectic Coulomb branches and nilpotent Slodowy slices of classical type among other topics; we hope our $\imath$-perspectives on shifted iYangians and affine Grassmannian islices can be complementary to theirs and those in the math physics literature. 

In \cite{SSX25}, the authors also formulated (a variant of) shifted iYangians of type AIII$_{2n}$ and defined a homomorphism from it to a Coulomb branch. Their paper provides a first example of Coulomb branches which relates precisely to shifted iYangians (through iGKLO like ours), supporting our general proposal that truncated shifted iYangians are intimately related to iCoulomb branches through iGKLO and affine Grassmannian islices. Note that the Coulomb branch of type AIII$_{2n}$ {\em loc. cit.} is the only one among all iCoulomb branches which is of cotangent type and has a purely type A gauge group. 

The $\imath$-fication process often leads to type BCD features. The icanonical bases arising from iquantum groups of quasi-split type AIII have played a fundamental role in Kazhdan-Lusztig theory of type BCD \cite{BW18KL}, and these (affine) iquantum groups admit a geometric realization via flag varieties of classical type in \cite{BKLW18, FLLLW, SuW24}; Theorem \ref{thm:sliceBCD:Intr} on nilpotent Slodowy slices is a new example. The root of such type BCD phenomenon can be traced down to the split and quasi-split rank one iquantum groups. The geometric realization of twisted Yangians in this paper, which applies to all (quasi-split) ADE type, appears in very different forms. In light of the conjectural iCoulomb branch connection, the $\imath$-fication manifests itself again in the component gauge groups being type (A)BCD. This should be contrasted with the fact that the gauge groups are always of type A for the quiver gauge theories giving rise to Nakajima quiver varieties as Higgs branches and the corresponding Coulomb branches \cite{Nak94, BFN19}. 

The present paper is also naturally related to integrable systems.  Indeed, the algebras $\Yt_\mu^\la$ come equipped with natural commutative subalgebras, defined in \S \ref{ssec:commsub}, which we expect to be quantum integrable systems. The classical limits of these subalgebras give integrable systems in many examples, and we conjecture that this is always the case.  In particular, in the setting of Theorem \ref{thm:sliceBCD:Intr} we obtain integrable systems on the varieties $\mathbb{O}_{\pi_1}^\epsilon \cap \mc S_{\pi_2}^\epsilon$.  We are unaware if these have been studied previously in general, though some cases are related to work of Harada \cite{Harada} who constructed integrable systems on generic coadjoint orbits for symplectic Lie algebras.

It is natural to expect a $q$-deformation of the main constructions in this paper. Drinfeld presentations of quasi-split affine iquantum groups have been constructed in \cite{LW21, mLWZ24}. We have been able to formulate the shifted affine iquantum groups of split and quasi-split types accordingly (compare \cite{FT19}), construct their iGKLO representations and thus define the truncated shifted affine iquantum groups. These are then expected to be related to the K-theory of affine Grassmannian islices and iCoulomb branches. 

The $\imath$-fication framework can be further enlarged. 
Some main constructions in this paper will be extended beyond quasi-split Satake diagrams in a sequel \cite{LWW}. In particular, we shall identify the affine Grassmannian islices of type AII and others with nilpotent Slodowy slices of classical types, complementary to the results in Section \ref{sec:slodowy}. 

%Many natural (algebraic or geometric) questions arise from the study of this paper. Here are some samples. 
%\begin{problem} \label{beyond}
%\begin{enumerate}
%  \item Develop systematically the basic structures of double affine iquantum groups and affine iYangians. Develop their shifted versions as well. 
% \item Establish (truncated) shifted iYangians of type BCFG as quantizations of affine Grassmannian islices of the corresponding type, and further connection to iCoulomb branches with symmetrizers. 
%  \item Develop the double affine Grassmannian islices. 
%  \item Determine some reasonable sufficient and/or  necessary conditions for the affine Grassmannian islices to be irreducible (resp. normal, and resp. symplectic singularities). 
%\end{enumerate}
%\end{problem}

The paper is organized as follows.
Section \ref{sec:S.T.Yangians} is mostly a summary of the prequel \cite{LWW25sty}, including (truncated) shifted iYangians $\Yt_\mu$ and iGKLO representations. The construction of filtrations on $\Yt_\mu$ in \S\ref{ssec:filtrations on stY} is new. 
In Section \ref{sec:tYdualityPrinciple}, we show that a version of twisted Yangians from quantum duality principle quantizes the Poisson loop symmetric space. 
In Section \ref{sec:stYloci}, we formulate Poisson subschemes $\cW_\mu$ of a loop group, and show that the shifted iYangian $\Yt_\mu$ quantizes $\cW_\mu$, using the filtrations from Section \ref{sec:S.T.Yangians}. 
In Section \ref{sec:islices}, we construct the affine Grassmannian slices $\iWbar_\mu^\la$, and supply strong evidence on the conjecture that the truncated shifted iYangians $\Yt_\mu^\la$ quantize a top-dimensional irreducible component of $\iWbar_\mu^\la$. 
In Section \ref{sec:slodowy}, specializing to type AI, we show that the affine Grassmannian slices $\iWbar_\mu^\la$ are isomorphic to explicit nilpotent Slodowy slices of type BCD.
In Section \ref{sec:Coulomb}, we formulate iCoulomb branches starting from data on type A quiver gauge theory together with an involution $\tau$. We speculate that the iCoulomb branch provides a normalization of a top-dimensional component of the affine Grassmannian islice $\iWbar_\mu^\la$, noting they have the same dimension.

\vspace{2mm}
\noindent {\bf Acknowledgement.}
K.L. and W.W. are partially supported by the NSF grant DMS-2401351. A.W.~is supported by an NSERC Discovery Grant.  W.W. thanks ICMS at Edinburgh for providing a stimulating atmosphere in August 2023 when some of the connections was conceived, and National University of Singapore (Department of Mathematics and IMS) for providing an excellent research environment and support at the final stage of this work. The main key results of this work were stated in W.W.'s conference talk at NCSU in October 2024 and this project was also announced in \cite{LPTTW25}. We thank Yaolong Shen, Changjian Su and Rui Xiong for communicating their work with us; we thank them, Gwyn Bellamy, Michael Finkelberg, Amihay Hanany, Pengcheng Li, Hiraku Nakajima, Eric Sommers, Ben Webster and Curtis Wendlandt for their many helpful discussions, comments and correspondences.

%\pagebreak

\section{Shifted iYangians and truncations}
\label{sec:S.T.Yangians}

In this section, we review from \cite{LWW25sty} shifted iYangians $\Yt_\mu$ associated to quasi-split ADE Satake diagrams $(\I,\tau)$ and even spherical coweights $\mu$ and their basic properties. We also recall the iGKLO representation of $\Yt_\mu$ which allows us to define the truncated shifted iYangians $\Yt_\mu^\la$. Section \ref{ssec:filtrations on stY} on filtrations of $\Yt_\mu$ is new, which is then used in \eqref{eq:quofilt}; the latter plays a crucial role in this paper. 

\subsection{Shifted iYangians of quasi-split ADE type}

Let $C=(c_{ij})_{i,j\in \I}$ be the Cartan matrix of type ADE, and let $\g$ be the corresponding simple Lie algebra. We fix a simple system $\{\alpha_i\mid i\in \I\}$ with corresponding set $\Delta^+$ of positive roots. Let $\tau$ be an involution of the  Dynkin diagram of $\g$, i.e., $c_{ij}=c_{\tau i,\tau j}$ such that $\tau^2=\mathrm{id}$; note that $\tau=\id$ is allowed. We refer to $(\I,\tau)$ as quasi-split Satake diagrams and call the Satake diagrams split if $\tau=\id$. The split Satake diagrams formally look identical to Dynkin diagrams, and the quasi-split Satake diagrams $(\I,\tau)$ with $\tau \neq \id$ can be found in Table \ref{tab:Satakediag}, where the index denotes the number of nodes in a given diagram. 

\begin{table}[H]  
\begin{center}
\centering
%\begin{tabular}
%{|m{5cm}<{\centering}|m{8cm}<{\centering}|}
%\hline
%Satake diagrams  \\
%\hline
%%
\setlength{\unitlength}{0.6mm}			\begin{picture}(70,35)(0,5)
    \put(-70,20){${\rm AIII}_{2r-1} \,(r\gge 1)$}
    \put(-70,-15){${\rm AIII}_{2r} \,(r\gge 1)$}
    \put(-70,-50){${\rm DI}_{r} \,(r\gge 4)$}
    \put(-70,-85){${\rm EII}_{6} $}
   
				\put(0,10){$\circ$}
				\put(0,30){$\circ$}
				\put(50,10){$\circ$}
				\put(50,30){$\circ$}
				\put(72,10){$\circ$}
				\put(72,30){$\circ$}
				\put(92,20){$\circ$}
%				\put(-5,5.5){$2r-1$}
%				\put(-2,34){${1}$}
%				\put(47,6){\small $r+2$}
%				\put(47,34){\small $r-2$}
%				\put(69,6){\small $r+1$}
%				\put(69,34){\small $r-1$}
%				\put(92,16){\small $r$}

                \put(3,11.5){\line(1,0){16}}
				\put(3,31.5){\line(1,0){16}}
				\put(23,10){$\cdots$}
				\put(23,30){$\cdots$}
				\put(33.5,11.5){\line(1,0){16}}
				\put(33.5,31.5){\line(1,0){16}}
				\put(53,11.5){\line(1,0){18.5}}
				\put(53,31.5){\line(1,0){18.5}}
				
				\put(75,12){\line(2,1){17}}
				\put(75,31){\line(2,-1){17}}

				\color{red}
                \put(-7,20){$\tau$}
				\qbezier(0,13.5)(-4,21.5)(0,29.5)
				\put(-0.25,14){\vector(1,-2){0.5}}
				\put(-0.25,29){\vector(1,2){0.5}}
				
				\qbezier(50,13.5)(46,21.5)(50,29.5)
				\put(49.75,14){\vector(1,-2){0.5}}
				\put(49.75,29){\vector(1,2){0.5}}

				\qbezier(72,13.5)(68,21.5)(72,29.5)
				\put(71.75,14){\vector(1,-2){0.5}}
				\put(71.75,29){\vector(1,2){0.5}}
			\end{picture}
            \\
%\vspace{2mm}
%\hline
\setlength{\unitlength}{0.6mm}			\begin{picture}(70,35)(0,5)

                  %  \put(-70,20){${\rm AIII}_{2r-1}^{(\tau)}\,(r\gge 2)$}
   
				\put(0,10){$\circ$}
				\put(0,30){$\circ$}
				\put(50,10){$\circ$}
				\put(50,30){$\circ$}
				\put(72,10){$\circ$}
				\put(72,30){$\circ$}
%				\put(-4,5){$2r$}
%				\put(-2,34){${1}$}
%				\put(47,6){\small $r+2$}
%				\put(47,34){\small $r-1$}
%				\put(69,6){\small $r+1$}
%				\put(69,34){\small $r$}
				
				\put(3,11.5){\line(1,0){16}}
				\put(3,31.5){\line(1,0){16}}
				\put(23,10){$\cdots$}
				\put(23,30){$\cdots$}
				\put(33.5,11.5){\line(1,0){16}}
				\put(33.5,31.5){\line(1,0){16}}
				\put(53,11.5){\line(1,0){18.5}}
				\put(53,31.5){\line(1,0){18.5}}
				
				\put(73.5,13.6){\line(0,1){16}}
				
				\color{red}
                \put(-7,20){$\tau$}
				\qbezier(0,13.5)(-4,21.5)(0,29.5)
				\put(-0.25,14){\vector(1,-2){0.5}}
				\put(-0.25,29){\vector(1,2){0.5}}
				
				\qbezier(50,13.5)(46,21.5)(50,29.5)
				\put(49.75,14){\vector(1,-2){0.5}}
				\put(49.75,29){\vector(1,2){0.5}}

				\qbezier(76,13.5)(80,21.5)(76,29.5)
				\put(75.85,14){\vector(-1,-2){0.5}}
				\put(75.85,29){\vector(-1,2){0.5}}
			\end{picture}
\\
%\hline
\setlength{\unitlength}{0.6mm}	\begin{picture}(40,35)(20,-15)
  % \put(-48,0){${\rm DI}_{r}^{(\tau)}\,(r\gge 4)$}
				\put(0,-1){$\circ$}
%				\put(0,-6){\small$1$}				
				\put(3,0){\line(1,0){16.5}}
				\put(20,-1){$\circ$}
%				\put(20,-6){\small$2$}
				\put(64,-1){$\circ$}
%				\put(56,-6){\small$r-2$}
				\put(84,-10){$\circ$}
%				\put(80,-13){\small${r-1}$}
				\put(84,9.5){$\circ$}
%				\put(84,13.5){\small${r}$}

				\put(38,0){\line(-1,0){15.5}}
				\put(64,0){\line(-1,0){15}}
				
				\put(40,-1){$\cdots$}
				
				\put(83.5,9.5){\line(-2,-1){16.5}}
				\put(83.5,-8.5){\line(-2,1){16.5}}
				
				\put(12,-20.5){\begin{picture}(100,40)	\color{red}
                \put(79,20){$\tau$}
						\qbezier(75,13.5)(79,21.5)(75,29.5)
						\put(75.25,14){\vector(-1,-2){0.5}}
						\put(75.25,29){\vector(-1,2){0.5}}
				\end{picture}}
			\end{picture}
        \\
%\hline
\setlength{\unitlength}{0.6mm}		\begin{picture}(70,35)(20,7)
          % \put(-32,20){${\rm EI}_{6}^{(\tau)}$}
%				\put(97,36){\small${6}$}
%				\put(75,36){\small${5}$}
%				\put(97,6.5){\small${1}$}
%				\put(75,6.5){\small${2}$}
%				\put(33,16){\small $4$}	
%				\put(55,16){\small $3$}	
                \put(10,35){\rotatebox[origin=c]{180}{\begin{picture}(100,10)
							
							\put(10,10){$\circ$}
							
							\put(32,10){$\circ$}
							
							\put(10,30){$\circ$}
							
							\put(32,30){$\circ$}
							%\put(96,20){$\circ$}

							%\put(76.7,21.2){\line(1,0){19}}
							
							\put(31.5,11){\line(-1,0){19}}
							\put(31.5,31){\line(-1,0){19}}
							
							\put(52,22){\line(-2,1){17.5}}
							\put(52,20){\line(-2,-1){17.5}}
							
							\put(54.7,21.2){\line(1,0){19}}

							\put(52,20){$\circ$}
							
							\put(74,20){$\circ$}
					\end{picture}}

					\put(-37,-32.5){\begin{picture}(100,40)	\color{red}
							\qbezier(5,13.5)(9,21.5)(5,29.5)
							\put(5.25,14){\vector(-1,-2){0.5}}
							\put(5.25,29){\vector(-1,2){0.5}}
					\end{picture}}
					\put(-15,-32.5){\begin{picture}(100,40)	\color{red}
                    \put(9,20){$\tau$}
							\qbezier(5,13.5)(9,21.5)(5,29.5)
							\put(5.25,14){\vector(-1,-2){0.5}}
							\put(5.25,29){\vector(-1,2){0.5}}
					\end{picture}}
				}
				
			\end{picture}
%\\\hline
%\end{tabular} 
\end{center}
\vspace{0.5cm}
\caption{Quasi-split Satake diagrams with $\tau \neq {\rm Id}$}
\label{tab:Satakediag}
\end{table}

Denote $\I_0$ the set of fixed points of $\tau$ in $\I$, i.e., $\I_0 =\{i\in \I \mid \tau i=i\}$. Let $\I_1$ be a set of representatives for $\tau$-orbits in $\I$ of length 2 and define $\I_{-1}=\tau \I_1$; $\I_1$ can be conveniently chosen so that it is underlying Dynkin subdiagram is connected. Then $\I =\I_1 \sqcup \I_0 \sqcup \I_{-1}$. Set \begin{align}  \label{eq:iI}
\iI=\I_1 \sqcup \I_0.
\end{align}

The involution $\tau$ naturally acts on the (co)root and (co)weight lattices of $\g$. A weight/coweight in this paper is always meant to be integral.

\begin{dfn} \label{def:spherical}
A coweight $\mu$ is called {\em spherical} if $\mu =\mu_1 +\tau \mu_1$ for some coweight $\mu_1$. A coweight $\mu$ is \emph{even} if $\mu = 2\mu'$ for some coweight $\mu'$, i.e., $\langle \mu, \alpha_i\rangle \in 2\bZ$ for all $i\in \I$. 
\end{dfn}

We shall assume that a shift coweight $\mu$ to be even spherical in the context of shifted iYangians below, and remarkably, the same condition will be needed in consideration of fixed point loci of affine Grassmannian slices later on. 
Denote $[A,B]_+ =AB+BA$. 

\begin{dfn} \cite[Definition 2.2]{LWW25sty}
\label{def:qsplit}
Let $\mu$ be an even spherical coweight. The \textit{shifted iYangians $\Yt_{\mu}:=\Yt_\mu(\g)$ of quasi-split type} is the $\bC$-algebra with generators $H_{i}^{(r)}$, $ B_{i}^{(s)}$, for $i\in \I$, $r\in\bZ$, and $s\in\bZ_{>0}$, subject to the following relations, for $r,r_1,r_2\in \bZ$ and $s,s_1,s_2 \in \bZ_{>0}$: 
\begin{align}
& H_i^{(r)}=0~\text{ for }~r<-\langle \mu,\alpha_i\rangle,\quad H_{i}^{(-\langle \mu,\alpha_i\rangle)}=1,\label{def0}\\
&[H_i^{(r_1)},H_j^{(r_2)}]=0,\label{hhIII}\\
&[H_i^{(r+2)},B_j^{(s)}]-[H_i^{(r)},B_j^{(s+2)}]=\frac{c_{ij}-c_{\tau i,j}}{2} [H_{i}^{(r+1)},B_j^{(s)}]_+\label{hbNqs} \\
& \hskip4.92cm +\frac{c_{ij}+c_{\tau i,j}}{2} [H_{i}^{(r)},B_j^{(s+1)}]_+ +\frac{c_{ij}c_{\tau i,j}}{4}[H_i^{(r)},B_j^{(s)}], \nonumber\\
&[B_i^{(s_1+1)},B_j^{(s_2)}]-[B_i^{(s_1)},B_j^{(s_2+1)}]=\frac{c_{ij}}2 [B_i^{(s_1)},B_j^{(s_2)}]_+ + 2\delta_{\tau i,j}(-1)^{s_1}H_j^{(s_1+s_2)},\label{bbNqs}
\end{align}
and the Serre relations: for $c_{ij}=0$,
\beq\label{bbtau}
[B_i^{(s_1)},B_{j}^{(s_2)}]=(-1)^{s_1-1}\delta_{\tau i,j}H_{j}^{(s_1+s_2-1)},\\
\eeq
and for $c_{ij}=-1$, $i\ne \tau i\ne j$,
\beq\label{eq:Serre-ord}
\mathrm{Sym}_{s_1,s_2}\big[B_{i}^{(s_1)},[B_{i}^{(s_2)},B_{j}^{(s)}]\big]=0,
\eeq
and for $c_{ij}=-1$, $i=\tau i$,
\beq\label{SerreIII}
\begin{split}
\mathrm{Sym}_{s_1,s_2}\big[B_{i}^{(s_1)},[B_{i}^{(s_2)},B_{j}^{(s)}]\big] 
=
(-1)^{s_1-1}[H_{i}^{(s_1+s_2)},B_j^{(s-1)}],
\end{split}
\eeq
and for $c_{i,\tau i}=-1$,
\beq\label{SerreIII2}
\begin{split}
\mathrm{Sym}_{s_1,s_2}\big[B_{i}^{(s_1)},[B_{i}^{(s_2)},B_{\tau i}^{(s)}]\big]
=
4\,\mathrm{Sym}_{s_1,s_2}(-1)^{s_1-1}\sum_{p\gge 0} 3^{-p-1}[B_{i}^{(s_2+p)},H_{\tau i}^{(s_1+s-p-1)}].
\end{split}
\eeq
Here, if $c_{ij}=-1$ and $s=1$, we use the following convention in \eqref{SerreIII}:
\begin{align*}
[H_{i}^{(r)},B_j^{(s-1)}]
=\sum_{p\gge 0}2^{-2p}\big([H_{i}^{(r-2p-2)},B_{j}^{(s+1)}]-
[H_{i}^{(r-2p-2)},B_{j}^{(s)}]_+\big),
\end{align*}
which follows from \eqref{hbNqs} if $s>1$, cf. \cite[Lem. 4.11]{LWZ25}.
\end{dfn}
In particular, if $\tau=\mathrm{id}$, we call $\Yt_\mu$  the \textit{shifted iYangian of split type}.

Note that $\Yt_\mu$ has a generating set
\beq\label{rvset3}
\big\{B_\beta^{(r)}| \beta\in\Delta^+, r>0\big\} \cup\big\{H_i^{(2p)}| i\in \I_0, 2p>-\langle \mu,\alpha_i\rangle\big\} \cup\big\{H_i^{(p)}| i\in \I_1, p>-\langle \mu,\alpha_i\rangle\big\}.    
\eeq

Let $Q$ be the root lattice for $\g$. Consider the abelian group (called the $\imath$root lattice)
    \begin{align}  \label{Qtau}
    \Qt = Q / \langle \beta+\tau \beta \mid \beta \in Q\rangle.
    \end{align}
See \cite[(3.3)]{BW18iCB} for a similar notion of $\imath$weight lattice.
For $\beta \in Q$, denote its image by $\overline{\beta} \in \Qt$. The algebra $\Yt_\mu$ admits a grading by $\Qt$ defined by assigning degrees $\deg H_i^{(r)} = \overline{0}$ and $\deg B_i^{(s)} = \overline{\alpha_i}$.  Indeed, it is not hard to see that the defining relations of $\Yt_\mu$ are all homogeneous.  Let $T = \operatorname{Spec} \bC[Q]$ denote the torus whose character lattice is $Q$.  Then the grading on $\Yt_\mu$ by $\Qt$ corresponds to an action of a subgroup $\Tt $ of $T$:
    \begin{align} \label{eq:def of Tt}
    \Tt = \{ t \in T \mid \tau(t) = t^{-1}\}  = \operatorname{Spec} \bC[\Qt].
    \end{align}
Since $\tau$ permutes the basis $\{\alpha_i\}_{i \in \I}$ for  $Q$, we can identify $\Qt \cong \bZ^{\I_1} \times (\bZ/2\bZ)^{\I_0}$, where the $\bZ^{\I_1}$-factor has basis $\{\overline{\alpha_i}\}_{i \in \I_1}$ while $(\bZ/2\bZ)^{\I_0}$ is generated by  $\{\overline{\alpha_i} \}_{i \in \I_0}$.  If we quotient $\Qt$ by its torsion subgroup $(\bZ/2\bZ)^{\I_0}$, we obtain an induced grading on $\Yt_\mu$ by $\bZ^{\I_1}$. This corresponds to the adjoint action of the elements  $\{H_i^{(-\langle \mu, \alpha_i\rangle+1)} 
%= -H_{\tau i}^{(-\langle \mu, \alpha_{\tau i}\rangle+1)}
\mid i \in \I_{1}\}$.

\begin{lem} \cite[Lemma 2.7]{LWW25sty}
\label{lem:shiftqs}
Let $\mu,\nu$ be coweights such that both $\mu$ and $\nu+\tau \nu$ are even spherical. Suppose further that $\nu$ is anti-dominant. Then there exists a homomorphism 
\begin{align}
    \label{shift_hom}
    \iota^\tau_{\mu,\nu}:\Yt_\mu \longrightarrow \Yt_{\mu+\nu+\tau \nu}
\end{align}
defined by
\[
H_i^{(r)}\mapsto  H_i^{(r- \langle\nu+\tau \nu,\alpha_i\rangle )}, \qquad B_i^{(s)}\mapsto \begin{cases}B_i^{(s-\langle\nu,\alpha_i\rangle)}, &\text{ if }\langle\nu,\alpha_i\rangle \text{ is even},\\
\sqrt{-1}B_i^{(s-\langle\nu,\alpha_i\rangle)}, &\text{ if }\langle\nu,\alpha_i\rangle \text{ is odd},
\end{cases}
\]
for $r\in\bZ$ and $s\in \bZ_{>0}$.
\end{lem}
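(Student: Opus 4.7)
The plan is to verify directly that the map $\iota_{\mu,\nu}^\tau$ respects each defining relation of $\Yt_\mu$ from Definition \ref{def:qsplit}. Write $a_i := \langle\nu,\alpha_i\rangle$ and $a'_i := \langle\nu+\tau\nu,\alpha_i\rangle = a_i + a_{\tau i}$; by hypothesis $a_i \lle 0$ for every $i$, and $a_i \equiv a_{\tau i} \pmod 2$ since $\nu+\tau\nu$ is even. Set $\epsilon_i = 1$ if $a_i$ is even and $\epsilon_i = \sqrt{-1}$ if $a_i$ is odd, so that the map is $H_i^{(r)} \mapsto H_i^{(r-a'_i)}$ and $B_i^{(s)} \mapsto \epsilon_i B_i^{(s-a_i)}$.

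The vanishing and normalization in \eqref{def0} follow at once from $-\langle\mu,\alpha_i\rangle - a'_i = -\langle\mu+\nu+\tau\nu,\alpha_i\rangle$, and \eqref{hhIII} is automatic. For the remaining relations the strategy is uniform: each is homogeneous in its $(r,s)$-indices, so after applying the map both sides become an instance of the corresponding relation in $\Yt_{\mu+\nu+\tau\nu}$ with indices uniformly shifted (by $-a'_i$ on $H_i^{(\cdot)}$ and by $-a_j$ on $B_j^{(\cdot)}$); what remains is to balance the $\epsilon$-scalars. Relation \eqref{hbNqs} is linear in $B_j^{(\cdot)}$, so a single $\epsilon_j$ pulls out of both sides. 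For \eqref{bbNqs} and \eqref{bbtau}, the bracket $[B_i,B_j]$ contributes $\epsilon_i\epsilon_j$, while the $H$-term carries a sign $(-1)^{s_1}$ that becomes $(-1)^{s_1 - a_i}$ in the target relation; for the only nonzero $H$-contribution (the case $j = \tau i$) one needs $\epsilon_i\epsilon_j(-1)^{-a_i} = 1$, which is trivial when $a_i$ is even and equals $(\sqrt{-1})^2 \cdot (-1) = 1$ when $a_i$ is odd, using $a_i \equiv a_{\tau i}\pmod 2$. This is precisely the place where the hypothesis that $\nu+\tau\nu$ is even is essential. The Serre relations \eqref{eq:Serre-ord}, \eqref{SerreIII}, and \eqref{SerreIII2} are handled identically, reducing to $\epsilon_i^2(-1)^{-a_i} = 1$ (which holds unconditionally) together with the $j=\tau i$ check already made.

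The subtlest point, and the one I expect to require the most care, is the convention invoked in \eqref{SerreIII} when $s=1$: there $B_j^{(s-1)} = B_j^{(0)}$ is not a generator, and the symbol $[H_i^{(r)}, B_j^{(0)}]$ is defined by the displayed formal series in $B_j^{(1)}$ and $B_j^{(2)}$. Under the shift the target index becomes $s' = 1 - a_j$: when $a_j < 0$ one has $s' \gge 2$, so the convention is not needed in the target and the identity is a genuine consequence of \eqref{hbNqs}; when $a_j = 0$ one has $\epsilon_j = 1$ and the convention must be re-invoked in the target, but a term-by-term comparison of the two series reduces once more to the $\epsilon$-bookkeeping performed above. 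In either subcase the shifted relation holds, completing the proof that $\iota_{\mu,\nu}^\tau$ is a well-defined algebra homomorphism.
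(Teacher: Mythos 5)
Your proposal is correct, and since this lemma is imported from the prequel \cite{LWW25sty} (the present paper does not reprove it), a direct verification of the defining relations is exactly what is called for. Your $\epsilon$-bookkeeping is accurate throughout: in each case where $j=\tau i$ the hypothesis $a_i\equiv a_{\tau i}\pmod 2$ forces $\epsilon_i\epsilon_{\tau i}=\epsilon_i^2$, and the resulting identity $\epsilon_i^2(-1)^{-a_i}=1$ holds unconditionally; and your split of the $s=1$ convention into the subcases $a_j=0$ (re-invoke the target's convention) versus $a_j<0$ (the target expression is a genuine commutator, and the series identity is then a consequence of \eqref{hbNqs}) correctly handles the one delicate point.
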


\subsection{PBW basis}
\label{ssec:PBW for sty}

Consider a positive root $\beta$ and fix an arbitrary ordered decomposition $\beta=\alpha_{i_1}+\ldots+\alpha_{i_\ell}$ into simple roots such that the elements $[e_{i_1},[e_{i_2},\cdots[e_{i_{\ell-1}},e_{i_\ell}]\cdots]]$ is a nonzero element in the root subspace $\g_\beta$. For any $r>0$, we define a root vector in $\mc Y_\mu(\g)$
\be
B_\beta^{(r)}:=\Big[B_{i_1}^{(r)},\big[B_{i_2}^{(1)},\cdots[B_{i_{\ell-1}}^{(1)},B_{i_\ell}^{(1)}]\cdots\big]\Big].
\ee

\begin{thm} \cite[Theorem 2.16]{LWW25sty}  
\label{thm:pbw_arb}
Let $\mu$ be an arbitrary even spherical coweight. 
\begin{enumerate}
\item 
The set of ordered monomials in the root vectors \eqref{rvset3} forms a basis for $\Yt_\mu$. 
\item 
For any anti-dominant $\nu$ such that $\nu+\tau \nu$ is even spherical, the shift homomorphism $\iota^\tau_{\mu,\nu}:\Yt_\mu\to \Yt_{\mu+\nu+\tau \nu}$  in \eqref{shift_hom} is injective.
\end{enumerate}
\end{thm}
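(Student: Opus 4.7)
The plan is to establish Part (1) via an associated graded argument and then deduce Part (2) as a formal consequence.

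For spanning, I would use the filtration $F$ from \S \ref{ssec:filtrations on stY} together with the defining relations \eqref{hhIII}--\eqref{SerreIII2} to show, by a standard Bergman diamond-lemma induction, that ordered monomials in \eqref{rvset3} span $\Yt_\mu$. Each relation serves as a straightening rule modulo lower filtration degree: \eqref{hhIII} for $HH$, \eqref{hbNqs} for $HB$, \eqref{bbNqs} for reordering pairs of simple root vectors, and the Serre relations \eqref{bbtau}--\eqref{SerreIII2} for resolving cubic obstructions, especially at $\tau$-fixed indices. Because each straightening step strictly decreases either the number of inversions or the filtration degree, the induction terminates on ordered monomials in \eqref{rvset3}.

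For linear independence, the strategy is to pass to $\gr^F \Yt_\mu$ and show it is a polynomial algebra on the symbols of the root vectors. A careful degree count through the defining relations confirms that every generator-generator commutator in $\Yt_\mu$ has filtration degree strictly less than the sum of the degrees of its factors, so $\gr^F \Yt_\mu$ is commutative and generated by the symbols of \eqref{rvset3}. To upgrade ``generated'' to ``freely generated,'' I would first treat the case $\mu = 0$: here $\Yt_0$ is the (unshifted) iYangian in Drinfeld presentation, for which the PBW theorem was established in \cite{LWZ25} (split type) and \cite{LZ24} (quasi-split type). For general even spherical $\mu$, the cleanest route is to exhibit a sufficiently rich representation on which ordered monomials act as linearly independent operators: the natural candidate is the iGKLO representation $\Phi_\mu^\la$ from \cite{LWW25sty}, taken in a stable range $\la \gg 0$ (or an inverse limit thereof) so that the induced map on associated gradeds is injective on each fixed Kazhdan degree. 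A standard Rees/filtered argument then promotes linear independence in $\gr^F \Yt_\mu$ to PBW for $\Yt_\mu$ itself. Part (2) follows immediately: the shift homomorphism $\iota^\tau_{\mu,\nu}$ is filtered and sends each ordered monomial basis element of $\Yt_\mu$ to a nonzero scalar multiple of an ordered monomial in $\Yt_{\mu+\nu+\tau\nu}$ (with shifted superscripts), and these images are linearly independent in the target by Part (1).

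The main obstacle is upgrading the commutative graded to a genuine polynomial algebra for arbitrary even spherical $\mu$. The Serre relation \eqref{SerreIII2}, with its infinite sum $\sum_{p \ge 0} 3^{-p-1}[B_i^{(s_2+p)}, H_{\tau i}^{(s_1+s-p-1)}]$, couples root vectors to higher Cartan generators in a manner that could in principle introduce hidden relations in $\gr^F \Yt_\mu$. The critical input, which must be secured independently of any graded-commutative argument to avoid circularity with Theorem \ref{thm:iWmuPoisson:Intr}, is a faithfulness statement (even in a weak stable form) for the iGKLO representation, ensuring that ordered monomials cannot collectively collapse under any combination of the Serre relations.
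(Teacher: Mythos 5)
The theorem you are asked to prove is only cited in this paper (it is \cite[Theorem 2.16]{LWW25sty}), but the prequel's strategy is legible from how it is used here. The proof of Proposition~\ref{prop: equal filtrations} explicitly runs the three-step template of \cite{FKPRW,KPW22}: handle $\mu=0$ first; handle antidominant $\mu$ via the algebra embedding $\wY \hookrightarrow \Yt_0 \otimes \bC[\xi_i \mid i\in\I]$ from \cite[Lemma~2.18]{LWW25sty}; then pass to general even spherical $\mu$ through an injective shift to an antidominant target. Your spanning argument by straightening matches the prequel's \cite[Proposition~2.13]{LWW25sty} (compare the proof of Lemma~\ref{lem:iWmu_pres} here), but your route to linear independence for arbitrary $\mu$ is genuinely different: you replace the antidominant-embedding-plus-shift step with a faithfulness claim for the iGKLO representations in a ``stable range $\la \gg 0$.''

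That step is where the proposal does not close, and you flag the reason yourself. Each $\Phi_\mu^\la$ is by construction a surjection onto a proper quotient $\Yt_\mu^\la$, so injectivity fails for any single $\la$; no stable faithfulness as $\la\to\infty$ is established in \cite{LWW25sty} or here, and making such a statement precise is precisely the content of the open Conjecture~\ref{conj:TSTY:islices} (equality of the quotient and subspace filtrations on $\Yt_\mu^\la$). There is also a practical constraint you do not address: $\Phi_\mu^\la$ is only built under the parity condition \eqref{parity}, so the admissible $\la$'s form a proper subset and one must argue cofinality. In short, securing iGKLO faithfulness independently of PBW is exactly the circularity you worry about, and there is no known way around it. The antidominant-plus-shift route sidesteps this entirely: linear independence for antidominant $\mu$ is inherited from the unshifted iYangian (where PBW is known from \cite{LWZ25,LZ24}) through an explicit, manifestly injective embedding, and Part~(2) then comes for free. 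One small further caveat on your Part~(2) deduction: the shift $\iota^\tau_{\mu,\nu}$ does not literally carry root vectors $B_\beta^{(r)}$ of $\Yt_\mu$ to root vectors of the target (the inner factors $B_{i_k}^{(1)}$ shift), so one must argue linear independence of the images via associated graded (top degree) rather than by pointing to the target's PBW basis directly — and, more importantly, the standard bootstrap only needs Part~(1) for the antidominant target, not for general $\mu$, which is what breaks the circularity you would otherwise be running.
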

 
\subsection{Filtrations on shifted iYangians}
\label{ssec:filtrations on stY}

Let us define filtrations on shifted iYangians $\Yt_\mu$ via the PBW bases introduced in Theorem \ref{thm:pbw_arb}. (These are analogous to the filtrations $F_{\mu_1,\mu_2}^\bullet Y_\mu$ on shifted Yangians \cite[\S 5.4]{FKPRW} discussed just before Proposition \ref{prop: Wmu Poisson gens}.)

Choose any decomposition $\mu = \mu_1 + \tau \mu_1,$ where $\mu_1$ is a coweight. Then we may define an increasing filtration $F_{\mu_1}^\bullet \mc \Yt_\mu$ on $ \Yt_\mu$ by defining the degrees of PBW generators as follows:
\begin{equation}
    \label{eq: grading on iYmu}
    \deg H_i^{(r)} = r + \langle \mu, \alpha_i\rangle, \qquad \deg B_\beta^{(s)} = s + \langle \mu_1, \beta \rangle.
\end{equation}
That is, for each $k\in \bZ$ we define the $k$th filtered piece $F_{\mu_1}^k (\Yt_\mu)$ to be the span of all ordered monomials in the PBW generators whose total degree is at most $k$. This defines a separated and exhaustive filtration on the vector space $\Yt_\mu$, i.e.,~$\bigcap_k F_{\mu_1}^k(\Yt_\mu) = 0$ and $\bigcup_k F_{\mu_1}^k(\Yt_\mu) = \Yt_\mu$.  The following nontrivial claim will be established   below in Proposition \ref{prop: equal filtrations}:
\begin{prop}
    \label{prop:algfilt}
    $F_{\mu_1}^\bullet \Yt_\mu$ is an algebra filtration, is independent of the choice of PBW basis, and the associated graded algebra $\gr^{F_{\mu_1}^\bullet} \Yt_\mu$ is commutative.
\end{prop}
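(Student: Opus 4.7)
The plan is to establish all three claims simultaneously by proving the key technical fact that for any two PBW generators $x,y$ of $\Yt_\mu$ the commutator $[x,y]$ lies in strictly lower filtration degree than $\deg x+\deg y$. Once this is in hand, the filtration is an algebra filtration because products of ordered monomials reorder into ordered PBW monomials with only lower-degree corrections; the associated graded $\gr^{F_{\mu_1}^\bullet}\Yt_\mu$ is commutative directly; and independence of the PBW basis choice follows because any two PBW generators (corresponding, e.g., to different ordered decompositions of a non-simple positive root $\beta$) differ by strictly lower-degree terms, leaving the filtered pieces unaltered.

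First I would verify the degree-consistency of each defining relation \eqref{hhIII}--\eqref{SerreIII2}: every monomial on the right-hand side should have degree at most $\deg x+\deg y-1$. The key computation is the inhomogeneous term $2\delta_{\tau i,j}(-1)^{s_1}H_j^{(s_1+s_2)}$ in \eqref{bbNqs}: this has degree $s_1+s_2+\langle\mu,\alpha_j\rangle$, and using $\mu=\mu_1+\tau\mu_1$, the $\tau$-invariance of $\mu$, and $j=\tau i$, one computes $\langle\mu,\alpha_j\rangle=\langle\mu_1,\alpha_i\rangle+\langle\mu_1,\alpha_j\rangle$, so this degree matches that of the anticommutator $[B_i^{(s_1)},B_j^{(s_2)}]_+$ and is strictly less than $\deg B_i^{(s_1+1)}+\deg B_j^{(s_2)}$. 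Similar parity-matching arguments handle \eqref{hbNqs}, \eqref{bbtau}, and the inhomogeneous Serre relations \eqref{SerreIII} and \eqref{SerreIII2}.

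Next I would induct on $r\geq -\langle\mu,\alpha_i\rangle$ to bound $\deg[H_i^{(r)},B_j^{(s)}]$. The base cases are $r=-\langle\mu,\alpha_i\rangle$ (where $H_i^{(r)}=1$ by \eqref{def0}) and $r=-\langle\mu,\alpha_i\rangle+1$: applying \eqref{hbNqs} with $r$ shifted to $-\langle\mu,\alpha_i\rangle-1$ and invoking \eqref{def0} yields $[H_i^{(-\langle\mu,\alpha_i\rangle+1)},B_j^{(s)}]=(c_{ij}-c_{\tau i,j})B_j^{(s)}$, a term of the desired degree. The inductive step rewrites \eqref{hbNqs} as $[H_i^{(r+2)},B_j^{(s)}]=[H_i^{(r)},B_j^{(s+2)}]+(\text{lower degree})$ and applies the induction hypothesis to the first term. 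An analogous induction using \eqref{bbNqs} bounds $[B_i^{(s_1)},B_j^{(s_2)}]$, and the Serre relations together with Jacobi identities then bound the iterated commutators defining $B_\beta^{(r)}$ for non-simple $\beta$.

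The main obstacle I anticipate is dealing with the infinite sum on the right-hand side of \eqref{SerreIII2} and the convention at $s=1$ in \eqref{SerreIII}, which require careful bookkeeping to verify that every emerging term has the expected lower degree. A related subtlety is that the root vector $B_\beta^{(r)}$ depends a priori on the chosen ordered decomposition $\beta=\alpha_{i_1}+\cdots+\alpha_{i_\ell}$; one needs to check that different choices differ by terms of strictly lower filtration degree, which follows inductively from the same estimates once the Serre relations are controlled.
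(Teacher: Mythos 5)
Your route is genuinely different from the paper's, and it has a gap worth naming.

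The paper never verifies the filtration properties relation-by-relation. Instead it introduces a second filtration $G_{\mu_1}^\bullet \Yt_\mu$, defined as the quotient of the standard filtration on the enveloping algebra $U(L)$ of the free Lie algebra $L$ on the generators $H_i^{(r)},B_i^{(s)}$ (with degrees shifted by $-1$ so that $L_n$ sits in filtered degree $n+1$). A quotient filtration of this form is \emph{automatically} an algebra filtration with commutative associated graded, and is manifestly independent of any PBW choice. The whole content is then concentrated in Proposition \ref{prop: equal filtrations}, the statement $F_{\mu_1}^\bullet = G_{\mu_1}^\bullet$, which is proved by a dimension count: the inclusion $F^k\subseteq G^k$ is tautological, and the reverse is forced because $\gr^{G^\bullet_{\mu_1}}\Yt_\mu$ is a quotient of the Poisson algebra $\yt_\mu\cong\bC[\iW_\mu]$ (Lemmas \ref{lem:iWmuPBW}--\ref{lem:iWmu_pres}), whose PBW monomials match those defining $F^\bullet_{\mu_1}$. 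The case $\mu=0$ is settled by comparing dimensions directly; antidominant $\mu$ uses the embedding ${}^\imath\wtl\Y\hookrightarrow\Yt_0\otimes\bC[\xi_i]$; and general $\mu$ follows via shift homomorphisms and Lemma \ref{lem:strictlyfiltered}. What this buys is that the ungraded PBW theorem (Theorem \ref{thm:pbw_arb}, quoted from the prequel) and the presentation of $\bC[\iW_\mu]$ do all the heavy lifting; no straightening argument is ever run.

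Your direct approach --- show that commutators of PBW generators drop filtration degree, then straighten arbitrary monomials --- is the ``graded PBW'' strategy, and it is not wrong in spirit, but as written it has a circularity you have not resolved. To assert that $[x,y]$ ``lies in strictly lower filtration degree'' when $F^\bullet_{\mu_1}$ is \emph{defined} as the span of ordered PBW monomials already presupposes that you can re-express $[x,y]$ in PBW form while controlling degree. The right-hand sides of the relations \eqref{hbNqs}, \eqref{SerreIII}, \eqref{SerreIII2} contain unordered and anticommuted products, and the Serre relations in particular only control \emph{symmetrized} double brackets, not the individual nested commutators appearing in the definition of $B_\beta^{(r)}$ for compound $\beta$. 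So claims (i) commutators of PBW generators land in strictly lower degree, (ii) straightening terminates and respects degree, and (iii) different nested-commutator decompositions of the same $B_\beta^{(r)}$ agree modulo lower-order terms, are mutually dependent and need a careful simultaneous induction on filtration degree together with a secondary statistic (e.g.~number of inversions), not just an induction on the superscript $r$. You flag this as the ``main obstacle,'' but resolving it in detail essentially amounts to re-running the prequel's PBW proof in a filtered setting --- at which point the paper's dimension-count shortcut is both shorter and less error-prone. Your degree check on the inhomogeneous term $2\delta_{\tau i,j}(-1)^{s_1}H_j^{(s_1+s_2)}$ is correct and is indeed the key place where the hypothesis $\mu=\mu_1+\tau\mu_1$ enters; but getting the rest of the bookkeeping airtight, especially the $s=1$ convention under \eqref{SerreIII} and the truncation of the geometric series in \eqref{SerreIII2}, would take substantially more work than you sketch.
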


Since these filtrations are defined in terms of PBW basis vectors,  we note a simple consequence of Theorem \ref{thm:pbw_arb}.
\begin{lem}
    \label{lem:strictlyfiltered}
    Let $\mu$ be an even spherical coweight and fix a decomposition $\mu = \mu_1 + \tau \mu_1$. For any anti-dominant coweight $\nu$ such that $\nu+\tau \nu$ is even, the shift homomorphism $\iota_{\mu,\nu}^\tau: \Yt_\mu \hookrightarrow \Yt_{\mu+\nu+\tau \nu}$ respects the filtrations $F_{\mu_1}^\bullet \Yt_\mu$ and $F_{\mu_1+\nu}^\bullet \Yt_{\mu+\nu+\tau \nu}$.  Moreover, it is strictly filtered in the sense that
    $$
    \iota^{\tau}_{\mu,\nu} \big( F_{\mu_1}^k (\Yt_\mu) \big) = \iota_{\mu,\nu}^\tau( \Yt_\mu) \cap F_{\mu_1+\nu}^k( \Yt_{\mu+\nu+\tau\nu}).
    $$
\end{lem}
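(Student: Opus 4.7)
The plan is to verify the filtered property on generators (which extends to $\Yt_\mu$ by multiplicativity since $F^\bullet$ is an algebra filtration), and then deduce strictness from injectivity of the induced associated graded map.

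For the filtered property, direct computation on Cartan generators gives that $\iota_{\mu,\nu}^\tau(H_i^{(r)}) = H_i^{(r-\langle\nu+\tau\nu,\alpha_i\rangle)}$ has target degree $(r-\langle\nu+\tau\nu,\alpha_i\rangle) + \langle\mu+\nu+\tau\nu,\alpha_i\rangle = r + \langle\mu,\alpha_i\rangle$, matching the source degree; similarly $\iota_{\mu,\nu}^\tau(B_i^{(s)}) = \pm B_i^{(s-\langle\nu,\alpha_i\rangle)}$ has target degree $(s-\langle\nu,\alpha_i\rangle) + \langle\mu_1+\nu,\alpha_i\rangle = s + \langle\mu_1,\alpha_i\rangle$, also matching. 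For a higher root vector $B_\beta^{(s)}$ (an $(\ell-1)$-fold nested commutator of simple root generators), the commutativity of $\gr^{F_{\mu_1}^\bullet} \Yt_\mu$ from Proposition \ref{prop:algfilt} ensures each commutator reduces the naive product degree by one, giving the declared filtration degree $s+\langle\mu_1,\beta\rangle$; the identical bookkeeping applied to the image nested bracket places $\iota_{\mu,\nu}^\tau(B_\beta^{(s)})$ in filtration degree at most $s+\langle\mu_1,\beta\rangle$ of the target. Since $\iota_{\mu,\nu}^\tau$ is a ring map and PBW monomials span $\Yt_\mu$, the filtered property extends to all of $\Yt_\mu$.

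For strictness, I will invoke the standard fact that a filtered homomorphism is strict whenever its associated graded is injective: given $x \in \Yt_\mu$ of source filtration degree $d > k$ with $\iota_{\mu,\nu}^\tau(x)$ in the target's $F_{\mu_1+\nu}^k \subseteq F_{\mu_1+\nu}^{d-1}$, the nonzero symbol $\bar x$ in $\gr^d$ would map to $\overline{\iota_{\mu,\nu}^\tau(x)} = 0$ in $\gr^d$ of the target, contradicting injectivity. So it suffices to show injectivity of the map $\gr \iota_{\mu,\nu}^\tau: \gr^{F_{\mu_1}^\bullet} \Yt_\mu \to \gr^{F_{\mu_1+\nu}^\bullet} \Yt_{\mu+\nu+\tau\nu}$. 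By Theorem \ref{thm:iWmuPoisson:Intr}(1),(3), both sides are polynomial algebras $\bC[\iW_\mu]$ and $\bC[\iW_{\mu+\nu+\tau\nu}]$ in the classes of the PBW generators. Direct computation gives $\gr\iota_{\mu,\nu}^\tau(\bar H_i^{(r)}) = \bar H_i^{(r-\langle\nu+\tau\nu,\alpha_i\rangle)}$; and iterated use of \eqref{bbNqs} in the top-degree component identifies the image of $\bar B_\beta^{(s)}$ as $\pm \bar B_\beta^{(s-\langle\nu,\beta\rangle)}$ plus products of PBW generators indexed by roots strictly smaller than $\beta$. A term-order argument on PBW monomials (by root height, then by shift index) then yields the required injectivity.

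The main obstacle is the identification of the leading symbol $\gr \iota_{\mu,\nu}^\tau(\bar B_\beta^{(s)})$: the image nested bracket distributes the shifts $\langle\nu,\alpha_{i_k}\rangle$ across all of its positions, whereas the target's PBW generator $B_\beta^{(s-\langle\nu,\beta\rangle)}$ concentrates all shift at the outermost position. Reconciling the two via \eqref{bbNqs} produces correction terms of the form $\bar B_i^{(a)} \bar B_j^{(b)}$ and Cartan corrections $\bar H_j^{(a+b)}$ that must be tracked; crucially, all such corrections involve only PBW generators indexed by strictly smaller roots, so they do not affect the dominant $\pm 1$ (or $\pm\sqrt{-1}$) coefficient on $\bar B_\beta^{(s-\langle\nu,\beta\rangle)}$ and hence do not obstruct the term-order injectivity argument.
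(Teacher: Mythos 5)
Your argument is circular within the logical structure of the paper, and this is the main flaw. You invoke Proposition~\ref{prop:algfilt} (that $F_{\mu_1}^\bullet$ is an algebra filtration with commutative associated graded) at two crucial junctures: when extending the filtered property from generators to all of $\Yt_\mu$ ``by multiplicativity,'' and when arguing that the commutator bookkeeping for $B_\beta^{(s)}$ gives the declared degree $s+\langle\mu_1,\beta\rangle$. You also invoke Theorem~\ref{thm:iWmuPoisson}(1),(3) to identify $\gr^{F_{\mu_1}^\bullet}\Yt_\mu$ with $\bC[\iW_\mu]$ as a polynomial ring for your injectivity argument. But in the paper Proposition~\ref{prop:algfilt} is established only later, as part of Proposition~\ref{prop: equal filtrations}, whose proof for general $\mu$ explicitly appeals to Lemma~\ref{lem:strictlyfiltered}; and Theorem~\ref{thm:iWmuPoisson}(1) is itself derived from Proposition~\ref{prop: equal filtrations}. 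So both pillars of your argument ultimately rest on the very lemma you are trying to prove, and neither is available at this point.

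The intended route is far more elementary and uses only Theorem~\ref{thm:pbw_arb}. At this stage $F_{\mu_1}^k(\Yt_\mu)$ is merely a vector subspace: the linear span of the PBW monomials of total degree at most $k$. The shift homomorphism $\iota_{\mu,\nu}^\tau$ is injective by Theorem~\ref{thm:pbw_arb}(2), sends each PBW generator of $\Yt_\mu$ to a scalar multiple of a corresponding PBW (root or Cartan) vector in $\Yt_{\mu+\nu+\tau\nu}$ with exactly the matching degree under $F_{\mu_1+\nu}^\bullet$, and hence maps the PBW basis of $\Yt_\mu$ to a degree-graded linearly independent subset of $\Yt_{\mu+\nu+\tau\nu}$. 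From this, both the filtered property and strictness $\iota_{\mu,\nu}^\tau(F_{\mu_1}^k(\Yt_\mu)) = \iota_{\mu,\nu}^\tau(\Yt_\mu)\cap F_{\mu_1+\nu}^k(\Yt_{\mu+\nu+\tau\nu})$ follow by linear algebra alone: a linear combination on the left is in $F^k$ if and only if every PBW monomial appearing in it has degree $\le k$, and this is read off on the image since each monomial's image is (up to scalar) a PBW monomial in the target of the same degree. No algebra structure of the filtration and no geometric input are needed. You are right that the image $\iota_{\mu,\nu}^\tau(B_\beta^{(r)})$ is a nested commutator with shifts distributed across its arguments rather than concentrated in the outer slot, but the way to handle this is simply to use the compatible choice of PBW bases coming with the shift map (as used in the injectivity proof in the prequel), not to try to convert back via the defining relations~\eqref{bbNqs} after the fact --- that conversion is exactly where you fall back onto the commutativity of $\gr$ and hence into the circle. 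Your candid final paragraph correctly flags the obstacle; the fix is to avoid the ``distributed vs.\ concentrated'' reconciliation entirely by choosing the target basis to match the image, rather than forcing the image into a fixed target basis.
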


\begin{rem}
    \label{rem: change of filtration}
    Similarly to \cite[\S 5.4]{FKPRW}, various choices of filtrations on $\Yt_\mu$ defined above are related to one another by $\bZ$-gradings.  More precisely, if we have two decompositions $\mu = \mu_1 + \tau \mu_1 = \mu_2 + \tau \mu_2$, then we may define a $\bZ$--grading of $\Yt_\mu$ by 
    \begin{equation}\label{eq:collapsed grading}
    \deg H_i^{(r)} = 0, \qquad \deg B_i^{(s)} = \langle \mu_2-\mu_1, \alpha_i \rangle.
    \end{equation}
    The filtrations $F_{\mu_1}^\bullet \Yt_\mu$ and $F_{\mu_2}^\bullet \Yt_\mu$ are then related by the construction of \cite[Lemma 5.1]{FKPRW}, and in particular there are (non-graded) algebra isomorphisms
    $$
    \operatorname{Rees}^{F_{\mu_1}^\bullet} \Yt_\mu \cong \operatorname{Rees}^{F_{\mu_2}^\bullet} \Yt_\mu \quad\text{and} \quad \operatorname{gr}^{F_{\mu_1}^\bullet} \Yt_\mu \cong \operatorname{gr}^{F_{\mu_2}^\bullet} \Yt_\mu.
    $$
Thanks to $\tau(\mu_2-\mu_1)=-(\mu_2-\mu_1)$, the pairing of $\mu_2-\mu_1$ with the root lattice $Q$ naturally descends to a pairing with $\Qt$ in \eqref{Qtau}, allowing us to collapse the $\Qt$--grading on $\Yt_\mu$ to a $\bZ$--grading defined in \eqref{eq:collapsed grading}.
\end{rem}

\subsection{$\mathrm{i}$GKLO representations}
\label{sec:GKLO}

In this subsection, we recall a family of iGKLO representations of shifted iYangians of arbitrary quasi-split ADE type. 

\subsubsection{Ring of difference operators}
\label{ssec:quantumtorus}

Fix a dominant $\tau$-invariant coweight $\la$, i.e., $\tau \la =\la$,  and an even spherical coweight $\mu$ such that $\lambda \gge \mu$. We denote
\begin{align}\label{ell}
\la-\mu=\sum_{i\in \I}\bv_i\alpha_i^\vee,
\end{align}
where $\bv_i\in\bN$ for $i\in \I$. Denoting $\mathbb Z_2 =\{\bar 0, \bar 1\}$, we set
\beq \label{ell_theta}
\fkv_i =
\begin{cases} 
\bv_i, & \text{if } \tau i \neq i \\ 
\lfloor\tfrac{1}{2} \bv_i \rfloor, & \text{if } \tau i = i,
\end{cases}
\qquad 
\theta_i =
\begin{cases} 
0, & \text{if } \tau i \neq i \\ 
\delta_{\overline{\mathbf v}_i, \bar{1}}, & \text{if } \tau i = i. 
\end{cases}
\eeq
Introduce $\vartheta_i$ by
\beq\label{vartheta}
\vartheta_{i}=\begin{cases}
\theta_i, & \text{if }\tau i\ne i,\\
\max\{ \theta_j\mid j\in\I \text{ and }c_{ij}\ne 0\},& \text{if }\tau i= i.
\end{cases}
\eeq
Denote $\mathbf w_i=\langle \lambda,\alpha_i\rangle$. We also set
\beq
\label{varsigma}
\fkw_i =
\begin{cases} 
\bw_i, & \text{if } \tau i \neq i \\ 
\lfloor\tfrac{1}{2} \bw_i \rfloor, & \text{if } \tau i = i,
\end{cases}
\qquad 
\varsigma_i =
\begin{cases} 
0, & \text{if } \tau i \neq i \\ 
\delta_{\overline{\bw}_i, \bar{1}}, & \text{if } \tau i = i. 
\end{cases}
\eeq

Note that 
\[
\fkv_{\tau i}=\fkv_i,\qquad \fkw_{\tau i}=\fkw_i, \qquad \text{for all }i\in \I. 
\]
Recall $C=(c_{ij})$ is the Cartan matrix. Throughout the paper, we shall impose the following fundamental parity condition on the dimension vector $\bv =(\bv_i)_{i\in\I}$; see \eqref{ell}--\eqref{ell_theta}:
\beq\label{parity}
c_{ij}\theta_i\theta_j=0, \qquad \text{ for } i\neq j \in \I,
\eeq
that is, at least one of $\bv_i$ and $\bv_j$ is even when $i\neq j\in \I$ are connected. 
This condition is needed in this section and also turns out to be required for geometric constructions in Section \ref{sec:islices}; see, e.g., Theorem \ref{thm:ctgklo}. 

\begin{rem}  \label{rem:parity}
  Let $i\in \I$. If $\theta_i=1$, then the evenness of $\mu$ and the parity condition \eqref{parity} imply that $\mathbf w_i =\langle \lambda,\alpha_i\rangle$ is even and hence $\varsigma_i=0$. 
  %If $\theta_i=0$, then it may happen that $\mathbf w_i$ is odd.
\end{rem}

Let $\bm z:=(z_{i,s})_{i\in\iI,1\lle s\lle \fkw_i}$ be formal variables and denote the polynomial ring
\[
\bC[\bm z]=\bC[z_{i,s}]_{i\in\iI,1\lle s\lle \fkw_i}
\]
and define the new $\C$-algebra
\[
\Yt_\mu(\g)[\bm z]:=\Yt_\mu(\g)\otimes \bC[\bm z],
\]
with new central elements $z_{i,s}$.
Consider the $\bC$-algebra
\begin{align} \label{A:generators}
\mc A:=\bC[\bm z]\langle w_{i,r},\dfo_{i,r}^{\pm 1}, (w_{i,r}\pm w_{i,r'}+m)^{-1},(w_{i,r}+\hf m)^{-1}\rangle_{i \in \iI,1\lle r\ne r'\lle \fkv_i,m\in\bZ},
\end{align}
subject to the relations
\begin{align} \label{A:relations}
[ \dfo_{i,r}^{\pm 1},w_{j,r'}] =\pm\delta_{ij}\delta_{r,r'}\dfo_{i,r}^{\pm 1},\qquad [w_{i,r},w_{j,r'}]=[\dfo_{i,r},\dfo_{j,r'}]=0,\qquad \dfo_{i,r}^{\pm 1}\dfo_{i,r}^{\mp 1}=1. 
\end{align}
It is convenient to extend the notation $z_{i,s}$, $w_{i,r}$, and $\dfo_{i,r}$ to all $i\in \I$ as follows. First set 
\beq\label{signzw2}
z_{\tau i,s}=-z_{i,s}, \qquad \text{ for } i\in \I_1 \text{ and } 1\lle s\lle \fkw_i. 
\eeq
We further set
\begin{align}
 \label{signzw}
 w_{\tau i,r} :=-w_{i,\fkv_i+1-r},
\qquad
\dfo_{\tau i,r} :=\dfo_{i,\fkv_i+1-r}^{-1}, 
 \qquad \text{ for } i\in \I_1 \text{ and } 1\lle r\lle \fkv_i.
\end{align}

Given a monic polynomial $f(u)$ in $u$, we define
\begin{align}  \label{eq:f^-}
f^-(u):=(-1)^{\deg f}f(-u)
\end{align}
to be the monic polynomial whose roots are the opposite of the roots of $f(u)$.

For each $i\in \I_0$, define
\begin{align}  
W_i(u)&=\prod_{r=1}^{\fkv_i}(u- w_{i,r}),\qquad\quad~~~ Z_i(u)=\prod_{s=1}^{\fkw_i}(u-z_{i,s}),
\label{WiZi} \\
\W_i(u)&=u^{\theta_i}\prod_{r=1}^{\fkv_i}(u^2- w^2_{i,r}),\qquad \Z_i(u)=u^{\varsigma_i}\prod_{s=1}^{\fkw_i}(u^2-z^2_{i,s}).
\label{WiZibold}
\end{align}
Then we have $\deg \W_i(u) = \bv_i$ and $\W_i(u) = \W_i^-(u)$,  and similarly $\deg \Z_i(u) = \mathbf w_i$ and $\Z_i^-(u) = \Z_i(u)$. We also define 
\begin{align} \label{Wir}
\W_i^\circ(u)= \prod_{r=1}^{\fkv_i}(u^2- w^2_{i,r}),\quad\W_{i,r}(u)=u^{\theta_i}(u+w_{i,r})\prod_{s=1,s\ne r}^{\fkv_i}(u^2- w^2_{i,s}).
\end{align}
Introduce
\begin{align} \label{WiZibar}
\overline W^-_i(u):=u^{\theta_i}W^-_i(u),\qquad \overline Z^-_i(u):=u^{\varsigma_i}Z^-_i(u).
\end{align}
For simplicity, set
\beq \label{varkappa}
\varkappa(u)=1-\frac{1}{2u},\qquad \bm\varkappa(u)=1-\frac{1}{4u^2}.
\eeq

For each $i\in \I_1$, we fix a choice of $\zeta_i\in \bN$ such that $1\lle \zeta_i\lle \fkv_i$ and extend it to $i\in \I_1\cup\I_{-1}$ by 
\begin{align*} 
\zeta_{\tau i}=\fkv_i-\zeta_{i}.
\end{align*}

For $i\in \I_1\cup \I_{-1}$, set
\begin{align}
\label{WiZi_qs}
\begin{split}
W_i(u)=\prod_{r=1}^{\zeta_i}(u- w_{i,r}), &\qquad 
\W_i(u)=\prod_{r=1}^{\fkv_i}(u- w_{i,r}),
\\
\Z_i(u)=\prod_{s=1}^{\fkw_i}(u-z_{i,s}),
& \qquad
\W_{i,r}(u) =\prod_{s=1,s\ne r}^{\fkv_i}(u- w_{i,s}).
\end{split}
\end{align}
It follows by \eqref{signzw2} and \eqref{signzw} that for $i\in \I_1\cup \I_{-1}$ we have
\[
\W_i(u)=W_i(u)W_{\tau i}^-(u),\qquad 
\W_{\tau i}(u)=\W_i^-(u),\qquad 
\Z_{\tau i}(u)=\Z_i^-(u).
\]
We pick a monic polynomial $Z_i(u)$, for each $i\in \I_1\cup \I_{-1}$, such that
\[
\Z_i(u)=Z_i(u)Z_{\tau i}^-(u)=(-1)^{\deg Z_{\tau i}}Z_i(u)Z_{\tau i}(-u).
\]

\subsubsection{The iGKLO representationse}
\label{ssec:GKLOqs}

Recall the polynomials $\W_i(u)$ and $\Z_i(u)$ from \S\ref{ssec:quantumtorus}.
Fix an arbitrary orientation of the diagram $\I$ such that for each $i\in \I$ with $i\ne \tau i$, if $i\to j$, then $\tau j\to \tau i$, or if $j\to i$, then $\tau i\to \tau j$. Let
\beq\label{s-def}
\wp_{i}=\begin{cases}
1, & \text{if }~i\leftarrow \tau i,\\
-1, & \text{if }~i\rightarrow \tau i,\\
0, & \text{if }~c_{i,\tau i}=0, 2.
\end{cases}
\eeq

\begin{thm} \cite[Theorem 3.6]{LWW25sty}
\label{thm:GKLOquasisplit}
Let $(\I,\tau)$ be any quasi-split Satake diagram. 
Let $\la$ be a dominant $\tau$-invariant coweight and $\mu$ be an even spherical coweight subject to the constraint \eqref{parity} such that $\la\gge \mu$. 
Then there exists a homomorphism 
\[
\Phi_{\mu}^\la:\Yt_\mu[\bm z]\longrightarrow \mc A
\]
such that (see \eqref{ell}--\eqref{varsigma} and \eqref{WiZi}--\eqref{WiZi_qs} for notations)
\begin{align*}
H_i(u)&\mapsto \Big(1+\frac{\wp_i}{4u}\Big) \frac{\bm\varkappa(u)^{\vartheta_i}\Z_i(u)}{\W_{i}(u-\tfrac{1}2)\W_{i}(u+\tfrac{1}2)}\prod_{j\leftrightarrow i}\W_j(u),\qquad  \text{ for }\ i\in \I,
\\
B_i(u)&\mapsto -\sum_{r=1}^{\zeta_i}\frac{Z_i(w_{i,r}-\tfrac12)}{(u+\tfrac{1}{2}-w_{i,r})\W_{i,r}(w_{i,r})}\prod_{j\rightarrow i}\W_{j}(w_{i,r}-\tfrac12)\eth_{i,r}^{-1}\\
&\quad\  - \sum_{r=1}^{\zeta_{\tau i}}\frac{Z_i^-(w_{\tau i,r}+\tfrac12)}{(u+\tfrac{1}{2}+w_{\tau i,r})\W_{\tau i,r}( w_{\tau i,r})}\prod_{\tau j\leftarrow \tau i}\W_{\tau j}(w_{\tau i,r}+\tfrac12)\eth_{\tau i,r},
\qquad \text{ for }\  i\in \I\setminus\I_0,
\end{align*}
and 
\begin{align*}
B_i(u)&\mapsto -\sum_{r=1}^{\fkv_i}\frac{\varkappa(w_{i,r})^{-\vartheta_i}Z_i(w_{i,r}-\tfrac12)}{(u+\tfrac{1}{2}-w_{i,r})\W_{i,r}(w_{i,r})}\prod_{j\rightarrow i}\W_{j}(w_{i,r}-\tfrac12)\prod_{\substack{j\leftarrow i\\j\in\I_0}}\overline W_{j}^-(w_{i,r}-\tfrac12)\dfo_{i,r}^{-1}\\
&~\quad- \sum_{r=1}^{\fkv_i}\frac{\varkappa(-w_{i,r})^{-\vartheta_i}\overline Z_i^-(w_{i,r}+\tfrac12)}{(u+\tfrac{1}{2}+w_{i,r})\W_{i,r}(w_{i,r})}\prod_{\substack{j\rightarrow i\\ j\in \I_{\pm 1}}}\W_{j}(w_{i,r}+\tfrac12)\prod_{\substack{j\leftarrow i\\ j\in\I_0}}W_{j}(w_{i,r}+\tfrac12)\dfo_{i,r}
\\
&~\quad+\sqrt{(-1)^{\fkw_i+\sum_{i\leftrightarrow j\in \iI}\fkv_j}} \frac{\theta_i Z_i(0)}{u\W_i^\circ(\tfrac12)}\prod_{j\leftrightarrow i}W_{j}(0), \qquad\qquad\qquad \text{ for }\ i\in \I_0.
\end{align*}
\end{thm}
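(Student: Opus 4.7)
The plan is to verify, by direct computation in the difference operator algebra $\mc A$, that the proposed images of the generators satisfy each of the defining relations listed in Definition \ref{def:qsplit}. Since every defining relation involves at most two Dynkin indices (plus possibly their $\tau$-images), the verification reduces to working inside rank one and rank two Satake subdiagrams of $(\I,\tau)$. I would first dispose of the easy relations: the truncation condition \eqref{def0} follows from expanding $H_i(u)$ as a Laurent series in $u^{-1}$ using $\deg \W_i = \bv_i$ and $\deg \Z_i = \bw_i$, and the $H$-$H$ commutativity \eqref{hhIII} is immediate since each $H_i(u)$ lies in the commutative subring $\bC[\bm z, w_{j,r}]$.

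For the mixed relation \eqref{hbNqs} and the $B$-$B$ relations \eqref{bbNqs}, \eqref{bbtau}, I would rewrite each as an identity of rational functions in the variables $w_{j,r}$ after clearing the shift operators via the conjugation rule $\dfo_{i,r}^{\pm 1} f(w_{i,r}) \dfo_{i,r}^{\mp 1} = f(w_{i,r} \pm 1)$, and then verify the resulting partial-fraction identity by comparing residues at the simple poles $u = w_{j,r} \pm \tfrac12$ together with the behavior at $u = \infty$. The ordinary Serre relation \eqref{eq:Serre-ord} reduces, in the case that $i, j, \tau i$ are mutually distinct, to the analogous computation for the ordinary shifted Yangian GKLO representation known from \cite{KWWY14,BFN19}, applied to the embedded type $A_2$ subdiagram.

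The genuinely new content lies in the quasi-split Serre relations \eqref{SerreIII} and \eqref{SerreIII2}. For \eqref{SerreIII} with $i = \tau i \in \I_0$ and $c_{ij} = -1$, I would decompose the symmetrized triple bracket into a principal piece built from the two $\dfo$-shift summands of $B_i(u)$, which vanishes by the standard Serre identity, and a remainder arising from the extra correction $\sqrt{\cdots}\,\theta_i Z_i(0)\,(u \W_i^\circ(\tfrac12))^{-1}\prod_{j\leftrightarrow i} W_j(0)$; this remainder must be shown to match $(-1)^{s_1-1}[H_i^{(s_1+s_2)},B_j^{(s-1)}]$ using the already-verified $H$-$B$ formula. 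The main obstacle is \eqref{SerreIII2}, whose right-hand side is the infinite sum $\sum_{p\gge 0} 3^{-p-1}[B_i^{(s_2+p)}, H_{\tau i}^{(s_1+s-p-1)}]$. I would pass to the generating series $B_i(u_1), B_i(u_2), B_{\tau i}(v)$, express the symmetrized triple bracket as a rational function in $u_1, u_2, v$ after clearing the simple poles, and identify the geometric factor $3^{-p-1}$ with a rational kernel of the form $(3u - v)^{-1}$ arising from the collision of the poles $w_{i,r}$ of $B_i$ with the poles $-w_{i,r}$ of $B_{\tau i}$; extracting Laurent coefficients from this generating-series identity then yields the relation.
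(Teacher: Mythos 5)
This theorem is \emph{cited} from the prequel \cite{LWW25sty}; its proof is not reproduced in the present paper, so a line-by-line comparison is impossible. Still, your overall strategy---substitute the proposed images into each of Definition~\ref{def:qsplit}'s relations, reduce to rank-one and rank-two Satake subdiagrams, clear the shift operators using $\dfo_{i,r}^{\pm1}f(w_{i,r})\dfo_{i,r}^{\mp1}=f(w_{i,r}\pm1)$, and verify the resulting rational identities by residue analysis---is the standard template for GKLO-type results and is surely the same framework the prequel uses. The treatment of \eqref{def0}, \eqref{hhIII}, and the reduction of \eqref{eq:Serre-ord} to the ordinary shifted-Yangian case are all fine.

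There is, however, a concrete error in your decomposition for the split Serre relation \eqref{SerreIII}. You claim that the ``principal piece built from the two $\dfo$-shift summands of $B_i(u)$ vanishes by the standard Serre identity,'' leaving only a remainder from the $\theta_i$-correction term. This is not the case. For $i\in\I_0$ the image of $B_i(u)$ carries \emph{both} $\dfo_{i,r}^{-1}$ and $\dfo_{i,r}$ contributions (schematically $B_i\sim E_i+F_i$), unlike the ordinary GKLO image of $E_i$ or $F_i$ which carries only one type of shift. When you expand $\mathrm{Sym}_{s_1,s_2}\,[B_i^{(s_1)},[B_i^{(s_2)},B_j^{(s)}]]$, the pure ``$EEE$'' and ``$FFF$'' pieces indeed cancel by the ordinary Serre identity, but the cross-terms in which one $B_i$ contributes a $\dfo^{-1}$ and the other a $\dfo^{+1}$ do not cancel. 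These produce degree-zero-in-$\dfo$ expressions---functions of the $w_{i,r}$ alone---and this is precisely where the $[H_i^{(s_1+s_2)},B_j^{(s-1)}]$ on the right-hand side of \eqref{SerreIII} comes from. Crucially, this right-hand side is present even when $\theta_i=0$ and the scalar correction term to $B_i(u)$ vanishes identically, so a proof that attributes the nonzero right-hand side solely to the correction term cannot work. The same mixing issue will recur in \eqref{SerreIII2}, and your proposed kernel $(3u-v)^{-1}$ is a plausible guess for the origin of the geometric $3^{-p-1}$ weight, but it needs to be extracted from these same $\dfo^{-1}\!\dfo$ cross-terms rather than from a residue calculation set up only around the diagonal shift summands. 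Your outline should be revised to isolate and control the cross-terms (including how they combine with the $\varkappa(\pm w_{i,r})^{-\vartheta_i}$ normalizations and, when $\theta_i=1$, with the scalar correction) before the rest of the residue argument can go through.
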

We refer to $\Phi_\mu^\la$ as the iGKLO homomorphisms.

\subsection{Truncated shifted iYangians}
\label{ssec:commsub}

The iGKLO homomorphisms allow us to formulate a new family of algebras. 

\begin{dfn} \cite[Definition 3.8]{LWW25sty}
\label{dfn:truncated stY}
Let $\la$ be a dominant $\tau$-invariant coweight and $\mu$ be an even spherical coweight with $\lambda \gge \mu$. The truncated shifted iYangian (TSTY), denoted $\Yt_\mu^\la$, is the $\C$-algebra given by the image of the iGKLO homomorphism $\Phi_{\mu}^\la:\Yt_\mu[\bm z]\rightarrow \mc A$. 
\end{dfn}

Define a ``Cartan" series $\mathsf{A}_i(u)$ in $\Yt_\mu[\bm z][\![u^{-1}]\!]$, for $i\in \I$, by
\beq\label{GKLO-A}
H_i(u)=\Big(1+\frac{\wp_{i}}{4u}\Big)\frac{\bm\varkappa(u)^{\vartheta_i}\Z_i(u)\prod_{j\leftrightarrow i}u^{\bv_j}}{(u^2-\frac14)^{\bv_i}}\frac{\prod_{j\leftrightarrow i} \mathsf A_j(u)}{\mathsf A_i(u-\frac12)\mathsf A_i(u+\frac12)},
\eeq
where $\wp_i$ is defined in \eqref{s-def}.
Expanding the series $\mathsf A_i(u)$ gives us a family of GKLO-type ``Cartan" elements $\mathsf{A}_i^{(r)}$ in $\Yt_\mu[\bm z]$, for $r>0, i\in \I$:
\beq\label{A-coeff}
\mathsf A_i(u)=1+\sum_{r>0}\mathsf A_i^{(r)}u^{-r}.
\eeq

Consider the subalgebra of $\Yt_\mu^\la$ generated over $\bC[\bm z]$ by the coefficients of all of the series $\mathsf A_i(u)$. (Equivalently, this subalgebra is generated over $\bC[\bm z]$ by the coefficients of the series $H_i(u)$.)  This commutative subalgebra is a polynomial ring, having the following algebraically independent generators over $\bC[ \bm z]$ (cf. \cite{LWW25sty}):
\be
    \{ \mathsf A_i^{(2r)} : i \in \I_0, ~ 1 \lle 2r \lle \bv_i \} \cup \{\mathsf A_i^{(r)} : i \in \I_1, 1 \lle r \lle \bv_i \}.
\ee
We call this the \emph{Gelfand-Tsetlin subalgebra} of $\Yt_\mu^\la$, and in many cases it is a maximal commutative subalgebra of $\Yt_\mu^\la$. 

%Note that $\bC[\bm z]$ is a central subalgebra of $\Yt_\mu^\la$.  

Choose a coweight $\mu_1$ such that $\mu = \mu_1 + \tau \mu_1$, and recall the corresponding filtration $F_{\mu_1}^\bullet \Yt_\mu$ constructed in \S \ref{ssec:filtrations on stY}.  This extends to a filtration $F_{\mu_1}^\bullet \Yt_\mu[\bm z]$, by putting all variables $z_{i,s}$ in degree 1.  
Since $\Yt_\mu^\la$ is a quotient of $\Yt_\mu [ \bm z]$, it naturally inherits the quotient filtration.  By abuse of notation we will denote this filtration by $F_{\mu_1}^\bullet \Yt_\mu^\la$.  Then there is an epimorphism of associated graded algebras
\begin{equation}
    \label{eq:quofilt}
    \gr^{F_{\mu_1}^\bullet} \Yt_\mu[\bm z] \twoheadrightarrow \gr^{F_{\mu_1}^\bullet} \Yt_\mu^\la,
\end{equation}
and it follows from Proposition \ref{prop:algfilt} that $\gr^{F_{\mu_1}^\bullet} \Yt_\mu^\la$ is commutative. We will study the underlying geometry for this commutative algebra in \S \ref{ssec:GKLOfpl}.

\section{Twisted Yangians via quantum duality principle and Dirac reduction}
\label{sec:tYdualityPrinciple}

In this section, we establish a general relationship between twisted Yangians and fixed point loci in loop groups by a Poisson involution $\sigma$, by applying the quantum duality principle and Dirac reduction.  This serves as motivation for our constructions and results in subsequent sections, but it may be of independent interest.  

%%%%%%%%%%%%%%%%%%
\subsection{Yangians and quantum duality principle}
\label{ssec: Yangians and loop groups}

Let $G$ be an affine algebraic group over $\bC$, with Lie algebra $\g$.  Then we may consider the corresponding loop groups such as $G(\!(z^{-1})\!), G[z]$ and $G[\![z^{-1}]\!]$, whose Lie algebras are $\g(\!(z^{-1})\!)$, $\g[z]$ and $\g[\![z^{-1}]\!]$, respectively. More precisely, these Lie algebras are defined via base change, such as $\g(\!(z^{-1})\!) = \g \otimes_\bC \bC(\!(z^{-1})\!)$, while the loop groups are defined as (ind-)schemes via their functors of points. For example, $G(\!(z^{-1})\!)$ represents the functor sending each $\bC$--algebra $R$ to the group $G\big( R(\!(z^{-1})\!) \big)$. Finally, we will denote by $G_1[\![z^{-1}]\!]$ the kernel of the evaluation $G[\![z^{-1}]\!] \rightarrow G$ at $z^{-1} = 0$.  Then $G_1[\![z^{-1}]\!]$ is an affine group scheme with Lie algebra $z^{-1}\mathfrak{g}[\![z^{-1}]\!]$, and the exponential map defines an isomorphism of affine schemes
\begin{equation}
\label{eq: exp map}
\exp : z^{-1}\g[\![z^{-1}]\!] \xrightarrow{\sim} G_1[\![z^{-1}]\!].
\end{equation}

Suppose that $\g$ is equipped with a non-degenerate invariant symmetric bilinear form $(\cdot,\cdot)_\g$. In the standard way \cite[Example 1.3.9]{CP94} we may form a Manin triple $\big(\g(\!(z^{-1})\!), \g[z], z^{-1}\g[\![z^{-1}]\!]\big)$, equipped with the non-degenerate symmetric invariant bilinear form
\begin{equation}
    \label{eq: loop form}
    ( x,y ) = - \operatorname{Res}_{z=0} (x, y)_\g, \qquad x,y \in \g(\!(z^{-1})\!).
\end{equation}
This endows $\g(\!(z^{-1})\!)$ with a Lie bialgebra structure, with $\g[z]$ as a sub-Lie bialgebra. The Lie algebra $\g[z]$ is $\bZ$--graded with respect to degree in $z$, and its Lie cobracket is homogeneous of degree $-1$. Its dual $z^{-1}\g[\![z^{-1}]\!]$ is also naturally a Lie bialgebra, corresponding to the Manin triple $\big(\g(\!(z^{-1})\!), z^{-1}\g[\![z^{-1}]\!], \g[z]\big)$ with respect to the same bilinear form \eqref{eq: loop form}. The groups $G(\!(z^{-1})\!)$, $G[z]$ and $G_1[\![z^{-1}]\!]$ each carry corresponding Poisson structures.

We now turn to quantization.  Consider $\bC[\hbar]$ as a graded ring with $\deg \hbar = 1$. 

\begin{dfn} \label{def:Yangian:axioms}
 A \emph{Yangian} for $(\g, (\cdot,\cdot)_\g)$ is a $\bZ$-graded Hopf algebra $\U_\hbar(\g[z])$ over $\bC[\hbar]$, which is free as a graded module over $\bC[\hbar]$ and quantizes the Lie bialgebra $\g[z]$ in the sense that 
 \begin{equation}
     \label{eq: yangian quant}
     \U_\hbar(\g[z]) / \hbar \U_\hbar(\g[z]) \cong U(\g[z])
 \end{equation}
 as graded co-Poisson-Hopf algebras. 
\end{dfn} 

For each integer $n \gge 0$ define a map $\Delta^n: \U_\hbar(\g[z]) \rightarrow \U_\hbar(\g[z])^{\otimes n}$, where $\Delta^0 = \varepsilon$ is the counit, $\Delta^1 = \operatorname{Id}$, and $\Delta^n = (\Delta\otimes \operatorname{Id}^{\otimes n-2})\circ \Delta^{n-1}$ for $n \gge 2$. The \emph{Quantum Duality Principle} \cite{Dr87b,G02} (also called \emph{Drinfeld-Gavarini Duality}) defines a subalgebra:
\begin{equation}
    \U_\hbar(\g[z])' = \left\{ a \in \U_\hbar(\g[z]) \mid (\operatorname{Id}-\varepsilon)^{\otimes n} \circ \Delta^n(a) \in \hbar^n \U_\hbar(\g[z])^{\otimes n} \text{ for all } n \gge 0 \right\}.
\end{equation}
Then $\U_\hbar(\g[z])'$ is a graded sub-Hopf algebra over $\bC[\hbar]$, which is almost-commutative in the sense that for any $a,b\in \U_\hbar(\g[z])'$ we have $[a,b] \in \hbar \U_\hbar(\g[z])'$.  Following \cite[\S A(ii)]{FT19b}, it will be useful to make the following technical assumption: 
\begin{equation}
    \label{eq: QDP1}
    \begin{array}{c}
    \text{There is a subset } \{x_\gamma\} \subset \U_\hbar(\g[z]) \text{ consisting of homogeneous elements,} \\ \text{which lifts  a basis } \{ \overline{x_\gamma} \} \text{ for } \g[z], \text{ and such that all } \hbar x_\gamma \in \U_\hbar(\g[z])'.
    \end{array}
\end{equation} 
Under this assumption, any homogeneous basis for $\g[z]$ admits a similar  lift.  Moreover, an induction on degree shows that $\U_\hbar(\g[z])$ has a PBW basis over $\bC[\hbar]$ consisting of ordered monomials in the set $\{x_\gamma\}$, with respect to any given total order.

\begin{rem}
    If $\g$ is a simple Lie algebra then the Yangian $\U_\hbar(\g[z])$ is unique up to isomorphism \cite[Theorem 2]{Dr85}, and thus Assumption (\ref{eq: QDP1}) holds by \cite[Lemma A.20]{FT19b}, see also \cite[\S 5.2]{W24}.  If $\g = \gl_n$ and $\U_\hbar(\g[z])$ is the usual Yangian defined in RTT presentation as in  \cite{Mol07}, then Assumption (\ref{eq: QDP1}) holds by \cite[Proposition A.25]{FT19b}.  
    
    In fact, based on related results for formal quantizations of finite-dimensional Lie bialgebras from \cite{G02}, it seems likely that Assumption (\ref{eq: QDP1}) holds for \emph{any} Lie algebra $\g$ with a non-degenerate invariant symmetric bilinear form and corresponding Yangian $\U_\hbar(\g[z])$. We will not address this question here.
\end{rem}

We have the following generalization of \cite[Theorem 3.9]{KWWY14} and \cite[Remark 1.12]{S16}:

\begin{prop}
\label{prop: QDP Yangian1}
Let $\U_\hbar(\g[z])$ be a Yangian satisfying Assumption (\ref{eq: QDP1}).  Then:
\begin{enumerate}
\item $\U_\hbar(\g[z])'$ is free over $\bC[\hbar]$, with a PBW basis consisting of ordered monomials in the elements $\{\hbar x_\gamma\}$ with respect to any given total order.
% \item There is a perfect Hopf pairing between $U(z^{-1}\g[z^{-1}])$ and the classical limit $U_\hbar(\g[z])' / \hbar U_\hbar(\g[z])'$, uniquely determined by 
% $$
% \langle y, \overline{\hbar x_\gamma} \rangle = (y, x_\gamma)
% $$
% for any $y \in z^{-1}\g[z^{-1}]$ and element $\hbar x_\gamma \in U_\hbar(\g[z])'$.

\item $\U_\hbar(\g[z])'$ is a quantization of the Poisson algebraic group $G_1[\![z^{-1}]\!]$.  In other words, there is an isomorphism of Poisson-Hopf algebras
$$
\U_\hbar(\g[z])' / \hbar \U_\hbar(\g[z])' \ \cong \ \bC\big[G_1[\![z^{-1}]\!] \big].
$$
\end{enumerate}
In particular, these results hold for Yangian $\U_\hbar(\g[z])$ when $\g$ is simple, and for the (RTT) Yangian of $\g = \gl_n.$
\end{prop}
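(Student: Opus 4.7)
The plan is to establish the PBW basis claim in (1) by a coproduct-filtration argument following Drinfeld-Gavarini, and then to deduce the Poisson-Hopf identification in (2) as a standard duality consequence.

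For part (1), Assumption (\ref{eq: QDP1}) already places each $\hbar x_\gamma$ in $\U_\hbar(\g[z])'$, so every ordered monomial in $\{\hbar x_\gamma\}$ lies in $\U_\hbar(\g[z])'$; linear independence over $\bC[\hbar]$ is inherited from the PBW basis $\{x_\gamma\}$ of $\U_\hbar(\g[z])$. The substantive content is spanning: given $a \in \U_\hbar(\g[z])'$, expand $a = \sum_M c_M M$ in the PBW basis of ordered monomials in the $\{x_\gamma\}$, and let $\ell(M)$ denote the length of $M$. The claim reduces to showing $c_M \in \hbar^{\ell(M)}\bC[\hbar]$, for then $a$ rewrites as a $\bC[\hbar]$-linear combination of ordered monomials in $\{\hbar x_\gamma\}$. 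To this end I would compute the classical limit of $(\operatorname{Id}-\varepsilon)^{\otimes n}\circ\Delta^n$ on PBW monomials: modulo $\hbar$, this map vanishes on monomials of length $<n$ and sends $x_{\gamma_1}\cdots x_{\gamma_n}$ to a nonzero sum $\sum_\sigma x_{\gamma_{\sigma(1)}}\otimes\cdots\otimes x_{\gamma_{\sigma(n)}}$. Iterating on the maximal length $N$ of monomials whose coefficient is not yet known to satisfy the required divisibility, and combining with the QDP membership condition $(\operatorname{Id}-\varepsilon)^{\otimes N}\circ\Delta^N(a) \in \hbar^N \U_\hbar(\g[z])^{\otimes N}$, extracts the divisibility of the top-length coefficients; subtracting the top-length contribution and iterating on the residual terms completes the argument.

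Part (2) then follows by the Drinfeld-Gavarini philosophy. By (1), the quotient $\U_\hbar(\g[z])'/\hbar \U_\hbar(\g[z])'$ is a polynomial algebra in the classes $\overline{\hbar x_\gamma}$, carrying a commutative Hopf-Poisson structure in which $\{\bar a, \bar b\} = \overline{\hbar^{-1}[a,b]}$ is well-defined by almost-commutativity and the coproduct descends directly. To identify this with $\bC[G_1[\![z^{-1}]\!]]$, I would combine the exponential isomorphism \eqref{eq: exp map} and the Manin triple pairing \eqref{eq: loop form} into natural isomorphisms
\[
\bC\big[ G_1[\![z^{-1}]\!] \big] \ \cong \ \bC\big[ z^{-1}\g[\![z^{-1}]\!] \big] \ \cong \ \operatorname{Sym}(\g[z]),
\]
and send $\overline{\hbar x_\gamma}$ to $x_\gamma \in \g[z] \subset \operatorname{Sym}(\g[z])$. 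Compatibility with the Hopf structure reduces to a primitive-element comparison: classically, primitives of $\bC[G_1[\![z^{-1}]\!]]$ are the linear functionals on $\operatorname{Lie}(G_1[\![z^{-1}]\!]) = z^{-1}\g[\![z^{-1}]\!]$, identified with $\g[z]$ via the pairing, while $\Delta(\hbar x_\gamma) \equiv \hbar(x_\gamma \otimes 1 + 1 \otimes x_\gamma) \pmod{\hbar^2}$ implies $\overline{\hbar x_\gamma}$ is primitive on the quantum side. The Poisson structures then match because both are governed by the Lie cobracket on $z^{-1}\g[\![z^{-1}]\!]$, dual to the bracket on $\g[z]$ via the same Manin triple.

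The main obstacle is the leading-term / divisibility analysis in the spanning argument of (1). While standard in the Drinfeld-Gavarini literature, the computation must be performed carefully: one must track how PBW monomials of different lengths interact under $\Delta^n$ and verify that the top-length contribution detects coefficients unambiguously at each power of $\hbar$. Assumption (\ref{eq: QDP1}) is tailored precisely for this purpose, providing a homogeneous lift of a basis of $\g[z]$ that is compatible with the coproduct filtration on $\U_\hbar(\g[z])$. Once (1) is in place, (2) is a formal consequence in the topological infinite-dimensional setting at hand, for which the Manin triple structure set up in \S\ref{ssec: Yangians and loop groups} provides all the necessary input.
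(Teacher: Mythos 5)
For part (1), you are essentially re-deriving what the paper imports by citation from \cite[Proposition A.5]{FT19b}; your coefficient-divisibility argument is the standard way to prove this and would presumably work, though you would need to double-check that the leading-term analysis of $(\operatorname{Id}-\varepsilon)^{\otimes n}\circ\Delta^n$ genuinely isolates $\hbar^{\ell(M)}\mid c_M$ when monomials of mixed lengths appear with coefficients of mixed $\hbar$-valuation. Nothing wrong here, just more work than the paper does.

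For part (2) there is a genuine gap in your identification of Hopf structures. You claim $\Delta(\hbar x_\gamma)\equiv \hbar(x_\gamma\otimes 1 + 1\otimes x_\gamma)\pmod{\hbar^2}$ shows $\overline{\hbar x_\gamma}$ is primitive in $\U_\hbar(\g[z])'/\hbar\U_\hbar(\g[z])'$, and that this matches primitives on $\bC[G_1[\![z^{-1}]\!]]$, which you describe as linear functionals on $z^{-1}\g[\![z^{-1}]\!]$. Both halves fail when $\g$ is nonabelian. On the classical side, primitives of the commutative Hopf algebra $\bC[G_1[\![z^{-1}]\!]]$ are additive characters $G_1[\![z^{-1}]\!]\to\mathbb{G}_a$, i.e.\ linear functionals vanishing on $[\,\cdot\,,\cdot\,]$ of the Lie algebra --- \emph{not} all of $\g[z]$; under $\exp$ the group multiplication is Baker--Campbell--Hausdorff, not addition, so the "naive" identification of $\bC[G_1[\![z^{-1}]\!]]$ with $\operatorname{Sym}(\g[z])$ as an algebra does not carry the additive coproduct. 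On the quantum side, a term $\hbar^{2}a\otimes b$ in $\Delta(\hbar x_\gamma)$ must be rewritten as $(\hbar a)\otimes(\hbar b)$ to land in $\U_\hbar(\g[z])'\otimes_{\bC[\hbar]}\U_\hbar(\g[z])'$, and this term then reduces to $\overline{\hbar a}\otimes\overline{\hbar b}\ne 0$ modulo $\hbar$. So $\overline{\hbar x_\gamma}$ is not primitive, and your "primitive-element comparison" breaks precisely where the Hopf/Poisson structures are interesting. The paper sidesteps this: it constructs the isomorphism via the Quantum Duality pairing \eqref{eq: QDP pairing} (characterized by its values on $z^{-1}\g[z^{-1}]$), shows injectivity, and then proves surjectivity by comparing graded Hilbert series, with the Poisson identification delegated to \cite{G02}. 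That Hilbert-series comparison is exactly what replaces the structural argument you attempted; without it (or an equivalent substitute), your proof of part (2) is incomplete.
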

\begin{proof}
    Part (1) is an application of \cite[Proposition A.5]{FT19b}.    
    
    For  Part (2), first note that the Hopf algebras  $\bC[ G_1[\![z^{-1}]\!]]$ and $U(z^{-1}\g[z^{-1}])$ are graded dual to one another, via the perfect Hopf pairing defined by  $(X, f) = (X\cdot f)(e)$ for any $X \in U(z^{-1}\g[z^{-1}])$ and $f \in \bC\big[G_1[\![z^{-1}]\!]\big]$. Here $X\cdot f$ denotes the action of $X$ thought of as a left-invariant differential operator and $e \in G_1[\![z^{-1}]\!]$ denotes the identity element. Meanwhile, the Quantum Duality Principle provides a non-degenerate graded Hopf pairing between $U(z^{-1}\g[z^{-1}])$ and the classical limit  $\U_\hbar(\g[z])' / \hbar \U_\hbar(\g[z])'$. As in \cite[Corollary~ 3.4]{KWWY14}, this pairing is  uniquely determined by the property that
    \begin{equation}
    \label{eq: QDP pairing}
    \langle y, \hbar x_\gamma  + \hbar \U_\hbar(\g[z])' \rangle = (y, \overline{x_\gamma})    
    \end{equation}
    for all $y \in z^{-1} \g[z^{-1}]$ and generators $\hbar x_\gamma \in U_\hbar(\g[z])'$.  This pairing defines an injective map  $ \U_\hbar(\g[z])' / \hbar \U_\hbar(\g[z])' \rightarrow \bC\big[G_1[\![z^{-1}]\!]\big]$ of graded Hopf algebras, which is an isomorphism (cf.~the proof of \cite[Theorem~ 3.9]{KWWY14}) since both have Hilbert series $\prod_{r\gge 1} (1-q^r)^{-\dim \g}$.
    Finally, the Quantum Duality Principle \cite{G02} identifies the Poisson structures.
\end{proof}

\begin{rem}
\label{rem: filtered 1}
    We may also specialize $\hbar = 1$ and work with the filtered $\bC$--algebra $\U_{\hbar=1}(\g[z])'$. Then $\bC\big[G_1[\![z^{-1}]\!]\big]$ is the associated graded algebra, and we recover $\U_\hbar(\g[z])'$ via the Rees algebra construction, see for example \cite[\S A(iii)]{FT19b}.
\end{rem}

%%%%%%%%%%%%%%%%%%%%%%%%%%%%%%%%%%%%%%%%%%%%%%%%
\subsection{Dirac reduction and fixed points}
\label{ssec: dirac reduction and fixed points}

Let $X = \operatorname{Spec} A$ be an affine Poisson scheme over $\bC$.  In other words, $A$ is a commutative algebra over $\bC$ which is equipped with a Poisson bracket.  Let $\sigma$ be an algebra involution of $A$, which respects the Poisson structure; one may equivalently think of $\sigma$ as defining an action of the group $\bZ/2\bZ$ on $X$ by Poisson automorphisms.  The corresponding \emph{fixed point scheme} 
\begin{equation}
X^\sigma = X^{\bZ/2\bZ} = \operatorname{Spec} R(A, \sigma),
\end{equation}
is defined via the quotient algebra $R(A, \sigma) = A / \langle \sigma(a) - a : a \in A \rangle$. In particular $X^\sigma \subseteq X$ is a closed subscheme.
\begin{rem}
    \label{rem:fixedfunctorofpoints}
    Recall that schemes can also be described by their functors of points (see e.g.~\cite[\S 4.1]{GW10}). For any test scheme $S$, an element  $x \in X(S)$ is  simply a morphism $x: S \rightarrow X$. The functor corresponding to $X^\sigma$ has a very simple description \cite{F73}:
    $$
    X^\sigma(S)  = \big\{ x \in X(S) \mid \sigma (x) = x \big\}.
    $$
    Here $\sigma(x)$ denotes the composition $S \xrightarrow{x} X \xrightarrow{\sigma} X$.     In particular, this provides a definition of $X^\sigma$ even when $X$ is not affine.
\end{rem} 

\begin{lem}[\mbox{\cite[Prop.~3.4]{E92}}]
    \label{lem:fixedsm}
    If $X$ is smooth, then  $X^\sigma$ is smooth.
\end{lem}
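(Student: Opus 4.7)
The plan is to verify smoothness of $X^\sigma$ via the infinitesimal lifting criterion, exploiting the functor-of-points description from Remark~\ref{rem:fixedfunctorofpoints}. Since $X^\sigma \hookrightarrow X$ is a closed embedding and $X$ is locally of finite type over $\bC$, the subscheme $X^\sigma$ is automatically locally of finite type, so it suffices to check formal smoothness at each closed point and invoke the standard equivalence over a field.

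First I would take a surjection $R' \twoheadrightarrow R$ of local Artinian $\bC$--algebras with residue field $\bC$ and square-zero kernel $I$, and a $\sigma$--fixed $R$--point $x \in X^\sigma(R)$ with closed-point image $x_0 \in X^\sigma(\bC)$. Smoothness of $X$ yields some (a priori not $\sigma$--fixed) lift $\tilde x \in X(R')$. The set of all such lifts is a torsor over $T_{x_0} X \otimes_\bC I$, and the action of $\sigma$ on lifts via $\tilde x \mapsto \sigma \circ \tilde x$ is compatible with the linear action $d\sigma \otimes \id$ on this torsor. I would then consider the defect
\begin{equation*}
\delta \ := \ \sigma \circ \tilde x - \tilde x \ \in \ T_{x_0} X \otimes_\bC I.
\end{equation*}
Applying $\sigma$ once more and using $\sigma^2 = \id$ together with the above compatibility gives $(d\sigma \otimes \id)(\delta) = \sigma^2 \circ \tilde x - \sigma \circ \tilde x = -\delta$, so $\delta$ lies in the $(-1)$--eigenspace of $d\sigma \otimes \id$.

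The next step is an averaging trick, which is available because $\operatorname{char}\bC = 0$ and $\sigma$ has order $2$: the operator $d\sigma$ is diagonalizable on $T_{x_0} X$ with eigenvalues $\pm 1$, and I would set $v := \tfrac{1}{2}\delta$, which satisfies $v - (d\sigma \otimes \id)(v) = \delta$. Translating the chosen lift by $v$ produces a new lift $\tilde x + v \in X(R')$ satisfying $\sigma \circ (\tilde x + v) = \tilde x + v$, i.e., a point of $X^\sigma(R')$, which completes the lifting step and hence formal smoothness.

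The only step requiring real care is the compatibility between the natural action of $\sigma$ on the lift torsor and the tangent action $d\sigma \otimes \id$. This is a routine functoriality statement---lifts correspond bijectively to derivations $\Omega_{A,x_0} \to I$, and the ring-theoretic difference $\sigma\circ\tilde x - \tilde x$ realizes the derivation obtained by pulling back along $\sigma^\ast$, whose associated graded at $x_0$ is the transpose of $d\sigma$---but it is the main bookkeeping point. Apart from this, the argument is purely formal and recovers the classical fact that fixed loci of diagonalizable (here, $\mu_2$--)actions on smooth schemes in characteristic zero are smooth.
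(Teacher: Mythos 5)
The paper offers no proof of this lemma — it is invoked directly as a citation to Edixhoven's Proposition~3.4. Your argument is correct and is exactly the standard deformation-theoretic proof underlying that reference: lift a $\sigma$-fixed $R$-point to $X(R')$ using smoothness of $X$, observe that the defect $\delta = \sigma\circ\tilde x - \tilde x$ lands in the $(-1)$-eigenspace of $d\sigma$ on $T_{x_0}X\otimes I$, and translate by $\tfrac12\delta$ to produce a $\sigma$-fixed lift; this uses exactly that $2$ is invertible, i.e., linear reductivity of $\bZ/2\bZ$ over $\bC$. The only small bookkeeping point — that $\tilde x \mapsto \sigma\circ\tilde x$ is equivariant with respect to $d\sigma\otimes\id$ on the torsor of lifts, so that both $\delta$ lies in the antiinvariants and $\sigma\circ(\tilde x + v) = (\sigma\circ\tilde x) + (d\sigma\otimes\id)(v)$ — you flag explicitly, and it is indeed a routine functoriality statement. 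The conclusion then follows from the equivalence of formal smoothness at all closed points with smoothness for schemes locally of finite type over a field.
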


The fixed-point scheme $X^\sigma$ has a natural Poisson structure defined via \emph{Dirac reduction}, see \cite[\S 4.1]{Xu03} or \cite[\S 2.1]{T23}.  To describe this structure, let $A = A(1) \oplus A(-1)$ denote the eigenspace decomposition for the action of $\sigma$.  Then the inclusion $A(1) \subset A$ induces a natural  identification
\begin{equation}
R(A, \sigma) \equiv A / \langle A(-1) \rangle \cong A(1) / A(-1)^2.
\end{equation}
It is easy to see that $A(1) \subset A$ is a Poisson subalgebra, and that $A(-1)^2 \subset A(1)$ is a Poisson ideal. In this way the quotient $A(1)/A(-1)^2$ inherits a Poisson structure.  

\begin{rem}
    \label{rem: Dirac reduction Poisson}
    Explicitly, given two functions $f,g$ on $X^\sigma$, their Poisson bracket is defined by choosing $\sigma$--invariant extensions $\tilde{f}, \tilde{g}$ to functions on $X$, computing the Poisson bracket $\{\tilde{f}, \tilde{g}\}$ on $X$, and then restricting the result to $X^\sigma$. 
\end{rem}

\begin{lem}
\label{lem: Dirac reduction leaves}
Assume that $X$ has finitely many symplectic leaves.  Then $X^\sigma$ also has finitely many symplectic leaves, i.e., the connected components of the fixed point loci $\mathcal{S}^\sigma$, for those leaves $\mathcal{S}$ of $X$ stable under the action of $\sigma$.
\end{lem}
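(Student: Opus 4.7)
The plan is to identify the symplectic leaves of $X^\sigma$ with the connected components of $\mathcal{S}^\sigma$, as $\mathcal{S}$ ranges over the $\sigma$-stable symplectic leaves of $X$, and then deduce finiteness by counting.

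First I would observe that every point of $X^\sigma$ lies on a $\sigma$-stable leaf of $X$. Indeed, since $\sigma$ is a Poisson automorphism it permutes the symplectic leaves; if $p \in X^\sigma$ lies on a leaf $\mathcal{S}$, then $p = \sigma(p) \in \sigma(\mathcal{S}) \cap \mathcal{S}$, and disjointness of distinct leaves forces $\sigma(\mathcal{S}) = \mathcal{S}$. Conversely, for any $\sigma$-stable leaf $\mathcal{S}$, we have $\mathcal{S}^\sigma \subseteq X^\sigma$.

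Next, for any $\sigma$-stable leaf $\mathcal{S}$, Lemma \ref{lem:fixedsm} shows that $\mathcal{S}^\sigma = X^\sigma \cap \mathcal{S}$ is smooth. I would equip it with a natural symplectic structure as follows: at any $p \in \mathcal{S}^\sigma$, the involution $\sigma$ acts on $T_p\mathcal{S}$ preserving the symplectic form $\omega_\mathcal{S}$, giving the eigenspace decomposition $T_p\mathcal{S} = T_p^+ \oplus T_p^-$. Because $\omega_\mathcal{S}$ is $\sigma$-invariant while the pairing of vectors from $T_p^+$ with vectors from $T_p^-$ is anti-invariant under $\sigma$, the two eigenspaces are mutually $\omega_\mathcal{S}$-orthogonal, and each is therefore symplectic. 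Identifying $T_p(\mathcal{S}^\sigma) = T_p^+$, this endows $\mathcal{S}^\sigma$ with a symplectic form. Using Remark \ref{rem: Dirac reduction Poisson}, the Dirac-reduced bracket on $X^\sigma$ restricts to the bracket of this form on $\mathcal{S}^\sigma$: any $\sigma$-invariant extension of a function on $\mathcal{S}^\sigma$ restricts to a $\sigma$-invariant function on $\mathcal{S}$, and its Hamiltonian vector field is automatically $\sigma$-invariant, hence tangent to $\mathcal{S}^\sigma$ along the fixed locus.

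To conclude that each connected component of $\mathcal{S}^\sigma$ is a single symplectic leaf of $X^\sigma$, I would use that such a component is connected, symplectic of locally constant rank, and saturated under Hamiltonian flows of $\sigma$-invariant functions on $X$ (by the tangency statement above); these are precisely the characterizing features of a symplectic leaf. Finiteness then follows at once: only finitely many leaves of $X$ can be $\sigma$-stable, and each $\mathcal{S}^\sigma$ is a smooth subvariety of finite type over $\bC$, hence has finitely many connected components. The main subtlety I anticipate is the precise verification that the Dirac bracket on $X^\sigma$ restricts to the induced symplectic form on each $\mathcal{S}^\sigma$ and that no finer stratification by leaves occurs inside a connected component; this compatibility is worked out in the smooth Poisson setting in Xu \cite{Xu03}, to which one may appeal directly.
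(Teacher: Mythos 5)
Your proof is correct and follows essentially the same route as the paper's: partition $X^\sigma$ over the $\sigma$-stable leaves of $X$ (distinct leaves are disjoint, so any fixed point lies on a $\sigma$-stable leaf), use Lemma \ref{lem:fixedsm} for smoothness of each $\mathcal{S}^\sigma$, invoke Xu's result to identify the symplectic leaves of $\mathcal{S}^\sigma$ with its connected components, and note finiteness. The only small deviation is that you attempt to unpack Xu's theorem via the $\pm 1$-eigenspace decomposition of $T_p\mathcal S$ and the tangency of Hamiltonian flows of invariant functions before falling back on the citation; this adds useful intuition but is not a different strategy (indeed, the more subtle point — that no finer leaf stratification occurs inside a connected component of $\mathcal{S}^\sigma$ — is exactly what you defer to \cite{Xu03}). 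One minor omission: you implicitly use that the leaves of $X$ are smooth locally closed subvarieties, which the paper explicitly draws from \cite[Theorem 3.7]{BG03}; it is worth naming that reference since Lemma \ref{lem:fixedsm} requires smoothness of $\mathcal{S}$ as input.
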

%We note that in this situation the symplectic leaves of $X$ are locally closed subvarieties, the irreducible components of the stratification of $X$ by the rank of the Poisson bivector \cite[Proposition 3.7]{BG03}.
\begin{proof}
Since $X$ has finitely many symplectic leaves, each leaf  $\mathcal{S} \subseteq X$ is a smooth locally-closed subvariety by \cite[Theorem 3.7]{BG03}. Its image $\sigma(\mathcal{S})$ is also a leaf, and thus the involution $\sigma$ permutes the set of all symplectic leaves.  It follows that the fixed point locus $X^\sigma$ is the union of fixed point loci $\mathcal{S}^\sigma$, this union being taken over the set of all leaves $\mathcal{S}$ satisfying $\sigma(\mathcal{S}) = \mathcal{S}$. For each leaf satisfying $\sigma(\mathcal{S}) = \mathcal{S}$, since $\mathcal{S}$ is smooth,  $\mathcal{S}^\sigma$ is also smooth by Lemma \ref{lem:fixedsm}. Moreover the symplectic leaves of $\mathcal{S}^\sigma$ are precisely its (finitely many) connected components, by \cite[Theorem~ 2.3]{Xu03}. Since the inclusions $\mathcal{S}^\sigma \subset X^\sigma$ are Poisson by construction, the claim follows.
\end{proof}

We conclude with two results for later use.  For simplicity we ignore Poisson structures.
\begin{lem}[\mbox{\cite[Lemma 2.2]{T23}.}]
    \label{lem:symdirac}
    Let $V$ be a vector space equipped with an involution $\sigma$, with corresponding eigenspaces $V = V(1) \oplus V(-1)$. Extend $\sigma$ to an involution of the symmetric algebra $S(V)$ in the natural way.  If $U\subset V$ is a subspace complementary to $V(-1)$,  then the  composition 
    $$
    S(U) \hookrightarrow S(V) \twoheadrightarrow R\big( S(V), \sigma\big)
    $$
     is an isomorphism of algebras. In particular, this applies to $U = V(1).$
\end{lem}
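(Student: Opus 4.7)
The plan is to simplify the Dirac reduction $R(S(V), \sigma)$ to a quotient by an explicit ideal, and then apply the direct-sum decomposition $V = U \oplus V(-1)$.

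First I would identify the defining ideal of $R(S(V), \sigma)$. Any $f \in S(V)$ decomposes into $\sigma$-eigencomponents $f = f_+ + f_-$ with $\sigma(f_\pm) = \pm f_\pm$, so $\sigma(f) - f = -2f_-$. Hence
\[
\big\langle \sigma(f) - f : f \in S(V) \big\rangle \ = \ \big\langle S(V)(-1)\big\rangle.
\]
Using the canonical isomorphism $S(V) \cong S(V(1)) \otimes S(V(-1))$ and the fact that $\sigma$ acts as $(-1)^n$ on $S^n(V(-1))$, the $(-1)$-eigenspace $S(V)(-1) = S(V(1)) \otimes \bigoplus_{n \text{ odd}} S^n(V(-1))$ is contained in the ideal $V(-1)\cdot S(V)$. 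Conversely $V(-1) \subseteq S(V)(-1)$, so the two ideals agree and
\[
R\big(S(V), \sigma\big) \ = \ S(V)\big/\langle V(-1)\rangle.
\]

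Next I would invoke the complementarity hypothesis. The splitting $V = U \oplus V(-1)$ yields an algebra isomorphism $S(V) \cong S(U) \otimes S(V(-1))$, under which $\langle V(-1)\rangle$ corresponds to $S(U) \otimes S_{>0}(V(-1))$. Passing to the quotient gives
\[
S(V)\big/\langle V(-1)\rangle \ \cong \ S(U) \otimes \bC \ \cong \ S(U),
\]
and it is immediate from the construction that this identification coincides with the composition $S(U) \hookrightarrow S(V) \twoheadrightarrow R(S(V), \sigma)$ in the statement. The specialization $U = V(1)$ is then a particular case, since the eigenspace decomposition exhibits $V(1)$ as a complement to $V(-1)$.

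The argument is entirely elementary; the only step that requires a moment's attention is recognizing that the ideal generated by the entire eigenspace $S(V)(-1)$ is already generated by its linear part $V(-1)$, which is what causes the quotient to collapse cleanly onto $S(U)$.
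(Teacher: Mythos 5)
Your proof is correct. The paper does not supply its own proof of this lemma; it simply cites \cite[Lemma 2.2]{T23}, so there is no in-paper argument to compare against. Your route---identifying the defining ideal of $R\big(S(V),\sigma\big)$ with $\langle S(V)(-1)\rangle$, observing that this ideal is already generated by the linear part $V(-1)$, and then splitting $S(V)\cong S(U)\otimes S(V(-1))$ across the complement to collapse the quotient onto $S(U)$---is the standard one and closes the claim cleanly.
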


\begin{lem}
    \label{lem:fixedfp}
    Let $X, Y, Z$ be schemes, each equipped with an involution $\sigma$, and let $\alpha:X\rightarrow Z$ and $\beta:Y\rightarrow Z$ be $\sigma$--equivariant maps.  Then there is an equality of schemes:
    $$
    (X \times_Z Y)^\sigma = X^\sigma \times_{Z^\sigma} Y^\sigma.
    $$
\end{lem}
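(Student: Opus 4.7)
The plan is to verify the equality of schemes via the functor of points (Yoneda), using the description of fixed-point schemes recalled in Remark \ref{rem:fixedfunctorofpoints}. Since all objects in sight are determined by their functors, it suffices to prove the equality after evaluating on an arbitrary test scheme $S$.

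First I would unwind both sides. For the left-hand side, the fiber product $X\times_Z Y$ carries the diagonal involution $\sigma(x,y) = (\sigma(x),\sigma(y))$, which is well-defined because $\alpha$ and $\beta$ are $\sigma$-equivariant. Using Remark \ref{rem:fixedfunctorofpoints} and the universal property of fiber products,
\[
(X\times_Z Y)^\sigma(S) = \bigl\{ (x,y) \in X(S)\times Y(S) \,\big|\, \alpha(x)=\beta(y),\ \sigma(x)=x,\ \sigma(y)=y \bigr\}.
\]
For the right-hand side, the key observation is that whenever $x\in X^\sigma(S)$, the $\sigma$-equivariance of $\alpha$ gives $\sigma(\alpha(x)) = \alpha(\sigma(x)) = \alpha(x)$, so $\alpha(x) \in Z^\sigma(S)$; thus $\alpha$ restricts to a morphism $X^\sigma \to Z^\sigma$, and similarly for $\beta$. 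Consequently,
\[
\bigl(X^\sigma \times_{Z^\sigma} Y^\sigma\bigr)(S) = \bigl\{ (x,y) \in X^\sigma(S)\times Y^\sigma(S) \,\big|\, \alpha(x)=\beta(y) \text{ in } Z^\sigma(S)\bigr\}.
\]

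The two sets are manifestly identical: the first condition $\alpha(x)=\beta(y)$ in $Z(S)$ is the same as equality in $Z^\sigma(S)$ once $x,y$ are fixed by $\sigma$ (since $Z^\sigma \hookrightarrow Z$ is a monomorphism, so that $Z^\sigma(S) \hookrightarrow Z(S)$ is injective). This identification is natural in $S$, so Yoneda gives the desired equality of schemes.

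There is essentially no obstacle here; the only care needed is to use the functorial characterization of $X^\sigma$ from Remark \ref{rem:fixedfunctorofpoints} (rather than reasoning affine-locally via the quotient algebras $R(A,\sigma)$ used earlier in \S\ref{ssec: dirac reduction and fixed points}), so that the argument applies uniformly even when $X,Y,Z$ are not affine. One could alternatively give a ring-theoretic proof in the affine case, but the functor-of-points argument is cleaner and handles the general case in one stroke.
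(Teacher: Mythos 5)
Your proof is correct and follows essentially the same functor-of-points argument as the paper: unwind both sides on a test scheme $S$ using Remark \ref{rem:fixedfunctorofpoints} and the universal property of fiber products, then match the two descriptions via $\sigma$-equivariance of $\alpha$ and $\beta$. The only cosmetic difference is that you spell out the monomorphism $Z^\sigma(S)\hookrightarrow Z(S)$, which the paper leaves implicit.
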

\begin{proof}
    Recall the description of a fiber product as a functor of points, see e.g.~\cite[Chapter 4]{GW10}: 
    $$
    (X \times_Z Y)(S) = \left\{ (x,y) \in X(S) \times Y(S) : \alpha(x) = \beta(y) \in Z(S) \right\}.
    $$
    Using Remark \ref{rem:fixedfunctorofpoints}, we see that:
    $$
    (X\times_Z Y)^\sigma(S) = \left\{ (x,y) \in X(S) \times Y(S) : \alpha(x) = \beta(y) \in Z(S), \ \sigma(x) = x, \ \sigma(y) = y \right\}.
    $$
    For  any pair $(x,y)$ on the right side, since $\alpha,\beta$ are $\sigma$-equivariant the  image $z = \alpha(x) = \beta(y)$ satisfies $\sigma(z) = z$, so $z \in Z^\sigma(S)$.  Therefore $(X^\sigma \times_{Z^\sigma} Y^\sigma)(S) = (X\times_Z Y)^\sigma(S)$.
\end{proof}

\subsection{Loop symmetric spaces} 
\label{ssec:loop sym spaces}

Now let  $\omega$ be an involution of $G$, and  denote the corresponding involution of $\g$ by the same notation. We will assume that the bilinear form $(\cdot,\cdot)_\g$ on $\g$ is $\omega$-invariant:
\begin{equation}
    (\omega(x), \omega(y))_\g = (x,y)_\g, \qquad \forall x,y \in \g.
\end{equation}
Extend $\omega$ to an involution, again denoted by $\omega$, of $\g(\!(z^{-1})\!)$ via
\begin{equation}
    \omega( x \otimes z^k)  = \omega(x) \otimes (-z)^k.
\end{equation}
This involution preserves the subalgebras $\g[z]$ and $z^{-1}\g[\![z^{-1}]\!]$, and there is a natural corresponding involution $\omega$ of the group $G(\!(z^{-1})\!)$, preserving its subgroups $G[z]$ and $G_1[\![z^{-1}]\!]$.  Viewed as an involution of any of these groups $\omega$ is anti-Poisson, since $\omega$ is anti-symmetric for the bilinear form (\ref{eq: loop form}) on $\g(\!(z^{-1})\!)$.
\begin{lem}
The fixed-point subgroup $G_1[\![z^{-1}]\!]^\omega$ is coisotropic, i.e., its defining ideal $J \subset \bC\big[G_1[\![z^{-1}]\!]\big]$ satisfies $\{J, J\} \subseteq J$. Consequently, the quotient 
$$
G_1[\![z^{-1}]\!] / G_1[\![z^{-1}]\!]^\omega
$$  
has the structure of a Poisson homogeneous space.
\end{lem}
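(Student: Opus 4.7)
The plan is to verify the coisotropy condition directly by analyzing the defining ideal $J \subset A := \bC\big[G_1[\![z^{-1}]\!]\big]$ in terms of the eigenspace decomposition of $\omega^*$, and then invoke standard Poisson--Lie theory for the quotient statement.

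Since $\omega^*$ is an involution of $A$ and we are in characteristic $0$, there is an eigenspace decomposition $A = A(1) \oplus A(-1)$ with $f \mapsto \tfrac12(f + \omega^* f) + \tfrac12(f - \omega^* f)$. The subscheme of fixed points $G_1[\![z^{-1}]\!]^\omega$ is cut out by the ideal generated by the ``odd'' functions, namely $J = A \cdot A(-1)$.  Because $A(1)\cdot A(-1) \subseteq A(-1)$ and $A(-1)\cdot A(-1) \subseteq A(1)$, one has the refined description $J = A(-1) \oplus A(-1)^2$; in particular $A(-1)$ is a generating subspace for $J$.

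Next, I would exploit the anti-Poisson property of $\omega$: on the level of functions this reads $\omega^*\{f,g\} = - \{\omega^*f, \omega^* g\}$ for all $f,g \in A$. For $f, g \in A(-1)$ this yields
\[
\omega^*\{f,g\} = -\{-f,-g\} = -\{f,g\},
\]
so $\{f,g\} \in A(-1) \subseteq J$. Hence the Poisson bracket of elements of the generating subspace $A(-1)$ already lies in $J$. To promote this to $\{J,J\}\subseteq J$, I apply the Leibniz rule: for $f \in A$ and $g, h \in A(-1)$,
\[
\{fg, h\} \;=\; f\{g,h\} + g\{f,h\} \;\in\; A\cdot A(-1) + A(-1)\cdot A \;=\; J,
\]
since $\{g,h\}\in A(-1)$ by the previous step and $g \in A(-1)$. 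A symmetric argument in the second slot then shows $\{J,J\}\subseteq J$, so $J$ is a coisotropic ideal and $G_1[\![z^{-1}]\!]^\omega$ is a coisotropic closed subgroup.

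For the consequence, I would invoke Drinfeld's standard theorem in Poisson--Lie theory: if $H$ is a closed coisotropic subgroup of a Poisson algebraic group $K$, then the quotient $K/H$ carries a unique Poisson structure for which the projection $K \to K/H$ is Poisson, and the residual left action of $K$ on $K/H$ is a Poisson action, making $K/H$ a Poisson homogeneous space. Applying this to $K = G_1[\![z^{-1}]\!]$ and $H = G_1[\![z^{-1}]\!]^\omega$ gives the claim. The only genuine content of the lemma is thus the eigenspace calculation in the middle paragraph; I do not anticipate a real obstacle, the mild subtlety being merely to keep straight that $J$ is generated by $A(-1)$ rather than equal to it, so that the Leibniz step is needed in addition to the eigenspace identity $\{A(-1),A(-1)\}\subseteq A(-1)$.
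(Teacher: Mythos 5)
Your proof is correct, but it proceeds by a genuinely different route from the paper's. The paper's argument is infinitesimal: inside the Manin triple $\bigl(\g(\!(z^{-1})\!),\,\g[z],\,z^{-1}\g[\![z^{-1}]\!]\bigr)$, the orthogonal complement (for the loop-algebra bilinear form) of the Lie algebra $\bigl(z^{-1}\g[\![z^{-1}]\!]\bigr)^\omega$ of the subgroup is identified with $\g[z]^\omega$, which is a Lie subalgebra of $\g[z]$; this is exactly Drinfeld's Lie-algebraic criterion for a closed subgroup of a Poisson--Lie group to be coisotropic, and then the paper cites \cite[\S 1.3]{CG06} for the homogeneous-space consequence. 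You instead work directly on the coordinate ring: using only that $\omega^*$ is an anti-Poisson involution of $A=\bC\bigl[G_1[\![z^{-1}]\!]\bigr]$, you note that the defining ideal $J$ of the fixed locus is the ideal generated by the $(-1)$-eigenspace $A(-1)$, deduce $\{A(-1),A(-1)\}\subseteq A(-1)\subseteq J$ from anti-Poissonness, and propagate through $J$ by the Leibniz rule. Both arguments are valid and about the same length. Yours is more elementary and self-contained --- it requires no Lie-bialgebra machinery beyond the already-recorded anti-Poisson property of $\omega$ --- and it makes transparent that the $(-1)$-eigenspace is the natural generating set. The paper's version has the merit of identifying the annihilator of the subgroup's Lie algebra explicitly as $\g[z]^\omega$, a piece of information reused in the Hilbert-series computation in the proof of Theorem~\ref{thm: QDP Yangian2}, so it fits the running infinitesimal viewpoint of the section more snugly.
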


\begin{proof}
    On the Lie algebra level the orthogonal complement of $(z^{-1}\g[\![z^{-1}]\!])^\omega$ is $\g[z]^\omega \subset \g[z]$. Since this orthogonal complement is a Lie subalgebra, it follows that the subgroup $G_1[\![z^{-1}]\!]^\omega$ is coisotropic and the corresponding quotient space is a Poisson homogeneous space, see e.g.~\cite[\S 1.3]{CG06}.  
\end{proof}

\begin{rem}
We may think of $G_1[\![z^{-1}]\!] / G_1[\![z^{-1}]\!]^\omega$ as a symmetric space for $G_1[\![z^{-1}]\!]$, and thus an infinite-dimensional \emph{Poisson symmetric space} in the sense of \cite[\S 5.3]{Xu03} or \cite[\S 4]{F94}. 
\end{rem}

This homogeneous space admits a useful alternative description as a fixed point scheme. To this end, define a map on the group $G(\!(z^{-1})\!)$ by
\begin{equation} 
\label{eq:sigma}
\sigma = \operatorname{inv} \circ \  \omega = \omega \circ \operatorname{inv},
\end{equation} 
where $\operatorname{inv}$ denotes the inverse map. Clearly $\sigma$ is an involution and a group anti-homomorphism. We use the same notation for the restriction of $\sigma$ to the subgroups $G[z]$ and $G_1[\![z^{-1}]\!]$.  

\begin{lem}
    The map $\sigma$ is a Poisson involution of $G(\!(z^{-1})\!), G[z]$ and $ G_1[\![z^{-1}]\!]$.
\end{lem}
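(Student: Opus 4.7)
The plan is to verify three things in turn: that $\sigma$ squares to the identity, that it preserves each of the listed subgroups, and that it respects the Poisson structure.

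Step one is essentially formal. The equality $\operatorname{inv}\circ\omega = \omega\circ\operatorname{inv}$ asserted in \eqref{eq:sigma} is just the identity $\omega(g^{-1}) = \omega(g)^{-1}$, which holds because $\omega$ is a group homomorphism. Combined with $\omega^2 = \operatorname{id}$ and $\operatorname{inv}^2 = \operatorname{id}$, this gives $\sigma^2 = \operatorname{inv}\circ \omega \circ \omega\circ \operatorname{inv} = \operatorname{id}$ on $G(\!(z^{-1})\!)$.   Step two is equally immediate: $\operatorname{inv}$ preserves any subgroup, while $\omega$ was already observed in the previous paragraph to preserve $G[z]$ and $G_1[\![z^{-1}]\!]$, so the same is true of $\sigma$.

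The substantive step is the Poisson property. The key input is the classical fact that on any Poisson algebraic (equivalently, Poisson-Lie) group, the inversion map is anti-Poisson; this is a standard consequence of the multiplicativity of the Poisson bivector with respect to group multiplication, and holds equally for the ind-group $G(\!(z^{-1})\!)$ and its subgroups $G[z]$ and $G_1[\![z^{-1}]\!]$. On the other hand, the preceding paragraph already notes that $\omega$ is anti-Poisson on each of these three groups, as $\omega$ is anti-symmetric with respect to the invariant form \eqref{eq: loop form}. Since the composition of two anti-Poisson maps is Poisson, it follows that $\sigma = \omega\circ\operatorname{inv}$ is Poisson on each of the three groups, as claimed.

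I do not expect any real obstacle here: the only non-formal ingredient is the anti-Poisson property of inversion, which is a general feature of Poisson-Lie groups and not specific to the loop setting, so the argument reduces to quoting this standard fact and combining it with what has already been recorded for $\omega$.
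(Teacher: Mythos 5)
Your proposal is correct and uses precisely the argument the paper gives: the definition $\sigma = \operatorname{inv}\circ\omega$ together with the facts that both $\operatorname{inv}$ and $\omega$ are anti-Poisson, the composition of two anti-Poisson maps being Poisson. The paper's proof is just the one-line version of your step three; the involution and subgroup-preservation checks in your steps one and two are noted by the paper in the surrounding text rather than in the proof body.
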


\begin{proof}
    Follows by definition \eqref{eq:sigma} since both $\omega$ and $\operatorname{inv}$ are anti-Poisson maps.
\end{proof}

We will be interested in the fixed-point locus
\begin{equation}
    G_1[\![z^{-1}]\!]^\sigma =  \left\{ g \in G_1[\![z^{-1}]\!] \mid \sigma(g) = g\right\}= \left\{ g \in G_1[\![z^{-1}]\!] \mid  \omega(g) = g^{-1} \right\}.
\end{equation}
Since $\sigma$ is a Poisson involution, this fixed-point locus carries a natural Poisson structure via Dirac reduction as in \S \ref{ssec: dirac reduction and fixed points}. If we take this Poisson structure and rescale it by 2, we obtain another Poisson structure. We will always consider this doubled Poisson structure on $G_1[\![z^{-1}]\!]^\sigma$, because of the following result. Let $e\in G_1[\![z^{-1}]\!]$ denote the identity element. 

\begin{prop} 
\label{prop:loopSymSpace}
    \begin{enumerate}
        \item Group multiplication provides an isomorphism of affine schemes
        $$
        G_1[\![z^{-1}]\!]^\sigma \times G_1[\![z^{-1}]\!]^\omega \rightarrow G_1[\![z^{-1}]\!].
        $$
        
        \item There is a transitive Poisson action of $G_1[\![z^{-1}]\!]$ on $G_1[\![z^{-1}]\!]^\sigma$ (with its doubled Poisson structure) defined by $$g\cdot p = g p \sigma(g) = g p \omega(g)^{-1}.$$
        \item There is an induced  isomorphism of Poisson homogeneous  spaces
        $$
        G_1[\![z^{-1}]\!]/ G_1[\![z^{-1}]\!]^\omega \xrightarrow{\ \ \sim\ \ }  G_1[\![z^{-1}]\!]^\sigma, \qquad  [g] \mapsto g\cdot e = g \sigma(g).
        $$
        In particular, there is a corresponding isomorphism
        $$
        \bC\big[ G_1[\![z^{-1}]\!]^\sigma\big] \xrightarrow{\ \ \sim\ \ } \bC\big[ G_1[\![z^{-1}]\!]\big]^{G_1[\![z^{-1}]\!]^\omega}
        $$
        of graded Poisson algebras with left co-actions of $\bC\big[G_1[\![z^{-1}]\!]\big]$.
    \end{enumerate}
\end{prop}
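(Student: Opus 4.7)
The plan is to treat the three parts in the order (1), (3), (2), since (1) provides a section used in (3), and the transitivity in (2) follows from (3).

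For part (1), I will exploit the fact that $G_1[\![z^{-1}]\!]$ is pro-unipotent. The exponential map \eqref{eq: exp map} lets me define an algebraic ``square-root'' morphism $\sqrt{\cdot} : G_1[\![z^{-1}]\!] \to G_1[\![z^{-1}]\!]$ by $\sqrt{g} = \exp(\tfrac12 \log g)$. Given $g \in G_1[\![z^{-1}]\!]$, I first observe that $g\sigma(g) \in G_1[\![z^{-1}]\!]^\sigma$, because $\sigma$ is an involutive anti-homomorphism. Its logarithm therefore lies in the $(-\omega)$-eigenspace of $z^{-1}\g[\![z^{-1}]\!]$, so that $p := \sqrt{g\sigma(g)}$ also lies in $G_1[\![z^{-1}]\!]^\sigma$ and satisfies $p^2 = g\sigma(g) = g\omega(g)^{-1}$. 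Setting $h := p^{-1}g$, the identity $\omega(h) = \sigma(p)\omega(g) = p\omega(g) = p\cdot g^{-1}p^2 \cdot \text{(rearranged)} = h$ shows $h \in G_1[\![z^{-1}]\!]^\omega$. The assignment $g \mapsto (p,h)$ is a morphism of affine schemes inverse to multiplication, proving (1).

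For part (3), the map $\Psi: g \mapsto g\sigma(g)$ lands in $G_1[\![z^{-1}]\!]^\sigma$. For $k \in G_1[\![z^{-1}]\!]^\omega$ we have $\sigma(k) = k^{-1}$, so $\Psi(gk) = gk \cdot k^{-1}\sigma(g) = \Psi(g)$, hence $\Psi$ factors through $G_1[\![z^{-1}]\!]/G_1[\![z^{-1}]\!]^\omega$. The induced map is bijective: surjectivity follows from (1) since $\Psi(\sqrt{p}) = (\sqrt{p})^2 = p$ for $p \in G_1[\![z^{-1}]\!]^\sigma$, and injectivity follows because $\Psi(g) = \Psi(g')$ implies $(g^{-1}g')^{-1} = \sigma(g^{-1}g')$ and hence $g^{-1}g' \in G_1[\![z^{-1}]\!]^\omega$. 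To upgrade this to a Poisson isomorphism, the quotient Poisson structure on $G_1[\![z^{-1}]\!]/G_1[\![z^{-1}]\!]^\omega$ must be compared with the (doubled) Dirac Poisson structure on $G_1[\![z^{-1}]\!]^\sigma$; the factor of $2$ in the doubling is exactly accounted for by the differential of $\Psi$ at the identity, $d\Psi_e(x) = x - \omega(x) = 2\pi_{-\omega}(x)$, which is twice the projection onto the $(-\omega)$-eigenspace along $\mathrm{Lie}\, G_1^\omega$.

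For part (2), transitivity follows at once from (3): for $p \in G_1[\![z^{-1}]\!]^\sigma$, the element $\sqrt{p}$ satisfies $\sqrt{p}\cdot e = \sqrt{p}\,\sigma(\sqrt{p}) = p$. That the twisted action $g\cdot p = gp\sigma(g)$ is Poisson decomposes into two pieces: left multiplication by $G_1[\![z^{-1}]\!]$ on itself is Poisson (as a Poisson group action), and the map $g \mapsto \sigma(g)$ on the right is compatible with $\sigma$ being a Poisson involution. Translating this to $G_1[\![z^{-1}]\!]^\sigma$ through the identification in (3) yields the claim.

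The main obstacle will be the Poisson compatibility in (3), specifically verifying the factor of $2$ between the coisotropic quotient Poisson structure on $G_1[\![z^{-1}]\!]/G_1[\![z^{-1}]\!]^\omega$ and the Dirac-reduced structure on $G_1[\![z^{-1}]\!]^\sigma$. This is a general feature of Poisson symmetric spaces (cf.~Xu and Fernandes) but requires careful bookkeeping with the decomposition $z^{-1}\g[\![z^{-1}]\!] = (z^{-1}\g[\![z^{-1}]\!])^\omega \oplus (z^{-1}\g[\![z^{-1}]\!])^{-\omega}$, the orthogonality of these summands for the form \eqref{eq: loop form}, and Remark \ref{rem: Dirac reduction Poisson}. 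The remaining scheme-theoretic and group-theoretic verifications reduce to direct computations with the exponential map.
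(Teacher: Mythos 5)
Your proposal takes essentially the same approach as the paper: the unique square root in a pro-unipotent group via the exponential map is the engine of Part~(1), the map $\Psi(g)=g\sigma(g)$ and its factorization through $G_1[\![z^{-1}]\!]/G_1[\![z^{-1}]\!]^\omega$ drive Parts~(2)--(3), and both arguments ultimately appeal to the Xu/Fernandes theory of (Dirac) Poisson symmetric spaces to account for the factor of~$2$. The only substantive difference is the order of Parts~(2) and~(3): the paper establishes the transitive Poisson action in~(2) first, citing Xu's Theorem~5.9 (which requires the doubling), and then derives~(3) from it; you prove the Poisson isomorphism in~(3) directly, reading off the factor of~$2$ from $d\Psi_e(x)=x-\omega(x)$, and then deduce the Poisson action in~(2) by transport of structure. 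Both routes funnel through the same external input, so this is a reordering rather than a new idea.

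Two small points of care. First, in your Part~(2) the sentence ``left multiplication $\ldots$ is Poisson (as a Poisson group action)'' and the appeal to $\sigma$ being a Poisson involution form a slight red herring: the action $g\cdot p = gp\sigma(g)$ is not simply a composite of those two facts, and your actual argument is the final sentence (transport along the isomorphism in~(3) and use that the natural $G_1[\![z^{-1}]\!]$-action on the coisotropic quotient is Poisson), which is fine. Second, you verify bijectivity and the Poisson property of $\Psi$ but do not explicitly justify the last ring-level claim of~(3); the paper handles this by first noting that $p\mapsto[p]$ is a scheme isomorphism $G_1[\![z^{-1}]\!]^\sigma\to G_1[\![z^{-1}]\!]/G_1[\![z^{-1}]\!]^\omega$ (a consequence of Part~(1)), and then observing that composing it with $\Psi$ gives $p\mapsto p^2$ on $G_1[\![z^{-1}]\!]^\sigma$, which is an automorphism of the affine scheme. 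You have all the ingredients for this, but it is worth spelling out since a bijection of functors of points is not automatically an isomorphism of schemes without the affine-scheme structure furnished by Part~(1).
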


\begin{proof}  
We remark that every $g\in G_1[\![z^{-1}]\!]$ admits a unique square root $h \in G_1[\![z^{-1}]\!]$ satisfying $g = h^2$, as can be seen for example via the exponential map (\ref{eq: exp map}).  Moreover if $g \in G_1[\![z^{-1}]\!]^\omega$ then $h \in G_1[\![z^{-1}]\!]^\omega$, and similarly if $g\in G_1[\![z^{-1}]\!]^\sigma$ then $h \in G_1[\![z^{-1}]\!]^\sigma.$

    Given $g \in G_1[\![z^{-1}]\!]$ we can thus uniquely find $p \in G_1[\![z^{-1}]\!]^\sigma$ such that $p^2 = g \sigma(g)$.  Defining $k = p^{-1} g$, we have $k \in G_1[\![z^{-1}]\!]^\omega$ and $g = pk$, and this decomposition is easily seen to be unique. Since these constructions are all algebraic, as is the multiplication map, this proves Part (1).
    
    Next, consider the group action of $G_1[\![z^{-1}]\!]$ on $G_1[\![z^{-1}]\!]^\sigma$ given by $g\cdot p = g p \sigma(g)$. If $p \in G_1[\![z^{-1}]\!]^\sigma$, then there is a unique $h \in G_1[\![z^{-1}]\!]^\sigma \subset G_1[\![z^{-1}]\!]$ such that $p = h^2$. Since
    $$
    h \cdot e = h \sigma(h)  = h^2 = p,
    $$
   this group action is transitive, and the stabilizer of the identity element $e \in G_1[\![z^{-1}]\!]^\sigma$ is precisely $G_1[\![z^{-1}]\!]^\omega$.  The action is Poisson by \cite[Theorem 5.9]{Xu03}, the application of which requires doubling the Poisson structure on $G_1[\![z^{-1}]\!]^\sigma$ coming from Dirac reduction, which proves (2).   

    On the level of sets and Poisson structures, Part (3) follows from Part (2).  Regarding the ring isomorphism in the end, first consider the simpler map $G_1[\![z^{-1}]\!]^\sigma \rightarrow G_1[\![z^{-1}]\!]/G_1[\![z^{-1}]\!]^\omega$ defined by $ p \mapsto [p]$. This is a bijection of sets by Part (1), and in fact of affine schemes, via a sequence of  ring isomorphisms:
    \begin{align*}
    \bC\big[ G_1[\![z^{-1}]\!]^\sigma\big] &\cong \bC\big[ G_1[\![z^{-1}]\!]^\sigma\big]\otimes \bC \\&= \bC\big[ G_1[\![z^{-1}]\!]^\sigma\big] \otimes \bC\big[ G_1[\![z^{-1}]\!]^\omega\big]^{G_1[\![z^{-1}]\!]^\omega} \cong  \bC\big[ G_1[\![z^{-1}]\!]\big]^{G_1[\![z^{-1}]\!]^\omega}.
    \end{align*}
    Now, composing this simpler map with the claimed isomorphism of homogeneous spaces, one obtains the map from $G_1[\![z^{-1}]\!]^\sigma$ to itself defined by $p \mapsto p^2$.  This is an automorphism of this affine scheme, which proves the desired ring isomorphism.  
\end{proof}

\subsection{Twisted Yangians as quantizations} 
\label{ssec:tYangian:symSpaces}

We now turn to quantization of this story.  Fix a Yangian $\U_\hbar(\g[z])$ for $(\g, (\cdot,\cdot)_\g)$ as in Definition \ref{def:Yangian:axioms}.
\begin{dfn}  \label{def:tYaxioms}
    We say that a graded left coideal subalgebra $\U_\hbar(\g[z]^\omega) \subset \U_\hbar(\g[z])$ over $\bC[\hbar]$ is a \emph{twisted Yangian} for $(\g, (\cdot,\cdot)_\g, \omega)$, if the following conditions hold:
    \begin{enumerate}
    \item $\U_\hbar(\g[z]^\omega)$ is free as a graded module over $\bC[\hbar]$,
    \item  $ \U_\hbar(\g[z]^\omega) \cap \hbar \U_\hbar(\g[z]) = \hbar \U_\hbar(\g[z]^\omega)$,
    \item $\U_\hbar(\g[z]^\omega) \subset \U_\hbar(\g[z])$  quantizes $\g[z]^\omega \subset \g[z]$, in the sense that the isomorphism (\ref{eq: yangian quant}) restricts to an isomorphism of graded algebras 
    $$
    \U_\hbar(\g[z]^\omega) / \hbar \U_\hbar(\g[z]^\omega) \cong U(\g[z]^\omega).
    $$
    \end{enumerate}
\end{dfn}

\begin{rem}
    In fact $\U_\hbar(\g[z]^\omega) / \hbar \U_\hbar(\g[z]^\omega) \cong U(\g[z]^\omega)$ are isomorphic as Hopf algebras. Indeed, if we lift $\overline{x} \in \g[z]^\omega$ to an element $x \in \U_\hbar(\g[z]^\omega),$ then since $\U_\hbar(\g[z])$ quantizes $U(\g[z])$ we see that 
    $$
    \Delta(x) \in\, x \otimes 1 + 1 \otimes x + \hbar \ \U_\hbar(\g[z]) \otimes \U_\hbar(\g[z]^\sigma).
    $$
    The classical limit is thus the usual coproduct on $U(\g[z]^\omega)$, and similarly for the counit and antipode.   
\end{rem}

\begin{rem}
    By imposing constraints on the coproduct $\Delta(x) \pmod{\hbar^2}$, a related (but stricter) notion of twisted Yangian is given in \cite[Definition 5.3]{BR17}.
\end{rem}

In \cite{CG06,CG14}, the quantum duality principle was extended to incorporate (coisotropic) subgroups and homogeneous spaces.  Following their approach, define
%\footnote{In the notation of \cite{CG06,CG14} this coideal subalgebra would instead be denoted $\U_\hbar(\g[z]^\omega)^\Lsh$.}
\begin{align*}
    \U_\hbar(\g[z]^\omega)' = \left\{ c \in \U_\hbar(\g[z]^\omega)  \mid (\operatorname{Id}-\varepsilon)^{\otimes n} \circ\Delta^n(c) \in \hbar^n \U_\hbar(\g[z])^{\otimes(n-1)} \otimes \U_\hbar(\g[z]^\omega), \forall  n \gge 0 \right\}.    
\end{align*}
(Notation $\U_\hbar(\g[z]^\omega)^\Lsh$ was used {\em loc. cit.}) Then $\U_\hbar(\g[z]^\omega)' \subset \U_\hbar(\g[z])'$ is a graded left coideal subalgebra over $\bC[\hbar]$.  Similarly to (\ref{eq: QDP1}), it will be useful to make the following technical assumption:
\begin{equation}
\label{eq: QDP2}
    \begin{array}{c}
    \text{There is a subset } \{y_\eta\} \subset \U_\hbar(\g[z]^\omega) \text{ consisting of homogeneous elements,} \\ \text{which lifts  a basis } \{ \overline{y_\eta} \} \text{ for } \g[z]^\omega, \text{ and such that all } \hbar y_\eta \in \U_\hbar(\g[z]^\omega)'.
    \end{array}
\end{equation} 
In this case any homogeneous basis for $\g[z]^\omega$ admits a similar lift, and $\U_\hbar(\g[z]^\omega)$ admits a PBW basis over $\bC[\hbar]$ consisting of ordered monomials in the set $\{y_\eta\}$, with respect to any total order.

\begin{thm}
\label{thm: QDP Yangian2}
    Let $\U_\hbar(\g[z]^\omega) \subset \U_\hbar(\g[z])$ be a twisted Yangian such that Assumptions (\ref{eq: QDP1}) and (\ref{eq: QDP2}) both hold.  Then:
    \begin{enumerate}
        \item $\U_\hbar(\g[z]^\omega)'$ is free over $\bC[\hbar]$, with PBW basis consisting of ordered monomials in the elements $\{\hbar y_\eta\}$.
       \item $\U_\hbar(\g[z]^\omega)' \cap \hbar \U_\hbar(\g[z])' = \hbar \U_\hbar(\g[z]^\omega)'$.
        \item $\U_\hbar(\g[z]^\omega)'$ is a quantization of the Poisson homogeneous space $ G_1[\![z^{-1}]\!]/ G_1[\![z^{-1}]\!]^\omega$.         More precisely, the isomorphism $\U_\hbar(\g[z])' / \hbar \U_\hbar(\g[z])' \cong \bC\big[G_1[\![z^{-1}]\!] \big]$ from Proposition \ref{prop: QDP Yangian1} restricts to an isomorphism of graded Poisson left coideal subalgebras
        $$
        \U_\hbar(\g[z]^\omega)' / \hbar \U_\hbar(\g[z]^\omega)' \ \cong \ \bC\big[ G_1[\![z^{-1}]\!]\big]^{G_1[\![z^{-1}]\!]^\omega}.
        $$
    \end{enumerate}
\end{thm}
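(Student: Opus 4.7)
The plan is to adapt the argument of Proposition \ref{prop: QDP Yangian1} to the coideal setting, in the spirit of the quantum duality principle for coisotropic subgroups and Poisson homogeneous spaces developed by Ciccoli-Gavarini \cite{CG06,CG14}. For part (1), I would apply an analog of \cite[Proposition A.5]{FT19b}. By Assumption (\ref{eq: QDP2}), all elements $\hbar y_\eta$ lie in $\U_\hbar(\g[z]^\omega)'$, where the $y_\eta$ are lifts of a homogeneous basis of $\g[z]^\omega$. An induction on degree, using the fact that $\U_\hbar(\g[z]^\omega)$ is free over $\bC[\hbar]$ with PBW basis given by ordered monomials in $\{y_\eta\}$, shows that ordered monomials in $\{\hbar y_\eta\}$ span $\U_\hbar(\g[z]^\omega)'$ over $\bC[\hbar]$. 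Linear independence is inherited from the ambient Yangian.

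For part (2), I would choose compatible PBW data. Extend a homogeneous basis of $\g[z]^\omega$ to one of $\g[z]$, so that the lifts $\{x_\gamma\}$ in Assumption (\ref{eq: QDP1}) contain the lifts $\{y_\eta\}$ as a subset. Then the PBW basis of $\U_\hbar(\g[z]^\omega)'$ produced in part (1) is a subfamily of the PBW basis of $\U_\hbar(\g[z])'$ from Proposition \ref{prop: QDP Yangian1}(1). Any element of $\U_\hbar(\g[z]^\omega)'$ has a unique expansion in monomials from the smaller basis, and the hypothesis that it additionally lies in $\hbar \U_\hbar(\g[z])'$ forces every coefficient to lie in $\hbar\bC[\hbar]$, whence the element lies in $\hbar\U_\hbar(\g[z]^\omega)'$.

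For part (3), parts (1) and (2) together imply that the natural map
\[
\U_\hbar(\g[z]^\omega)'/\hbar\U_\hbar(\g[z]^\omega)' \ \hookrightarrow\ \U_\hbar(\g[z])'/\hbar\U_\hbar(\g[z])' \ \cong\ \bC\big[G_1[\![z^{-1}]\!]\big]
\]
is an injective homomorphism of graded Poisson-Hopf algebras, and its image is a left coideal subalgebra because $\U_\hbar(\g[z]^\omega)$ is. To identify this image with $\bC[G_1[\![z^{-1}]\!]]^{G_1[\![z^{-1}]\!]^\omega}$, I would first establish the containment $\subseteq$ by dualizing the Hopf pairing (\ref{eq: QDP pairing}): the class of $\hbar y_\eta$ pairs with $U(z^{-1}\g[\![z^{-1}]\!])$ via $(y,\overline{y_\eta})$, and since $\overline{y_\eta}\in\g[z]^\omega$ this corresponds to a function on $G_1[\![z^{-1}]\!]$ annihilated by the right translation action of the Lie algebra of $G_1[\![z^{-1}]\!]^\omega$; the left coideal property then propagates the invariance to all products. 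The reverse containment would follow from a Hilbert-series comparison: by Proposition \ref{prop:loopSymSpace}(1), the decomposition $G_1[\![z^{-1}]\!] \cong G_1[\![z^{-1}]\!]^\sigma \times G_1[\![z^{-1}]\!]^\omega$ identifies the invariant ring with $\bC[G_1[\![z^{-1}]\!]^\sigma]$, whose graded dimensions are computed via the exponential map from $(z^{-1}\g[\![z^{-1}]\!])^{-\omega}$ and match those of the PBW basis in part (1).

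The main obstacle is the identification of the classical limit in part (3). The injectivity and coideal/Poisson structure come essentially for free from parts (1) and (2) and the classical limit of Proposition \ref{prop: QDP Yangian1}, but the equality with the invariant subalgebra requires careful Hilbert-series bookkeeping: one must verify that the grading transported from $\U_\hbar(\g[z]^\omega)$ matches the natural grading on $\bC[G_1[\![z^{-1}]\!]^\sigma]$ coming from the exponential map, and in particular that the $\omega$-eigenspace decomposition of $\g[z]$ is graded compatibly on both sides. Once this matches, the injection of the previous paragraph upgrades to an isomorphism, and the Poisson and left-coideal structures agree automatically.
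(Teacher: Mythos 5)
Your proposal is essentially correct and follows the same overall strategy as the paper: adapt the Ciccoli--Gavarini quantum duality principle for coideal subalgebras, relying on a Hilbert-series comparison as the key computational step, with $\exp$-identification $G_1[\![z^{-1}]\!]^\sigma \cong (z^{-1}\g[\![z^{-1}]\!])^\sigma$ supplying the target Hilbert series. The only substantive divergence is in the logical organization. For parts (1)--(2), the paper chooses compatible PBW lifts and then proves the single identity $\U_\hbar(\g[z]^\omega)' = \U_\hbar(\g[z]^\omega) \cap \U_\hbar(\g[z])'$: the intersection visibly has ordered monomials in $\{\hbar y_\eta\}$ as a basis (by the PBW bases of the two factors), and since those monomials lie in $\U_\hbar(\g[z]^\omega)'$, one gets equality and both (1) and (2) at once. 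Your direct induction on degree for (1) is workable, but it is slightly more effort and does not give (2) for free the way the paper's characterization does.

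For (3), the paper passes to orthogonal complements: it sets $C = \U_\hbar(\g[z]^\omega)'/\hbar$, shows $(z^{-1}\g[z^{-1}])^\omega \subset C^\perp$ by the pairing computation, uses that $C^\perp$ is a \emph{left ideal} (dual of $C$ being a left coideal) to conclude the left ideal $J$ generated by $(z^{-1}\g[z^{-1}])^\omega$ is contained in $C^\perp$, proves $C^\perp = J$ by Hilbert series, and finally shows $J^\perp$ agrees with the invariant ring by another Hilbert-series comparison via Proposition \ref{prop:loopSymSpace}. Your argument works on the primal side in $\bC[G_1[\![z^{-1}]\!]]$, which is morally equivalent, but your "containment $\subseteq$" step is imprecise at one point: pairing against the primitive elements of $(z^{-1}\g[z^{-1}])^\omega$ alone only controls the bracket at the identity, and "the left coideal property propagates invariance to all products" is not quite what is needed. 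What is actually needed is that the \emph{orthogonal of a left coideal is a left ideal}, so that $(z^{-1}\g[z^{-1}])^\omega \subset C^\perp$ upgrades to $J \subset C^\perp$; the pro-unipotence of $G_1[\![z^{-1}]\!]$ is then what makes vanishing of all jets at $e$ equivalent to global invariance. Once phrased this way, your containment plus the Hilbert-series comparison gives the result, matching the paper's argument.
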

    
\begin{proof}
    By Assumption (\ref{eq: QDP2}) we have lifts $\{y_\eta\}$ of some homogeneous basis $\{\overline{y_\eta}\}$ for $\g[z]^\omega$.  If we enlarge $\{\overline{y_\eta}\}$ to a homogeneous basis $\{\overline{x_\gamma}\}$ for $\g[z]$, then using Assumption (\ref{eq: QDP1}) we may also enlarge $\{y_\eta\}$ to a set of lifts $\{x_\gamma\}$. Choose compatible total orders on these sets. By Proposition~ \ref{prop: QDP Yangian1}(1), along with the PBW property for $\U_\hbar(\g[z])^\omega)$, it follows that $\U_\hbar(\g[z]^\omega) \cap \U_\hbar(\g[z])'$ has a basis consisting of ordered monomials in $\{\hbar y_\eta\}$. But by assumption these monomials all lie in the subset $\U_\hbar(\g[z]^\omega)'$.  Therefore $\U_\hbar(\g[z]^\omega)' = \U_\hbar(\g[z]^\omega) \cap \U_\hbar(\g[z])'$, and this discussion proves (1) and (2).

Part (3) formally follows from the Quantum Duality Principle from \cite{CG06, CG14}.  However, since we work in an infinite-dimensional graded situation, we give a detailed direct argument.

    First, the classical limit of $\U_\hbar(\g[z]^\omega)'$ certainly corresponds to some graded Poisson left coideal subalgebra $C \subset \bC\big[ G_1[\![z^{-1}]\!] \big]$. Let  $C^\perp \subset U(z^{-1}\g[z^{-1}])$ be its orthogonal complement, under the duality discussed in the proof of Proposition \ref{prop: QDP Yangian1}. Then $C^\perp$ is a left ideal, and a two-sided coideal. Since $\U_\hbar(\g[z]^\omega)'$ is a left coideal spanned by monomials in $\{\hbar y_\eta\}$, and the classical limits $\overline{y_\eta} \in \g[z]^\omega$ are orthogonal to $(z^{-1}\g[z^{-1}])^\omega$, using the  pairing (\ref{eq: QDP pairing}) it is not hard to see that $(z^{-1} \g[z^{-1}])^\omega \subset C^\perp$. Thus $C^\perp$ contains the left ideal $J$ generated by $(z^{-1}\g[z^{-1}])^\omega$. We claim that in fact $C^\perp = J$.
    
    Indeed, decompose $\g = \mathfrak{k}\oplus \mathfrak{p}$ into $(\pm 1)$-eigenspaces for the involution $\omega$. The quotient $U(z^{-1}\g[z^{-1}]) / J$ is isomorphic as a vector space to the symmetric algebra on the $(-1)$--eigenspace for $\omega$ acting on $z^{-1}\g[z^{-1}]$, which is $z\bC[z^2] \mathfrak{k} \oplus \bC[z^2] \mathfrak{p}$. Thus this quotient has Hilbert series
    $$
    \frac{1}{\prod_{\substack{r \gge 1, \\ r \text{ odd}}}(1-q^{-r})^{\dim \mathfrak{k}} \cdot \prod_{\substack{r \gge 1, \\ r \text{ even}}}(1-q^{-r})^{\dim \mathfrak{p}}}.
    $$
    But the PBW basis for $\U_\hbar(\g[z]^\omega)'$ from Part (1) implies that $C$ has exactly the same Hilbert series, up to replacing $q$ by $q^{-1}$ (i.e.,~up to taking the graded dual vector space). This is only possible if $C^\perp = J$, proving the claim. 

    Finally,  we show that the orthogonal complement of the left ideal $J = C^\perp$ is precisely the coordinate ring of $G_1[\![z^{-1}]\!]/ G_1[\![z^{-1}]\!]^\omega$. A containment in one direction is straightforward, since the Lie algebra of $G_1[\![z^{-1}]\!]^\sigma$ contains $(z^{-1}\g[z^{-1}])^\sigma$. Equality follows from Proposition \ref{prop:loopSymSpace}(3), since the Hilbert series of the affine scheme $G_1[\![z^{-1}]\!]^\sigma \cong (z^{-1}\g[\![z^{-1}]\!])^\sigma$ is given once again by the Hilbert series above, up to replacing $q$ by $q^{-1}$. This proves Part (3).
\end{proof}

% \begin{rem}
% Part (3) of this theorem generalizes a result of Harada \cite[Theorem 4.3]{Harada}, who studied the case of $G = \mathrm{GL}_{2n}$ and $\omega( g(z) ) = Q^{-1} (g(-z)^t)^{-1} Q$, where $Q$ encodes the standard symplectic form on $\bC^{2n}$. 
% \end{rem}

Combining Proposition \ref{prop:loopSymSpace} and Theorem \ref{thm: QDP Yangian2}, we obtain the main result of this section.
\begin{thm}    
    \label{thm:tY:G1sigma}
     Let $\U_\hbar(\g[z]^\omega) \subset \U_\hbar(\g[z])$ be a twisted Yangian such that Assumptions (\ref{eq: QDP1}) and (\ref{eq: QDP2}) both hold.  Then $\U_\hbar(\g[z]^\omega)'$ quantizes the affine scheme $G_1[\![z^{-1}]\!]^\sigma$ with its (doubled) Dirac Poisson structure, and the inclusion $\U_\hbar(\g[z]^\omega) \subset \U_\hbar(\g[z])$ quantizes the map $G_1[\![z^{-1}]\!] \rightarrow G_1[\![z^{-1}]\!]^\sigma$  defined by $g \mapsto g \sigma(g)$. The left coideal structure on $\U_\hbar(\g[z]^\omega)$ corresponds to the left action of $G_1[\![z^{-1}]\!]$ on $G_1[\![z^{-1}]\!]^\sigma$ given by $g\cdot p = g p \sigma(g).$
\end{thm}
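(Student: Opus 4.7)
The plan is to combine the two main results already established in this section, namely Proposition~\ref{prop:loopSymSpace} (identifying the Poisson homogeneous space $G_1[\![z^{-1}]\!]/G_1[\![z^{-1}]\!]^\omega$ with the fixed point scheme $G_1[\![z^{-1}]\!]^\sigma$ with its doubled Dirac structure) with Theorem~\ref{thm: QDP Yangian2}(3) (quantizing the invariant ring $\bC[G_1[\![z^{-1}]\!]]^{G_1[\![z^{-1}]\!]^\omega}$ by $\U_\hbar(\g[z]^\omega)'$). Chaining these isomorphisms produces the promised quantization of $G_1[\![z^{-1}]\!]^\sigma$.

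More precisely, I would first apply Theorem~\ref{thm: QDP Yangian2}(3) to obtain the isomorphism of graded Poisson left coideal subalgebras
\begin{equation*}
\U_\hbar(\g[z]^\omega)' / \hbar \U_\hbar(\g[z]^\omega)' \ \cong \ \bC\big[ G_1[\![z^{-1}]\!]\big]^{G_1[\![z^{-1}]\!]^\omega},
\end{equation*}
inside the classical limit $\U_\hbar(\g[z])' / \hbar \U_\hbar(\g[z])' \cong \bC[G_1[\![z^{-1}]\!]]$. Then I would invoke Proposition~\ref{prop:loopSymSpace}(3), which provides the isomorphism of graded Poisson algebras $\bC\big[G_1[\![z^{-1}]\!]^\sigma\big] \xrightarrow{\sim} \bC\big[G_1[\![z^{-1}]\!]\big]^{G_1[\![z^{-1}]\!]^\omega}$ induced by $p \mapsto p^2$, and is compatible with the left co-actions of $\bC[G_1[\![z^{-1}]\!]]$. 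Composing these two isomorphisms yields the desired quantization statement, with the doubled Dirac Poisson structure appearing automatically because that is precisely the structure carried by $G_1[\![z^{-1}]\!]^\sigma$ in Proposition~\ref{prop:loopSymSpace}.

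It remains to interpret the two geometric maps. For the inclusion $\U_\hbar(\g[z]^\omega) \subset \U_\hbar(\g[z])$: dually, this becomes a surjection of classical rings $\bC[G_1[\![z^{-1}]\!]] \twoheadrightarrow \bC[G_1[\![z^{-1}]\!]]^{G_1[\![z^{-1}]\!]^\omega} \cong \bC[G_1[\![z^{-1}]\!]^\sigma]$, which by Proposition~\ref{prop:loopSymSpace}(3) is induced by the morphism $g \mapsto g\sigma(g)$ (this being the composition of the isomorphism $G_1[\![z^{-1}]\!]/G_1[\![z^{-1}]\!]^\omega \xrightarrow{\sim} G_1[\![z^{-1}]\!]^\sigma$ with the natural quotient). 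For the left coideal structure: since $\U_\hbar(\g[z]^\omega) \subset \U_\hbar(\g[z])$ is a left coideal, the coproduct $\Delta$ on $\U_\hbar(\g[z])$ restricts to a left co-action of $\U_\hbar(\g[z])$ on $\U_\hbar(\g[z]^\omega)$. Passing to classical limits and using the identification above, this yields a left co-action of $\bC[G_1[\![z^{-1}]\!]]$ on $\bC[G_1[\![z^{-1}]\!]^\sigma]$ which—by the isomorphism in Proposition~\ref{prop:loopSymSpace}(3) being equivariant—is dual to the transitive Poisson action $g \cdot p = g p \sigma(g)$ of Proposition~\ref{prop:loopSymSpace}(2).

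The main obstacle, and the only nontrivial point beyond bookkeeping, is the compatibility of all structures under the composed isomorphism: the Dirac Poisson bracket on $G_1[\![z^{-1}]\!]^\sigma$ must match the bracket transported from $\U_\hbar(\g[z]^\omega)'$, and the rescaling factor of $2$ must appear correctly. This is precisely what \cite[Theorem~5.9]{Xu03} (used in the proof of Proposition~\ref{prop:loopSymSpace}(2)) and the squaring isomorphism $p \mapsto p^2$ (which is responsible for the doubling) combine to ensure; the rest is essentially the naturality of the Quantum Duality Principle and does not require further calculation.
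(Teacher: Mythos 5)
Your proof is correct and follows exactly the paper's argument: the paper proves Theorem~\ref{thm:tY:G1sigma} simply by combining Proposition~\ref{prop:loopSymSpace} with Theorem~\ref{thm: QDP Yangian2}, which is what you do. One small correction to your commentary: the factor of $2$ comes from the application of Xu's theorem in Proposition~\ref{prop:loopSymSpace}(2), which forces the doubled Dirac structure in order to make the action Poisson, not from the squaring map $p\mapsto p^2$ itself (that map merely establishes the ring isomorphism in Proposition~\ref{prop:loopSymSpace}(3)); but this does not affect the validity of the argument.
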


\begin{rem}
    \label{rem: filtered 2}
    As in Remark \ref{rem: filtered 1} we may consider instead the filtered algebra $\U_{\hbar=1}(\g[z]^\omega)'$, from which we recover $\U_\hbar(\g[z]^\omega)'$ via the Rees algebra construction. This filtered algebra perspective is in keeping with the approach to (shifted) twisted Yangians taken in the main body of this paper, and much of the literature.
\end{rem}

Assumptions \eqref{eq: QDP1} and \eqref{eq: QDP2} hold for many of the twisted Yangians studied in the literature. 

\begin{eg}
    The classic twisted Yangians were first introduced mathematically by Olshanski \cite{Ols92}. These algebras fit the framework of this section.  More precisely, equip $\bC^n$ with a nondegenerate bilinear form which we assume to be either orthogonal or symplectic, encoded by a matrix $Q$. As in \cite[Definition 2.1.1]{Mol07}, we may associate a corresponding  algebra denoted $Y_Q(\g_n)$. It follows from the results of \cite[\S 2]{Mol07} that these algebras are twisted Yangians in the conventions of this section (specialized at $\hbar=1$).  In this case we take $G = GL_n$, and the anti-involution $\sigma$ of $G_1[\![z^{-1}]\!]$ is defined by $g(z) \mapsto Q^{-1} g(-z)^T Q$ where $A \mapsto A^T$ denotes matrix transpose.  By the above results, it follows that $Y_Q(\g_n)$ quantizes a Poisson structure on the fixed point locus 
    $$
    G_1[\![z^{-1}]\!]^\sigma = \left\{ g(z) \in G_1[\![z^{-1}]\!] \mid g(z) = Q^{-1} g(-z)^T Q \right\}.
    $$
    This locus is naturally a homogeneous space for $G_1[\![z^{-1}]\!]$, quantized by the coideal structure on the twisted Yangian.  
\end{eg}

\begin{rem}
    In the setting of the previous example, the symplectic case was previously studied by Harada \cite[\S 4.1, 4.2]{Harada} in connection with integrable systems and Gelfand-Tsetlin bases for symplectic Lie algebras.  Harada views the twisted Yangian as a quantization of the homogeneous space $G_1[\![z^{-1}]\!] / G_1[\![z^{-1}]\!]^\omega$, which is isomorphic to $G_1[\![z^{-1}]\!]^\sigma$ by Proposition \ref{prop:loopSymSpace}.  Thus, we can think of Theorem \ref{thm: QDP Yangian2} and Theorem \ref{thm:tY:G1sigma} as a generalization of aspects of Harada's work.
\end{rem}

\begin{rem}  \label{rem:tYsame}
We conjecture that a variant of the algebra $\Yt_0$ over the ring $\C[\hbar]$ in Drinfeld presentation considered in other sections is always a twisted Yangian in the sense of Definition~ \ref{def:tYaxioms}. This conjecture holds for split type (excluding type G$_2$) and quasi-split type AIII. In the cases of quasi-split type A, the iYangians \cite{LWZ25, LZ24} are isomorphic to the twisted Yangians defined via R-matrix  \cite{Ols92, MR02}. In the case of split type, the iYangians \cite{Lu25} are isomorphic to the twisted Yangians defined via $J$-presentation \cite{Mac02,BR17}. Hence they are coideal subalgebras of the Yangian. It is nontrivial but possible to verify more cases. 
\end{rem}

%Although the next example is not directly related to subsequent sections, we have included it as an illustration of the above result:
%\begin{eg} 
%    \emph{Reflection algebras} were introduced by Sklyanin \cite{S88}, and further studied by Molev and Ragoucy \cite{MR02}. Borrowing the notation of the latter, the reflection algebras  $\mathcal{B}(n,0)$ are twisted Yangians in the conventions of this section (specialized again at $\hbar=1$), as follows readily from the results of \cite[\S 3]{MR02}. In this case $G=SL_n$, the involution $\omega$ is the identity on $\g$, and thus the anti-involution $\sigma$ of $G_1[\![z^{-1}]\!]$ is defined by $g(z) \mapsto g(-z)^{-1}$. In particular, the corresponding fixed point locus is
%    $$    G_1[\![z^{-1}]\!]^\sigma \ = \ \left\{ g(z) \in G_1[\![z^{-1}]\!] \mid g(z) = g(-z)^{-1} \right\}.    $$
%    By the above results  this space carries a natural Poisson stucture which is quantized by $\mathcal{B}(n,0)$, and is a homogeneous space for $G_1[\![z^{-1}]\!]$, this being quantized by the coideal structure of $\mathcal{B}(n,0)$.
%\end{eg}

%%%%%%%%%%%%%%%%%%%%%%%%%%%%%%%%%%%%%%%%%%%%%%%%%%%%%%%%%%
\section{Shifted iYangians and fixed point loci}
\label{sec:stYloci}

We show that a Poisson involution $\sigma$ descending from the loop group preserves the closed subscheme $\cW_\mu$ exactly when $\mu$ is even spherical. In this case, we give a Poisson algebra presentation for the coordinate algebra of the $\sigma$-fixed point locus ${}^\imath\cW_\mu$, and show that it is isomorphic to the associated graded of the shifted iYangian $\Yt_\mu$ with respect to a natural filtration.

%%%%%%%%%%%%%%%%%%
\subsection{Spaces $\cW_\mu$ from loop groups}
\label{sec:Wmu}

We briefly recall the notion of presentations of Poisson algebras by generators and relations, following \cite[\S 3.1]{T23}.
The free Poisson algebra generated by a set $X$ is the symmetric algebra $S(L_X)$, where $L_X$ is the free Lie algebra generated by $X$. Here $S(L_X)$ is endowed with a Poisson structure in the standard way.  For any Poisson algebra $A$, we say that $A$ is \emph{Poisson generated} by its subset $X \subset A$ if there is a surjective homomorphism of Poisson algebras $S(L_X) \twoheadrightarrow A$.  Given any set $R \subset S(L_X)$ which we refer to as \emph{Poisson relations}, we may consider the Poisson ideal $\langle R\rangle_{Poisson} \subset S(L_X)$ which they generate.  If the homomorphism $S(L_X) \twoheadrightarrow A$ induces an isomorphism $S(L_X) / \langle R\rangle_{Poisson} \cong A$, then we refer to this as  presentation of $A$ as a Poisson algebra by generators and relations.

For any affine algebraic group $H$ over $\bC$, there is a loop group $H(\!(z^{-1})\!)$ and its subgroups $H_1[\![z^{-1}]\!]$ and $H[z]$; see \S \ref{ssec: Yangians and loop groups}.

Recall that $\g$ is a simple Lie algebra of type ADE, and let $G$ be a corresponding connected  algebraic group of adjoint type\footnote{We make the assumption that $G$ is adjoint type in order that it has all fundamental coweights. However this assumption is largely unnecessary, see \cite[Remark 2.10]{MW24}.}.  Fix Chevalley generators $\{e_i, h_i, f_i\}_{i\in \I}$  for $\g$, yielding a triangular decomposition $\mathfrak{g} = \mathfrak{n}^+ \oplus \mathfrak{h} \oplus \mathfrak{n}^-$, and let $U^+, U^-, T \subset G$ be the corresponding unipotent subgroups and maximal torus.

Each coweight $\mu : \bC^\times \rightarrow T$ yields a point $z^\mu \in T(\!(z^{-1})\!)$. Following \cite[\S 5.9]{FKPRW}, we define a closed subscheme of $G(\!(z^{-1})\!)$ by:
\begin{equation}
\label{eq:Wmu}
\cW_\mu  \ = \ U_1^+[\![z^{-1}]\!] T_1[\![z^{-1}]\!] z^\mu U_1^-[\![z^{-1}]\!].
\end{equation}
This is an affine scheme and is the product of its factors. In particular, $\cW_0 = G_1[\![z^{-1}]\!].$   Following \cite[\S 5.9]{FKPRW}, for antidominant coweights $\nu_1,\nu_2$, we define a (surjective) shift map
\begin{equation}
\label{eq:shift map}
\iota_{\mu,\nu_1,\nu_2} : \cW_{\mu+\nu_1+\nu_2} \longrightarrow \cW_\mu, \qquad g \mapsto \pi ( z^{-\nu_1} g z^{-\nu_2}).
\end{equation}
Here $\pi$ is the surjective map from $U^+(\!(z^{-1})\!) T_1[\![z^{-1}]\!]z^\mu U^-(\!(z^{-1})\!)$ onto $U_1^+[\![z^{-1}]\!]T_1[\![z^{-1}]\!]z^\mu U_1^-[\![z^{-1}]\!] $ defined by projecting along the appropriate factors in the decompositions $U^\star(\!(z^{-1})\!) = U^\star[z] \times U_1^\star[\![z^{-1}]\!]$ (for $\star=+$ or $-$) induced by multiplication.

There is a natural action of $\bC^\times$ on $G(\!(z^{-1})\!)$ by loop rotation, however this action does not preserve the subscheme $\cW_\mu$ unless $\mu = 0$.  Following \cite[\S 5.9]{FKPRW} we can correct this via a slight modification: pick any decomposition $\mu = \mu_1 + \mu_2$ into coweights $\mu_1,\mu_2$, and define an action of $\bC^\times$ on $\cW_\mu$ by
\begin{equation}
\label{eq:loop rotation action}
    s\cdot g(z) = s^{-\mu_1} g(sz) s^{-\mu_2}.
\end{equation}

For each $i\in\I$ there are homomorphisms $\psi^\pm_i: U^\pm \rightarrow \mathbb{G}_a$, normalized by $\psi_i^+( \exp(t e_i )) = t$ and $\psi_i^-(\exp(t f_i )) = t$.  We also have homomorphisms $\alpha_i : T \rightarrow \mathbb{G}_m$ corresponding to the simple roots. Define distinguished functions on $\cW_\mu$ as follows: given $g = n^+ h z^\mu n^- \in \cW_\mu$, we have
\begin{equation}
\label{eq: Drinfeld gens Wmu}
e_i^{(r)}( g) = [z^r] \ \psi_i^+(n^+), \qquad h_i^{(r)}(g) = [z^r] \ \alpha_i(hz^\mu), \qquad f_i^{(r)} (g) = [z^r] \ \psi_i^-(n^-).
\end{equation}
Here, for a series $F(z) = \sum F_n z^n $ we denote by $[z^r]\ F(z) = F_r$ the coefficient of $z^r.$  We note that $e_i^{(r)} =  f_i^{(r)} = 0$ for $r \lle 0$, $h_i^{(s)} = 0$ for $s<-\langle \mu,\alpha_i\rangle$, and $h_i^{(-\langle\mu,\alpha_i\rangle)} = 1$.  Under the $\bZ$--grading on $\bC[\cW_\mu]$ corresponding to a $\bC^\times$--action as in (\ref{eq:loop rotation action}), we have
\begin{equation}
\label{eq:degrees under Wmu}
\deg h_i^{(r)} = r + \langle \mu,\alpha_i\rangle, \qquad \deg e_i^{(s)} = s + \langle \mu_1,\alpha_i\rangle, \qquad \deg f_i^{(s)} = s+ \langle \mu_2,\alpha_i\rangle.
\end{equation}
The torus $T$ acts on $\cW_\mu$ by conjugation, inducing a grading on $\bC[\cW_\mu]$ by the root lattice $Q$ of $\g$. For each $\beta\in \Delta^+$ and $s\gge 1$ we fix arbitrary decompositions $\beta=\alpha_{i_1}+\ldots + \alpha_{i_\ell}$ as in \S \ref{ssec:PBW for sty}, and set
$$
e_\beta^{(s)} = \big\{e_{i_1}^{(s)},\{e_{i_2}^{(1)},\cdots\{e_{i_{\ell-1}}^{(1)}, e_{i_\ell}^{(1)}\}\cdots \}\big\}, 
\qquad 
f_\beta^{(s)} = \big\{f_{i_1}^{(s)},\{f_{i_2}^{(1)},\cdots\{f_{i_{\ell-1}}^{(1)}, f_{i_\ell}^{(1)}\}\cdots \}\big\}.
$$

Following \cite[Definition B.2]{BFN19} and \cite[Definition 3.5]{FKPRW}, one defines algebras $Y_\mu = Y_\mu(\g)$ called \emph{shifted Yangians}.  The algebra $Y_\mu$ has a PBW basis in elements $H_i^{(r)}, E_\alpha^{(s)}, F_\alpha^{(s)}$ by \cite[Corollary 3.15]{FKPRW}. There are shift homomorphisms $\iota_{\mu,\nu_1,\nu_1}: Y_\mu \rightarrow Y_{\mu+\nu_1+\nu_2}$ by \cite[Proposition~3.8]{FKPRW}, and filtrations $F_{\mu_1,\mu_2}^\bullet Y_\mu$ by \cite[\S 5.4]{FKPRW} (compare with degrees in \eqref{eq:degrees under Wmu}).  There is also a grading of $Y_\mu$ by the root lattice, defined by $\deg E_\alpha^{(s)} = \alpha, \deg H_i^{(r)}=  0$ and $\deg F_\alpha^{(s)} = -\alpha$, which respects the filtrations $F_{\mu_1, \mu_2}^\bullet Y_\mu$.

As a preparation for Proposition \ref{prop: Wmu Poisson gens}, we list the Poisson relations satisfied by generators $h_i^{(r)}, e_i^{(s)}, f_i^{(s)}$ in $\bC[\cW_\mu]$, for $i\in \mathbb I$, $r \in \bZ $ and $s \gge 1$:
\begin{align}
h_i^{(r)} = 0~\text{ for }~r<&- \langle\mu,\alpha_i\rangle, \quad h_i^{(-\langle \mu, \alpha_i\rangle)} = 1,  \label{eq:csy0} \\
\{h_i^{(r_1)}, h_j^{(r_2)}\} &= 0, \label{eq:csy1}\\
\{e_i^{(s_1)}, f_j^{(s_2)}\} &= \delta_{ij} h_i^{(s_1+s_2-1)}, \label{eq:csy2}\\
 \{ h_i^{(r)}, e_j^{(s)}\}  &=  c_{ij} \sum_{p\gge 0} h_i^{(r-p-1)} e_j^{(s+q)}, \label{eq:csy3}\\
 \{h_i^{(r)}, f_j^{(s)}\}  &=  -  c_{ij} \sum_{p\gge 0} h_i^{(r-p-1)} f_j^{(s+p)}, \label{eq:csy4}\\
\{ e_i^{(r_1+1)}, e_j^{(r_2)}\} - \{ e_i^{(r_1)}, e_j^{(r_2+1)}\} &=  c_{ij} e_i^{(r_1)} e_j^{(r_2)}, \label{eq:csy5}\\
 \{ f_i^{(r_1+1)}, f_j^{(r_2)}\} - \{ f_i^{(r_1)}, f_j^{(r_2+1)}\} &= - c_{ij} f_i^{(r_1)} f_j^{(r_2)}, \label{eq:csy6}\\
i \neq j, N = 1-c_{ij} \Longrightarrow \operatorname{Sym}_{r_1,\ldots,r_N}& \{e_i^{(r_1)},\{e_i^{(r_2)},\cdots\{e_i^{(r_N)}, e_j^{(s)}\}\cdots\}\} = 0, \label{eq:csy7}\\
i \neq j, N = 1-c_{ij} \Longrightarrow \operatorname{Sym}_{r_1,\ldots,r_N} &\{f_i^{(r_1)},\{f_i^{(r_2)},\cdots\{f_i^{(r_N)}, f_j^{(s)}\}\cdots\}\} = 0. \label{eq:csy8}
\end{align}
The next result is an extension of the results of \cite{FKPRW}, inspired by the definition of the semiclassical shifted Yangian from \cite[\S 3.3]{T23} and \cite[Proposition~3.4]{TT24}. 

\begin{prop}
\label{prop: Wmu Poisson gens}
\begin{enumerate}
\item For each decomposition $\mu = \mu_1 + \mu_2$ there is an isomorphism of $Q\times \bZ$--graded algebras $$\gr^{F^\bullet_{\mu_1,\mu_2}} Y_\mu \cong \bC[\cW_\mu],$$ where the $\bZ$--grading on $\bC[\cW_\mu]$ corresponds to the $\bC^\times$--action (\ref{eq:loop rotation action}).   This isomorphism identifies the classes of $E_i^{(s)}, H_i^{(r)}, F_i^{(s)}$ with the elements $e_i^{(s)}, h_i^{(r)}, f_i^{(s)}$. It endows $\cW_\mu$ with a Poisson structure, independent of $\mu_1, \mu_2$, with the Poisson bracket having degree $(0,-1)$.

\item The coordinate ring $\bC[\cW_\mu]$ is the Poisson algebra generated by $h_i^{(r)}, e_i^{(s)}, f_i^{(s)}$ for $i\in \mathbb I$, $r \in \bZ $ and $s \gge 1$, with the defining Poisson relations \eqref{eq:csy0}--\eqref{eq:csy8}.

\item $\bC[\cW_\mu]$ is a polynomial algebra  on the PBW generators 
$$ 
 \{
 e_\beta^{(s)} : \beta \in\Delta^+, s \gge 1 \} \cup \{h_i^{(r)} : i \in \I, r > -\langle \mu,\alpha_i\rangle\} \cup\{f_\beta^{(s)} : \beta \in \Delta^+, s \gge 1
 \}.
$$

\item The shift map (\ref{eq:shift map}) is Poisson, and it is quantized by the shift homomorphism $\iota_{\mu,\nu_1,\nu_2}$ of shifted Yangians.  On the Poisson generators the corresponding map $\iota_{\mu,\nu_1,\nu_2} : \bC[\cW_\mu] \rightarrow \bC[\cW_{\mu+\nu_1+\nu_2}]$ is given by
$$
e_i^{(r)} \mapsto e_i^{(r-\langle \nu_1, \alpha_i\rangle)},\qquad h_i^{(r)} \mapsto h_i^{(r-\langle \nu_1+\nu_2, \alpha_i\rangle)},\qquad f_i^{(r)} \mapsto f_i^{(r-\langle \nu_2, \alpha_i\rangle)}.
$$
\end{enumerate}
\end{prop}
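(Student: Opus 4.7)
The plan is to bootstrap from known results about shifted Yangians $Y_\mu$ in \cite{FKPRW} and the scheme description of $\cW_\mu$, handling each part in sequence, with part (1) providing the engine for (2)--(4).

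\textbf{Part (1).} First, observe that as a scheme, $\cW_\mu$ is a direct product of its three factors, and each factor is isomorphic (via the exponential map combined with the parametrization by coefficients of the series $e_i^{(r)}, h_i^{(s)}, f_i^{(r)}$) to an infinite-dimensional affine space. Consequently $\bC[\cW_\mu]$ is a polynomial algebra on the functions $\{e_i^{(s)}, h_i^{(r)}, f_i^{(s)}\}$, and its Hilbert series (with respect to the $Q\times \bZ$--grading from the torus action and from \eqref{eq:loop rotation action}) can be written explicitly. On the other hand, by \cite[Corollary 3.15]{FKPRW} the shifted Yangian $Y_\mu$ has a PBW basis in generators $H_i^{(r)}, E_\beta^{(s)}, F_\beta^{(s)}$, and by \cite[\S 5.4]{FKPRW} the filtration $F_{\mu_1,\mu_2}^\bullet Y_\mu$ is algebra, with $\gr Y_\mu$ commutative and polynomial in the symbols of these generators. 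A direct comparison of Hilbert series in the $Q\times \bZ$--grading shows that the map $\gr Y_\mu \to \bC[\cW_\mu]$ sending $\overline{E_i^{(s)}} \mapsto e_i^{(s)}$, $\overline{H_i^{(r)}} \mapsto h_i^{(r)}$, $\overline{F_i^{(s)}} \mapsto f_i^{(s)}$ is a $Q\times \bZ$--graded isomorphism (one needs to verify that this map is well-defined, which reduces to checking that the leading symbols of the defining relations of $Y_\mu$ vanish on the $e_i^{(s)}, h_i^{(r)}, f_i^{(s)}$; this is a direct expansion in the matrix entries of a generic element of $\cW_\mu$). The almost-commutativity of $Y_\mu$ (commutators drop degree by one in $F_{\mu_1,\mu_2}^\bullet$) then equips $\gr Y_\mu$, hence $\bC[\cW_\mu]$, with a Poisson bracket of degree $(0,-1)$. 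Independence of $(\mu_1,\mu_2)$ follows as in \cite[Lemma 5.1]{FKPRW} / Remark \ref{rem: change of filtration} by a Rees-algebra argument.

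\textbf{Part (2).} The relations \eqref{eq:csy0}--\eqref{eq:csy8} are precisely the leading-symbol (semi-classical) limits of the defining relations of $Y_\mu$ under the filtration $F_{\mu_1,\mu_2}^\bullet$. Hence they hold in $\bC[\cW_\mu]$ via the isomorphism of (1). Let $A$ denote the Poisson algebra with the stated presentation; there is a surjective Poisson morphism $A \twoheadrightarrow \bC[\cW_\mu]$. To see injectivity, use a standard straightening argument: the relations \eqref{eq:csy5}--\eqref{eq:csy8} allow one to replace nonsimple brackets $\{e_\beta^{(s)},\cdot\}$ by simple-root ones and hence define elements $e_\beta^{(s)}, f_\beta^{(s)} \in A$, while \eqref{eq:csy1}--\eqref{eq:csy4} show that monomials in the PBW order span $A$. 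By part (3) below these monomials are algebraically independent in $\bC[\cW_\mu]$, so the surjection is an isomorphism.

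\textbf{Part (3).} The scheme $\cW_\mu = U_1^+[\![z^{-1}]\!] \times T_1[\![z^{-1}]\!] \times z^\mu U_1^-[\![z^{-1}]\!]$ is a product of ind-affine spaces. Using the standard parametrization of $U_1^\pm[\![z^{-1}]\!]$ via a chosen convex order on $\Delta^+$ and the exponential map, together with the parametrization of $T_1[\![z^{-1}]\!]$ by the coefficients $h_i^{(r)}$ for $r > -\langle \mu,\alpha_i\rangle$, one obtains an isomorphism of $\cW_\mu$ with an infinite-dimensional affine space whose coordinates are (up to triangular change of coordinates) exactly the proposed PBW generators. Thus $\bC[\cW_\mu]$ is polynomial in these generators.

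\textbf{Part (4).} The shift homomorphism $\iota_{\mu,\nu_1,\nu_2}: Y_\mu \to Y_{\mu+\nu_1+\nu_2}$ of \cite[Proposition 3.8]{FKPRW} respects the relevant filtrations (strictly, by a PBW comparison analogous to Lemma \ref{lem:strictlyfiltered}) and therefore induces a graded Poisson homomorphism on the associated graded algebras, which by (1) is a morphism $\bC[\cW_\mu] \to \bC[\cW_{\mu+\nu_1+\nu_2}]$. On the other hand, one checks directly from the scheme-theoretic definition \eqref{eq:shift map} that the geometric shift map $\iota_{\mu,\nu_1,\nu_2}$ pulls back the coordinate functions exactly by the stated formulas: conjugation by $z^{-\nu_1}$ on the left and $z^{-\nu_2}$ on the right shifts the $z$-degrees of the matrix coefficients of $U_1^\pm[\![z^{-1}]\!]$ and $T_1[\![z^{-1}]\!]$ by $-\langle \nu_1,\alpha_i\rangle, -\langle \nu_1+\nu_2,\alpha_i\rangle, -\langle \nu_2,\alpha_i\rangle$ respectively, and the projection $\pi$ preserves the leading coefficients. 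This geometric morphism is Poisson since conjugation by $z^{\pm \nu_i}$ is compatible with the bialgebra structure \eqref{eq: loop form} (the relevant rescalings being isometries on the invariant form). Matching formulas on Poisson generators finishes (4).

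\textbf{Main obstacle.} The subtle step is Part (1): rigorously establishing that the natural candidate map $\gr Y_\mu \to \bC[\cW_\mu]$ is well-defined and an isomorphism of graded \emph{Poisson} algebras (rather than just of commutative algebras). The well-definedness amounts to checking that the leading symbols of the defining relations of $Y_\mu$ vanish on the coordinate functions $e_i^{(s)}, h_i^{(r)}, f_i^{(s)}$, which is most efficiently done by writing a generic element $g \in \cW_\mu$ in triangular form and expanding. The Poisson compatibility then requires identifying the explicit Poisson bracket arising from the loop group Manin triple with the one coming from almost-commutativity of $Y_\mu$; this identification can be verified directly on simple-root generators and extended via the Leibniz rule, or deduced abstractly from the quantum duality principle as in \S \ref{ssec: Yangians and loop groups} after extending Proposition \ref{prop: QDP Yangian1} to the shifted setting as in \cite[Theorem B.15]{BFN19}.
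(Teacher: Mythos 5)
Your overall architecture mirrors the paper's — parts (1), (3), (4) are reductions to the known structure theory of shifted Yangians and the scheme $\cW_\mu$, and part (2) is proved by exhibiting a surjection $A \twoheadrightarrow \bC[\cW_\mu]$ from the abstract Poisson presentation and showing it is injective by a PBW-spanning count. The differences in (1), (3), (4) are cosmetic: the paper simply cites \cite[Theorem 5.15]{FKPRW}, while you reconstruct the argument by Hilbert-series comparison, which is fine.

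The genuine gap is in part (2), in your phrase ``use a standard straightening argument.'' It does not in fact suffice to observe that \eqref{eq:csy5}--\eqref{eq:csy8} ``allow one to replace nonsimple brackets by simple-root ones'' and that \eqref{eq:csy1}--\eqref{eq:csy4} ``show that monomials in the PBW order span $A$'': those relations only govern brackets among the degree-one simple-root generators, and in a Poisson algebra presented by generators and relations the real content is that every iterated nested bracket reduces to an ordered monomial in $e_\beta^{(s)}, h_i^{(r)}, f_\beta^{(s)}$ — a spanning statement that requires a nontrivial argument (the Poisson analogue of the Yangian PBW theorem). The paper handles this carefully: it first notes that the $\mathscr{Y}^>_\mu$, $\mathscr{Y}^0_\mu$, $\mathscr{Y}^<_\mu$ triangular decomposition surjects onto $\mathscr{Y}_\mu$, then reduces the positive and negative halves to the $\mu=0$ case via surjections $\mathscr{Y}^>_0 \twoheadrightarrow \mathscr{Y}^>_\mu$, $\mathscr{Y}^<_0 \twoheadrightarrow \mathscr{Y}^<_\mu$, and for dominant $\mu$ it imports the spanning statement from Topley's \cite[Theorem~3.4]{T23} (extended from $\gl_n$ to general simple $\g$). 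You need to supply or cite that actual spanning argument; asserting it as ``standard'' leaves a real hole where the proof does genuine work.
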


\begin{proof}
Parts (1), (3) and (4) follow from \cite[Theorem 5.15]{FKPRW}, which also implies that $\bC[\cW_\mu]$ is Poisson generated by the elements in Part (2).  Furthermore it follows that the claimed relations from (2) hold in $\bC[\cW_\mu]$, since these are simply the classical limits of the defining relations of $Y_\mu$. 

Let us temporarily denote by $\mathscr{Y}_\mu$ the Poisson algebra defined by generators and relations as in Part (2), so we have $\mathscr{Y}_\mu \twoheadrightarrow \bC[\cW_\mu]$. To complete the proof, it remains to prove the claim that $\mathscr{Y}_\mu$ is spanned by the same PBW monomials from Part (3).   For $\mu$ dominant, this follows from \cite[Theorem~ 3.4]{T23}.  (Although that result is stated for $\gl_n$, it holds for any simple $\g$ with minor modifications.)  For general $\mu$, consider the subalgebras $\mathscr{Y}^{>}_\mu, \mathscr{Y}^0_\mu, \mathscr{Y}^<_\mu$ Poisson generated by the elements $e_i^{(q)}$ (resp.~$h_i^{(p)}$, resp.~$f_i^{(q)}$). Then $\mathscr{Y}_\mu^0$ is a quotient of the polynomial ring generated by the $h_i^{(p)}$. Similarly to \cite[Remark~ 3.3]{FKPRW} there are surjections $\mathscr{Y}^>_0 \twoheadrightarrow \mathscr{Y}^>_\mu$ and $\mathscr{Y}^<_0 \twoheadrightarrow \mathscr{Y}^<_\mu$, so the spanning set for $\mathscr{Y}^>_0$ yields one for $\mathscr{Y}^>_\mu$, and similarly for $\mathscr{Y}^<_\mu$.  Finally using the relations one can see that $\mathscr{Y}^<_\mu \otimes \mathscr{Y}^0_\mu \otimes \mathscr{Y}^>_\mu \twoheadrightarrow \mathscr{Y}_\mu$, cf.~the first half of the proof of Lemma~\ref{lem:iWmu_pres}, which proves the claim.
\end{proof}

\begin{rem}
     $\cW_\mu$ inherits a Poisson structure from its quantization $Y_\mu$, by Proposition \ref{prop: Wmu Poisson gens}(1).  If $\mu$ is antidominant, then this Poisson structure admits an alternate description by \cite[Theorem~ A.8]{KPW22}: it is restricted from the Poisson structure on the group $G(\!(z^{-1})\!)$ corresponding to a Manin triple as in \S \ref{ssec: Yangians and loop groups}.
\end{rem}

%%%%%%%%%%%%%%%%%%%%%%%%%%%%%%%%%%%%%%%%%%%%%%%%
\subsection{Fixed point loci of the spaces $\cW_\mu$}
\label{sec:fixed points in slices}
Recall that  $\tau$ is a (possibly trivial) involution of the Dynkin diagram of  $\g$. Define a corresponding involution $\omega_\tau$ of the Lie algebra $\g$, given on Chevalley generators by $e_i \mapsto -f_{\tau i}$, $h_i \mapsto - h_{\tau i}$ and $f_i \mapsto - e_{\tau i} $. In other words, $\omega_\tau$ is the composition of the Chevalley involution $\omega_0$ on $\g$ and the involution induced by $\tau$ on $\g$. 

We shall work from now on with this explicit involution $\omega_\tau$ in place of $\omega$ in Section \ref{sec:tYdualityPrinciple}. We define an anti-involution $\sigma$ of the loop algebra $\g(\!(z^{-1})\!)$ by 
\begin{align} \label{sigma:gz}
\sigma(x\otimes z^k) = - \omega_\tau(x) \otimes (-z)^k.
\end{align}
Explicitly, this anti-involution $\sigma$ is determined by
\begin{equation}
    \label{eq:sigma:gz}
    e_i \otimes z^k \mapsto (-1)^k f_{\tau i} \otimes z^k, \qquad h_i \otimes t^k \mapsto (-1)^k h_{\tau i} \otimes z^k, \qquad  f_i \otimes z^k \mapsto (-1)^k e_{\tau i} \otimes z^k.
\end{equation}
The corresponding anti-involution $\sigma$ of the group $G(\!(z^{-1})\!)$ (see \S \ref{ssec:loop sym spaces} and \eqref{eq:sigma}) satisfies $\sigma(T_1[\![z^{-1}]\!])= T_1[\![z^{-1}]\!]$ and $\sigma(U^\pm_1[\![z^{-1}]\!]) = U^\mp_1[\![z^{-1}]\!]$, as well as
\begin{equation}
\label{eq:sigma:zmu}
\sigma(z^\mu) = (-z)^{\tau \mu} = (-1)^{\tau \mu} z^{\tau \mu}.
\end{equation}
Similarly, if $s\in \bC^\times$ then the element $s^\mu\in T$ satisfies $\sigma(s^\mu) = s^{\tau \mu}.$  

Inside the maximal torus $T \subset G$, recall from \eqref{eq:def of Tt} the following subgroup 
\begin{equation*}
    \Tt  = \{ t \in T \mid \sigma(t) = t^{-1} \} = \{ t \in T \mid \tau(t) = t^{-1} \}.
\end{equation*}
The coordinate ring of $\Tt$ is the group algebra $\bC[\Qt]$; see \eqref{Qtau}.  

\begin{eg}
\label{eg:transpose}
Take $\g = \fksl_n$ and $\tau = \id$, then  $\omega_\tau$ is given by negative transpose $\omega_\tau(x) = - x^T$.  For any $g(z) \in \mathrm{PGL}_n(\!(z^{-1})\!)$ we then have $\sigma\big( g(z) \big) = g(-z)^T$.  
% On the other hand, if $\tau$ is the diagram automorphism 
\end{eg}

Recall the notion of spherical and even coweights from Definition \ref{def:spherical}.

\begin{lem}
\label{lem:evenspherical}
    The following are equivalent, for any coweight $\mu$:
    \begin{enumerate}
        \item $\sigma$ preserves the subscheme $\cW_\mu \subset G(\!(z^{-1})\!)$;
        \item $\sigma(z^\mu) = z^\mu$;
        \item $\mu$ is even and $\tau\mu = \mu$;
        \item $\mu$ is even spherical.
    \end{enumerate}
\end{lem}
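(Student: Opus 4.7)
The plan is to establish the chain (1) $\Leftrightarrow$ (2) $\Leftrightarrow$ (3) $\Leftrightarrow$ (4), leveraging the explicit formula $\sigma(z^\mu) = (-1)^{\tau\mu} z^{\tau\mu}$ from \eqref{eq:sigma:zmu} together with the product structure of $\cW_\mu$ in \eqref{eq:Wmu}.

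First I would handle (2) $\Leftrightarrow$ (3): the identity $(-1)^{\tau\mu} z^{\tau\mu} = z^\mu$ in $T(\!(z^{-1})\!)$ holds if and only if $\tau\mu = \mu$ (comparing powers of $z$ under each character of $T$) and the constant $(-1)^\mu \in T$ is trivial. Since $G$ is of adjoint type, the character lattice of $T$ is the root lattice $Q$, so $(-1)^\mu = e$ is equivalent to $\langle\mu, \alpha_i\rangle \in 2\bZ$ for all $i$, i.e., to $\mu$ being even in the sense of Definition \ref{def:spherical}.

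Next, for (3) $\Leftrightarrow$ (4), I would unpack Definition \ref{def:spherical}. Spherical means $\mu = \mu_1 + \tau\mu_1$, which immediately forces $\tau\mu = \mu$. Conversely, given $\mu$ even with $\tau\mu = \mu$, write $\mu = 2\mu'$ for a coweight $\mu'$; then $2\tau\mu' = 2\mu'$, and torsion-freeness of the coweight lattice forces $\tau\mu' = \mu'$, so that $\mu = \mu' + \tau\mu'$ exhibits sphericity.

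For (2) $\Rightarrow$ (1), the computation is direct: $\sigma$ is a group anti-homomorphism with $\sigma(U_1^\pm[\![z^{-1}]\!]) = U_1^\mp[\![z^{-1}]\!]$ and $\sigma(T_1[\![z^{-1}]\!]) = T_1[\![z^{-1}]\!]$, so applying $\sigma$ to \eqref{eq:Wmu} and using $\sigma(z^\mu) = z^\mu$ together with the commutativity of $T_1[\![z^{-1}]\!]$ with $z^\mu$ yields $\sigma(\cW_\mu) = \cW_\mu$.

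The main (if modest) obstacle lies in the reverse direction (1) $\Rightarrow$ (2), which requires invoking the uniqueness of the factorization defining $\cW_\mu$. Since $z^\mu \in \cW_\mu$, condition (1) gives $\sigma(z^\mu) \in \cW_\mu$; as $\sigma(z^\mu) \in T(\!(z^{-1})\!)$, the unipotent factors of its decomposition in $U_1^+[\![z^{-1}]\!]\,T_1[\![z^{-1}]\!]\,z^\mu\, U_1^-[\![z^{-1}]\!]$ must be trivial, forcing $\sigma(z^\mu) = h z^\mu$ with $h \in T_1[\![z^{-1}]\!]$. Reading the equality $(-1)^{\tau\mu} z^{\tau\mu - \mu} = h$ through each root character of $T$, the requirement that $h$ evaluates to $1$ at $z = \infty$ forces $\tau\mu - \mu = 0$ (to remove nontrivial $z$-powers) and then $(-1)^\mu = e$ (to make the residual constant trivial), which is precisely (2).
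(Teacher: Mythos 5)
Your proof is correct and follows essentially the same approach as the paper: compute $\sigma(z^\mu)=(-1)^{\tau\mu}z^{\tau\mu}$, exploit the uniqueness of the Gauss decomposition defining $\cW_\mu$, and compare $\sigma(z^\mu)$ with $z^\mu$ character-by-character using that $G$ is adjoint. The only cosmetic difference is that for $(1)\Rightarrow(2)$ you specialize to the single element $g=z^\mu$, while the paper phrases the same uniqueness argument for a general $g=n^+hz^\mu n^-\in\cW_\mu$; and you spell out the $(3)\Leftrightarrow(4)$ step that the paper dismisses as ``clear.''
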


\begin{proof}
    Each $g \in \cW_\mu$ has a unique Gauss decomposition $g = n^+ h z^\mu n^-$ in \eqref{eq:Wmu}.  Then 
    $$
    \sigma(g) = \sigma(n^-) \sigma(h) \sigma(z^\mu) \sigma(n^+) \in U_1^+[\![z^{-1}]\!]T_1[\![z^{-1}]\!] \sigma(z^\mu) U_1^-[\![z^{-1}]\!].
    $$
    Since Gauss decompositions are unique, we have $\sigma(g) \in \cW_\mu$ if and only if $\sigma(z^\mu) = z^\mu.$  Comparing with (\ref{eq:sigma:zmu}), this  holds exactly when $(-1)^{\tau \mu} = 1$ and $\tau \mu = \mu$.  But $(-1)^{\tau \mu} = 1$ means precisely that $\tau \mu$ is even, which is equivalent to $\mu$ itself being even.  

    The equivalence of (3) and (4) is also clear. 
\end{proof}

\begin{lem}
    \label{lem: gradings and shifts iWmu}
    Suppose that $\mu$ is an even spherical coweight.
    \begin{enumerate}
        \item Consider a $\bC^\times$--action on $\cW_\mu$ as in (\ref{eq:loop rotation action}), such that $\mu_2 = \tau \mu_1$, and hence $\mu = \mu_1 + \tau \mu_1.$ Then this $\bC^\times$--action commutes with $\sigma$.

        \item Under the adjoint action of the group $T$ on $\cW_\mu$, the subgroup $\Tt$ commutes with $\sigma$.
        
        \item Let $\nu$ be an even antidominant coweight. Then the shift map $\iota_{\mu,\nu,\tau \nu}: \cW_{\mu+\nu+\tau\nu} \rightarrow \cW_\mu$ from (\ref{eq:shift map}) intertwines the respective involutions $\sigma|_{\cW_\mu} \circ \iota_{\mu, \nu, \tau \nu} = \iota_{\mu,\nu,\tau \nu} \circ \sigma|_{\cW_{\mu+\nu+\tau \nu}}$. 

        \item The restriction $\sigma|_{\cW_\mu}$ is a Poisson involution, and given on the Poisson generators by
    $$
    e_i^{(r)} \mapsto (-1)^r f_{\tau i}^{(r)}, \qquad h_i^{(s)} \mapsto (-1)^s h_{\tau i}^{(s)} , \qquad f_i^{(r)} \mapsto (-1)^r e_{\tau i}^{(r)}.
    $$
    \end{enumerate}
\end{lem}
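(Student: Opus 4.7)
The plan is to establish each of the four parts by direct computation, exploiting that $\sigma$ is a group anti-homomorphism satisfying $\sigma(s^\nu)=s^{\tau\nu}$ for $s\in\bC^\times$ together with the Lie-algebraic formulas \eqref{eq:sigma:gz}. For Part~(1), I would expand both $\sigma(s\cdot g(z))$ and $s\cdot\sigma(g(z))$ using anti-multiplicativity: the former equals $\sigma(s^{-\mu_2})\sigma(g(sz))\sigma(s^{-\mu_1}) = s^{-\tau\mu_2}\sigma(g)(sz)\,s^{-\tau\mu_1}$, while the latter is $s^{-\mu_1}\sigma(g)(sz)\,s^{-\mu_2}$; these agree iff $\mu_2=\tau\mu_1$. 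For Part~(2), observe that $\omega_\tau(t)=\tau(t)^{-1}$ on the constant torus $T$, so $\sigma(t)=\omega_\tau(t)^{-1}=\tau(t)$; hence for $t\in\Tt$ we have $\sigma(t)=t^{-1}$, and anti-multiplicativity yields $\sigma(tgt^{-1})=\sigma(t)^{-1}\sigma(g)\sigma(t)=t\sigma(g)t^{-1}$.

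For Part~(3), the evenness of $\nu$ gives $\sigma(z^{-\nu}) = (-z)^{-\tau\nu}=z^{-\tau\nu}$ (and similarly with $\nu\leftrightarrow\tau\nu$), so that $\sigma(z^{-\nu}gz^{-\tau\nu})=\sigma(z^{-\tau\nu})\sigma(g)\sigma(z^{-\nu})=z^{-\nu}\sigma(g)z^{-\tau\nu}$. It remains to check that $\sigma$ commutes with the projection $\pi$, which follows because $\sigma$ swaps $U^+\leftrightarrow U^-$ and preserves the polynomial/series splitting $U^\pm(\!(z^{-1})\!)=U^\pm[z]\cdot U_1^\pm[\![z^{-1}]\!]$: indeed, the substitution $z\mapsto -z$ preserves each factor, and then $\sigma$ in the decomposition $\sigma(n^+_{\mathrm{poly}}\cdot (\text{middle})\cdot n^-_{\mathrm{poly}})$ moves the polynomial parts to the outer positions, so that applying $\pi$ commutes with $\sigma$ acting on the middle factor.

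For Part~(4), I would use the Gauss decomposition: if $g = n^+hz^\mu n^-$, then by anti-multiplicativity and $\sigma(z^\mu)=z^\mu$ (Lemma~\ref{lem:evenspherical}), $\sigma(g) = \sigma(n^-)\sigma(h)z^\mu\sigma(n^+)$ is again in Gauss form with the unipotent factors swapped. Combined with \eqref{eq:sigma:gz}, this yields the claimed formulas on the generators $e_i^{(r)}, f_i^{(r)}, h_i^{(r)}$: the sign $(-1)^r$ arises from the substitution $z\mapsto -z$, and the swap $i\leftrightarrow \tau i$ comes from $\omega_\tau$. The evenness of $\mu$ is used in the $h_i^{(r)}$ case to ensure that the factor $(-1)^{\langle\alpha_i,\mu\rangle}$ arising from expanding $(-z)^{\langle\alpha_{\tau i},\mu\rangle}$ is trivial; in the $e_i^{(r)}, f_i^{(r)}$ cases, $\mu$ does not enter the formulas at all.

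For the Poisson claim in Part~(4), I would invoke the presentation of $\bC[\cW_\mu]$ by the Poisson relations \eqref{eq:csy0}--\eqref{eq:csy8} from Proposition~\ref{prop: Wmu Poisson gens}(2) and verify that the algebra involution $\sigma^*$ determined on generators preserves each relation. This is the main obstacle: no individual relation is difficult, but the sign bookkeeping across the eight families must be checked uniformly. The mechanism is that the relations come in $e\leftrightarrow f$ pairs, and the total parity of the superscript indices $r_i$ matches on the two sides of each relation, so that the $(-1)^r$ signs cancel consistently (e.g.~in \eqref{eq:csy2} the left side contributes $(-1)^{s_1+s_2}$ and the bracket itself contributes an extra $(-1)$ from the anti-symmetry of swapping $f_{\tau i}^{(s_1)}$ and $e_{\tau j}^{(s_2)}$, matching the $(-1)^{s_1+s_2-1}$ on the right). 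An alternative for antidominant $\mu$ would be to transport the Poisson structure from $G(\!(z^{-1})\!)$ via \cite[Theorem A.8]{KPW22} and use that $\sigma$ is already Poisson there, but the presentation-based check is uniform in $\mu$.
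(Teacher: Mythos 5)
Your proposal is correct, and Parts~(1)--(3) together with the coordinate computation in Part~(4) follow essentially the same direct line of reasoning as the paper: anti-multiplicativity of $\sigma$, the identity $\sigma(s^\nu) = s^{\tau\nu}$ for constants, Gauss decomposition, and the formula \eqref{eq:sigma:gz}. The paper merely states that (2) and (3) are ``similar direct calculations,'' and your fleshed-out account of commutativity with $\pi$ (using that $z\mapsto -z$ and the swap $U^\pm\leftrightarrow U^\mp$ respect the splittings $U^\pm(\!(z^{-1})\!) = U^\pm[z]\times U^\pm_1[\![z^{-1}]\!]$) is a correct way to fill that in.

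Where you diverge is the proof that $\sigma|_{\cW_\mu}$ is Poisson. The paper's argument is geometric: since $\sigma$ is already Poisson on $G(\!(z^{-1})\!)$ (\S\ref{ssec:loop sym spaces}), and for antidominant $\mu$ the subscheme $\cW_\mu\subseteq G(\!(z^{-1})\!)$ is Poisson by \cite[Theorem~A.8]{KPW22}, the restriction is automatically Poisson there; the general case follows by pushing along the shift embedding $\bC[\cW_\mu]\hookrightarrow\bC[\cW_{\mu+\nu+\tau\nu}]$ using Part~(3). You instead propose the presentation-based check: view $\sigma^*$ as a Poisson endomorphism of the free Poisson algebra determined on generators, and verify it preserves the Poisson ideal generated by \eqref{eq:csy0}--\eqref{eq:csy8}. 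This works---your sign calculus for the representative case \eqref{eq:csy2} is right, and since $\sigma$ already acts by an algebra automorphism (Lemma~\ref{lem:evenspherical} gives that $\sigma$ preserves $\cW_\mu$), only the bracket-compatibility on pairs of generators needs to be checked, which is exactly Poisson-relation preservation. The trade-off: your route is uniform in $\mu$ and self-contained given Proposition~\ref{prop: Wmu Poisson gens}(2), but it forces one through eight families of sign/$\tau$-bookkeeping; the paper's route avoids all signs at the cost of invoking \cite{KPW22} and making the shift maps do work first. Both are legitimate, and you are right to note that the \cite{KPW22} argument a priori only covers the antidominant case---the missing ingredient that bridges to general $\mu$ is precisely the shift map from Part~(3), which you dismiss as a limitation but the paper uses as the closing step.
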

\begin{proof}
    For Part (1), let $g = g(z) \in \cW_\mu$. Then we compute:
    \begin{align*}
    \sigma( s\cdot g(z) ) = \sigma( s^{-\mu_1}g(sz) s^{-\tau \mu_1}) &= \sigma( s^{-\tau \mu_1}) \sigma( g(sz)) \sigma( s^{-\mu_1})\\& = s^{-\mu_1} \sigma( g(sz)) s^{-\tau\mu_1} = s \cdot \sigma( g(z)).
    \end{align*}
    Parts (2) and (3) follow from similar direct calculations.
    % Similarly for (2), if $g \in \cW_{\mu+\nu+\tau\nu}$ then one sees that
    % $$
    % \sigma( \iota_{\mu, \nu,\tau\nu}( g(z) ) = \sigma( \pi( z^{-\nu} g z^{-\tau \nu}) )  = \pi( \sigma( z^{-\tau \nu}) \sigma(g) \sigma(z^{-\nu})) = \pi( z^{-\nu} \sigma(g) z^{-\tau \nu}) = \iota_{\mu,\nu,\tau\nu}( \sigma(g))
    % $$
    % where in the second to last equality we used $\sigma(z^{-\nu}) = (-1)^{ -\tau \nu} z^{-\tau \nu} = z^{-\tau \nu}$ since $\nu$ is even.
    
    It remains to prove Part (4). It follows from the results of \S \ref{ssec:loop sym spaces} that $\sigma$ is a Poisson involution of $G(\!(z^{-1})\!)$ in a natural way. For $\mu$ antidominant, $\cW_\mu \subset G(\!(z^{-1})\!)$ is a Poisson subscheme by \cite[Theorem~ A.8]{KPW22}, and therefore the restriction  $\sigma|_{\cW_\mu}$ is Poisson.  For general $\mu$, pick an even antidominant weight $\nu$ such that $\mu+\nu+\tau\nu$ is antidominant, and by (3) the corresponding Poisson embedding $ \bC[\cW_\mu] \hookrightarrow \bC[\cW_{\mu+\nu+\tau\nu}]$ intertwines the action of $\sigma$, which proves that $\sigma|_{\cW_\mu}$ is Poisson.  The explicit description in coordinates in (4) follows from the formula $\sigma(n^+ h z^\mu n^-) = \sigma(n^-) \sigma(h) z^\mu \sigma(n^+)$ together with equations  (\ref{eq: Drinfeld gens Wmu}) and (\ref{eq:sigma:gz}).   
\end{proof}

Recall an element in $\cW_\mu$ \eqref{eq:Wmu} is written naturally as $u h z^\mu u_-$. 
    
\begin{dfn}  \label{def:iWmu}
For any even spherical coweight $\mu$, we define the fixed point locus
\[
    \iW_\mu \ := \ (\cW_\mu)^\sigma 
    = \left\{ u h z^\mu u_- \in \cW_\mu \mid \sigma(u) = u_-, \ \sigma(h) = h \right \}.
\]
\end{dfn}

Since $\sigma$ defines a Poisson involution of $\cW_\mu$, the fixed point locus $\iW_\mu$ inherits a Poisson structure via Dirac reduction as in \S\ref{ssec: dirac reduction and fixed points}.  We may rescale this Poisson structure by a factor of 2, i.e., $\{a,b\} := 2 \{a,b\}_{Dirac}$; see Proposition \ref{prop:loopSymSpace} for an explanation. From now on, we will always equip $\iW_\mu$ with this doubled Poisson bracket.

\begin{prop}
\label{prop:iWmu}
    $\iW_\mu$ has a natural Poisson structure inherited from $\cW_\mu$ via (doubled) Dirac reduction. The group $\Tt$ acts on $\iW_\mu$ by conjugation, and for each choice of decomposition $\mu = \mu_1 + \tau \mu_1$ there is a $\bC^\times$--action on $\iW_\mu$ by
    $s\cdot g(z) = s^{-\mu_1} g(sz) s^{-\tau \mu_1}$
    such that the Poisson bracket has degree $-1$.    Finally, for each antidominant weight $\nu$ such that $\nu+\tau\nu$ is even, the shift map $\iota_{\mu,\nu}^\tau : \iW_{\mu+\nu+\tau\nu} \rightarrow \iW_\mu$, 
    $g \mapsto \pi( z^{-\nu} g z^{-\tau \nu})$,
    is surjective and Poisson.
\end{prop}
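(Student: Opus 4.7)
The plan is to observe that parts (1)--(3) are almost formal consequences of the framework already set up in Sections 3.2 and 4.2, while (4) is straightforward except for surjectivity, which will be the main technical point.

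For (1), since $\mu$ is even spherical, Lemma \ref{lem:evenspherical} shows that $\sigma$ preserves $\cW_\mu$, and by Lemma \ref{lem: gradings and shifts iWmu}(4) the restriction is a Poisson involution. The Dirac reduction procedure of Section \ref{ssec: dirac reduction and fixed points} then endows $\iW_\mu = \cW_\mu^\sigma$ with an induced Poisson structure, and we rescale by 2 for the reasons motivated in Proposition \ref{prop:loopSymSpace}. For (2), Lemma \ref{lem: gradings and shifts iWmu}(2) shows that the conjugation action of $\Tt \subset T$ on $\cW_\mu$ commutes with $\sigma$, hence descends to $\iW_\mu$. For (3), with the fixed decomposition $\mu = \mu_1 + \tau \mu_1$, Lemma \ref{lem: gradings and shifts iWmu}(1) shows the $\bC^\times$-action $s \cdot g(z) = s^{-\mu_1} g(sz) s^{-\tau\mu_1}$ commutes with $\sigma$ and so restricts to $\iW_\mu$. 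The Poisson bracket on $\cW_\mu$ has degree $-1$ by Proposition \ref{prop: Wmu Poisson gens}(1); since the $\sigma$--eigenspace decomposition respects the $\bZ$--grading (because $\bC^\times$ and $\sigma$ commute), the inherited bracket on $\iW_\mu$, even after doubling, is still of degree $-1$.

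For (4), the hypotheses $\mu$ even spherical and $\nu + \tau \nu$ even imply that $\mu + \nu + \tau \nu$ is even and manifestly $\tau$-invariant, so it is even spherical by Lemma \ref{lem:evenspherical}, and $\iW_{\mu+\nu+\tau\nu}$ is defined. Lemma \ref{lem: gradings and shifts iWmu}(3) says that $\iota_{\mu,\nu,\tau\nu}$ intertwines $\sigma$, so it restricts to a well-defined map $\iota_{\mu,\nu}^\tau : \iW_{\mu+\nu+\tau\nu} \to \iW_\mu$. This restricted map is Poisson because the unrestricted shift map is Poisson (Proposition \ref{prop: Wmu Poisson gens}(4)) and Dirac reduction is functorial for $\sigma$-equivariant Poisson morphisms: explicitly, a Poisson map which commutes with $\sigma$ sends the eigenspace $(+1)$ to $(+1)$ and the defining ideal of one fixed locus into the defining ideal of the other.

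The main obstacle is surjectivity of $\iota_{\mu,\nu}^\tau$. The unrestricted $\iota_{\mu,\nu,\tau\nu}$ is surjective on $\bC$-points, since $\pi$ is surjective by construction in \eqref{eq:shift map}. The strategy will be to write down an explicit $\sigma$-fixed lift: given $h \in \iW_\mu$ with Gauss decomposition $h = u_+ t z^\mu u_-$ where necessarily $\sigma(u_+) = u_-$ and $\sigma(t) = t$, the element $g_0 := u_+ t z^{\mu+\nu+\tau\nu} u_- \in \cW_{\mu+\nu+\tau\nu}$ is manifestly $\sigma$-fixed (here we use $\sigma(z^{\mu+\nu+\tau\nu}) = z^{\mu+\nu+\tau\nu}$ from Lemma \ref{lem:evenspherical}). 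Verifying $\iota_{\mu,\nu,\tau\nu}(g_0) = h$ amounts to computing $z^{-\nu} g_0 z^{-\tau\nu} = (z^{-\nu} u_+ z^\nu)\, t z^\mu\, (z^{\tau\nu} u_- z^{-\tau\nu})$ and checking, using antidominance of $\nu$ and $\tau\nu$, that the conjugated unipotents split as a product of a $U^\pm[z]$ part (killed by $\pi$) and a $U_1^\pm[\![z^{-1}]\!]$ part which recovers $u_\pm$. This is a direct loop group calculation; if instead one prefers an abstract argument, the same conclusion follows from the fact that the fiber of $\iota_{\mu,\nu,\tau\nu}$ over $h$ is isomorphic to an affine space on which $\sigma$ acts as an affine involution, hence has nonempty fixed locus.
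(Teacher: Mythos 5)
Your treatment of parts (1)--(3) is correct and matches the route the paper itself takes, invoking Lemma~\ref{lem: gradings and shifts iWmu} and Definition~\ref{def:iWmu}; the observation that the doubled Dirac bracket is still of degree $-1$ is the right way to handle part (3). For part (4), the identification of $\iota_{\mu,\nu}^\tau$ as the restriction of a $\sigma$-equivariant Poisson map and the Poisson functoriality of Dirac reduction are also correct.

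Your explicit lift for surjectivity is wrong. If $h = u_+ t z^\mu u_- \in \iW_\mu$, then $g_0 = u_+ t z^{\mu+\nu+\tau\nu} u_-$ is indeed a $\sigma$-fixed point of $\cW_{\mu+\nu+\tau\nu}$, but $\iota_{\mu,\nu,\tau\nu}(g_0) \neq h$ in general. The projection $\pi$ in \eqref{eq:shift map} factors $z^{-\nu} u_+ z^\nu \in U^+(\!(z^{-1})\!)$ into a $U^+[z]$-part times a $U_1^+[\![z^{-1}]\!]$-part and keeps only the latter; since $\nu$ is antidominant, conjugation by $z^{-\nu}$ raises the $z$-degree of each root coordinate of $u_+$, so the surviving $U_1^+[\![z^{-1}]\!]$-part is a \emph{truncation} of $u_+$, not $u_+$ itself. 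Already for $\fksl_2$, $\tau = \id$: with $u_+ = x_\alpha(z^{-1})$ and $\langle\nu,\alpha\rangle = -1$, one has $z^{-\nu} u_+ z^\nu = x_\alpha(1) \in U^+[z]$, whose $U_1^+[\![z^{-1}]\!]$-component is trivial. The backup argument you offer in passing --- that the fiber of $\iota_{\mu,\nu,\tau\nu}$ over $h$ is $\sigma$-stable and admits a fixed point --- is what actually works. To make it rigorous: the fiber is a $\sigma$-stable torsor under a pro-unipotent subgroup $N \subset U_1^+[\![z^{-1}]\!] \times U_1^-[\![z^{-1}]\!]$, and since $\exp: \mathfrak{n} \to N$ is an isomorphism in characteristic zero, $N$ is uniquely $2$-divisible and any $\sigma$-twisted $N$-torsor is $\sigma$-equivariantly trivial, hence contains a $\sigma$-fixed point. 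The paper's own proof of this proposition is a single sentence and does not address surjectivity at all, so you have identified a genuine point that deserves a remark; your first attempted proof fails, but your fallback sketch is essentially right and should be promoted to the main argument.
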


\begin{proof}
    This follows from Lemma \ref{lem: gradings and shifts iWmu} and Definition \ref{def:iWmu}: the subscheme  $\iW_\mu$ is preserved by the $\bC^\times$--action on $\cW_\mu$ corresponding to $\mu = \mu_1 + \mu_2$ with $\mu_2 = \tau \mu_1$, as well as the action of the group $\Tt$, and finally the map $\iota_{\mu,\nu}^\tau : \iW_{\mu+\nu+\tau\nu} \rightarrow \iW_\mu$ is the restriction of the Poisson map $\iota_{\mu,\nu,\tau\nu} : \cW_{\mu+\nu+\tau\nu} \rightarrow \cW_\mu$.
\end{proof}

We introduce natural functions on $\iW_\mu$ by restriction from $\cW_\mu$:
\begin{equation}
    \label{eq: Drinfeld gens iWmu}
    h_i^{(r)} := h_i^{(r)}\big|_{\iW_\mu} = (-1)^{(r)} h_{\tau i}^{(r)}\big|_{\iW_\mu}, \qquad b_i^{(s)} := e_i^{(s)}\big|_{\iW_\mu} = (-1)^{s} f_{\tau i}^{(s)}\big|_{\iW_\mu}
\end{equation}
Consider the action of $\bC^\times$ on $\iW_\mu$ associated with a decomposition $\mu = \mu_1 + \tau\mu_1$.  Under the corresponding $\bZ$--grading of $\bC[\iW_\mu]$ we have degrees
$$
\deg h_i^{(r)} = r + \langle \mu, \alpha_i \rangle, \qquad \deg e_i^{(s)} = s + \langle \mu_1, \alpha_i\rangle
$$
as follows from (\ref{eq:degrees under Wmu}); compare with the filtration $F_{\mu_1}^\bullet \mc Y_\mu$ defined in (\ref{eq: grading on iYmu}). At the same time, the adjoint action of $\Tt$ induces a grading on $\bC[\iW_\mu]$ by $\Qt$, with $h_i^{(r)}$ in degree $\overline{0}$ and $b_j^{(s)}$ in degree $\overline{\alpha_i}$. Since the actions of $\bC^\times$ and $\Tt$ commute, we obtain a grading on $\bC[\iW_\mu]$ by $\Qt\times \bZ$.

As a preparation for Theorem~\ref{thm:iWmuPoisson} below, we list the Poisson relation  \eqref{eq:csty0}--\eqref{eq:csty7} satisfied by generators $h_i^{(r)}, b_i^{(s)}$ for $i \in \I$, $r \in \bZ$ and $s \gge 1$: 
    \begin{align}
         h_i^{(r)} = 0~\text{ for }~r < &-\langle \mu,\alpha_i\rangle, \quad h_i^{(-\langle \mu, \alpha_i\rangle)} = 1,  \label{eq:csty0}\\
         \{h_i^{(r_1)}, h_j^{(r_2)} \}  &= 0, \label{eq:csty1}\\
        \{h_i^{(r)}, b_j^{(s)}\} &= \sum_{p \gge 0} \big( c_{ij} +(-1)^{p+1} c_{\tau i, j} \big) h_i^{(r-p-1)} b_j^{(s+p)}, \label{eq:csty2}\\
                 \{b_i^{(s_1+1)}, b_j^{(s_2)}\} - \{b_i^{(s_1)}, b_j^{(s_2+1)}\} &= c_{ij} b_i^{(s_1)} b_j^{(s_2)} + 2 \delta_{\tau i, j} (-1)^{s_1} h_j^{(s_1+s_2)} \label{eq:csty3}
    \end{align}
    as well as the Serre relations: 
    \begin{align}
    \{b_i^{(s_1)},b_{\tau i}^{(s_2)}\} &=(-1)^{s_1-1}h_{\tau i}^{(s_1+s_2-1)}, \quad
    \text{for } c_{ij} = 0, \, \tau i \neq j,
        \label{eq:csty4}
        \\
    \mathrm{Sym}_{s_1,s_2}\big\{b_{i}^{(s_1)},\{b_{i}^{(s_2)},b_{j}^{(s)}\}\big\}& =0, \qquad\qquad\qquad   
    \text{for } c_{ij}=-1, \, i\ne \tau i\ne j,
    \label{eq:csty5}
    \\
    \mathrm{Sym}_{s_1,s_2}\big\{b_{i}^{(s_1)},\{b_{i}^{(s_2)},b_{j}^{(s)}\}\big\}
        &= 2c_{ij} (-1)^{s_1-1} \sum_{p\gge 0} h_{i}^{(s_1+s_2-2p-1)}b_j^{(s+2p-1)}, 
        \notag \\
    &\qquad\qquad\qquad\qquad\text{ for }c_{ij}=-1, \, i=\tau i,
    \label{eq:csty6}
    \\
     \mathrm{Sym}_{s_1,s_2}\big\{b_{i}^{(s_1)},\{b_{i}^{(s_2)},b_{\tau i}^{(s)}\}\big\}
        &=
        4~\mathrm{Sym}_{s_1,s_2}(-1)^{s_1-1}\sum_{p \gge 0}h_{\tau i}^{(s_1+s-2p-2)} b_{i}^{(s_2+2p)},
        \notag \\
        &\qquad\qquad\qquad\qquad \text{for } c_{i,\tau i}=-1.
    \label{eq:csty7}
    \end{align}
Since $\tau$ is an involution, it follows from \eqref{eq:csty4} that 
\beq \label{hi=htaui}
h_i^{(r)}=(-1)^{r}h_{\tau i}^{(r)},
\eeq
and in particular,
\beq \label{h=0:i=taui}
h_i^{(2r+1)}=0, \qquad \text{ if } \tau i=i.
\eeq
For each $\beta\in \Delta^+$ and $s\gge 1$ we choose an element $b_\beta^{(s)} \in \bC[\iW_\mu]$ similarly to \S\ref{ssec:PBW for sty}: 
    $$
    b_\beta^{(s)} = \Big\{b_{i_1}^{(s)}, \big\{b_{i_2}^{(1)},\cdots \{ b_{i_{\ell-1}}^{(1)}, b_{i_\ell}^{(1)}\}\cdots\big\} \Big\}.
    $$
Now we can formulate the main result of this section.

\begin{thm}
\label{thm:iWmuPoisson}
Let $\mu$ be an even spherical coweight.
\begin{enumerate}
    \item For each decomposition $\mu = \mu_1 + \tau \mu_1$, there is an isomorphism of $\Qt\times \bZ$--graded Poisson algebras 
    $$
    \gr^{F_{\mu_1}^\bullet} \Yt_\mu \cong \bC[\iW_\mu],
    $$ 
    which identifies the classes  $\bar H_i^{(r)}, \bar B_j^{(s)}$ with the elements $h_i^{(r)}, b_j^{(s)}$.

    \item The coordinate ring $\bC[\iW_\mu]$ is the Poisson algebra generated by $h_i^{(r)}, b_i^{(s)}$ for $i \in \I$, $r \in \bZ$ and $s \gge 1$, with defining Poisson relations \eqref{eq:csty0}--\eqref{eq:csty7}.

    \item $\bC[\iW_\mu]$ is a polynomial algebra  on PBW generators
    $$
    \{ b_\beta^{(s)} : \beta \in \Delta^+, s \gge 1\} \cup\{h_i^{(2p)} : i \in \I_0, 2p > -\langle \mu,\alpha_i\rangle\} \cup \{h_i^{(s)} : i \in \I_1, s > -\langle \mu,\alpha_i\rangle\}.
    $$
    
    \item For any antidominant weight $\nu$ such that $\nu+\tau\nu$ is even, the shift map $\iota_{\mu,\nu}^\tau: \iW_{\mu+\nu+\tau\nu} \rightarrow \iW_\mu$ is quantized by the shift homomorphism $\iota_{\mu,\nu}^\tau : \Yt_\mu \rightarrow \Yt_{\mu+\nu+\tau\nu}$ from Lemma \ref{lem:shiftqs}.
\end{enumerate}
\end{thm}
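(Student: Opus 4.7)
The strategy is to establish Parts (2) and (3) first using Dirac reduction, then derive Part (1) by matching generators and relations with the associated graded of $\Yt_\mu$, and finally verify Part (4) on generators. Throughout, the key tool is the identification $\bC[\iW_\mu] \cong R\big(\bC[\cW_\mu], \sigma\big)$ together with Lemma~\ref{lem:symdirac}.

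First, I would verify that relations \eqref{eq:csty0}--\eqref{eq:csty7} are satisfied by the restricted functions $h_i^{(r)}, b_i^{(s)}$ on $\iW_\mu$. Since $\iW_\mu$ carries the doubled Dirac Poisson structure (Proposition \ref{prop:iWmu}), this amounts to choosing $\sigma$-invariant extensions
\[
\tilde h_i^{(r)} = \tfrac{1}{2}\big(h_i^{(r)}+(-1)^r h_{\tau i}^{(r)}\big), \qquad \tilde b_i^{(s)} = \tfrac{1}{2}\big(e_i^{(s)}+(-1)^s f_{\tau i}^{(s)}\big)
\]
in $\bC[\cW_\mu]$ whose restrictions to $\iW_\mu$ equal $h_i^{(r)}$ and $b_i^{(s)}$ (using Lemma \ref{lem: gradings and shifts iWmu}(4)), computing their brackets using \eqref{eq:csy1}--\eqref{eq:csy8}, doubling, and restricting. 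The four cross-terms in $\{\tilde b_i^{(s_1)}, \tilde b_j^{(s_2)}\}$ combine to produce the coefficient $c_{ij}+(-1)^{p+1} c_{\tau i,j}$ in \eqref{eq:csty2} after applying \eqref{eq:csy3} and \eqref{eq:csy4}, while the $\{e,f\}$-term in \eqref{eq:csy2} is responsible for the $2\delta_{\tau i,j}(-1)^{s_1} h_j^{(s_1+s_2)}$ piece of \eqref{eq:csty3} after the factor of~$2$ from doubling. The Serre relations \eqref{eq:csty5}--\eqref{eq:csty7} follow from \eqref{eq:csy7}--\eqref{eq:csy8} together with the inhomogeneous correction terms produced by \eqref{eq:csy3}--\eqref{eq:csy6}. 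This yields a surjective homomorphism of Poisson algebras $\mathscr{Y}_\mu^\imath \twoheadrightarrow \bC[\iW_\mu]$, where $\mathscr{Y}_\mu^\imath$ denotes the Poisson algebra presented by \eqref{eq:csty0}--\eqref{eq:csty7}.

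Next I would show that $\mathscr{Y}_\mu^\imath$ is spanned by the PBW monomials listed in Part (3). Relations \eqref{eq:csty3}, \eqref{eq:csty5}--\eqref{eq:csty7} allow reordering the $b$'s and reducing any iterated bracket to a standard root vector $b_\beta^{(s)}$ as in \S\ref{ssec:PBW for sty}, relation \eqref{eq:csty2} lets $h$'s commute past $b$'s, and \eqref{eq:csty1} gives commutativity of the $h$'s. Swapping $i \leftrightarrow \tau i$ in \eqref{eq:csty3} and \eqref{eq:csty4} forces $h_i^{(r)} = (-1)^r h_{\tau i}^{(r)}$ in $\mathscr{Y}_\mu^\imath$, and in particular $h_i^{(2p+1)}=0$ for $\tau i = i$, so we may restrict to the generators indexed as in Part (3). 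The hard step is algebraic independence. Using Lemma \ref{lem:symdirac} applied to the span $V$ of the polynomial generators of $\bC[\cW_\mu]$ from Proposition \ref{prop: Wmu Poisson gens}(3), we get $\bC[\iW_\mu] \cong S\big(V(1)\big)$. A computation from Lemma \ref{lem: gradings and shifts iWmu}(4) yields $\sigma(e_\beta^{(s)}) = \pm f_{\tau\beta}^{(s)}$ (the sign depending on $s$ and the length of the chosen decomposition of $\beta$) and $\sigma(h_i^{(r)}) = (-1)^r h_{\tau i}^{(r)}$. For $\tau\beta\ne\beta$, the four-dimensional subspace spanned by $e_\beta^{(s)}, e_{\tau\beta}^{(s)}, f_\beta^{(s)}, f_{\tau\beta}^{(s)}$ decomposes with two $(+1)$-eigenvectors whose images in $\bC[\iW_\mu]$ are scalar multiples of $b_\beta^{(s)}$ and $b_{\tau\beta}^{(s)}$; for $\tau\beta = \beta$ the pair $(e_\beta^{(s)}, f_\beta^{(s)})$ contributes a single $(+1)$-eigenvector mapping to a scalar multiple of $b_\beta^{(s)}$. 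Similarly, the Cartan part of $V(1)$ has the basis $\{h_i^{(2p)}\}_{i\in\I_0}\cup\{h_i^{(r)}+(-1)^r h_{\tau i}^{(r)}\}_{i\in\I_1}$ (in the appropriate ranges). The count in each graded piece matches the PBW generators of Part (3), completing Parts (2) and (3).

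Part (1) then follows by comparing presentations. By Theorem \ref{thm:pbw_arb} and Proposition \ref{prop:algfilt}, $\gr^{F_{\mu_1}^\bullet} \Yt_\mu$ is a commutative graded algebra with PBW basis given by the ordered monomials in the classes of $H_i^{(r)}, B_\beta^{(s)}$ indexed as in Part (3); the leading symbols of the defining relations in Definition~\ref{def:qsplit} produce exactly the Poisson relations \eqref{eq:csty0}--\eqref{eq:csty7} (e.g., the $\frac{c_{ij}-c_{\tau i,j}}{2}$ and $\frac{c_{ij}+c_{\tau i,j}}{2}$ terms in \eqref{hbNqs} assemble into the $c_{ij}+(-1)^{p+1}c_{\tau i,j}$ coefficient in \eqref{eq:csty2}, and the $2\delta_{\tau i,j}$ term in \eqref{bbNqs} matches the corresponding term in \eqref{eq:csty3}). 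Thus $\gr^{F_{\mu_1}^\bullet} \Yt_\mu \cong \mathscr{Y}_\mu^\imath \cong \bC[\iW_\mu]$, matching generators. For Part (4), one computes on generators that the shift homomorphism $\iota_{\mu,\nu}^\tau: \Yt_\mu \to \Yt_{\mu+\nu+\tau\nu}$ from Lemma \ref{lem:shiftqs} has classical limit equal to the pullback of functions under the geometric shift map $\iW_{\mu+\nu+\tau\nu}\to\iW_\mu$ from Proposition \ref{prop:iWmu}; the factor $\sqrt{-1}$ appearing in Lemma \ref{lem:shiftqs} disappears upon squaring and matches the geometric shift. The main obstacle is the careful sign/constant bookkeeping in Step~1, and the decomposition $V = V(1) \oplus V(-1)$ in Step~2, especially for roots $\beta$ with $\tau\beta=\beta$, where a single pair $(e_\beta^{(s)}, f_\beta^{(s)})$ rather than two pairs contributes, so dimension counts in each $(\Qt\times\bZ)$-graded piece require separate verification.
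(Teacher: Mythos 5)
Your overall strategy mirrors the paper's: verify the Poisson relations on $\iW_\mu$ by Dirac reduction, prove the PBW statement via Lemma~\ref{lem:symdirac}, and then match the presented Poisson algebra $\yt_\mu$ against $\operatorname{gr}^{F_{\mu_1}^\bullet}\Yt_\mu$. Steps~1 and~2 of your proposal track Lemmas~\ref{lem:iWmuDrinfeld}, \ref{lem:iWmuPBW}, and~\ref{lem:iWmu_pres} closely (though after decomposing $V=V(1)\oplus V(-1)$ and counting dimensions, you still need the inductive argument on $\operatorname{ht}\beta$ to express the images of $e_\beta^{(s)}+\pm f_{\tau\beta}^{(s)}$ in terms of the $b_\beta^{(s)}$, as in \cite[Lemma 2.3]{LWZ25degen}; a graded dimension count alone does not identify the specific elements).

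The genuine gap is in your Step~3. You invoke Proposition~\ref{prop:algfilt} as a black box and conclude that $\operatorname{gr}^{F_{\mu_1}^\bullet}\Yt_\mu$ can be compared to $\yt_\mu$ by reading off leading symbols of the defining relations. But Proposition~\ref{prop:algfilt} is only \emph{announced} in \S\ref{ssec:filtrations on stY}; it is established inside the proof of this very theorem via Proposition~\ref{prop: equal filtrations}, and this is the heart of the argument. The filtration $F_{\mu_1}^\bullet$ as defined is merely a vector-space filtration spanned by PBW monomials of bounded degree; it is not obvious that it is multiplicative, nor that the images of $H_i^{(r)}, B_i^{(s)}$ Poisson-generate $\operatorname{gr}^{F_{\mu_1}^\bullet}\Yt_\mu$. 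The paper resolves this by introducing a second filtration $G_{\mu_1}^\bullet$ as a quotient filtration from $U(L)$ for a free Lie algebra $L$ (so that $\operatorname{gr}^{G_{\mu_1}^\bullet}\Yt_\mu$ is automatically Poisson-generated and surjected by $\yt_\mu$), observing $F_{\mu_1}^k\subseteq G_{\mu_1}^k$, and then proving equality by a three-stage argument: a Hilbert-series count for $\mu=0$, a strictly filtered embedding into $\Yt_0\otimes\bC[\xi_i]$ via ${}^\imath\widetilde{\mathcal Y}$ for antidominant $\mu$, and propagation to general $\mu$ via the strictly filtered shift maps of Lemma~\ref{lem:strictlyfiltered}. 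Without this, the claimed isomorphism $\operatorname{gr}^{F_{\mu_1}^\bullet}\Yt_\mu\cong\yt_\mu$ has no proof: you have the surjection $\yt_\mu\twoheadrightarrow\bC[\iW_\mu]$ and you have a PBW basis on both sides, but no map in the required direction from $\yt_\mu$ onto $\operatorname{gr}^{F_{\mu_1}^\bullet}\Yt_\mu$ until Proposition~\ref{prop: equal filtrations} is in hand.
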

The proof of this theorem will be given in \S \ref{ssec:proof of theorem on quantizing fixed locus} below.

\begin{rem}
Assume that $\Yt_0$ are identified with the twisted Yangians (specialized at $\hbar =1$) in Definition \ref{def:tYaxioms}; this is conjectured to be always true and known in type AI and AIII, see Remark~ \ref{rem:tYsame}. Then Theorem \ref{thm:tY:G1sigma} is compatible with Theorem \ref{thm:iWmuPoisson}(1), stating that the twisted Yangian $\Yt_0$ quantizes $G_1[\![z^{-1}]\!]^\sigma =\iW_0$. We expect that Theorem \ref{thm:iWmuPoisson} could then be proven along the lines of \cite[Theorem 5.15]{FKPRW}. Note that our proof below is not reliant on any such assumptions.
\end{rem}

%%%%%%%%%%%%%%%%%%%%%%%%%%%%%%%%%%%%%%%%%%%%%%%%%%%%%%%%
\subsection{Proof of quantization Theorem \ref{thm:iWmuPoisson} of $\iW_\mu$}
\label{ssec:proof of theorem on quantizing fixed locus}

We will break the proof into several steps.

\begin{lem}
    \label{lem:iWmuDrinfeld}
    The elements $h_i^{(r)}$ and $b_i^{(s)}$ of $\bC[\iW_\mu]$ satisfy the Poisson relations (\ref{eq:csty0})--(\ref{eq:csty7}).
\end{lem}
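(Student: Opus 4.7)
The plan is to verify each Poisson relation by pulling back to $\bC[\cW_\mu]$, using the explicit Poisson relations \eqref{eq:csy0}--\eqref{eq:csy8} from Proposition~\ref{prop: Wmu Poisson gens} together with the doubled Dirac formalism. Concretely, each generator on $\iW_\mu$ admits a natural $\sigma$-invariant lift
\begin{equation*}
\tilde h_i^{(r)} := \tfrac12\big(h_i^{(r)} + (-1)^r h_{\tau i}^{(r)}\big),\qquad \tilde b_i^{(s)} := \tfrac12\big(e_i^{(s)} + (-1)^s f_{\tau i}^{(s)}\big)
\end{equation*}
in $\bC[\cW_\mu]$, as follows from Lemma~\ref{lem: gradings and shifts iWmu}(4); and the doubled Dirac bracket on $\iW_\mu$ reads $\{f,g\}_{\iW_\mu} = 2\{\tilde f,\tilde g\}_{\cW_\mu}\big|_{\iW_\mu}$ by Remark~\ref{rem: Dirac reduction Poisson} and the doubling convention fixed in \S\ref{sec:fixed points in slices}.

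With this setup, relations \eqref{eq:csty0} and \eqref{eq:csty1} are immediate from \eqref{eq:csy0}--\eqref{eq:csy1}. For \eqref{eq:csty2} I would expand $2\{\tilde h_i^{(r)},\tilde b_j^{(s)}\}$ into four brackets on $\cW_\mu$, apply \eqref{eq:csy3}--\eqref{eq:csy4} (using $c_{\tau i,\tau j}=c_{ij}$), and restrict to $\iW_\mu$ via $h_{\tau i}^{(k)}|_{\iW_\mu} = (-1)^k h_i^{(k)}$ and $f_{\tau j}^{(l)}|_{\iW_\mu} = (-1)^l b_j^{(l)}$; the four contributions collapse to the prescribed coefficient $c_{ij} + (-1)^{p+1}c_{\tau i,j}$ in each summand. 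Relation~\eqref{eq:csty3} proceeds analogously, except that the $\{e_i,f_j\}$ and $\{f_{\tau i},e_{\tau j}\}$ brackets from \eqref{eq:csy2} now also contribute, producing the extra term $2\delta_{\tau i,j}(-1)^{s_1}h_j^{(s_1+s_2)}$ on the right-hand side.

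The Serre relations \eqref{eq:csty4}--\eqref{eq:csty7} are handled by the same mechanism: each iterated Poisson bracket on the left expands, by Leibniz, into a sum of nested brackets among $e$'s and $f$'s, most of which vanish by the classical Serre relations \eqref{eq:csy7}--\eqref{eq:csy8} (together with \eqref{eq:csy5}--\eqref{eq:csy6} when a pair lies in a single triangular part). The surviving contributions come from ``mixed'' nestings in which an inner bracket is of $\{e,f\}$-type, producing an $h$-factor via \eqref{eq:csy2}; the remaining outer bracket against a $b$-generator is then evaluated by \eqref{eq:csy3}--\eqref{eq:csy4}, after which restriction to $\iW_\mu$ and symmetrization in $(s_1,s_2)$ recover the prescribed right-hand sides.

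The main obstacle will be the sign and multiplicity bookkeeping for the quasi-split Serre relations \eqref{eq:csty6} (with $i=\tau i$) and especially \eqref{eq:csty7} (with $c_{i,\tau i}=-1$), where one must correctly organize contributions from several mixed nestings and match the nontrivial infinite sum $4\,\mathrm{Sym}_{s_1,s_2}(-1)^{s_1-1}\sum_{p\gge 0}h_{\tau i}^{(s_1+s-2p-2)}b_i^{(s_2+2p)}$. This is a purely combinatorial check, made tractable by the fact that on $\iW_\mu$ one has the symmetry \eqref{hi=htaui} together with \eqref{h=0:i=taui}, which collapse many would-be contributions and force the surviving terms to appear only in the indicated parities.
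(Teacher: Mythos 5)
Your proposal follows essentially the same route as the paper: pick the $\sigma$-invariant lifts $\tilde b_i^{(s)}=\tfrac12(e_i^{(s)}+(-1)^sf_{\tau i}^{(s)})$ (and similarly for $\tilde h_i^{(r)}$), compute the doubled Dirac bracket as $2\{\tilde f,\tilde g\}\big|_{\iW_\mu}$ (iterated in the obvious way for nested brackets), expand using \eqref{eq:csy0}--\eqref{eq:csy8}, discard terms that vanish by the classical Serre relations, and restrict via $h_{\tau i}^{(k)}|_{\iW_\mu}=(-1)^kh_i^{(k)}$ and $f_{\tau j}^{(l)}|_{\iW_\mu}=(-1)^lb_j^{(l)}$. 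The paper's own proof is exactly this direct calculation, illustrated in detail with the relation \eqref{eq:csty7}, so your plan is correct and matches the paper's argument.
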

\begin{proof}
     We remind the reader that we use the \emph{doubled} Dirac reduction Poisson bracket on $\iW_\mu$. By Remark \ref{rem: Dirac reduction Poisson}, this means that for $f,g \in \bC[\iW_\mu]$ we choose $\sigma$--invariant lifts $\widetilde{f}, \widetilde{g} \in \bC[\cW_\mu]$, and then define $\{f,g\} = 2 \{\widetilde{f}, \widetilde{g}\}\big|_{\iW_\mu}$. A similar formula applies for iterated Poisson brackets.  With this in mind, the claim is a direct calculation  using the relations (\ref{eq:csy0})--(\ref{eq:csy7}); similar calculations also appeared in the proof of \cite[Theorem 3.7]{T23}, except for the analogues of (\ref{eq:csty5}) and (\ref{eq:csty7}) which are not included since $\tau = \id$ in \cite{T23}.

     We illustrate this process with (\ref{eq:csty7}). Lift each $b_i^{(s)} \in \bC[\iW_\mu]$ to the $\sigma$--invariant element $\widetilde{b}_i^{(s)} = \frac{1}{2}(e_i^{(s)}+(-1)^s f_{\tau i}^{(s)})\in \bC[\cW_\mu]$. Then $\{b_i^{(s_1)}, \{b_i^{(s_2)}, b_{\tau i}^{(s)}\}\}$ is determined by a calculation in $\bC[\cW_\mu]$:
     \begin{align*}
        &~2\big\{ \widetilde{b}_i^{(s_1)}, 2 \big\{ \widetilde{b}_i^{(s_2)}, \widetilde{b}_{\tau i}^{(s)} \big\} \big\} \\
         = &~ \tfrac{1}{2} \Big\{ e_i^{(s_1)}+(-1)^{s_1} f_{\tau i}^{(s_1)}, \big\{ e_i^{(s_2)}+(-1)^{s_2} f_{\tau i}^{(s_2)}, e_{\tau i}^{(s)} + (-1)^s f_i^{(s)}\big\}\Big\}     \\
         =&~ \tfrac{1}{2} \Big\{ e_i^{(s_1)}+(-1)^{s_1} f_{\tau i}^{(s_1)},\\
         &\quad~~ \{e_i^{(s_2)}, e_{\tau i}^{(s)}\} + (-1)^s h_i^{(s_2+s-1)} - (-1)^{s_2} h_{\tau i}^{(s_2+s-1)} +(-1)^{s_2+s} \{f_{\tau i}^{(s_2)}, f_i^{(s)}\}\Big\}.
     \end{align*}
     We will expand this Poisson bracket.  Observe that the term $\{e_i^{(s_1)}, \{e_i^{(s_2)}, e_{\tau i}^{(s)}\}\}$ will vanish once we  symmetrize in $s_1$ and $s_2$ due to (\ref{eq:csy7}), and similarly for the term $(-1)^{s_1+s_2+s}\{f_{\tau i}^{(s_1)}, \{f_{\tau i}^{(s_2)}, f_{i}^{(s)}\}\}$. We thus omit these terms below.  Using the relations (\ref{eq:csy2})--(\ref{eq:csy4}), the remaining terms are seen to be:
     \begin{align*}
        &~- (-1)^s \sum_{p \gge 0} h_i^{(s_2+s-p-2)} e_i^{(s_1+p)} - (-1)^{s_2} \tfrac{1}{2}\sum_{p \gge 0}h_{\tau i}^{(s_2+s-p-2)} e_i^{(s_1+p)}\\&~ + (-1)^{s_2+s} \tfrac{1}{2} \{ f_{\tau i}^{(s_2)}, h_i^{(s_1+s-1)}\} 
        +(-1)^{s_1} \tfrac{1}{2} \{ e_i^{(s_2)}, - h_{\tau i}^{(s_1+s-1)}\}\\ &~-(-1)^{s_1+s} \tfrac{1}{2}\sum_{p \gge 0} h_i^{(s_2+s-p-2)} f_{\tau i}^{(s_1+p)} - (-1)^{s_1+s_2} \sum_{p \gge 0} h_{\tau i}^{(s_2+s-p-2)} f_{\tau i}^{(s_1+p)}\\
        = &~- (-1)^s \sum_{p \gge 0} h_i^{(s_2+s-p-2)} e_i^{(s_1+p)} - (-1)^{s_2} \tfrac{1}{2}\sum_{p \gge 0}h_{\tau i}^{(s_2+s-p-2)} e_i^{(s_1+p)}\\
        &~ - (-1)^{s_2+s} \tfrac{1}{2} \sum_{p \gge 0} h_i^{(s_1+s-p-2)} f_{\tau i}^{(s_2+p)} -(-1)^{s_1} \tfrac{1}{2}\sum_{p \gge 0} h_{\tau i}^{(s_1+s-p-2)} e_i^{(s_2+p)}\\
        &~-(-1)^{s_1+s} \tfrac{1}{2}\sum_{p \gge 0} h_i^{(s_2+s-p-2)} f_{\tau i}^{(s_1+p)} - (-1)^{s_1+s_2} \sum_{p \gge 0} h_{\tau i}^{(s_2+s-p-2)} f_{\tau i}^{(s_1+p)}.
     \end{align*}
     Restricting to $\iW_\mu$ and applying (\ref{eq: Drinfeld gens iWmu}) yields
     $$
      \sum_{p\gge 0}\left( 2(-1)^{s_2-p-1} + (-1)^{s_2-1}\right) h_{\tau i}^{(s_2+s-p-2)} b_i^{(s_1+p)}+  \sum_{p\gge 0}(-1)^{s_1-1} h_{\tau i}^{(s_1+s-p-2)} b_i^{(s_2+p)}.
     $$
     Finally, after symmetrizing in $s_1$ and $s_2$ we get
     $$
      \operatorname{Sym}_{s_1,s_2} \sum_{p\gge 0}\left( 2(-1)^{s_1-p-1} + 2(-1)^{s_1-1}\right) h_{\tau i}^{(s_1+s-p-2)} b_i^{(s_2+p)}.
     $$
     The coefficient appearing inside the sum vanishes unless $p$ is even, yielding (\ref{eq:csty7}).
\end{proof}

The next result establishes Part (3) of Theorem \ref{thm:iWmuPoisson}.
\begin{lem}
\label{lem:iWmuPBW}
    $\bC[\iW_\mu]$ is Poisson generated by the elements $h_i^{(r)}$ and $b_i^{(s)}$ for $i\in \I$, $r\in \bZ$ and $s \gge 1$, and is a polynomial ring in PBW generators 
    $$
    \{ b_\beta^{(s)} : \beta \in \Delta^+, s \gge 1\} \cup\{h_i^{(2p)} : i \in \I_0, 2p > -\langle \mu,\alpha_i\rangle\} \cup \{h_i^{(r)} : i \in \I_1, r > -\langle \mu,\alpha_i\rangle\}.
    $$
\end{lem}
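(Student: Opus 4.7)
The plan is to establish a scheme-theoretic product decomposition of $\iW_\mu$ which makes the PBW structure transparent, and then to match the resulting natural coordinates with the intrinsically defined $b_\beta^{(s)}$. First I would exploit the Gauss factorization $\cW_\mu = U_1^+[\![z^{-1}]\!]\cdot T_1[\![z^{-1}]\!] z^\mu\cdot U_1^-[\![z^{-1}]\!]$ together with $\sigma(uhz^\mu u_-) = \sigma(u_-)\sigma(h)z^\mu \sigma(u)$ (using $\sigma(z^\mu)=z^\mu$ from Lemma \ref{lem:evenspherical}). Uniqueness of Gauss decomposition then forces the fixed-point condition to $u_- = \sigma(u)$ and $\sigma(h)=h$, yielding an isomorphism of affine schemes
\[
\iW_\mu \;\xrightarrow{\sim}\; U_1^+[\![z^{-1}]\!] \times T_1[\![z^{-1}]\!]^\sigma z^\mu, \qquad u h z^\mu \sigma(u) \mapsto (u, h z^\mu),
\]
and hence a tensor decomposition $\bC[\iW_\mu] \cong \bC[U_1^+[\![z^{-1}]\!]] \otimes \bC[T_1[\![z^{-1}]\!]^\sigma z^\mu]$.

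Each tensor factor is then computed separately. Proposition \ref{prop: Wmu Poisson gens}(3), applied to the Gauss decomposition of $\cW_\mu$, identifies $\bC[U_1^+[\![z^{-1}]\!]]$ as a polynomial ring in $\{e_\beta^{(s)}|_{\iW_\mu} : \beta\in\Delta^+,\, s\gge 1\}$, and $\bC[T_1[\![z^{-1}]\!] z^\mu]$ as a polynomial ring in $\{h_i^{(r)} : i\in\I,\, r>-\langle\mu,\alpha_i\rangle\}$. By Lemma \ref{lem: gradings and shifts iWmu}(4), $\sigma$ acts linearly on these torus generators by $h_i^{(r)}\mapsto (-1)^r h_{\tau i}^{(r)}$; Lemma \ref{lem:symdirac} applied to its $(+1)$-eigenspace identifies $\bC[T_1[\![z^{-1}]\!]^\sigma z^\mu]$ with the polynomial ring on $\{h_i^{(2p)}: i\in \I_0,\, 2p>-\langle\mu,\alpha_i\rangle\}\cup \{h_i^{(r)}: i \in \I_1,\, r>-\langle\mu,\alpha_i\rangle\}$, where the Dirac reduction simultaneously imposes the relations $h_i^{(2p+1)} = 0$ for $i\in\I_0$ and $h_{\tau i}^{(r)} = (-1)^r h_i^{(r)}$ for $i\in\I_1$. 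This matches the claimed $h$-part of the PBW generators exactly.

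It remains to exchange the $U_1^+$-coordinates $e_\beta^{(s)}|_{\iW_\mu}$ for the intrinsically defined $b_\beta^{(s)}$ from Proposition \ref{prop:iWmu}. The simple root case $b_i^{(s)} = e_i^{(s)}|_{\iW_\mu}$ holds by \eqref{eq: Drinfeld gens iWmu}. For general $\beta$, I would prove by induction on $\operatorname{ht}(\beta)+s$ that
\[
b_\beta^{(s)} = c_\beta\cdot e_\beta^{(s)}|_{\iW_\mu}+ (\text{polynomial of strictly smaller }F_{\mu_1}^\bullet\text{-filtration degree}),
\]
for some nonzero scalar $c_\beta$. The computation expands the iterated doubled-Dirac bracket defining $b_\beta^{(s)}$ using the symmetric lifts $\widetilde{b}_i^{(s)} = \tfrac12(e_i^{(s)}+(-1)^s f_{\tau i}^{(s)})$: cross contributions of the form $\{e,f\}$ produce $h$-factors via \eqref{eq:csy2} and hence lie in strictly smaller filtration degree, while the ``diagonal'' contributions $\{e,\ldots,e\}|_{\iW_\mu}$ and $\{f,\ldots,f\}|_{\iW_\mu}$ combine via the $\sigma$-invariance identity $\sigma(e_\beta^{(s)})|_{\iW_\mu} = e_\beta^{(s)}|_{\iW_\mu}$ (which expresses $f_{\tau\beta}^{(s)}|_{\iW_\mu}$ as $\pm e_\beta^{(s)}|_{\iW_\mu}$ modulo lower-order terms) to yield the stated nonzero leading scalar. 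This triangular substitution preserves the polynomial-ring property and establishes the PBW basis. Poisson generation of $\bC[\iW_\mu]$ by the simple family $\{h_i^{(r)}, b_i^{(s)}\}_{i\in\I}$ is then immediate, since every non-simple $b_\beta^{(s)}$ is by definition an iterated Poisson bracket of simple $b_{i_k}^{(s)}$'s.

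The hard part will be the inductive bookkeeping in the third paragraph: carefully tracking signs, filtration degrees, and the lower-order corrections coming from Serre-type relations and cross-bracket contributions, and ensuring that the leading scalar $c_\beta$ really is nonzero independently of the choice of ordered decomposition $\beta = \alpha_{i_1}+\cdots+\alpha_{i_\ell}$. Once this is in hand, the remainder of the argument is a combination of Lemma \ref{lem:symdirac}, the Gauss decomposition of $\cW_\mu$, and Proposition \ref{prop: Wmu Poisson gens}(3).
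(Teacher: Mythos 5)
Your argument is correct and, once unpacked, is the same proof as the paper's; the difference is one of packaging. The paper works with the span $V$ of all PBW generators $e_\beta^{(s)}, h_i^{(r)}, f_\beta^{(s)}$, where $\bC[\cW_\mu]=S(V)$, decomposes $V=V(1)\oplus V(-1)$ under $\sigma$, and then Lemma \ref{lem:symdirac} (with $U=V(1)$) gives $\bC[\iW_\mu]\cong S\big(V(1)\big)$ with basis $e_\beta^{(s)}+(-1)^{s+\operatorname{ht}\beta-1}f_{\tau\beta}^{(s)}$ and the surviving torus variables. You instead use the scheme-theoretic Gauss factorization $\iW_\mu\cong U_1^+[\![z^{-1}]\!]\times \big(T_1[\![z^{-1}]\!] z^\mu\big)^\sigma$, which is precisely Lemma \ref{lem:symdirac} applied with the ``upper-triangular'' complement $U=\operatorname{span}\{e_\beta^{(s)}\}\oplus\operatorname{span}\{h_i^{(r)}\}$ in place of $V(1)$; the lemma permits any complement to $V(-1)$, and since $e_\beta^{(s)}|_{\iW_\mu}=\tfrac12\big(e_\beta^{(s)}+\sigma(e_\beta^{(s)})\big)|_{\iW_\mu}$, the two sets of intermediate coordinates differ only by a factor of $2$. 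Your geometric version is, if anything, slightly cleaner because it sidesteps writing down the explicit $\sigma$-invariant combinations.

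One remark on the step you flagged: the constant $c_\beta$ in $b_\beta^{(s)} = c_\beta\,e_\beta^{(s)}|_{\iW_\mu}+(\text{lower filtration degree})$ is always $1$, with no case analysis needed. Since each lift $\widetilde{b}_{i}^{(s)}$ is $\sigma$-invariant and $\sigma$ acts as a Poisson automorphism of $\bC[\cW_\mu]$, the top-filtration part of the iterated bracket $\{\widetilde{b}_{i_1}^{(s)},\{\cdots\}\}$ is itself $\sigma$-invariant; its pure-$e$ contribution is $\tfrac{1}{2^\ell}e_\beta^{(s)}$, so by $\sigma$-invariance the pure-$f$ contribution must be $\tfrac{1}{2^\ell}\sigma(e_\beta^{(s)})$, and the two restrict to $\iW_\mu$ as the same function. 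Combined with the $2^{\ell-1}$ from the iterated doubled Dirac bracket, one gets $c_\beta=2^{\ell-1}\cdot\tfrac{2}{2^\ell}=1$. The paper delegates this induction to the proof of \cite[Lemma 2.3]{LWZ25degen} rather than spelling it out.
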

\begin{proof}
    Inside $\bC[\cW_\mu]$, consider the subspace $V$ spanned by the PBW generators 
    $$ 
    \{e_\beta^{(s)} : \beta \in\Delta^+, s \gge 1 \} \cup \{h_i^{(r)} : i \in \I, r > -\langle \mu,\alpha_i\rangle\} \cup\{f_\beta^{(s)} : \beta \in \Delta^+, s \gge 1\}.
    $$
    In the definition of the PBW generators in $\bC[\cW_\mu]$,  we may choose them is such a way that $\sigma( e_\beta^{(s)}) = (-1)^{s+\operatorname{ht} \beta -1} f_{\tau \beta}^{(s)}$.  Then $\sigma$ acts on $V$, with invariants $V(1) \subset V$ having as basis the set
    $$
    \{ e_\beta^{(s)} + (-1)^{s+\operatorname{ht}\beta - 1} f_{\tau \beta}^{(s)} : \beta \in \Delta^+, s \gge 1\} $$
    $$
    \cup \ \{ h_i^{(2p)} : i \in \I_0, 2p > -\langle \mu, \alpha_i \rangle\} \ \cup \ \{ h_i^{(s)} + (-1)^s h_{\tau i}^{(r)} : i \in \I_1, r > -\langle \mu, \alpha_i\rangle \}.
    $$
    
Proposition \ref{prop: Wmu Poisson gens}(3) says that $\bC[\cW_\mu] = S(V)$ is a symmetric algebra on $V$.  By Lemma~ \ref{lem:symdirac} the composed map $S\big(V(1)\big) \hookrightarrow \bC[\cW_\mu] \twoheadrightarrow \bC[\iW_\mu]$ induces an isomorphism 
    $S\big(V(1)\big) \xrightarrow{\sim} \bC[\iW_\mu]$. 
To complete the proof, it suffices to prove that the images of the elements $e_\beta^{(s)} + (-1)^{s + \operatorname{ht} \beta -1} f_\beta^{(s)} $ of $V(1)$ can all be written in terms of the PBW generators from the statement of the lemma. This follows by an induction on $\operatorname{ht} \beta$, similarly to the proof of \cite[Lemma 2.3]{LWZ25degen}.  
\end{proof}

For the remainder of this section, we denote by $\yt_\mu$ the Poisson algebra with generators $h_i^{(r)}$ and $b_i^{(s)}$ for $i\in \I$, $r > -\langle \mu, \alpha_i\rangle$, and $s \gge 1$, with relations (\ref{eq:csty0})--(\ref{eq:csty7}).  By the previous two lemmas, there is a natural surjective Poisson map $\yt_\mu \twoheadrightarrow \bC[\iW_\mu]$ by sending $h_i^{(r)}, b_i^{(s)}$ to the same-named elements.

We can now prove Part (2) of Theorem \ref{thm:iWmuPoisson}.

\begin{lem}
\label{lem:iWmu_pres}
    The map $\yt_\mu \twoheadrightarrow \bC[\iW_\mu]$ is an isomorphism of Poisson algebras.
\end{lem}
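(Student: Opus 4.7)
The surjectivity of $\yt_\mu \twoheadrightarrow \bC[\iW_\mu]$ is already established, so the plan is to prove injectivity. Since $\bC[\iW_\mu]$ is a polynomial algebra on the PBW generators of Lemma~\ref{lem:iWmuPBW}, it suffices to show that the same PBW monomials \emph{span} $\yt_\mu$ as a vector space: then, their images being a basis of the target, they form a basis of $\yt_\mu$, and the map is an isomorphism.

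First I would make two elementary reductions. Using antisymmetry of the Poisson bracket in \eqref{eq:csty4}, one derives the algebraic identities \eqref{hi=htaui} and \eqref{h=0:i=taui}, which cut the Cartan generators of $\yt_\mu$ down to the PBW subset $\{h_i^{(2p)} : i \in \I_0\} \cup \{h_i^{(r)} : i \in \I_1\}$. Second, one defines the root vectors $b_\beta^{(s)} \in \yt_\mu$ for $\beta \in \Delta^+$ by iterated Poisson brackets of the $b_i^{(s)}$'s, mirroring the construction in \S\ref{ssec:PBW for sty}.

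The main obstacle is to establish the spanning property. My approach is to compare $\yt_\mu$ with $\gr^{F_{\mu_1}^\bullet}\Yt_\mu$, using the filtration from \S\ref{ssec:filtrations on stY} (and Proposition~\ref{prop:algfilt}, which guarantees the latter is commutative with an induced Poisson bracket coming from the commutator). The degrees in \eqref{eq: grading on iYmu} are arranged precisely so that each defining relation of $\Yt_\mu$ in Definition~\ref{def:qsplit} degenerates, at the leading filtration degree, to the corresponding Poisson relation \eqref{eq:csty0}--\eqref{eq:csty7} of $\yt_\mu$: for instance, \eqref{bbNqs} degenerates to \eqref{eq:csty3}, \eqref{hbNqs} to \eqref{eq:csty2}, and the Serre relations \eqref{eq:Serre-ord}--\eqref{SerreIII2} to \eqref{eq:csty5}--\eqref{eq:csty7}. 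Verifying this is a direct but lengthy term-by-term leading-order check; it is the hard part of the argument. Once it is done, one obtains a surjective Poisson homomorphism $\yt_\mu\twoheadrightarrow \gr^{F_{\mu_1}^\bullet}\Yt_\mu$. By the PBW Theorem~\ref{thm:pbw_arb} for $\Yt_\mu$ together with Proposition~\ref{prop:algfilt}, the target is a polynomial algebra on the classes of the PBW generators, and the surjection identifies PBW monomials of $\yt_\mu$ with this PBW basis. This forces the PBW monomials to span $\yt_\mu$, and combined with the first paragraph, completes the proof; as a by-product, it simultaneously yields part~(1) of Theorem~\ref{thm:iWmuPoisson}.
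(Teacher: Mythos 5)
Your high-level plan is correct up through the reduction (surjectivity is given, $\bC[\iW_\mu]$ is already known to be a polynomial algebra on its PBW generators by Lemma~\ref{lem:iWmuPBW}, so it suffices to show the same PBW monomials span $\yt_\mu$). The problem is in how you try to get the spanning.

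There are two genuine issues. First, a circularity: you invoke Proposition~\ref{prop:algfilt} to argue that $\gr^{F_{\mu_1}^\bullet}\Yt_\mu$ is a commutative algebra with a polynomial PBW basis. But in the paper Proposition~\ref{prop:algfilt} is a forward reference --- it is proved as Proposition~\ref{prop: equal filtrations} in \S\ref{ssec:proof of theorem on quantizing fixed locus}, \emph{after} Lemma~\ref{lem:iWmu_pres}, and its proof there ($\mu = 0$ case, then bootstrapping) uses the dimension count from $\yt_0 \cong \bC[\iW_0]$, i.e., precisely the content of the lemma you are proving. The filtration $F_{\mu_1}^\bullet$ is defined via the PBW basis and is not \emph{a priori} an algebra filtration; showing that it is one is exactly as hard as showing $F_{\mu_1}^\bullet = G_{\mu_1}^\bullet$, which is the ``nontrivial claim'' explicitly deferred.

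Second, even granting Proposition~\ref{prop:algfilt}, your final step does not follow. Having a surjection $\yt_\mu \twoheadrightarrow \gr^{F_{\mu_1}^\bullet}\Yt_\mu$ that carries PBW monomials of $\yt_\mu$ onto a basis of the target tells you those monomials are \emph{linearly independent} in $\yt_\mu$ --- but you already knew that from the map to $\bC[\iW_\mu]$. It says nothing about whether they span $\yt_\mu$: the surjection could have a large kernel, and ``surjection onto a polynomial algebra on the right generators'' does not force the source to be the same polynomial algebra.

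The paper's route sidesteps both issues. It does not touch the Yangian filtration $F^\bullet_{\mu_1}$ at this stage at all. Instead, it proves spanning of $\yt_\mu$ directly in two steps: (i) a closure-under-Leibniz argument, using relation~\eqref{eq:csty2}, shows $\yt_\mu$ is spanned by words $b_1\cdots b_k h_{i_1}^{(m_1)}\cdots h_{i_\ell}^{(m_\ell)}$ with the $b_j$ nested brackets; (ii) a much coarser filtration on $\yt_\mu$ itself, by $\deg b_j^{(s)}=1$ and $\deg h_i^{(r)}=0$, degenerates the $b$-part to the positive half of $\bC[\cW_\mu]$ (via relations \eqref{eq:csy5}, \eqref{eq:csy7}), which is \emph{already known} to be polynomial on $e_\beta^{(s)}$ from Proposition~\ref{prop: Wmu Poisson gens}(3). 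This gives the spanning and closes the loop without using any facts about $\Yt_\mu$. The degeneration of the full defining relations of $\Yt_\mu$ that you worked out is indeed needed in the paper --- as the unnamed lemma right after Lemma~\ref{lem:iWmu_pres} producing $\yt_\mu \twoheadrightarrow \gr^{G_{\mu_1}^\bullet}\Yt_\mu$ --- but only as an \emph{input} to Proposition~\ref{prop: equal filtrations} downstream, not as a step in proving the present lemma.
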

\begin{proof}
    It is enough to show that $\yt_\mu$ is spanned by monomials in the same PBW generators from Lemma \ref{lem:iWmuPBW}.  This is essentially a  Poisson algebra variation on the proof of \cite[Proposition~2.13]{LWW25sty}.  

    First, we claim that $\yt_\mu$ is spanned by elements of the form
    $b_1 \cdots b_k h_{i_1}^{(m_1)} h_{i_2}^{(m_2)} \cdots h_{i_\ell}^{(m_\ell)},$
    where $b_1,\ldots,b_k$ are each nested commutators of the elements $b_i^{(s)}$. It suffices to prove that the subalgebra $S \subseteq \yt_\mu$ generated by these  elements is closed under Poisson brackets. By the Leibniz rule, it is in turn sufficient to show the Poisson bracket of any two generators of $S$ is still in $S$.  The only interesting case is to show that  $\{h_i^{(r)}, b\} \in S$, where $b$ is any nested commutator in the $b_i^{(s)}$. This follows by a straightforward induction on the length of $b$, using the relation (\ref{eq:csty2}) in $\yt_\mu$.
    
    Next, consider the filtration on $\yt_\mu$ defined by setting $\deg b_j^{(s)} = 1$ and $\deg h_i^{(r)}=0$. More precisely, these degrees define a grading on the free Lie algebra $L$ generated by symbols $h_i^{(r)}$ and $b_i^{(s)}$, and thus on the symmetric algebra $S(L)$. There is a surjection $S(L) \twoheadrightarrow \yt_\mu$ and the grading on $S(L)$ thus induces a filtration on $\yt_\mu$ (a similar and fuller discussion can be found in the paragraph right after this proof below).  Note that the Poisson bracket is degree zero.

    Similarly to the proof of \cite[Proposition~2.13]{LWW25sty}, one can argue that the associated graded algebra $\gr~\yt_\mu$ is spanned by monomials in the images of the PBW generators from Lemma \ref{lem:iWmuPBW}.  This is because the images $\overline{b_i^{(s)}} \in \gr~\yt_\mu$ satisfy the Poisson relations (\ref{eq:csy5}) and (\ref{eq:csy7}) which define the positive half of the algebra $\bC[\cW_\mu]$. This completes the proof.        
\end{proof}

To complete the proof of the remaining parts (1) and (4) of Theorem \ref{thm:iWmuPoisson}, we turn to the shifted iYangian $\Yt_\mu$ and its filtrations.  We take an indirect approach, inspired by the proof of \cite[Theorem 3.4]{T23}.  Consider the set
\begin{align}  \label{set:X}
X = \left\{ H_i^{(r)}, B_i^{(s)}  \mid i \in \I, r \in \bZ, s \gge 1\right\}
\end{align}
and the free Lie algebra $L$ on this set.  Then there is a surjection $U(L) \cong \bC\langle X\rangle \twoheadrightarrow \Yt_\mu$.  Define a grading on $L = \bigoplus_{n \in \bZ}L_n$ by declaring that $\deg H_i^{(r)} = r + \langle \mu, \alpha_i\rangle - 1$ and $\deg B_i^{(s)} = s + \langle \mu_1, \alpha_i\rangle -1$. Then we may define a filtration on the enveloping algebra $U(L)$, such that $L_n$ lives in filtered degree $n+1$. The PBW theorem for $U(L)$ ensures that $\gr~U(L) \cong S(L)$ with its standard Poisson structure.  In particular, $\gr~U(L)$ is Poisson generated by (the images of) the set $X$.
Via the surjection $U(L) \twoheadrightarrow \Yt_\mu$ we induce a corresponding (quotient) filtration $G^\bullet_{\mu_1} \Yt_\mu$ on $\Yt_\mu$.   Note that $\gr^{G_{\mu_1}^\bullet} \Yt_\mu$ is commutative and identified with a quotient of $S(L)$.

\begin{lem}
    There is a surjective homomorphism of graded algebras $\yt_\mu \twoheadrightarrow \gr^{G_{\mu_1}^\bullet}\Yt_\mu$, sending the generators $h_i^{(r)}, b_i^{(s)}$ to the classes  $\bar H_i^{(r)}, \bar B_i^{(s)}$.
\end{lem}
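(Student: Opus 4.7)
The plan is to show both that the stated assignment defines a well-defined algebra homomorphism $\yt_\mu \to \gr^{G_{\mu_1}^\bullet}\Yt_\mu$, and that this homomorphism is surjective. Surjectivity is essentially automatic: since $\Yt_\mu$ is generated as an algebra by the elements $H_i^{(r)}, B_i^{(s)}$, their classes $\bar H_i^{(r)}, \bar B_i^{(s)}$ generate the associated graded algebra $\gr^{G_{\mu_1}^\bullet}\Yt_\mu$. So the real work lies in verifying well-definedness, i.e., that the defining Poisson relations \eqref{eq:csty0}--\eqref{eq:csty7} of $\yt_\mu$ are satisfied by the images of the generators.

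The filtration $G_{\mu_1}^\bullet$ is constructed as a quotient of the standard (PBW) filtration on the enveloping algebra $U(L)$ of the free Lie algebra $L$ on the set $X$ from \eqref{set:X}. In particular, if $x$ sits in filtered degree $\lle a$ and $y$ in filtered degree $\lle b$ of $\Yt_\mu$, then the commutator $[x,y]$ sits in filtered degree $\lle a+b-1$, and the associated graded inherits a natural Poisson structure via $\{\bar x,\bar y\} = [x,y] \bmod G^{\lle a+b-2}_{\mu_1}\Yt_\mu$. With this in mind, each Poisson relation in $\yt_\mu$ will be obtained by passing to the associated graded of a corresponding defining relation of $\Yt_\mu$ from Definition \ref{def:qsplit}.

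Most of the relations arise directly and require only bookkeeping of degrees. Relation \eqref{eq:csty0} is literally \eqref{def0}. Relation \eqref{eq:csty1} is the associated graded of \eqref{hhIII}. Relation \eqref{eq:csty3} comes from \eqref{bbNqs}: the symmetrized commutator $\tfrac{c_{ij}}{2}[B_i^{(s_1)},B_j^{(s_2)}]_+$ has top-degree symbol $c_{ij}\bar B_i^{(s_1)}\bar B_j^{(s_2)}$, the pure commutator $[B_i^{(s_1)}, B_j^{(s_2)}]$ drops a degree, and matching degrees when $\tau i = j$ uses the identity $\langle\mu,\alpha_j\rangle = \langle\mu_1,\alpha_i+\alpha_j\rangle$ coming from $\mu = \mu_1 + \tau\mu_1$. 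The Serre-type relations \eqref{eq:csty4}--\eqref{eq:csty7} arise analogously from \eqref{bbtau}--\eqref{SerreIII2}, using in particular $\langle\mu,\alpha_i\rangle = 2\langle\mu_1,\alpha_i\rangle$ when $i = \tau i$ to match filtered degrees on both sides.

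The main obstacle is relation \eqref{eq:csty2}, since the corresponding relation \eqref{hbNqs} is recursive rather than closed-form. Here the plan is indirect: the associated graded of \eqref{hbNqs} only yields the recursion
\begin{equation*}
\{\bar H_i^{(r+2)}, \bar B_j^{(s)}\} - \{\bar H_i^{(r)}, \bar B_j^{(s+2)}\} = (c_{ij}-c_{\tau i,j})\,\bar H_i^{(r+1)}\bar B_j^{(s)} + (c_{ij}+c_{\tau i,j})\,\bar H_i^{(r)}\bar B_j^{(s+1)}.
\end{equation*}
However, together with the boundary condition $\bar H_i^{(r)} = 0$ for $r < -\langle \mu, \alpha_i\rangle$ inherited from \eqref{def0}, this recursion uniquely pins down every Poisson bracket $\{\bar H_i^{(r)}, \bar B_j^{(s)}\}$, by descending induction along antidiagonals $r+s = \text{const}$. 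One then verifies directly that the closed form in \eqref{eq:csty2} satisfies both the recursion (matching coefficients via $c_{ij} + (-1)^{p+1}c_{\tau i,j}$ for $p$ even versus odd) and the boundary condition, so by uniqueness it agrees with the Poisson bracket in $\gr^{G_{\mu_1}^\bullet}\Yt_\mu$. This establishes \eqref{eq:csty2}, and with it all of \eqref{eq:csty0}--\eqref{eq:csty7}, completing the proof.
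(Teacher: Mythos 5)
Your overall strategy matches the paper's: surjectivity is automatic because $H_i^{(r)}, B_i^{(s)}$ generate $\Yt_\mu$, hence their classes generate the associated graded, and well-definedness is checked by passing to the top degree parts of the defining relations \eqref{def0}--\eqref{SerreIII2}. Your expanded account of deriving the closed form \eqref{eq:csty2} from the recursion \eqref{hbNqs_gr} plus the boundary condition \eqref{def0} by induction along antidiagonals is correct and supplies more detail than the paper, which dispatches this step in a single sentence.

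However, there is a genuine gap in your treatment of \eqref{eq:csty7}. You assert that \eqref{eq:csty4}--\eqref{eq:csty7} ``arise analogously'' by passing to top degree. This is defensible for \eqref{eq:csty4} and \eqref{eq:csty5}, but not for \eqref{eq:csty7}: the relation \eqref{SerreIII2} has right-hand side $4\,\mathrm{Sym}_{s_1,s_2}(-1)^{s_1-1}\sum_{p\gge 0} 3^{-p-1}[B_{i}^{(s_2+p)},H_{\tau i}^{(s_1+s-p-1)}]$, whereas \eqref{eq:csty7} contains no powers of $3$. The filtered degree of $[B_i^{(s_2+p)}, H_{\tau i}^{(s_1+s-p-1)}]$ is independent of $p$ (raising the $B$ superscript by one is compensated by lowering the $H$ superscript by one), so every term of the $p$-sum survives at top degree and the coefficients $3^{-p-1}$ cannot simply vanish by taking leading symbols. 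What the paper actually does is substitute the already-established relation \eqref{eq:csty2} into each leading bracket $\{\bar B_i^{(s_2+p)}, \bar H_{\tau i}^{(s_1+s-p-1)}\}$ and shows that the resulting double sum over $p$ and $q$ telescopes across consecutive values of $p$, with the factors $3^{-p}$ cancelling against a coefficient $3$ and only the even shifts surviving, which produces exactly \eqref{eq:csty7}. This cancellation is the essential computation for the hardest relation, and it is absent from your argument. A smaller symptom is your ordering: you place the derivation of \eqref{eq:csty2} after the Serre relations, but \eqref{eq:csty2} is in fact a prerequisite for \eqref{eq:csty7} (and for \eqref{eq:csty6}, which the paper handles by referencing \cite[(6.18)]{LWZ25degen}).
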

\begin{proof}
    Looking at the top degree parts of the defining relations \eqref{def0}--\eqref{SerreIII2} of $\Yt_\mu$, one can see that the above assignment on generators defines a homomorphism $\yt_\mu \rightarrow \gr^{G_{\mu_1}^\bullet}\Yt_\mu$. Let us denote the image of generators of $\Yt_\mu$ in $\gr^{G_{\mu_1}^\bullet}\Yt_\mu$ by $\bar H_i^{(r)}, \bar B_j^{(s)}$ and so on. 
    For example, modulo lower terms the  relation \eqref{hbNqs} becomes
   \begin{align}  
   \label{hbNqs_gr}
    [\bar H_i^{(r+2)},\bar B_j^{(s)}]-[\bar H_i^{(r)},\bar B_j^{(s+2)}]=(c_{ij}-c_{\tau i,j}) \bar H_{i}^{(r+1)}\bar B_j^{(s)} +(c_{ij}+c_{\tau i,j}) \bar H_{i}^{(r)}\bar B_j^{(s+1)}.
   \end{align}
    This is simply an inductive version of the relation \eqref{eq:csty2}. Similarly \eqref{SerreIII} reduces to \eqref{eq:csty6}, cf.~\cite[(6.18)]{LWZ25degen}. 
    Now we show that \eqref{SerreIII2} reduces to \eqref{eq:csty7}. Indeed, with the help of \eqref{eq:csty2} (corresponding to \eqref{hbNqs_gr}), we rewrite \eqref{SerreIII2} in the associated graded after cancellation of many summands as
    \begin{align*}
    & \mathrm{Sym}_{s_1,s_2}\big[\bar B_{i}^{(s_1)},[\bar B_{i}^{(s_2)},\bar B_{\tau i}^{(s)}]\big]
    \\
&=\frac{4}{3}\mathrm{Sym}_{s_1,s_2}(-1)^{s_1-1}
\sum_{p=0}^{s_1+s-2}3^{-p} 
%[\bar B_{i}^{(s_2+p)},\bar H_{\tau i}^{(s_1+s-p-1)}]
\Big(\sum_{q\gge 0} 3 \bar H_{\tau i}^{(s_1+s-p-2-2q)}\bar B_j^{(s_2+p+2q)} \\&\hskip 6cm -\sum_{q\gge 0}  \bar H_{\tau i}^{(s_1+s-p-3-2q)}\bar B_j^{(s_2+p+2q+1)}\Big)
\\
&=\frac{4}{3}\mathrm{Sym}_{s_1,s_2}(-1)^{s_1-1}
\sum_{q\gge 0} 3 \bar H_{\tau i}^{(s_1+s-2-2q)}\bar B_j^{(s_2+2q)}.
    \end{align*}
This matches \eqref{eq:csty7}.    Finally, this homomorphism  $\yt_\mu \rightarrow \gr^{G_{\mu_1}^\bullet}\Yt_\mu$ is surjective since $\gr~U(L)$ is Poisson generated by the set $X$ in \eqref{set:X}, and thus so is its quotient $\gr~U(L) \twoheadrightarrow\gr^{G_{\mu_1}^\bullet}~\Yt_\mu$. 
    \end{proof}

Recall that  in \S \ref{ssec:filtrations on stY}, we defined a \emph{vector space} filtration $F_{\mu_1}^\bullet \Yt_\mu$, which a priori depends on a choice of PBW basis for $\Yt_\mu$.  The following technical result is key:
\begin{prop}
    \label{prop: equal filtrations}
    The filtrations $F_{\mu_1}^\bullet \Yt_\mu$ and $G_{\mu_1}^\bullet \Yt_\mu$ coincide.  
\end{prop}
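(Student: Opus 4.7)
\textbf{Proof plan for Proposition \ref{prop: equal filtrations}.}

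The plan is to squeeze the two filtrations together by a Hilbert-series argument: I will show directly that $F_{\mu_1}^k \subseteq G_{\mu_1}^k$, and then use the intermediate Poisson algebra $\yt_\mu$ to produce a matching dimension count forcing equality.

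First, I would verify that every PBW generator of $\Yt_\mu$ carries the same $F_{\mu_1}$-degree and $G_{\mu_1}$-degree. For $H_i^{(r)}$ this is immediate from the definitions. For a root vector $B_\beta^{(s)} = [B_{i_1}^{(s)}, [B_{i_2}^{(1)}, \cdots [B_{i_{\ell-1}}^{(1)}, B_{i_\ell}^{(1)}]\cdots]]$ with $\ell = \operatorname{ht} \beta$, the element lies in the homogeneous component $L_n$ of the graded Lie algebra $L$, where
$$
n = \bigl(s-1+\langle \mu_1, \alpha_{i_1}\rangle\bigr) + \sum_{k=2}^{\ell} \langle \mu_1, \alpha_{i_k}\rangle = s-1+\langle \mu_1, \beta\rangle,
$$
so its $G_{\mu_1}$-degree in $U(L)$ is $n+1 = s + \langle \mu_1, \beta\rangle$, matching its $F_{\mu_1}$-degree from \eqref{eq: grading on iYmu}. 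Since $G_{\mu_1}^\bullet$ is a multiplicative filtration by construction, any ordered monomial in PBW generators of total $F_{\mu_1}$-degree at most $k$ lies in $G_{\mu_1}^k$, which gives $F_{\mu_1}^k \subseteq G_{\mu_1}^k$ and hence $\dim F_{\mu_1}^k \lle \dim G_{\mu_1}^k$.

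For the reverse inequality I would use Hilbert series. By the PBW Theorem \ref{thm:pbw_arb}, the ordered PBW monomials form a vector-space basis of $\Yt_\mu$, so $\dim F_{\mu_1}^k$ coincides with the truncation at degree $k$ of the Hilbert series $H(q)$ of the polynomial algebra $\bC[\iW_\mu]$ from Lemmas \ref{lem:iWmuPBW} and \ref{lem:iWmu_pres} (with PBW generators given their $F_{\mu_1}$-degrees). On the other hand, the preceding lemma furnishes a surjection of $\bZ$-graded algebras $\yt_\mu \twoheadrightarrow \gr^{G_{\mu_1}^\bullet} \Yt_\mu$, and since $\yt_\mu \cong \bC[\iW_\mu]$ also has Hilbert series $H(q)$, one obtains coefficient-wise $\dim G_{\mu_1}^k \lle \sum_{j \lle k} [q^j] H(q) = \dim F_{\mu_1}^k$.

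Combining the two inequalities gives $\dim F_{\mu_1}^k = \dim G_{\mu_1}^k$ for every $k$, and together with the inclusion $F_{\mu_1}^k \subseteq G_{\mu_1}^k$ this forces $F_{\mu_1}^k = G_{\mu_1}^k$. As a by-product the squeeze shows that the surjection $\yt_\mu \twoheadrightarrow \gr^{G_{\mu_1}^\bullet}\Yt_\mu$ is an isomorphism, thereby also yielding Proposition \ref{prop:algfilt} and Theorem \ref{thm:iWmuPoisson}(1). The main obstacle is the degree computation for the nested-commutator root vectors in the first step, which requires careful bookkeeping of the shift-by-one in the grading on $L$ versus the filtration on $U(L)$; once that is in place, everything else is a formal Hilbert-series squeeze built on the already-established PBW theorem for $\Yt_\mu$ and the polynomial structure of $\yt_\mu$.
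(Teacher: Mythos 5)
Your first step — checking that the nested-commutator root vectors $B_\beta^{(s)}$ carry matching $F_{\mu_1}$- and $G_{\mu_1}$-degrees, and hence that $F_{\mu_1}^k \subseteq G_{\mu_1}^k$ — is correct, and your Hilbert-series squeeze is exactly what the paper does for the base case $\mu = 0$. But the squeeze as you have written it does not work for general even spherical $\mu$, and that is the crux of the problem.

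The dimension count requires the filtered pieces $F_{\mu_1}^k(\Yt_\mu)$, or equivalently the $\bZ$-graded pieces of $\bC[\iW_\mu]$, to be finite-dimensional. This holds for $\mu = 0$ with $\mu_1 = 0$, since then every PBW generator has strictly positive degree. For general $\mu$, however, there will typically exist generators $B_\beta^{(s)}$ with $\deg B_\beta^{(s)} = s + \langle \mu_1, \beta\rangle \lle 0$: already for $\g = \fksl_2$, $\tau = \id$, $\mu = -2\alpha^\vee$ and $\mu_1 = -\alpha^\vee$ one finds $\deg B^{(1)} = -1$. Once such a generator exists, $F_{\mu_1}^k$ contains arbitrarily high powers of it and is infinite-dimensional, the coefficients $[q^j]H(q)$ are infinite, and the inequality $\dim G_{\mu_1}^k \lle \sum_{j\lle k}[q^j]H(q) = \dim F_{\mu_1}^k$ reads $\infty \lle \infty$ and gives no information. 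Changing $\mu_1$ within $\mu = \mu_1 + \tau\mu_1$ only shifts degrees by a character that is $\tau$-antisymmetric, so it cannot make all generator degrees positive. Refining to the $\Qt\times\bZ$-grading does not rescue the argument either, since the torsion part $(\bZ/2\bZ)^{\I_0}$ of $\Qt$ gives no control over repeated factors.

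The paper circumvents this by bootstrapping: the dimension count is carried out only at $\mu = 0$; then one passes to antidominant $\mu$ via the strictly filtered embedding ${}^\imath\widetilde{\Y} \hookrightarrow \Yt_0 \otimes \bC[\xi_i \mid i\in\I]$ and finally to arbitrary $\mu$ via the injective, strictly filtered shift maps $\iota_{\mu,\nu}^\tau$ of Lemma \ref{lem:strictlyfiltered}. If you want to salvage your approach, you would need exactly this kind of reduction to $\mu = 0$; the formal Hilbert-series comparison alone is not enough once the filtration is unbounded below.
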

In particular, this result establishes Proposition \ref{prop:algfilt}, since by construction $G_{\mu_1}^\bullet  \Yt_\mu$ is an algebra filtration and does not depend on any choice of PBW basis.
\begin{proof}
We adopt a general proof strategy used in \cite{FKPRW,KPW22}: the $\mu=0$ case first, then antidominant $\mu$, and finally all remaining $\mu$.

For each $k\in \bZ$ there is a natural inclusion
$F_{\mu_1}^k (\Yt_\mu) \subseteq G_{\mu_1}^k (\Yt_\mu),$
so we must establish  the opposite inclusion. Similarly to Remark \ref{rem: change of filtration}, for each $\mu$ is suffices to prove this claim for a single choice of decomposition $\mu = \mu_1 + \tau \mu_1$.  

First, consider the case of $\mu = 0$, with $\mu_1 = 0$. In this case all associated filtrations and gradings are non-negative: $F_0^k( \Yt_0) = G_0^k(\Yt_0)  = (\yt_0)_k = 0$ for $k < 0$.  By definition $F_{\mu_1}^k (\Yt)$ has a basis consisting of those PBW basis elements which have total degree $\lle k$. There are finitely many such monomials. In addition, analogous monomials label a basis for the degree $\lle k$ part of $ \yt_0 \cong \bC[\iW_0]$.  Due to the surjection of graded algebras $\yt_0 \twoheadrightarrow \gr^{G_0^\bullet}\Yt_0$, the dimension of $\dim G_0^k(\Yt_0)$ is at most that of the degree $\lle k$ part of $\yt_0$. Put together, we obtain the estimate
$$
\dim F_{0}^k (\Yt_0) \ \gge \ \dim G_{0}^k (\Yt_0).
$$
As $F_0^k(\Yt_0) \subseteq G_0^k(\Yt_0)$, we must have $F_0^k(\Yt_0) = G_0^k(\Yt_0)$ as claimed.

Next, let $\mu$ be antidominant. Since $\mu$ is even and $\tau \mu = \mu$, we can write $\mu = 2 \mu_1$ with $\tau \mu_1 = \mu_1$.  Recall the algebra ${}^\imath\wtl\Y$ from \cite[Definition.~2.17]{LWW25sty}, and define a filtration $F_{\mu_1}^k( {}^\imath\wtl\Y)$ as the span of PBW monomials of degree $\lle k$ where
$$
\deg \wtl H_i^{(r)} = r + \langle \mu, \alpha_i\rangle, \qquad \deg \wtl B_\beta^{(s)} = s + \langle \mu_1, \beta\rangle.
$$
Similarly, we may define a filtration $G_{\mu_1}^\bullet( {}^\imath\wtl\Y)$ analogously to the definition of $G_{\mu_1}^\bullet(\Yt_\mu)$.  Recall also from the proof of \cite[Lemma~2.18]{LWW25sty} that there is an algebra embedding ${}^\imath \wtl\Y \hookrightarrow \Yt_0 \otimes \bC[\xi_i \mid i \in \I]$. If we consider the tensor product of the filtration $F_0^\bullet(\Yt_0) = G_0^\bullet(\Yt_0)$ with the filtration on $\bC[\xi_i\mid i\in\I]$ defined by giving $\deg \xi_i = \langle \mu_1, \alpha_i\rangle$, then one can see that our algebra embedding is strictly filtered, for both filtrations. It follows that $F_{\mu_1}^\bullet({}^\imath \wtl \Y) = G_{\mu_1}^\bullet({}^\imath \wtl \Y)$.

Finally, for general $\mu$ we may again write $\mu = 2 \mu_1$. Choose any shift homomorphism $\iota_{\mu,\nu}^\tau:  \Yt_\mu \hookrightarrow \Yt_{\mu+\nu+\tau\nu}$ as in Lemma \ref{lem:shiftqs}, such that $\mu+\nu+\tau \nu$ is antidominant. Then it is easy to see that the image of $G_{\mu_1}^k(\Yt_\mu)$ lies in $G_{\mu_1+\nu}^k(\Yt_{\mu+\nu+\tau\nu})$. Since $\mu+\nu+\tau\nu$ is antidominant, we have already proven  that $F_{\mu_1+\nu}^k(\Yt_{\mu+\nu+\tau\nu}) = G_{\mu_1+\nu}^k(\Yt_{\mu+\nu+\tau\nu})$. Therefore:
\begin{align*}
\iota_{\mu,\nu}^\tau\big( F_{\mu_1}^k(\Yt_\mu)\big) \subseteq \iota_{\mu,\nu}^\tau\big( G_{\mu_1}^k(\Yt_\mu)\big)
 &\subseteq \iota_{\mu,\nu}^\tau (\Yt_\mu) \cap G_{\mu_1+\nu}^k(\Yt_{\mu+\nu+\tau\nu}) \\
&= \iota_{\mu,\nu}^\tau (\Yt_\mu) \cap F_{\mu_1+\nu}^k(\Yt_{\mu+\nu+\tau\nu}).
\end{align*}
But by Lemma \ref{lem:strictlyfiltered} the right hand side above is exactly $\iota_{\mu,\nu}^\tau\big( F_{\mu_1}^\bullet(\Yt_\mu)\big).$  As $\iota_{\mu,\nu}^\tau$ is injective we conclude that $F_{\mu_1}^k(\Yt_\mu) = G_{\mu_1}^k (\Yt_\mu)$, which completes the proof.
\end{proof}

We can now prove the remaining Parts (1) and (4) of Theorem \ref{thm:iWmuPoisson}. Since $F_{\mu_1}^\bullet \Yt_\mu$ is defined in terms of a PBW basis, the algebra $\gr^{F_{\mu_1}^\bullet} \Yt_\mu$ inherits a PBW basis. On the other hand, Lemmas \ref{lem:iWmuPBW} and \ref{lem:iWmu_pres} show that $\bC[\iW_\mu] \cong \yt_\mu $ has an analogous PBW basis.  These are mapped onto one another under the surjection (cf. Proposition~\ref{prop: equal filtrations})
$$
\bC[\iW_\mu] \cong \yt_\mu \twoheadrightarrow \gr^{G_{\mu_1}^\bullet}\Yt_\mu = \gr^{F_{\mu_1}^\bullet} \Yt_\mu.
$$
Therefore this map is a $\Qt$--grading preserving isomorphism, which proves Part (1). For Part (4), note that there are two shift homomorphisms $\bC[\iW_{\mu+\nu+\tau \nu}] \rightarrow \bC[\iW_\mu]$: one defined geometrically in Corollary \ref{prop:iWmu}, and the other inherited from Lemma \ref{lem:shiftqs} by taking associated graded.  They are both Poisson, and have the same effect on the Poisson generators of $\bC[\iW_\mu]$.  Therefore they are equal, which proves (4).   
This completes the proof of Theorem \ref{thm:iWmuPoisson}.

\section{Fixed point loci of affine Grassmannian slices}
\label{sec:islices}

In this section, for any even spherical coweight $\mu$, we show that the Poisson involution $\sigma$ preserves the generalized affine Grassmannian slice $\overline{\cW}_\mu^\lambda$ exactly when $\lambda$ is $\tau$-invariant. Then we study the affine Grassmannian {\em islices}, i.e., the fixed point loci $\iWbar_\mu^\lambda = (\overline{\cW}_\mu^\lambda)^\sigma$, and connect them to iGKLO.

\subsection{Generalized affine Grassmannian slices}

Let $\lambda \gge \mu$ be coweights with $\lambda $ dominant, and recall the space $\cW_\mu$ from \eqref{eq:Wmu}.   Define the \emph{generalized affine Grassmannian slice} as a closed subscheme of $\cW_\mu$:
\begin{equation}
\label{eq:Wmula}
\overline{\cW}_\mu^\lambda \ = \ \cW_\mu \cap \overline{ G[z] z^\lambda G[z]}.
\end{equation}
This is an irreducible affine variety over $\bC$ of dimension $\langle 2 \rho, \lambda -\mu\rangle$ \cite{BFN19}, and is a closed Poisson subvariety of $\cW_\mu$.  The $\bC^\times$--actions from (\ref{eq:loop rotation action}) all preserve $\overline{\cW}_\mu^\lambda \subset \cW_\mu$, as does the action of $T$ by conjugation. The shift maps (\ref{eq:shift map}) restrict to birational Poisson maps $\iota_{\mu,\nu_1,\nu_2}: \overline{\cW}_{\mu+\nu_1 + \nu_2}^{\lambda+\nu_1+\nu_2} \rightarrow \overline{\cW}_\mu^\lambda$ for any dominant coweight $\lambda \gge \mu$ such that $\lambda+\nu_1+\nu_2$ is also dominant \cite[Proposition~4.11]{KPW22}.

\begin{rem}
    \label{rem:modular}
    The varieties $\overline{\cW}_\mu^\lambda$ admit an alternate description as a moduli space of $G$-bundles on $\mathbb{P}^1$, see \cite[\S 2]{BFN19} for details.
\end{rem}

For $\mu$ dominant, $\overline{\cW}_\mu^\lambda$ are the usual affine Grassmannian slices between spherical Schubert varieties in the affine Grassmannian $\operatorname{Gr}_G$.
For general $\mu$, generalized affine Grassmannian slices are Coulomb branches for quiver gauge theories \cite{BFN19}.   They have symplectic singularities, as proven for $\mu$ dominant by \cite[Theorem 2.7]{KWWY14} and in general by \cite{Z20}(and also see \cite{B23}). In general, $\overline{\cW}_\mu^\lambda$ is a union of finitely many symplectic leaves \cite{MW23}: 
\begin{equation}
    \label{eq: Wlamu symplecic leaves}
    \overline{\cW}_\mu^\lambda  = \bigsqcup_{\substack{\mu \lle \nu \lle \lambda, \\ \nu \text{ dominant}}} \cW_\mu^{\nu}.
\end{equation}

\begin{eg}
    \label{eg:SL2slices}
    Let $G = \operatorname{PGL}_2$.  Given coweights $\la \gge \mu$ with $\la$ dominant, we extract non-negative integers $\bw = \langle \la, \alpha\rangle$ and $\bv = \langle \la-\mu, \varpi\rangle$.  The slice $\overline{\cW}_\mu^\la$ has an explicit description as matrices with polynomial entries due to Kamnitzer, see \cite[Proposition 2.17]{BFN19}:
    \begin{equation}
        \overline{\cW}_\mu^\la \cong \left\{  \begin{pmatrix} \gklod & \gklob \\ \gkloc & \gkloa \end{pmatrix} ~ :~ \begin{array}{cl} (i) & \gkloa, \gklob, \gkloc, \gklod \in \bC[z], \\ (ii) & \gkloa \text{ is monic of degree } \bv, \\ (iii) & \gklob, \gkloc \text{ have degree } < \bv, \\ (iv) & \gkloa \gklod - \gklob \gkloc= z^\bw. \end{array}\right\}.
    \end{equation}
    Note that the condition $(iv)$ entails that $\gkloa$ divides $z^\bw + \gklob \gkloc$ with quotient $\gklod$, and in particular $\gklod$ is uniquely determined by the other matrix entries. 
\end{eg}

The matrix entries from the previous example admit generalizations to all types, which we  consider in the next section.

%%%%%%%%%%%%%%%%%%%%%%%%%%%%%%%%%%
\subsection{Classical GKLO homomorphism}
\label{sec:classGKLO1}

For each fundamental weight $\varpi_i$ of $\g$ let $V(-\varpi_i)$ denote the corresponding irreducible representation of $\g$ with \emph{lowest weight} $-\varpi_i$. Let $s_i \in W$ denote the simple reflection for $i \in \I$. Choose nonzero extremal weight vectors $v_{-\varpi_i} \in V(-\varpi_i)_{- \varpi_i}$ and $v_{-s_i \varpi_i} \in V(-\varpi_i)_{- s_i \varpi_i}$, with corresponding dual vectors $v_{-\varpi_i}^\ast, v_{-s_i\varpi_i}^\ast \in V(-\varpi_i)^\ast$.

Let $\lambda\gge \mu$ be coweights with $\lambda$ dominant, and write $\lambda - \mu = \sum_i \bv_i \alpha_i^\vee$ as in \S \ref{ssec:quantumtorus}. For any element $g = n^+ h z^\mu n^- \in \overline{\cW}^\lambda_\mu$, we can extract various formal Laurent series in $z^{-1}$:
\begin{align}
    \gkloa_i(z)  & = z^{\langle\lambda, \varpi_i \rangle}\langle v_{-\varpi_i}^\ast,  g v_{-\varpi_i}\rangle = z^{\bv_i} \langle v_{-\varpi_i}^\ast, h v_{-\varpi_i}\rangle,  \label{eq:gkloa} \\
    \gklob_i(z) & = z^{\langle\lambda, \varpi_i \rangle}\langle v_{-s_i \varpi_i}^\ast,  g v_{-\varpi_i}\rangle =  z^{\bv_i}\langle v_{-s_i \varpi_i}^\ast, n^+ h v_{-\varpi_i}\rangle,  \label{eq:gklob} \\
    \gkloc_i(z) & = z^{\langle\lambda, \varpi_i \rangle}\langle v_{-\varpi_i}^\ast,  g v_{-s_i \varpi_i}\rangle = z^{\bv_i}\langle v_{-\varpi_i}^\ast, h n^- v_{-s_i \varpi_i}\rangle.\label{eq:gkloc} 
\end{align}
In fact, these are all polynomials in $z$, with $\gkloa_i(z)$ being monic of degree $\bv_i$, while $\gklob_i(z)$ and $\gkloc_i(z)$ have degree strictly less than $\bv_i$, see \cite[\S 2.2.1]{MW24}.  Writing $\gkloa_i(z) = \sum_s z^{\bv_i - s} \gkloa_i^{(s)}$, and defining $\gklob_i^{(s)}$ and $\gkloc_i^{(s)}$ similarly, these coefficients define regular functions on $\overline{\cW}_\mu^\lambda$.  In fact, the coefficients $\gkloa_i^{(s)}$ and $\gklob_i^{(s)}$ for $i \in \I$ form a system of birational coordinates on $\overline{\cW}_\mu^\lambda$ by the results of \cite{FKMM99} and \cite[\S 2]{BFN19}.  These functions satisfy the following fundamental equation, which stems from a relation for generalized minors due to Fomin-Zelevinsky \cite[Theorem 1.17]{FZ99}, see also \cite[(2.70)]{MW24}: for each $i\in \I$, recalling that $\bw_i = \langle \lambda, \alpha_i\rangle$, we have  
\begin{equation}
    \label{eq:gklorel}
    \gkloa_i(z) \gklod_i(z) - \gklob_i(z) \gkloc_i(z) = z^{\bw_i} \prod_{\substack{j \leftrightarrow i,~j \in \I}} \gkloa_j(z).
\end{equation}
Here $\gklod_i(z)  = z^{\langle\lambda, \varpi_i \rangle}\langle v_{-s_i \varpi_i}^\ast,  g v_{-s_i \varpi_i}\rangle$ which is again a polynomial in $z$. 

\begin{rem}
    We have defined $G$ to be the \emph{adjoint} group associated to $\g$, however $V(-\varpi_i)$ is only a representation of the \emph{simply-connected} group associated to $\g$.  Nevertheless, the functions \eqref{eq:gkloa}--\eqref{eq:gkloc} are well defined on the variety $\overline{\cW}_\mu^\lambda$. One can see this via zastava spaces, as in \cite[\S 2]{BFN19}. Alternatively, following  \cite[Remark 2.10]{MW24}, note that $U_1^\pm[\![z^{-1}]\!]$ and $T_1[\![z^{-1}]\!]$ are naturally independent of the choice of group, so the factors $n^\pm, h$ of $g = n^+ h z^\mu n^-$ all act on $V(-\varpi_i)(\!(z^{-1})\!)$. Letting $D$ denote the determinant of the Cartan matrix, then the factor $z^\mu$ also has a well-defined action on $V(-\varpi_i)(\!(z^{-\frac{1}{D}})\!)$. Thus \eqref{eq:gkloa}--\eqref{eq:gkloc} are all well-defined as Laurent series in $z^{-\frac{1}{D}}$, and in fact are Laurent series in $z^{-1}$. 
\end{rem}
Let $\bA^{(\lambda-\mu)}$ denote the variety of tuples $\big(\gkloa_i(z)\big)_{i \in \I}$ of monic polynomials $\gkloa_i(z)$ of degree $\bv_i$. Then $\bA^{(\lambda-\mu)}$ is an affine space with coordinates $\gkloa_i^{(s)}$ for $i\in \I$ and $1 \lle s \lle \bv_i$. The functions \eqref{eq:gkloa} define a natural map $\overline{\cW}_\mu^\lambda \rightarrow \bA^{(\lambda-\mu)}$ which is faithfully flat by \cite[Lemma 2.7]{BFN19}.
We may identify  $\bA^{(\lambda-\mu)} = \bA^{|\lambda-\mu|} / S_{\lambda-\mu}$ as the quotient of an affine space $\bA^{|\lambda-\mu|} = \prod_i \bA^{\bv_i}$ by the product of symmetric groups $S_{\lambda-\mu} = \prod_i S_{\bv_i}$.  Indeed, we denote natural coordinates on $\bA^{|\lambda-\mu|}$ by $\ow_{i,r}$ for $i\in \I$ and $1\lle r\lle \bv_i$, defined so that 
\begin{equation}
    \label{eq:rootsofgkloa}
    \gkloa_i(z) = \prod_{r=1}^{\bv_i} (z-\ow_{i,r})
\end{equation}
with $S_{\bv_i}$ acting by permuting these roots. Define also 
\begin{equation}
\ox_{i,r}^- = \gklob_i(\ow_{i,r}) \quad \text{and} \quad \ox_{i,r}^+ = \gkloc_i(\ow_{i,r}),
\end{equation}
and observe that because of the relation \eqref{eq:gklorel} these  must satisfy
\begin{equation}
    \label{eq:gklorel3}
    \ox_{i,r}^+ \ox_{i,r}^- = - \ow_{i,r}^{\bw_i} \prod_{j\leftrightarrow i,~j \in \I} \gkloa_j(\ow_{i,r}).
\end{equation}
The polynomials $\gklob_i(z), \gkloc_i(z)$ can be recovered from the $\ox_{i,r}^\pm$ by Lagrange interpolation:
\begin{equation}
    \label{eq:lagrange}
    \gklob_i(z) = \sum_{r=1}^{\bv_i} \Big( \prod_{s=1, s \neq r}^{\bv_i} \frac{z - w_{i,s}}{w_{i,r}-w_{i,s}}\Big) \ox_{i,r}^-, \quad \gkloc_i(z) = \sum_{r=1}^{\bv_i} \Big( \prod_{s=1, s \neq r}^{\bv_i} \frac{z - w_{i,s}}{w_{i,r}-w_{i,s}}\Big) \ox_{i,r}^+.
\end{equation}

\begin{rem}
    \label{rem:ogcgklo}
    The functions from \eqref{eq: Drinfeld gens Wmu} can also be expressed in terms of the above coordinates:
\begin{equation}
    \label{eq:changeofgens}
    \sum_r e_i^{(r)} z^{-r} = \frac{\gklob_i(z)}{\gkloa_i(z)}, 
    \quad 
    \sum_r h_i^{(r)} z^{-r} =z^{\bw_i} \prod_{j\in \I} \gkloa_j(z)^{- c_{ji}}, 
    \quad 
    \sum_r f_i^{(r)} z^{-r} = \frac{\gkloc_i(z)}{\gkloa_i(z)}.
\end{equation}
This leads to variations on the Lagrange interpolation formulas above, such as
$$
 \sum_r e_i^{(r)} z^{-r} = \sum_{r=1}^{\bv_i} \frac{1}{(z-w_{i,r})  \prod_{s=1, s \neq r}^{\bv_i}{(w_{i,r}-w_{i,s})}} \ox_{i,r}^-,
$$
where the right side is expanded as a Laurent series in $z^{-1}$.
\end{rem}

In a slight variation on the notation of \cite[\S 2]{BFN19}, we define an open subset $\mathring{\bA}^{|\lambda-\mu|} \subset \bA^{|\lambda-\mu|}$ as the complement to all diagonals $\ow_{i,r} = \ow_{j,s}$  for pairs $(i,r ) \neq (j,s)$ with $c_{ij} \neq 0$. Let  $\mathring{\bA}^{(\lambda-\mu)} =  \mathring{\bA}^{|\lambda-\mu|} / S_{\lambda-\mu}$. Then $\mathring{\bA}^{(\lambda-\mu)} \subset \bA^{(\lambda-\mu)}$ is open, consisting of tuples $(\gkloa_i(z))_{i \in \I}$ where each $\gkloa_i(z)$ has no multiple roots, and where $\gkloa_i(z)$ and $\gkloa_j(z)$ have no common roots whenever $c_{ij}=-1$. Define an open subset $U_\mu^\lambda \subseteq \overline{\cW}_\mu^\lambda$ as the fiber product 
\begin{equation} \label{eq:U}
U_\mu^\lambda = \overline{\cW}_\mu^\lambda \times_{\bA^{(\lambda-\mu)}} \mathring{\bA}^{(\lambda-\mu)}.
\end{equation}
The following definition provides an étale covering of $U_\mu^\lambda$.

\begin{dfn}
    \label{def:X}
    Consider an affine space $\bA^{3|\lambda-\mu|}$ with coordinates $\ow_{i,r}, \ox_{i,r}^+, \ox_{i,r}^-$ for $i\in \I$ and $1\lle r \lle \bv_i$.
    Let $X_\mu^\lambda \subseteq\bA^{ 3|\lambda-\mu|}$ be the locally closed subvariety defined by imposing the relations \eqref{eq:gklorel3} for all pairs $(i,r)$, and by also imposing that $\ow_{i,r} - \ow_{j,s} \neq 0$ for all pairs $(i,r ) \neq (j,s)$ where $c_{ij} \neq 0$. 
\end{dfn}

 There is a natural map $X_\mu^\lambda \rightarrow \bA^{|\lambda-\mu|}$ defined by projecting onto the coordinates $\ow_{i,r}$. Note that the image of $X_\mu^\lambda$ actually lands in the open set $\mathring{\bA}^{|\lambda-\mu|}$.  
 %
 % There is also an action of $S_{\lambda-\mu}$ on $X_\mu^\lambda$ permuting the $\ow_{i,r}$ (resp.~$\ox_{i,r}^\pm$) for each fixed $i \in \I$, and the above map $X_\mu^\lambda \rightarrow \bA^{|\lambda-\mu|}$  is equivariant for the action of $S_{\lambda-\mu}$.  
The following result is a variant of results from \cite[\S2]{BFN19} describing the birational geometry of $\overline{\cW}_\mu^\lambda$. The Poisson structure on $X_\mu^\lambda$  is related to the Poisson structure for zastava spaces in the étale coordinates of \cite{FKMM99}. Denote 
\[
\delta_{i\leftrightarrow j} = \begin{cases} 1, & \text{if } c_{ij} = -1, \\ 0, & \text{else.} 
\end{cases}
\]

\begin{thm}[\mbox{cf. \cite{BFN19}}]
    \label{prop:gklobirat}
    \begin{enumerate}
        \item The variety $X_\mu^\lambda$ is equipped with a Poisson structure defined by:
        \begin{align*}
        \{\ow_{i,r}, \ow_{j,s}\} &= 0, \quad \{\ow_{i,r}, \ox_{j,s}^\pm\} = \mp \delta_{i,j} \delta_{r,s} \ox_{j,s}^\pm,
        \\
        \{\ox_{i,r}^-, \ox_{j,s}^+\} &= \delta_{i,j} \delta_{r,s} \frac{\partial}{\partial \ow_{i,r}} \Big( w_{i,r}^{\bw_i} \prod_{j \leftrightarrow i, ~j \in \I} \gkloa_i(\ow_{i,r}) \Big),
        \\
        \{\ox_{i,r}^\pm, \ox_{j,s}^\pm\} &=\pm \frac{\delta_{i\leftrightarrow j}}{\ow_{i,r} - \ow_{j,s}} \ox_{i,r}^\pm \ox_{j,s}^\pm.     
        \end{align*}
 
        \item   There is a Poisson map $X_\mu^\lambda \rightarrow U_\mu^\lambda$, uniquely determined by
        $$
        \gkloa_i(z) = \prod_r (z-\ow_{i,r}),$$ 
        $$\gklob_i(z) = \sum_r \Big( \prod_{s \neq r} \frac{z-\ow_{i,s}}{\ow_{i,r} - \ow_{i,s}}\Big) \ox_{i,r}^-
        , \qquad \gkloc_i(z) = \sum_r \Big( \prod_{s \neq r} \frac{z-\ow_{i,s}}{\ow_{i,r} - \ow_{i,s}}\Big) \ox_{i,r}^+.
        $$
        This map is finite and faithfully flat, and identifies $$X_\mu^\lambda = U_\mu^\lambda \times_{{\bA}^{(\lambda-\mu)}} {\bA}^{|\lambda-\mu|}  = U_\mu^\lambda \times_{\mathring{\bA}^{(\lambda-\mu)}} \mathring{\bA}^{|\lambda-\mu|}.$$
        % and $U_\mu^\lambda = X_\mu^\lambda / S_{\lambda-\mu}$.
    \end{enumerate}
\end{thm}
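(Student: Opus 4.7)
The plan is to construct the map in part (2) first, use it to transfer the Poisson structure from $U_\mu^\lambda$, and then verify the explicit brackets claimed in part (1).

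\textbf{Step 1: Construction of the map and fiber product identification.}  Given a point $(\ow_{i,r}, \ox_{i,r}^\pm) \in X_\mu^\lambda$, define $\gkloa_i(z) = \prod_r(z-\ow_{i,r})$ and let $\gklob_i(z), \gkloc_i(z)$ be the polynomials of degree $<\bv_i$ determined by Lagrange interpolation from the values $\ox_{i,r}^\pm$. The defining relations \eqref{eq:gklorel3} say that the polynomial $z^{\bw_i}\prod_{j\leftrightarrow i}\gkloa_j(z) - \gklob_i(z)\gkloc_i(z)$ vanishes at each of the $\bv_i$ distinct roots $\ow_{i,r}$ of $\gkloa_i(z)$, hence is divisible by $\gkloa_i(z)$; the quotient yields a polynomial $\gklod_i(z)$, so the fundamental equation \eqref{eq:gklorel} holds. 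By \cite[\S 2]{BFN19}, these matrix-coefficient data determine a point of $\overline{\cW}_\mu^\lambda$; since the $\gkloa_i$ have simple roots with no coincidence among connected nodes, the image lies in $U_\mu^\lambda$. To identify the fiber product, observe that over $\mathring{\bA}^{(\lambda-\mu)}$ an ordering of the roots of each $\gkloa_i(z)$ is an étale datum, and once such an ordering is chosen, the values $\ox_{i,r}^\pm := \gklob_i(\ow_{i,r}), \gkloc_i(\ow_{i,r})$ are uniquely determined and satisfy \eqref{eq:gklorel3}; conversely Lagrange interpolation recovers $\gklob_i,\gkloc_i$ from such data. This yields the bijection $X_\mu^\lambda \cong U_\mu^\lambda \times_{\bA^{(\lambda-\mu)}} \bA^{|\lambda-\mu|}$, on the level of schemes.

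\textbf{Step 2: Finite flatness and transport of the Poisson structure.}  The cover $\bA^{|\lambda-\mu|}\to\bA^{(\lambda-\mu)}$ is finite and faithfully flat (a quotient by the finite group $S_{\lambda-\mu}$), and étale over $\mathring{\bA}^{(\lambda-\mu)}$. Combined with the faithful flatness of $U_\mu^\lambda \to \bA^{(\lambda-\mu)}$ from \cite[Lemma~2.7]{BFN19}, base change shows $X_\mu^\lambda\to U_\mu^\lambda$ is finite, faithfully flat and étale. An étale morphism carries a unique Poisson lift, so $X_\mu^\lambda$ inherits a well-defined Poisson structure from $U_\mu^\lambda\subset\cW_\mu$.

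\textbf{Step 3: Computing the brackets.}  It remains to check that this pulled-back Poisson structure matches the formulas in (1). Via Remark \ref{rem:ogcgklo}, the generators $e_i^{(r)}, h_i^{(r)}, f_i^{(r)}$ from Proposition \ref{prop: Wmu Poisson gens} translate into generating-function relations among $\gkloa_i(z), \gklob_i(z), \gkloc_i(z)$. A direct computation from \eqref{eq:csy1}--\eqref{eq:csy6} gives, for example,
\begin{align*}
\{\gkloa_i(z),\gkloa_j(w)\} &= 0, \\
\{\gkloa_i(z),\gklob_j(w)\} &= -\delta_{ij}\,\frac{\gkloa_i(z)\gklob_j(w)-\gkloa_i(w)\gklob_j(z)}{z-w},\\
\{\gklob_i(z),\gkloc_j(w)\} &= \delta_{ij}\,\frac{z^{\bw_i}\prod_{k\leftrightarrow i}\gkloa_k(z) - w^{\bw_i}\prod_{k\leftrightarrow i}\gkloa_k(w)}{z-w},
\end{align*}
and analogous formulas for the remaining pairs. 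Evaluating at $z=\ow_{i,r}$ and $w=\ow_{j,s}$, and using $\gkloa_i(\ow_{i,r})=0$ together with the differentiation rule $\partial\ow_{i,r}/\partial \gkloa_i^{(s)} = -\ow_{i,r}^{\bv_i-s}/\gkloa_i'(\ow_{i,r})$ coming from $\gkloa_i(\ow_{i,r})=0$, the resulting expressions simplify to precisely those in part~(1); the apparent singularities cancel because of the vanishing of $\gkloa_i$ at its own roots. A useful consistency check: the relation $\ox_{i,r}^+\ox_{i,r}^-+\ow_{i,r}^{\bw_i}\prod_{j\leftrightarrow i}\gkloa_j(\ow_{i,r})=0$ must Poisson-commute with each generator, and indeed the $\{\ox^-,\ox^+\}$-formula is forced by this constraint together with the Leibniz rule.

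The main obstacle is the explicit computation in Step 3: the differentiation of Lagrange interpolants and the bookkeeping of symmetric-function identities at the roots of $\gkloa_i$. However, the procedure closely parallels the well-known étale coordinate calculation on zastava spaces in \cite{FKMM99} and \cite[\S2]{BFN19}, which handles the symmetric-algebra analogue; the present case is obtained by adding the Drinfeld Cartan data, and the only genuinely new verifications are the diagonal brackets $\{\ox^-_{i,r},\ox^+_{i,r}\}$ (forced by the defining relation) and the off-diagonal brackets $\{\ox^\pm_{i,r},\ox^\pm_{j,s}\}$ for $c_{ij}=-1$, which come from the Serre-adjacent relations \eqref{eq:csy5}--\eqref{eq:csy6} and produce the expected simple pole at $\ow_{i,r}=\ow_{j,s}$.
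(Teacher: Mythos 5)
Your proof takes a genuinely different route from the paper's.

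The paper does not verify the explicit brackets at all: it first imports the identification $\overline{\cW}_\mu^\la \cong \mc M_C(\operatorname{GL}(V),\mathbf{N})$ from \cite[Theorem 3.10]{BFN19}, then uses Coulomb branch abelianization to identify the base-changed space with $\mc M_C(\mathbf T,\mathbf N')$ for a maximal torus $\mathbf T \subset \operatorname{GL}(V)$ and the framing-only representation $\mathbf N'$. Étaleness of $\widetilde X_\mu^\la \to U_\mu^\la$ comes from \cite[Theorem 10]{W22}. Finally, both $\widetilde X_\mu^\la$ and $X_\mu^\la$ of Definition \ref{def:X} are recognized as the same open subset of a product of type A Kleinian singularities $y_{i,r}^+ y_{i,r}^- = w_{i,r}^{\bw_i}$ (after unit renormalizations), with the Poisson structure read off directly from the abelianized Coulomb branch via \cite[\S 3(vii)(a), \S 4(iv)]{BFN18}. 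In this way the explicit formulas in part (1) are an output of the identification, not something verified by computation.

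Your proposal instead builds the map by hand — Lagrange interpolation plus the divisibility argument for $\gklod_i$ — and obtains the fiber-product identification directly from the freeness of the symmetric-group action on $\mathring{\bA}^{|\lambda-\mu|}$. This cleanly establishes parts of (2) without invoking Coulomb branch theory, and is the approach one finds historically in the zastava-space literature (\cite{FKMM99}). The price you pay is Step 3: you must now derive the explicit brackets of part (1) from the semiclassical Yangian relations \eqref{eq:csy1}--\eqref{eq:csy8} together with Remark \ref{rem:ogcgklo}. The generating-function identity $\{\gkloa_i(z),\gklob_j(w)\}$ you state needs care about the sign and about the low-order terms $h_i^{(r)}$, $r \le 0$, which sit outside the range where \eqref{eq:csy2}--\eqref{eq:csy4} apply; and the evaluation at $z=w_{i,r}$ requires the differentiation-under-constraint chain rule you mention. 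This can certainly be done, but it is a nontrivial bookkeeping exercise that you sketch rather than complete. The paper's abelianization argument trades all of this for formal consequences of BFN's general theory, which is precisely why the authors chose it; your route is more elementary and self-contained once the brackets are nailed down. Both are valid strategies.

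One small point worth flagging explicitly in your write-up: the étaleness in Step 2 needs the observation that $S_{\lambda-\mu}$ acts freely on $\mathring{\bA}^{|\lambda-\mu|}$ (which follows because the excluded diagonals include $w_{i,r}=w_{i,s}$ for $r\neq s$, since $c_{ii}=2\neq 0$), so that $\mathring{\bA}^{|\lambda-\mu|}\to\mathring{\bA}^{(\lambda-\mu)}$ is an étale cover before you base change. You implicitly use this but should say it.
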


\begin{proof}
    An isomorphism $\overline{\cW}_\mu^\la \cong \mc M_C(\operatorname{GL}(V),\mathbf{N})$ with the Coulomb branch for a quiver gauge theory is established in \cite[Theorem 3.10]{BFN19}.  It follows that the base change $\overline{\cW}_\mu^\la \times_{\mathbb A^{(\lambda-\mu)}} \mathring{\mathbb{A}}^{|\lambda-\mu|}$ can be described using Coulomb branch techniques from \cite[\S 5 and \S 6]{BFN18}. Indeed, fix a maximal torus $\mathbf T \subset \operatorname{GL}(V)$ and let ${\bf N}' = \bigoplus_{i \in \I} \Hom(W_i, V_i)$ be the sub-representation of $\bf N$ where only framing arrows are left (see \cite[\S 3(iii)]{BFN19} for the definition of $\bf N$), there is an isomorphism:
    $$
    \overline{\cW}_\mu^\la \times_{\mathbb A^{(\lambda-\mu)}} \mathring{\mathbb{A}}^{|\lambda-\mu|} \cong \mc M_C(\bf T, \bf N') \times_{\mathbb A^{|\lambda-\mu|}} \mathring{\mathbb{A}}^{|\lambda-\mu|}.
    $$
    Denoting this space by $\widetilde X_\mu^\la$, there is a natural map $\widetilde X_\mu^\la \rightarrow U_\mu^\la$. It is finite and faithfully flat since it arises via base change along $\mathbb{A}^{|\lambda-\mu|} \rightarrow \mathbb{A}^{(\lambda-\mu)}$. It is étale by an application of \cite[Theorem~ 10]{W22}.  
    
    To complete the proof, it suffices to show that $\widetilde X_\mu^\la = X_\mu^\la$ .  In fact, both can be identified with the same open subset of a product of Kleinian singularities of type A.  For $\mathcal M_C(\bf T, \bf N')$ this follows from \cite[\S 3(vii)(a)]{BFN18} and \cite[\S 4(iv)]{BFN18}, since $\bf T$ is a product of multiplicative groups $\bC^\times_{i,r}$, each acting independently by scaling a space $\Hom(W_i, \bC_{i,r})$.  Meanwhile, in Definition \ref{def:X} note that all linear factors of $\gkloa_j(z)$ in the defining relation \eqref{eq:gklorel3} are invertible.  Thus up to renormalizing  $\ox_{i,r}^\pm$ this relation is also simply the Kleinian singularity $\ox_{i,r}^+ \ox_{i,r}^- = w_{i,r}^{\bw_i}$.
\end{proof}

\subsection{Affine Grassmannian islices $\iWbar_\mu^\lambda$}

Recall from Lemma \ref{lem: gradings and shifts iWmu} a Poisson involution $\sigma$ of $\cW_\mu$, for any even spherical coweight $\mu$.  We are interested in the fixed point loci of the subvarieties $\overline{\cW}_\mu^\lambda \subset \cW_\mu$ under the action of $\sigma$:
\begin{align} \label{eq:islices}
    \iWbar_\mu^\lambda := (\overline{\cW}_\mu^\lambda)^\sigma.
\end{align}
We shall refer to $\iWbar_\mu^\lambda$ as an affine Grassmannian {\em islice}.

\begin{thm}
    \label{thm:fixedWlm}
    Let $\mu$ be an even spherical coweight, and let $\lambda$ be a dominant coweight with $\lambda \gge \mu$. Then $\sigma$ preserves the subvariety $\overline{\cW}_\mu^\lambda \subset \cW_\mu$ if and only if $\tau \lambda = \lambda.$  In this case, the islice 
    $\iWbar_\mu^\lambda$
    inherits a Poisson structure via (doubled) Dirac reduction, and $\iWbar_\mu^\lambda \subset\iW_\mu$ is a Poisson embedding.  There is a decomposition into smooth strata:
    $$
    \iWbar_\mu^\lambda = \bigsqcup_{\substack{\mu \lle \nu \lle \lambda, \\ \nu \text{ dominant,}\\ \tau \nu = \nu} } \iW_\mu^\nu,
    \qquad
    \text{where }\iW_\mu^\nu := (\cW_\mu^\nu)^\sigma.
    $$
    The symplectic leaves of $\ \iWbar_\mu^\lambda$ consist of the connected components of the strata $\iW_\mu^\nu$. Finally, the shift maps restrict to Poisson maps $\iota_{\mu,\nu}^\tau : {}^{\imath} \overline{\cW}_{\mu+\nu+\tau \nu}^{\lambda + \nu + \tau \nu} \longrightarrow {}^{\imath}\overline{\cW}_\mu^\lambda$.
\end{thm}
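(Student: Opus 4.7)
\medskip
\noindent \textbf{Proof proposal.} The plan is to treat the six assertions in sequence, with the preservation criterion as the foundation. First I would analyze when $\sigma$ preserves $\overline{\cW}_\mu^\lambda$. Since $\omega_\tau$ permutes (up to sign) the Chevalley generators and $\operatorname{inv}$ is a group automorphism, both preserve the positive part $G[z] \subset G(\!(z^{-1})\!)$, hence $\sigma(G[z]) = G[z]$. Using that $\sigma$ is an anti-homomorphism together with $\sigma(z^\lambda) = (-1)^{\tau \lambda} z^{\tau \lambda}$ from \eqref{eq:sigma:zmu}, and observing that the element $(-1)^{\tau\lambda} = (\tau\lambda)(-1) \in T \subset G[z]$ can be absorbed into the flanking factors, I would deduce
\[
\sigma\big( \overline{G[z] z^\lambda G[z]} \big) = \overline{G[z] z^{\tau\lambda} G[z]}.
\]
Combined with Lemma~\ref{lem:evenspherical} (which says $\sigma$ preserves $\cW_\mu$) and the fact that distinct dominant coweights give distinct Schubert varieties, this forces the equivalence with $\tau\lambda = \lambda$.

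Assuming $\tau\lambda = \lambda$, the Poisson structure on $\iWbar_\mu^\lambda$ is then constructed exactly as in \S\ref{ssec: dirac reduction and fixed points}, applying (doubled) Dirac reduction to the restriction $\sigma|_{\overline{\cW}_\mu^\lambda}$, which is Poisson by Lemma~\ref{lem: gradings and shifts iWmu}(4). The Poisson embedding $\iWbar_\mu^\lambda \hookrightarrow \iW_\mu$ follows from Remark~\ref{rem: Dirac reduction Poisson}: any $\sigma$-invariant function on $\iW_\mu$ restricts to one on $\iWbar_\mu^\lambda$, and Poisson brackets of such invariant extensions are compatible with the closed embedding $\overline{\cW}_\mu^\lambda \hookrightarrow \cW_\mu$, which is already a closed Poisson embedding.

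Next I would analyze the symplectic leaves. By \eqref{eq: Wlamu symplecic leaves}, $\overline{\cW}_\mu^\lambda$ decomposes into the smooth symplectic leaves $\cW_\mu^\nu$ with $\mu \lle \nu \lle \lambda$ dominant. Each $\cW_\mu^\nu$ is (up to intersection with $\cW_\mu$) a piece of the Schubert stratum $G[z] z^\nu G[z]$, so the argument from Part~1 applied inside $\overline{\cW}_\mu^\lambda$ shows that $\sigma$ sends $\cW_\mu^\nu$ to $\cW_\mu^{\tau \nu}$; the leaf is preserved precisely when $\tau \nu = \nu$, and otherwise $\sigma$ swaps two disjoint leaves, contributing no fixed points. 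Hence set-theoretically
\[
\iWbar_\mu^\lambda = \bigsqcup_{\substack{\mu \lle \nu \lle \lambda \\ \nu \text{ dominant},\ \tau\nu=\nu}} \iW_\mu^\nu.
\]
Smoothness of each $\iW_\mu^\nu$ follows from Lemma~\ref{lem:fixedsm}, and the identification of symplectic leaves of $\iWbar_\mu^\lambda$ with the connected components of these $\iW_\mu^\nu$ is an immediate application of Lemma~\ref{lem: Dirac reduction leaves}.

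Finally, for the shift maps, I would take the shift $\iota_{\mu,\nu,\tau\nu} : \cW_{\mu+\nu+\tau\nu} \to \cW_\mu$ and note that it restricts to a map on generalized slices $\overline{\cW}_{\mu+\nu+\tau\nu}^{\lambda+\nu+\tau\nu} \to \overline{\cW}_\mu^\lambda$ by \cite[Proposition~4.11]{KPW22}. Lemma~\ref{lem: gradings and shifts iWmu}(3) provides $\sigma$-equivariance of this map, so passing to $\sigma$-fixed loci yields the desired Poisson map $\iota_{\mu,\nu}^\tau$, in agreement with the description already given in Proposition~\ref{prop:iWmu}. The main subtle point in the whole argument is the bookkeeping of the sign $(-1)^{\tau\lambda}$ in $\sigma(z^\lambda)$ and its absorption into $G[z]$; all remaining steps are consequences of the general fixed-point machinery of \S\ref{ssec: dirac reduction and fixed points} once this is in hand.
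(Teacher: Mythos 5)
Your proposal is correct and follows essentially the same route as the paper's proof: compute $\sigma(G[z]z^\nu G[z]) = G[z]z^{\tau\nu}G[z]$ using \eqref{eq:sigma:zmu} with the sign absorbed into $G[z]$, deduce preservation of the stratum (resp.~slice) iff $\tau\nu = \nu$ (resp.~$\tau\lambda = \lambda$) from disjointness of Schubert cells indexed by distinct dominant coweights, and then read off the Poisson structure, stratification, and symplectic leaves from the general Dirac-reduction machinery (Lemmas~\ref{lem:fixedsm} and \ref{lem: Dirac reduction Leaves}) together with \eqref{eq: Wlamu symplecic leaves}. The only cosmetic difference is at the shift-map step: you derive Poisson-ness of the restricted $\iota_{\mu,\nu}^\tau$ from the $\sigma$-equivariance in Lemma~\ref{lem: gradings and shifts iWmu}(3) plus \cite[Proposition~4.11]{KPW22}, whereas the paper cites the established Poisson shift map $\iW_{\mu+\nu+\tau\nu}\to\iW_\mu$ from Theorem~\ref{thm:iWmuPoisson}(4); these amount to the same thing.
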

\begin{proof}
    For any dominant coweight $\nu$, using (\ref{eq:sigma:zmu}) we have 
    $$
    \sigma( G[z] z^\nu G[z]) = \sigma(G[z]) \sigma(z^\nu) \sigma( G[z]) = G[z]  (-1)^{\tau \nu} z^{\tau \nu} G[z] = G[z]z^{\tau \nu} G[z].
    $$
    Since $\tau \nu$ is dominant, we have $G[z] z^\nu G[z] \cap G[z] z^{\tau \nu} G[z] = \emptyset$ unless $\tau \nu = \nu$. It follows that $\sigma$ maps $\cW_\mu^\nu$ isomorphically onto $\cW_\mu^{\tau \nu}$, and preserves $\cW_\mu^\nu$ if and only if $\tau \nu = \nu.$ Similarly $\sigma$ preserves $\overline{\cW}_\mu^\lambda$ if and only if $\tau \lambda =\lambda$. In this case the fixed point locus $\iWbar_\mu^\lambda$ inherits a Poisson structure by (doubled) Dirac reduction, and $\iWbar_\mu^\lambda \subset \iW_\mu$ is Poisson since $\overline{\cW}_\mu^\lambda \subset \cW_\mu$ is Poisson and (doubled) Dirac reduction is functorial. The restricted shift maps are therefore Poisson by Theorem \ref{thm:iWmuPoisson}(4). Finally, the  decomposition of $\iWbar_\mu^\lambda$ into symplectic leaves follows from  Lemma \ref{lem: Dirac reduction leaves} and (\ref{eq: Wlamu symplecic leaves}).
\end{proof}

\begin{rem}
    Similarly to Remark \ref{rem:modular}, the islice $\iWbar_\mu^\la$ admits a modular description as a moduli space of $G$-bundles.  More precisely, using the symmetric definition $\overline{\cW}_{\mu_1, \tau \mu_1}^\la $ of the slice $\overline{\cW}_\mu^\la$ from \cite[\S 2(v)]{BFN19},  we can cut out $\iWbar_\mu^\la \subset \overline{\cW}_{\mu_1, \tau \mu_1}^\la$ by imposing  that the bundles denoted $\mathcal{P}_\pm$ are related by the involution $\sigma$.   We leave the details to the interested reader.  
\end{rem}

\begin{eg}  
    \label{eg:SL2slices2}
    Let $G =\operatorname{PGL}_2$ and $\tau = \id$. The involution $\sigma$ on $G(\!(z^{-1})\!)$ is given by $g(z) \mapsto g(-z)^T$.  Take coweights $\la \gge \mu$ for $G$, with $\la$ dominant.  We assume that $\mu$ is even; note that it is equivalent to assume that $\la$ is even, since $\langle \lambda-\mu, \alpha\rangle = \langle \la - \mu, 2 \varpi\rangle = 2 \bv$. Recall the explicit model for $\overline{\cW}_\mu^\la$ from Example \ref{eg:SL2slices}.  After renormalizing by a multiple of the identity matrix (this is harmless since $G = \operatorname{PGL}_2$), in this model the involution takes the form, cf.~\eqref{eq:sigmaingklo}:
    $$
    \begin{pmatrix}
        \gklod(z) & \gklob(z) \\ \gkloc(z) & \gkloa(z) 
    \end{pmatrix} \longmapsto 
    (-1)^\bv \begin{pmatrix}
        \gklod(-z) & \gkloc(-z) \\ \gklob(-z) & \gkloa(-z) 
    \end{pmatrix}
    $$
    The fixed point locus can thus be described analogously to Example \ref{eg:SL2slices}:
        \begin{equation}
        \iWbar_\mu^\la \cong \left\{ 
         \begin{pmatrix} \gklod & \gklob \\ \gkloc & \gkloa \end{pmatrix} ~ :~ \begin{array}{cl} (i) & \gkloa, \gklob, \gkloc,\gklod \in \bC[z], \\ (ii) & \gkloa \text{ is monic of degree } \bv, \\ (iii) & \gklob, \gkloc \text{ have degree } < \bv, \\ (iv) & \gkloa\gklod-\gklob\gkloc = z^\bw, \\ (v)& \gkloa(z) = (-1)^\bv \gkloa(-z), \ \gkloc(z) = (-1)^\bv \gklob(z). \end{array}\right\}
    \end{equation}
    Similarly to Example \ref{eg:SL2slices}, the coordinate $\gklod$ is uniquely determined by the others by condition $(iv)$. Note that it  automatically satisfies $\gklod(z) = (-1)^\bv \gklod(-z)$, because of $(v)$, which also determines $\gkloc$ uniquely from $\gklob$. So in the most simple terms, the variety $\iWbar_\mu^\la$ consists of pairs $(\gkloa, \gklob)$ of polynomials satisfying the degree conditions ($ii$) and ($iii$), such that $\gkloa(z) = (-1)^\bv \gkloa(-z)$, and such that $\gkloa(z)$ divides $z^\bw + (-1)^\bv \gklob(z) \gklob(-z)$.

    In this case $\iWbar_\mu^\la$ is normal and a complete intersection.   When $\mu$ is dominant, we will give an alternate description of $\iWbar_\mu^\la$ in terms of Slodowy slices, see Example \ref{ex:rank1:islice}.  In particular these cases are all symplectic singularities.
\end{eg}

\subsection{Classical iGKLO for $\iWbar_\mu^\lambda$} 
\label{ssec:GKLOfpl} 

Let  $\mu$ be an even spherical coweight, and let $\tau \lambda =\lambda$ be a dominant coweight with $\lambda \gge \mu$.  Recall the iGKLO homomorphism $\Phi_\mu^\lambda : \Yt_\mu[\bm z] \rightarrow \mc A$ from Theorem~\ref{thm:GKLOquasisplit}. This homomorphism depends on a choice of orientation of the Dynkin diagram $\I$, which we fix from now on.

\subsubsection{Classical iGKLO}

We show that the classical limit of $\Phi_\mu^\lambda$ (or more precisely, its specialization at $\bm  z = 0$) is related to the fixed point scheme $\iWbar_\mu^\lambda$. We first define appropriate filtrations on these algebras. 

On one hand, we consider a filtration $F_{\mu_1}^\bullet(\Yt_\mu)$ as in \S \ref{ssec:filtrations on stY}. Recall that $\operatorname{gr}^{F_{\mu_1}^\bullet} \Yt_\mu \cong \bC[\iW_\mu]$ by Theorem \ref{thm:iWmuPoisson}.  For our purposes here,  we define the coweight $\mu_1$ for $\g$ as follows:
\begin{equation}  \label{eq:mu1}
    \langle \mu_1, \alpha_i\rangle  =  \left\{ \begin{array}{cl} \deg Z_i - \bv_i + \sum_{\substack{j \leftrightarrow i, \\ j \in \I}} \bv_j, & \text{if } i \in \I_{ \pm 1}, \\ \fkw_i- 2 \fkv_i+ \sum_{\substack{j\leftrightarrow i, \\ j \in \iI}} \fkv_j + \tfrac{1}{2} \Big(\varsigma_i + 2 \theta_i + \sum_{\substack{j \leftrightarrow i, \\ j \in \I_0}}\theta_j \Big), & \text{if }i \in \I_0. \end{array} \right.
\end{equation}
The following result is necessary so that the filtration $F_{\mu_1}^\bullet(\Yt_\mu)$ is defined.
\begin{lem}
    $\mu_1$ is an integral coweight, and $\mu = \mu_1 + \tau \mu_1$.
\end{lem}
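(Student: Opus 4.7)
The lemma has two claims: (i) $\langle \mu_1, \alpha_i\rangle \in \bZ$ for every $i\in\I$, and (ii) $\mu = \mu_1 + \tau\mu_1$. I would treat each part by splitting into the cases $i\in\I_{\pm 1}$ and $i\in\I_0$, with the case $i\in\I_0$ being the one that requires the parity input.

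For integrality, the case $i\in\I_{\pm 1}$ is immediate since the right-hand side of \eqref{eq:mu1} is a manifest sum of integers. For $i\in\I_0$, the only potential obstruction is the half-integer factor $\tfrac{1}{2}(\varsigma_i + 2\theta_i + \sum_{j\leftrightarrow i, j\in\I_0}\theta_j)$; since $2\theta_i$ is even, the question reduces to showing that $\varsigma_i + \sum_{j\leftrightarrow i, j\in\I_0}\theta_j$ is even. I would verify this by reducing $\langle \mu, \alpha_i\rangle = \bw_i - 2\bv_i + \sum_{j\leftrightarrow i}\bv_j$ modulo $2$: substituting $\bv_j = 2\fkv_j + \theta_j$ (for $j\in\I_0$), $\bw_i = 2\fkw_i + \varsigma_i$, and pairing up the two-element $\tau$-orbits $\{j,\tau j\}\subset \I_{\pm 1}$ (which contribute $\bv_j + \bv_{\tau j}=2\bv_j$, even) yields
\[
\langle \mu, \alpha_i\rangle \ \equiv\ \varsigma_i + \sum_{j\leftrightarrow i,\, j\in\I_0}\theta_j \pmod 2.
\]
The evenness of $\mu$ makes the left side vanish mod $2$, forcing integrality.

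For $\mu = \mu_1 + \tau\mu_1$, I would compute the pairings with each simple root. When $i\in\I_0$, we have $\tau i = i$ and hence $\langle \mu_1 + \tau\mu_1, \alpha_i\rangle = 2\langle \mu_1, \alpha_i\rangle$; expanding via \eqref{eq:mu1}, substituting the identities above and pairing up $\tau$-orbits in $\I_{\pm 1}$ should recover $\bw_i - 2\bv_i + \sum_{j\leftrightarrow i}\bv_j = \langle \mu, \alpha_i\rangle$. When $i\in\I_1$, I would use $\langle \mu_1 + \tau\mu_1, \alpha_i\rangle = \langle \mu_1, \alpha_i\rangle + \langle \mu_1, \alpha_{\tau i}\rangle$ and add the formulas for $i$ and $\tau i$, invoking $\deg Z_i + \deg Z_{\tau i} = \deg \Z_i = \bw_i$ together with the $\tau$-symmetry $\bv_j = \bv_{\tau j}$ to obtain $\langle \mu, \alpha_i\rangle$ again.

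The hard part is purely bookkeeping: the expression in \eqref{eq:mu1} interleaves quantities $\bv, \bw$ attached to $\I$ with quantities $\fkv, \fkw, \theta, \varsigma$ attached to $\iI$, and one must carefully compare sums indexed by $\I$ to sums indexed by $\iI$ through the $\tau$-pairings. Beyond this, no conceptual difficulty arises: everything reduces to the definitions together with the evenness of $\mu$ and the parity condition \eqref{parity}.
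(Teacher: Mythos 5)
Your overall strategy is the same as the paper's: both parts are direct calculations from the definitions in \S\ref{ssec:quantumtorus}, and the paper's own proof is exactly that (a two-line reduction, carried out explicitly only for $i\in\I_0$). Your integrality argument is correct and essentially identical to the paper's displayed equation: reducing $\langle\mu,\alpha_i\rangle = \bw_i - 2\bv_i + \sum_{j\leftrightarrow i}\bv_j$ modulo $2$ and pairing $\tau$-orbits in $\I_{\pm 1}$ gives $\langle\mu,\alpha_i\rangle \equiv \varsigma_i + \sum_{j\leftrightarrow i,\,j\in\I_0}\theta_j \pmod 2$, so evenness of $\mu$ forces the half-integer factor in \eqref{eq:mu1} to be an integer. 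One small correction: the parity condition \eqref{parity} is not used anywhere in this argument — only evenness of $\mu$ is — so it should not be cited as an input.

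The place where I'd ask you to do more is the $i\in\I_1$ case of $\mu=\mu_1+\tau\mu_1$, which you assert ``should recover $\langle\mu,\alpha_i\rangle$ again'' but do not carry out. If you substitute \eqref{eq:mu1} literally, with the sum taken over \emph{all} $j\leftrightarrow i$ in $\I$, then $\langle\mu_1,\alpha_i\rangle + \langle\mu_1,\alpha_{\tau i}\rangle = (\deg Z_i + \deg Z_{\tau i}) - (\bv_i+\bv_{\tau i}) + \sum_{j\leftrightarrow i}\bv_j + \sum_{j\leftrightarrow \tau i}\bv_j = \bw_i - 2\bv_i + 2\sum_{j\leftrightarrow i}\bv_j$, because $\sum_{j\leftrightarrow\tau i}\bv_j = \sum_{j\leftrightarrow i}\bv_j$. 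This overshoots $\langle\mu,\alpha_i\rangle$ by $\sum_{j\leftrightarrow i}\bv_j$. The bookkeeping does close up if the sum in the $\I_{\pm 1}$ clause of \eqref{eq:mu1} runs only over $j\to i$ for the fixed orientation (so that the two sums for $i$ and $\tau i$ together cover the neighbors of $i$ exactly once, using the rule "$j\to i \Rightarrow \tau i\to\tau j$"), which is also what the degrees of the leading terms of the iGKLO formula $\Phi_\mu^\la$ suggest. Since the whole content of this lemma \emph{is} that the book-balances exactly, you should actually perform this check rather than assert it; as sketched, the $\I_1$ verification has a gap.
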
 
\begin{proof}
    Both parts follow from direct calculation using the definitions from \S \ref{ssec:quantumtorus}.  
    In particular note that for $i \in \I_0$ we have:
    $$
\langle \mu, \alpha_i\rangle = \bw_i - 2\bv_i + \sum_{\substack{j \leftrightarrow i, \\ j \in \I}} \bv_j = (2 \fkw_i + \varsigma_i) - 2(2 \fkv_i + \theta_i) + 2\sum_{\substack{j \leftrightarrow i, \\ j \in \I_{1}}} \fkv_j  + \sum_{\substack{j \leftrightarrow i, \\ j \in \I_0}} (2 \fkv_j + \theta_j).
$$
This is an even integer since $\mu$ is even, and $\langle \mu_1, \alpha_i\rangle$ is forced to be half of this value to ensure that $\mu = \mu_1 + \tau \mu_1$.
\end{proof}

On the other hand, in the algebra $\mc A$ we specialize all (central) variables $z_{i,s} $ to $ 0$, obtaining its quotient algebra $\mc A_{\bm z= 0}$.  Similarly to \cite[\S B(ii)]{BFN19}, define a filtration on $\mc A_{\bm z=0}$ by setting $\deg w_{i,r} = 1$ for all pairs $(i,r)$, and setting
\begin{align}  \label{filter:A}
\deg \dfo_{i,r}^{\pm 1} = \left\{ \begin{array}{cl} 0, & \text{if } i \in \I_{\pm 1}, \\ 
\pm \sum_{\substack{j \rightarrow i, \\ j \in \I_0}} \fkv_j \pm \tfrac{1}{2}\Big(- \varsigma_i+\sum_{\substack{j \leftrightarrow i, \\ j \in \I_0}} \theta_j\Big), & \text{if } i \in \I_0. \end{array} \right.
\end{align} 
Similarly to the previous lemma, we note that $\deg \dfo_{i,r}^{\pm 1}$ is always an integer. By the same argument as \cite[Proposition 4.4]{KWWY14}, the associated graded algebra is a localized polynomial ring:
\begin{equation}
\operatorname{gr} \mc A_{\bm z=0} = \bC[\ow_{i,r}, \dfo_{i,r}^{\pm 1}, (w_{i,r}\pm w_{i,r'})^{-1}, w_{i,r}^{-1} ~:~ i \in \iI, 1 \lle r \neq r' \lle \fkv_i ].
\end{equation}
Its Poisson structure is determined by $\{ \dfo_{i,r}^{\pm 1}, w_{j,s}\} = \pm \delta_{i,j} \delta_{r,s} \dfo_{i,r}^{\pm 1}$ and $\{w_{i,r}, w_{j,s}\} = \{ \dfo_{i,r}, \dfo_{j,s}\} = 0$.  As in \S\ref{ssec:quantumtorus} we extend the notation $\dfo_{i,r}$ to $i \in \I_{-1} $ by $\dfo_{i,r} = - \dfo_{\tau i, \fkv_i + 1 -r}.$ 

The next theorem is the main result of this section.
\begin{thm}
    \label{thm:ctgklo}
     Retain the notation for filtrations on $\Yt_\mu$ and $\mc A_{\bm z=0}$ as above. 
    \begin{enumerate}
    \item The open subscheme $U_\mu^\lambda \subset \overline{\cW}_\mu^\lambda$ from \eqref{eq:U} is invariant under $\sigma$. Its fixed point locus ${}^\imath U_\mu^\lambda$ is non-empty if and only if the parity condition \eqref{parity} holds, and in this case we have 
    $$
    \dim {}^\imath U_\mu^\lambda = 2 \sum_{i \in \iI} \fkv_i.
    $$

    \item The homomorphism $\Phi_\mu^{\lambda, \bm z =0} : \Yt_\mu \rightarrow \mc A_{\bm z=0}$ is filtered, and the associated graded map
    $\operatorname{gr} \Phi_\mu^{\lambda, \bm z =0} : \bC[ \iW_\mu ]\longrightarrow \operatorname{gr} \mc A_{\bm z =0}$
    defines the closure $\overline{C}_\mu^\lambda \subseteq \iWbar_\mu^\lambda$, i.e.,~the kernel of this map is the defining ideal of $\overline{C}_\mu^\lambda$. Here $C_\mu^\lambda \subseteq {}^\imath U_\mu^\lambda$ is a top-dimensional irreducible component.
    \end{enumerate}    
\end{thm}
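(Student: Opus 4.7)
I prove Part (1) first, then use its dimension count in Part (2).

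\emph{Part (1).} The first step is to pin down the action of $\sigma$ on the coordinates $\gkloa_i(z), \gklob_i(z), \gkloc_i(z)$ from \eqref{eq:gkloa}--\eqref{eq:gkloc}. Starting from $\sigma(n^+hz^\mu n^-)=\sigma(n^-)\sigma(h)z^\mu\sigma(n^+)$ (valid because $\sigma(z^\mu)=z^\mu$ by Lemma \ref{lem:evenspherical}) and using $\omega_\tau(t)=\tau(t)^{-1}$ on $T$, one finds $\sigma(h(z))=\tau(h)(-z)$ and hence
\[
\sigma^\ast\gkloa_i(z) = (-1)^{\bv_i}\gkloa_{\tau i}(-z), \quad \sigma^\ast\gklob_i(z) = \pm\,\gkloc_{\tau i}(-z), \quad \sigma^\ast\gkloc_i(z) = \pm\,\gklob_{\tau i}(-z).
\]
The open condition defining $U_\mu^\la \subset \overline{\cW}_\mu^\la$ is thus $\sigma$--stable, and $\sigma$ lifts to an involution $\widetilde\sigma$ on the étale cover $X_\mu^\la$ of Theorem \ref{prop:gklobirat}, acting on coordinates exactly via \eqref{signzw2}--\eqref{signzw}. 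Denote the $\widetilde\sigma$--fixed subscheme of $X_\mu^\la$ by ${}^\imath X_\mu^\la$. Counting invariant coordinates: for $i\in\I_1$, the $\tau i$--variables are slaved to the $i$--variables, giving $2\fkv_i=2\bv_i$ parameters per $\tau$--orbit; for $i\in\I_0$, $\widetilde\sigma$ pairs $w_{i,r}\leftrightarrow -w_{i,\bv_i+1-r}$ and likewise pairs the $y^\pm$'s, yielding $2\fkv_i$ parameters and forcing a root at $0$ when $\bv_i$ is odd (i.e.,~$\theta_i=1$). Summing gives $\dim({}^\imath X_\mu^\la)=2\sum_{i\in\iI}\fkv_i$. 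Non-emptiness is equivalent to \eqref{parity}: if connected $i\ne j\in\I_0$ both have $\theta_i=\theta_j=1$, the forced zero roots of $\gkloa_i,\gkloa_j$ violate the openness in Definition \ref{def:X}; conversely, when \eqref{parity} holds one exhibits $\widetilde\sigma$--fixed points explicitly. Since $X_\mu^\la\to U_\mu^\la$ is finite, étale, and $\sigma$--equivariant by Theorem \ref{prop:gklobirat}(2), Lemma \ref{lem:fixedfp} gives a finite étale map ${}^\imath X_\mu^\la\to{}^\imath U_\mu^\la$, transferring these claims to ${}^\imath U_\mu^\la$.

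\emph{Part (2).} I first verify that $\Phi_\mu^{\la,\bm z=0}$ is filtered with respect to $F_{\mu_1}^\bullet\Yt_\mu$ (for $\mu_1$ as in \eqref{eq:mu1}) and the filtration \eqref{filter:A} on $\mc A_{\bm z=0}$. This is a direct degree check on the generating series $H_i(u), B_i(u)$ of Theorem \ref{thm:GKLOquasisplit}: the $u^{-r}$--coefficients of the images have degrees matching those of $H_i^{(r)}$ and $B_i^{(s)}$ from \eqref{eq: grading on iYmu}, and the specific choice \eqref{eq:mu1} is calibrated so this balances. Passing to the associated graded yields a Poisson map $\gr\Phi_\mu^{\la,\bm z=0}:\bC[\iW_\mu]\to\gr\mc A_{\bm z=0}$. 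Interpreting $\gr\mc A_{\bm z=0}$ as the coordinate ring of an open subscheme of ${}^\imath X_\mu^\la$, via the classical coordinate dictionary of Remark \ref{rem:ogcgklo} extended by \eqref{signzw}, this is dual to a morphism from that open subscheme into $\iW_\mu$. By compatibility with the classical GKLO for the ordinary shifted Yangian \cite{KWWY14, BFN19} combined with $\sigma$--equivariance, this morphism factors through the inclusion $\iWbar_\mu^\la\hookrightarrow\iW_\mu$ and lands in ${}^\imath U_\mu^\la$. Its scheme-theoretic image is closed and irreducible (the source ring is a domain) and, by Part (1), has dimension $2\sum_{i\in\iI}\fkv_i$; hence it equals the closure $\overline{C}_\mu^\la$ of a top-dimensional irreducible component $C_\mu^\la$ of ${}^\imath U_\mu^\la$. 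Consequently, $\ker(\gr\Phi_\mu^{\la,\bm z=0})$ is the defining ideal of $\overline{C}_\mu^\la$.

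\emph{Main obstacle.} The most delicate technical point is the filtration check and the explicit computation of the associated graded in Part (2). One must carefully track the sign and parity data $\wp_i,\theta_i,\vartheta_i,\varsigma_i$ throughout Theorem \ref{thm:GKLOquasisplit}, and in particular handle the extra Heisenberg-type summand proportional to $\theta_iZ_i(0)/\bigl(u\cdot\W_i^\circ(\tfrac12)\bigr)\cdot\prod_{j\leftrightarrow i}W_j(0)$ appearing in $B_i(u)$ for $i\in\I_0$, whose classical limit must be matched against a concrete contribution on the $\widetilde\sigma$--fixed locus of $X_\mu^\la$. It is then essential to confirm that the classical limit of the iGKLO is genuinely the restriction of the ordinary classical GKLO to the $\sigma$--fixed étale cover of $U_\mu^\la$; once that identification is secured, Part (1) and the structural results of Sections \ref{sec:S.T.Yangians}--\ref{sec:islices} close the argument.
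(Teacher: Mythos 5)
Your plan follows the paper's own route: lift $\sigma$ to the étale cover $X_\mu^\la$, analyze the fixed locus ${}^\imath X_\mu^\la$, transfer the conclusions to ${}^\imath U_\mu^\la$ along the finite faithfully flat map, and then compare $\gr\Phi_\mu^{\la,\bm z=0}$ with a classical-geometric map. But two steps, as you have written them, would not go through without real additional work.

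First, the dimension of ${}^\imath X_\mu^\la$ cannot be obtained by ``counting invariant coordinates.'' The scheme ${}^\imath X_\mu^\la$ is cut out inside an affine space by the quadratic relations \eqref{eq:fixedgklo1}--\eqref{eq:fixedgklo2} and an open condition; it can be reducible and is not a priori equidimensional, and the fixed locus of an involution on a singular variety need not have the ``expected'' codimension. The paper proves $\dim{}^\imath X_\mu^\la = 2\sum_{i\in\iI}\fkv_i$ as two separate inequalities (Proposition \ref{prop:classicalgklo3}): the lower bound comes from the explicit Poisson homomorphism $\bC[{}^\imath X_\mu^\la]\to\gr\mc A_{\bm z=0}$ of Lemma \ref{lem:classicalgklo2}, which becomes surjective after localizing at all $w_{i,r}$ and $w_{i,r}\pm w_{j,s}$; the upper bound comes from stratifying ${}^\imath X_\mu^\la$ by which $y_{i,r}^-$ vanish---when $y_{i,r}^-\ne 0$ the relation \eqref{eq:fixedgklo1} determines $y_{\tau i,\tau r}^-$ uniquely, but when $y_{i,r}^-=0$ it imposes no constraint at all, so one covers ${}^\imath X_\mu^\la$ by locally closed subvarieties each of dimension $\lle 2\sum\fkv_i$. (Also note the involution on $X_\mu^\la$ is given by \eqref{sigma:wir} and \eqref{sigma:yir} in Lemma \ref{lem:sigmaonX}; the formulas \eqref{signzw2}--\eqref{signzw} you cite are only notational conventions extending indices to $i\in\I_{-1}$.)

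Second, the appeal to ``compatibility with the classical GKLO for the ordinary shifted Yangian combined with $\sigma$-equivariance'' is not self-justifying, and the obstacle you identify is misstated: the classical limit of the iGKLO is \emph{not} literally the restriction of the ordinary classical GKLO, because the shifted iYangian $\Yt_\mu$ is not exhibited as a fixed subalgebra of $Y_\mu$, and the formulas in Theorem \ref{thm:GKLOquasisplit} carry the extra data $\wp_i,\vartheta_i,\varsigma_i,\bm\varkappa,\varkappa$ and the additional Heisenberg-type summand for $i\in\I_0$ with $\theta_i=1$. What the paper actually does is construct, independently of the iGKLO, a Poisson map $\bC[{}^\imath X_\mu^\la]\to\gr\mc A_{\bm z=0}$ (Lemma \ref{lem:classicalgklo2}), compose it with $\bC[\iW_\mu]\to\bC[{}^\imath X_\mu^\la]$, and then check that this Poisson map agrees with $\gr\Phi_\mu^{\la,\bm z=0}$ on the Poisson generators $b_i^{(r)},h_i^{(s)}$ by reading off leading terms in Theorem \ref{thm:GKLOquasisplit}. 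Since both are Poisson algebra homomorphisms out of the Poisson-generated algebra $\bC[\iW_\mu]$, agreement on generators forces equality. That generator-level comparison is the missing idea: ``compatibility'' is a conclusion, not an argument.
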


The proof of this theorem will be given in \S\ref{ssec:pf:GKLOclassical}. By Theorem \ref{thm:ctgklo}, if $\iWbar_\mu^\lambda$ is irreducible then $\operatorname{gr} \Phi_\mu^{\lambda, \bm z =0}$ defines $\iWbar_\mu^\lambda$ itself, and hence it is important to determine when $\iWbar_\mu^\lambda$ is irreducible.

\subsubsection{Deformed iGKLO}
%\begin{rem}

Although we specialize the parameters $\bm z=0$ above, this is for simplicity and not necessity. The variety $\overline{\cW}_\mu^\lambda$ admits a Beilinson-Drinfeld deformation $\overline{\cW}_\mu^{\underline{\lambda}}$ defined in \cite[\S 2(x)]{BFN19}. It is a closed subvariety $\overline{\cW}_\mu^{\underline{\lambda}} \subset \cW_\mu \times \bA^{|\lambda|}$, where $\bA^{|\lambda|} = \bA^{\sum_i \bw_i}$ is the affine space having points $\bm z = (z_{i,s})_{i \in \I, 1 \lle s \lle \bw_i}$. By definition, the fiber over $\bm z \in \bA^{|\lambda|}$ is
    \begin{equation}
    \label{eq:BDfiber}
    \overline{\cW}^{\underline{\lambda}, \bm z}_\mu = \cW_\mu \cap \overline{ G[z] z^{\lambda, \bm z} G[z]},
    \end{equation}
where $z^{\lambda, \bm z} = \prod_{i, s} (z-z_{i,s})^{\varpi_i^\vee} \in G(\!(z^{-1})\!)$. In particular, the fiber over $0 \in \bA^{|\lambda|}$ is exactly $\overline{\cW}_\mu^\lambda$. 

Now suppose that $\tau \lambda = \lambda$ and $\mu$ is even spherical.  Then the involution $\sigma$ of $G(\!(z^{-1})\!)$ induces an involution of $\overline{\cW}_\mu^{\underline{\lambda}}$, permuting the above fibers with $\sigma( \overline{\cW}_\mu^{\underline{\la}, \bm z}) = \overline{\cW}_\mu^{\underline{\la}, - \bm z}$.  Define an involution $\sigma$ of $\bA^{|\lambda|}$ by 
    $$\sigma(z_{i,s}) = \left\{ \begin{array}{cl} - z_{i, \bw_i - s+1}, & \text{ if } i \in \I_0, \\ - z_{\tau i, s}, & \text{ if } i \in \I \setminus \I_0. \end{array}\right.  $$
Then the map $\overline{\cW}_\mu^{\underline{\lambda}} \rightarrow \bA^{|\lambda|}$ is $\sigma$-equivariant and there is an embedding of fixed point loci
   \begin{align} \label{isliceDeform}
    \iWbar_\mu^{\underline{\lambda}} \subset \iW_\mu \times {}^\imath \bA^{|\lambda|}.
    \end{align}
Similarly to \eqref{eq:wcoords1} below, one sees that the fixed point locus ${}^\imath \bA^{|\lambda|}$ has coordinates $\{z_{i,s} : i \in \iI, 1 \lle s \lle \fkw_i\}$. Thus, a point $\bm z= (z_{i,s})_{i \in \iI, 1 \lle s \lle \fkw_i} \in {}^\imath \bA^{|\lambda|}$ is exactly as in \S \ref{ssec:quantumtorus}. The fixed point locus ${}^\imath U_\mu^{\underline{\lambda}}$ is an open subvariety of $\iWbar_\mu^{\underline{\lambda}}$. 
    
Let the filtration on $\Yt_\mu[\bm z]$ be defined as above for $\Yt_\mu$, and with the variables $z_{i,s}$ in  degree 1. We can now formulate a deformed version of Theorem \ref{thm:ctgklo}.
    \begin{thm} \label{thm:ctgklo:deform}
   The map $\Phi_\mu^\lambda : \Yt_\mu[\bm z] \rightarrow \mc A$ is filtered, and its associated graded map 
    $$
    \operatorname{gr} \Phi_\mu^\lambda : \bC[\iW_\mu \times {}^\imath\bA^{|\lambda|}] \longrightarrow \operatorname{gr} \mc A
    $$
defines the closure $\overline{C}_\mu^{\underline{\lambda}} \subseteq \iWbar_\mu^{\underline{\lambda}}$. Here $C_\mu^{\underline{\lambda}} \subseteq {}^\imath U_\mu^{\underline{\lambda}} $ is a top-dimensional irreducible component of dimension $\sum_{i \in \iI} (2 \fkv_i + \fkw_i)$.     
\end{thm}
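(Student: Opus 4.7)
\smallskip

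The strategy is to upgrade Theorem \ref{thm:ctgklo} (the case $\bm z = 0$) to a family version over the deformation base ${}^\imath \bA^{|\lambda|}$, using the Beilinson-Drinfeld model \eqref{eq:BDfiber}. Throughout, I treat the $z_{i,s}$ as central elements in filtered degree $1$, consistent with $\deg w_{i,r} = 1$ in $\mc A_{\bm z = 0}$.

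First, I would verify that $\Phi_\mu^\la$ is a filtered homomorphism. Inspecting the defining formulas of Theorem \ref{thm:GKLOquasisplit}, the image of each $H_i^{(r)}$ involves a ratio of $\bm\varkappa(u)^{\vartheta_i} \Z_i(u) \prod_{j\leftrightarrow i}\W_j(u)$ by $\W_i(u-\tfrac12)\W_i(u+\tfrac12)$; since $\deg \Z_i = \bw_i$ and contains the polynomial expression in the $z_{i,s}$, the filtered degree matches $\langle \mu, \alpha_i \rangle$ from \eqref{eq:mu1} exactly as in the $\bm z = 0$ case, with the $z_{i,s}$ contributing linearly. Similarly, each summand in the image of $B_i^{(s)}$ has the $\dfo_{i,r}^{\pm 1}$ degrees \eqref{filter:A} balanced against polynomial factors in $w_{j,r'}$ and the $z_{i,s}$ from $Z_i^\pm(w_{i,r}\mp\tfrac12)$, yielding compatibility with \eqref{eq:mu1}. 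Hence $\Phi_\mu^\la$ respects filtrations, and the associated graded map makes sense on $\gr^{F_{\mu_1}^\bullet} \Yt_\mu[\bm z] \cong \bC[\iW_\mu] \otimes \bC[\bm z] = \bC[\iW_\mu \times {}^\imath \bA^{|\lambda|}]$.

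Second, I would identify the image of $\gr \Phi_\mu^\la$ geometrically. The approach is to match the leading terms of the iGKLO formulas against classical GKLO coordinates $(\gkloa_i, \gklob_i, \gkloc_i)$ on the deformed slice $\overline{\cW}_\mu^{\underline{\lambda}}$, where the fundamental identity \eqref{eq:gklorel} deforms to $\gkloa_i(z)\gklod_i(z) - \gklob_i(z)\gkloc_i(z) = \Z_i(z) \prod_{j \leftrightarrow i} \gkloa_j(z)$ (or the appropriate square-root version on fixed points for $i \in \I_0$), using the étale description of $U_\mu^{\underline\la}$ that extends Theorem \ref{prop:gklobirat} to the Beilinson-Drinfeld family. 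On the $\sigma$-fixed locus, the pairing of shift operators $\dfo_{i,r}^{\pm 1}$ from \eqref{A:generators} with the roots $w_{i,r}$ realizes $\gkloa_i, \gklob_i, \gkloc_i$ (modulo parity constraints \eqref{signzw2}-\eqref{signzw}) on an étale chart, and one identifies the classical limit of iGKLO with the composition of the étale cover and the fixed-point inclusion. This identifies $\gr \Phi_\mu^{\la}$ with the defining ideal of a subvariety $\overline C_\mu^{\underline\la}$ inside $\iWbar_\mu^{\underline\la}$ whose open part $C_\mu^{\underline\la} \subseteq {}^\imath U_\mu^{\underline\la}$ corresponds to the image of the étale chart from the split/quasi-split data.

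Third, the dimension count: $\dim {}^\imath U_\mu^{\underline\la} = \dim {}^\imath U_\mu^\la + \dim {}^\imath \bA^{|\la|}$ by the natural projection to ${}^\imath \bA^{|\la|}$, which with Theorem \ref{thm:ctgklo}(1) gives $2\sum_{i \in \iI}\fkv_i + \sum_{i\in\iI}\fkw_i$. Matching against the generator count of $\gr \mc A$ (namely the $w_{i,r}, \dfo_{i,r}^{\pm 1}$ with $i \in \iI$ and the central $z_{i,s}$) shows $C_\mu^{\underline\la}$ has the correct top dimension. Irreducibility of $C_\mu^{\underline\la}$ follows by restricting to the generic fiber: a generic $\bm z \in {}^\imath \bA^{|\la|}$ makes $\Z_i(u)$ separable with no shared roots across $i$, at which point the étale étale chart is connected and yields a single irreducible component in that fiber, and flatness of the family $\overline{\cW}_\mu^{\underline{\la}} \to \bA^{|\la|}$ propagates irreducibility of $C_\mu^{\underline\la}$.

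The main obstacle will be the careful bookkeeping of filtered degrees in the iGKLO formulas for $B_i(u)$ when $i \in \I_0$ — the presence of the $\bm\varkappa(u)^{\vartheta_i}$ factor, the half-integer parity terms \eqref{vartheta}--\eqref{varsigma}, and the extra $u^{-1}$ summand for $\theta_i = 1$ all interact with the deformation parameters $z_{i,s}$, and one must verify that the associated graded of these rational expressions produces exactly the defining équations for the fixed-point étale chart and not merely an ideal containing them. A secondary subtlety is ensuring that the irreducibility argument via the generic $\bm z$-fiber is compatible with the parity condition \eqref{parity}, since the latter controls non-emptiness of the fixed locus and must persist over the entire base.
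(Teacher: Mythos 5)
Your strategy — repeat the proof of Theorem \ref{thm:ctgklo} while carrying along the deformation parameters $z_{i,s}$ in filtered degree $1$ — is indeed the intended route; the paper leaves the proof of Theorem \ref{thm:ctgklo:deform} implicit as a direct adaptation of the $\bm z = 0$ case, and your first two paragraphs track that adaptation correctly (deformed relation $\gkloa_i\gklod_i - \gklob_i\gkloc_i = \Z_i\prod_{j\leftrightarrow i}\gkloa_j$; deformed \'etale cover $X_\mu^{\underline\la}\to U_\mu^{\underline\la}$ descending to fixed loci). The one place you make things harder than necessary is the third paragraph. Once you know (i) $\gr\Phi_\mu^\la$ lands in $\gr\mc A$, which is a localized polynomial ring and in particular a domain, and (ii) the dimension of $\gr\mc A$ equals the target $\sum_{i\in\iI}(2\fkv_i + \fkw_i)$, irreducibility of $\overline{C}_\mu^{\underline\la} = \operatorname{Spec}\bigl(\bC[\iW_\mu\times{}^\imath\bA^{|\la|}]/\ker\gr\Phi_\mu^\la\bigr)$ is automatic (the kernel is prime), and it is a top-dimensional component as soon as you verify the inequality $\dim {}^\imath X_\mu^{\underline\la} \lle \sum_{i\in\iI}(2\fkv_i + \fkw_i)$. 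For that inequality, the safest argument is to redo the covering argument of Proposition \ref{prop:classicalgklo3} treating the $z_{i,s}$ as extra free coordinates, rather than to invoke $\dim{}^\imath U_\mu^{\underline\la} = \dim{}^\imath U_\mu^\la + \dim{}^\imath\bA^{|\la|}$ via the projection to the base, which requires controlling all fibers, not just the fiber over $\bm z=0$, and is not stated in the paper. Your generic-fiber irreducibility detour is not wrong, but it imports flatness facts about the Beilinson-Drinfeld family that are not needed once the domain argument is in hand.
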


Recall from Definition \ref{dfn:truncated stY} that we define the truncated shifted iYangian $\Yt_\mu^\lambda$ as the image of the map $\Phi_\mu^\lambda$.  Recall also from \S \ref{ssec:commsub} that we define a filtration $F_{\mu_1}^\bullet \Yt_\mu^\la$ as the quotient filtration from $\Yt_\mu[\bm z]$.

Then $\Yt_\mu^\la$ inherits natural filtrations: firstly as a quotient of $\Yt_\mu[\bm z]$, and second as a subalgebra of $\mc A$. Since $\Phi_\mu^\lambda$ is filtered, the first filtration is contained in the second. There is a canonical surjection, cf.~the proof of \cite[Theorem~5.8]{LWW25sty}:
$$
\gr \Yt_\mu^\la = \gr \operatorname{Im}(\Phi_\mu^\la) \twoheadrightarrow  \operatorname{Im}(\gr~ \Phi_\mu^\la) \cong \bC[ \overline{C}_\mu^{\underline{\la}}].
$$
We expect that it is an isomorphism:
\begin{conj}  \label{conj:TSTY:islices}
    The truncated shifted iYangian $\Yt_\mu^\lambda$ quantizes the variety $\overline{C}_\mu^{\underline{\lambda}} \subseteq \iWbar_\mu^{\underline{\la}}$. In particular, if $\iWbar_\mu^\la$ is irreducible then $\Yt_\mu^\la$ quantizes $\iWbar_\mu^{\underline{\la}}$.
\end{conj}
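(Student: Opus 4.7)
The task reduces, per the discussion immediately preceding the conjecture, to proving that the natural surjection $\gr \Yt_\mu^\la \twoheadrightarrow \bC[\overline{C}_\mu^{\underline{\la}}]$ is an isomorphism. Equivalently, the quotient filtration on $\Yt_\mu^\la$ inherited from $\Yt_\mu[\bm z]$ must coincide with the subspace filtration inherited from the inclusion $\Yt_\mu^\la \hookrightarrow \mc A$. Filteredness of $\Phi_\mu^\la$, established in Theorem~\ref{thm:ctgklo:deform}, supplies one containment for free, so the entire problem is to establish the reverse inclusion.

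My plan is to mimic the strategy of Proposition~\ref{prop: equal filtrations} and reduce to an antidominant base case via the shift homomorphisms $\iota_{\mu,\nu}^\tau$ of Lemma~\ref{lem:shiftqs}. Concretely, choose an antidominant $\nu$ with $\nu+\tau\nu$ even so that $\mu+\nu+\tau\nu$ is antidominant and $\la+\nu+\tau\nu$ is still a $\tau$-invariant dominant coweight. One first needs to construct a commutative square matching $\Phi_\mu^\la$ and $\Phi_{\mu+\nu+\tau\nu}^{\la+\nu+\tau\nu}$, where the vertical map on the $\mc A$-side is a shift of the coordinates $w_{i,r}$ absorbing the geometric shift $z^{-\nu}(\cdot)z^{-\tau\nu}$ from Theorem~\ref{thm:fixedWlm}. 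Strictness of the shift homomorphism (Lemma~\ref{lem:strictlyfiltered}), together with its analogue at the truncated level, should then reduce the equality of filtrations for $\Yt_\mu^\la$ to that for $\Yt_{\mu+\nu+\tau\nu}^{\la+\nu+\tau\nu}$, i.e., to the antidominant base case.

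In that base case, the plan is to lift explicitly. For each PBW monomial $M$ in $\Yt_\mu[\bm z]$, one computes the leading symbol of $\Phi_\mu^\la(M)$ in $\gr \mc A$ using formula \eqref{GKLO-A} and the explicit expressions for $B_i(u)$ in Theorem~\ref{thm:GKLOquasisplit}. The identification $\gr \Yt_\mu \cong \bC[\iW_\mu]$ from Theorem~\ref{thm:iWmuPoisson}(1), combined with the definition of $\overline{C}_\mu^{\underline{\la}}$ in Theorem~\ref{thm:ctgklo:deform}, should match these leading symbols to the natural generators of $\bC[\overline{C}_\mu^{\underline{\la}}]$. A bookkeeping argument on a chosen total order of the PBW basis from \S\ref{ssec:PBW for sty} then shows: if $\Phi_\mu^\la(M)$ happens to drop filtered degree, a kernel element $M' \in \ker \Phi_\mu^\la$ in the same filtered degree can be subtracted from $M$ to strictly lower its degree, feeding an induction on filtered degree.

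The principal obstacle is precisely this last step. The iGKLO formula \eqref{GKLO-A} expresses $H_i(u)$ as a product involving $\W_j(u)$ in the numerator and $\W_i(u\pm\tfrac12)$ in the denominator, which entangles the filtration on the Gelfand-Tsetlin generators $\mathsf A_i^{(r)}$ of \S\ref{ssec:commsub} in a nontrivial way; for $i \in \I_0$ the extra factor $\bm\varkappa(u)^{\vartheta_i}$ introduces corrections from $\varkappa(u)=1-\tfrac{1}{2u}$ that can obscure leading symbols. A successful proof will likely require a Harish-Chandra style injectivity statement for the Gelfand-Tsetlin subalgebra inside $\Yt_\mu^\la$, ruling out any unexpected collapse in filtered degree. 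Granted all of the above, the final clause—that $\Yt_\mu^\la$ quantizes $\iWbar_\mu^{\underline{\la}}$ whenever $\iWbar_\mu^\la$ is irreducible—is automatic, since irreducibility of $\iWbar_\mu^\la$ forces $\overline{C}_\mu^{\underline{\la}} = \iWbar_\mu^{\underline{\la}}$ by Theorem~\ref{thm:ctgklo:deform}.
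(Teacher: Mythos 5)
The statement you set out to prove is labeled a \emph{Conjecture} in the paper, and the paper contains no proof of it. Immediately after stating Conjecture~\ref{conj:TSTY:islices}, the authors remark that it reduces to comparing two filtrations on $\Yt_\mu^\la$ --- the quotient filtration inherited from $\Yt_\mu[\bm z]$, and the subspace filtration inherited from $\mc A$ --- and that the former is contained in the latter; equality of the two is exactly what would prove the conjecture, and this is left open. They note that the untwisted analogue is established in \cite[Corollary B.28]{BFN19} for $\mu$ dominant and in \cite[Theorem 3.13]{W19} in general, but they do not claim the $\imath$-version.

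Your proposal correctly identifies this reduction, but it is not a proof; it is a strategy sketch, as you candidly signal with phrases such as ``should then reduce,'' ``will likely require a Harish-Chandra style injectivity statement,'' and ``Granted all of the above.'' The gap is precisely the one you name: showing that no PBW monomial drops filtered degree under $\Phi_\mu^\la$, equivalently establishing strictness of the two filtrations at the truncated level. That statement \emph{is} the conjecture; invoking an unproven ``Harish-Chandra style injectivity'' for the Gelfand--Tsetlin subalgebra merely relocates the difficulty without resolving it. Moreover, the proposed reduction to an antidominant base case via shift homomorphisms requires a commutative square between $\Phi_\mu^\la$ and $\Phi_{\mu+\nu+\tau\nu}^{\la+\nu+\tau\nu}$ at the level of \emph{truncated} algebras (so that strictness of $\iota_{\mu,\nu}^\tau$ descends to $\Yt_\mu^\la \to \Yt_{\mu+\nu+\tau\nu}^{\la+\nu+\tau\nu}$); nothing in the paper supplies such a truncated shift homomorphism together with a strictness statement, and constructing it would itself be a nontrivial piece of the conjecture. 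The only part of your argument that is genuinely complete is the final clause: if $\iWbar_\mu^\la$ is irreducible, then $\overline{C}_\mu^{\underline{\la}} = \iWbar_\mu^{\underline{\la}}$ follows from Theorem~\ref{thm:ctgklo:deform}, so the second sentence of the conjecture is an immediate consequence of the first. In short, you have correctly mapped out a plausible line of attack consistent with how the untwisted case was handled, but since the paper itself stops at the conjecture, there is no ``paper's proof'' to compare against, and your sketch leaves the central step unresolved.
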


Proving this conjecture would be a question of comparing two filtrations on $\Yt_\mu^\la$: the quotient filtration from $\Yt_\mu[\bm z]$ as defined above, and the subspace filtration inherited from $\mc A$. The former filtration is contained within the latter, and if they are equal, then the conjecture holds.  

For truncated shifted (untwisted) Yangians, the analogous conjecture is proven for $\mu$ dominant in \cite[Corollary B.28]{BFN19} and for general $\mu$ in \cite[Theorem 3.13]{W19}.  

\begin{rem}
    A weak form of the above conjecture states that $\Yt_\mu^\la$ quantizes a scheme supported on $\overline{C}_\mu^{\underline{\la}}$.  A similar result was proven for  truncated shifted Yangians in \cite[Theorem 4.8]{KWWY14}.
\end{rem}

\subsection{Proof of Theorem \ref{thm:ctgklo} on ${}^\imath\overline{\cW}_\mu^\lambda$}
  \label{ssec:pf:GKLOclassical}

First, we observe that the involution $\sigma$ on $\overline{\cW}_\mu^\lambda$ maps 
\begin{equation}
    \label{eq:sigmaingklo}
    \begin{array}{cc}
    \sigma\big( \gkloa_i(z) \big) = (-1)^{\bv_i}\gkloa_{\tau i}(-z), & \quad \sigma\big( \gklob_i(z) \big) = (-1)^{\bv_i}\gkloc_{\tau i}(-z), \\
    \sigma\big( \gkloc_i(z) \big) = (-1)^{\bv_i} \gklob_{\tau i}(-z), & \quad \sigma\big( \gklod_i(z)\big) = (-1)^{\bv_i}\gklod_{\tau i}(-z).
    \end{array}
\end{equation}
Indeed, one sees this from the identity $\sigma(n^+ h z^\mu n^-) = \sigma(n^-) \sigma(h) z^\mu \sigma(n^+)$ for $n^+ h z^\mu n^- \in \iW_\mu$, together with \eqref{eq:sigma:gz} and the definitions \eqref{eq:gkloa}--\eqref{eq:gkloc}. See also \cite[Proposition~2.7]{FZ99}. Therefore, viewed as functions on $\iWbar_\mu^\lambda$ by restriction:
    \begin{equation}
        \label{cor:involutioningklo}
        \gkloa_i(z) = (-1)^{\bv_i} \gkloa_{\tau i}(-z), \quad \gkloc_i(z) = (-1)^{\bv_i} \gklob_{\tau i}(-z), \quad \gklod_i(z) = (-1)^{\bv_i} \gklod_{\tau i}(-z).
    \end{equation}
\begin{rem}
    The algebra $\bC[\iWbar_\mu^\lambda]$ is Poisson generated by the coefficients of $\gkloa_i(z)$ and $\gklob_i(z)$ for $i \in \I$.  This follows from Theorem \ref{thm:iWmuPoisson}(2) together with equations \eqref{eq: Drinfeld gens iWmu} and \eqref{eq:changeofgens}.
\end{rem}
% \begin{lem}
%     \label
    
%     Moreover, 
%     % and in particular, from  \eqref{eq:gklorel} and 
%     % \begin{equation}
%     %     \label{eq:gklorel2}
%     %     \gkloa_i(z) \gklod_i(z) - (-1)^{\langle \lambda-\mu,\alpha_i\rangle} \gklob_i(z) \gklob_{\tau i}(-z) = z^{\langle \lambda, \alpha_i\rangle} \prod_{\substack{j \in \I:~ c_{ij}= -1}} \gkloa_j(z).
%     % \end{equation}  
% \end{lem}

Consequently, if we define an involution $\sigma$ on $\bA^{(\lambda-\mu)}$ by $\sigma\big(\gkloa_i(z)\big) = (-1)^{\bv_i} \gkloa_{\tau i}(-z)$, then the map $\overline{\cW}_\mu^\lambda \rightarrow \bA^{(\lambda-\mu)}$ from \S \ref{sec:classGKLO1} is $\sigma$-equivariant and induces a map on fixed point loci
\begin{equation}
    \iWbar_\mu^\lambda \longrightarrow {}^\imath \bA^{(\lambda-\mu)}.
\end{equation}
Applying  Lemma \ref{lem:symdirac}, we note that ${}^\imath \bA^{(\lambda-\mu)}$ is an affine space  with coordinates
\begin{equation}
    \{ \gkloa_i^{(s)} : i \in \I_0, 1 \lle s \lle \bv_i \text{ with } s \text{ even} \} \cup \{ \gkloa_i^{(s)} : i \in \I_1, 1 \lle s \lle \bv_i\}.
\end{equation}
Here, we view the $\gkloa_i^{(s)}$ as functions on ${}^\imath \bA^{(\lambda-\mu)}$ by restriction from $\bA^{(\lambda-\mu)}$. Recalling the notation $\fkv_i$ from \eqref{ell_theta}, we see that ${}^\imath \bA^{(\lambda-\mu)}$ has dimension $\sum_{i \in \iI} \fkv_i$.

We will next extend $\sigma$ to an involution of $\bA^{|\lambda-\mu|}$, such that the map $\bA^{|\lambda-\mu|} \rightarrow \bA^{(\lambda-\mu)}$ defined by \eqref{eq:rootsofgkloa} is $\sigma$-equivariant and well behaved (in the sense of Lemma \ref{lem:fixedcoloureddiv} and Corollary \ref{cor:Aflat}).   For $i \in \I$ and $1 \lle r \lle \bv_i$, we set
\begin{equation}
    \label{sigma:wir}
    \sigma(w_{i,r}) = - w_{\tau i, \tau r}, \qquad \text{ where }\quad \tau r:=\bv_i + 1 - r.
\end{equation}
%Meanwhile, for $i\in \I_0$ and $1 \lle r \lle \bv_i$ we set
%\begin{equation}
%    \sigma(w_{i,r}) = \left\{ \begin{array}{cl} - w_{i,r + \fkv_i}, & \text{if } 1 \lle r \lle \fkv_i, \\ - w_{i,r - \fkv_i}, & \text{if } \fkv_i < r \lle 2\fkv_i, \\ - w_{i, 2\fkv_i +1}, & \text{if } r = \bv_i = 2\fkv_i+1 \text{ is odd.} \end{array} \right.
%\end{equation}
%
This defines an involution $\sigma$ on the space $\bA^{|\lambda-\mu|}$. 

Using Lemma \ref{lem:symdirac}, there are various choices of coordinates on the fixed point scheme ${}^\imath \bA^{|\lambda-\mu|} $ defined by restriction from $\bA^{|\lambda-\mu|}$. For our next result, a good choice is  to use coordinates
\begin{equation}
    \label{eq:wcoords1}
    \{ w_{i,r} : i \in \iI, 1 \lle r \lle \fkv_i\}.
\end{equation}
The restriction of any other $w_{i,r}$ to ${}^\imath \bA^{|\lambda-\mu|}$ can be expressed in terms of these coordinates, using \eqref{sigma:wir}. Note that for any $i\in\I_0$ such that $\bv_i = 2\fkv_i+1$ is odd (or equivalently if $\theta_i = 1$, in the notation of \S \ref{ssec:quantumtorus}) the restriction $w_{i, \fkv_i+1} = 0$.   Hence we have $\dim {}^\imath \bA^{|\lambda-\mu|} =\sum_{i \in \iI} \fkv_i$.  Consider now the following group:
\begin{equation}
    {}^\imath S_{\lambda-\mu} =\prod_{i \in \I_0} \big( S_{\fkv_i} \ltimes ( \bZ/ 2 \bZ)^{\fkv_i}\big) \times  \prod_{i \in \I_1} S_{\fkv_i},
\end{equation}
Then there is an action of ${}^\imath S_{\lambda-\mu}$ on the space ${}^\imath \bA^{|\lambda-\mu|}$: each symmetric group factor $S_{\fkv_i}$ permutes the corresponding coordinates $\{w_{i,r}: 1 \lle r  \lle \fkv_i\}$ in \eqref{eq:wcoords1}, while each  $\bZ/2\bZ$ factor acts on its corresponding coordinate by $w_{i,r} \mapsto - w_{i,r}$.

% Indeed, the relation \eqref{eq:rootsofgkloa} requires that $\sigma(\ow_{i,r}) = - \ow_{\tau i, \tau r}$ where $(i,r) \mapsto (\tau i, \tau r)$ is an involution on the set of all pairs $\{ (i,r) : i \in \I, \  1 \lle r \lle \bv_i\}$. Conversely, any such involution on this set provides an extension of $\sigma$ to $\bA^{|\lambda-\mu|}$.  In order that the fixed point locus inside $\bA^{|\lambda-\mu|}$ is well-behaved (i.e.,~the following lemma holds), we should also assume that whenever $\tau i = i$ then the involution $(i,r) \mapsto (i,\tau r)$ of the set $\{(i,r) : 1 \lle r \lle \bv_i\}$ has at most one fixed point.  Under this additional assumption, the involution is \emph{unique} up to relabeling the $w_{i,r}$ and is given by:
% \begin{itemize}
%     \item If $\tau i \neq i$, then $\sigma(w_{i,r}) = - w_{\tau i, r}$ for all $1 \lle r\lle \bv_i$.

%     \item If $\tau i = i$, then $\sigma (w_{i,r}) = - w_{i, \bv_i - r +1}$ for all $1 \lle r \lle \bv_i$.
% \end{itemize}
% To simplify notation, at times we will continue to denote this involution by $\sigma(w_{i,r}) = - w_{\tau i, \tau r}$.  We will view the $w_{i,r}$ as functions on the fixed point locus ${}^\imath \bA^{|\lambda-\mu|} \subseteq \bA^{|\lambda-\mu|}$ by restriction, and similarly the $\gkloa_i^{(s)}$ as functions on ${}^\imath \bA^{(\lambda-\mu)} \subseteq \bA^{(\lambda-\mu)}$.  We will also  freely borrow notation from Sections \ref{ssec:quantumtorus} and \ref{ssec:GKLOqs}, including the notations $\fkv_i$, $\theta_i$, and $\W_i(z)$.

\begin{lem}
\label{lem:fixedcoloureddiv}
The map $\bA^{|\lambda-\mu|} \rightarrow \bA^{(\lambda-\mu)}$ from \eqref{eq:rootsofgkloa} is $\sigma$-equivariant.  The induced map ${}^\imath \bA^{|\lambda-\mu|} \rightarrow {}^\imath \bA^{(\lambda-\mu)}$ on fixed point loci identifies
$$
{}^\imath \bA^{(\lambda-\mu)} = {}^\imath \bA^{|\lambda-\mu|} / {}^\imath S_{\lambda-\mu}.
$$
The open subsets $\mathring{\bA}^{|\lambda-\mu|}$ and $\mathring{\bA}^{(\lambda-\mu)}$ are preserved by $\sigma$, and their fixed point loci ${}^\imath \mathring{\bA}^{|\lambda-\mu|}$ and ${}^\imath \mathring{\bA}^{(\lambda-\mu)}$ are non-empty if and only if the parity condition \eqref{parity} holds.
% \begin{enumerate}
%     \item ${}^\imath \bA^{|\lambda-\mu|}$ is an affine space with coordinates 
%     $$
%     \{ w_{i,r} : i \in \I_0, 1 \lle r \lle \fkv_i \} \cup \{ w_{i,r} : i \in \I_1, 1 \lle r \lle \bv_i \}.
%     $$

%     \item ${}^\imath\bA^{(\lambda-\mu)}$ is an affine space with coordinates 
%     $$
%     \{ \gkloa_i^{(s)} : i \in \I_0, 1 \lle s \lle \bv_i \text{ with } s \text{ even} \} \cup \{ \gkloa_i^{(s)} : i \in \I_1, 1 \lle s \lle \bv_i\}.
%     $$

%     \item The  map ${}^\imath \bA^{|\lambda-\mu|} \rightarrow {}^\imath \bA^{(\lambda-\mu)}$ is specified by
%     $$
%     \gkloa_i(z) = \W_i(z).
%     $$
%     This map identifies ${}^\imath \bA^{(\lambda-\mu)} = {}^\imath \bA^{|\lambda-\mu|} / {}^\imath S_{\lambda-\mu}$ and is finite and faithfully flat.  Here, we consider the group 
%     $$
%     {}^\imath S_{\lambda-\mu} =\prod_{i \in \I_0} \big( S_{\fkv_i} \ltimes ( \bZ/ 2 \bZ)^{\fkv_i}\big) \times  \prod_{i \in \I_1} S_{\bv_i},
%     $$
%     which  acts on ${}^\imath \bA^{|\lambda-\mu|}$ as follows:       
% \end{enumerate}
\end{lem}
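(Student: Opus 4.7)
The plan has three parts, one for each claim of the lemma, and the main technical load is concentrated in the middle part.

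First, for $\sigma$-equivariance of the map $\bA^{|\lambda-\mu|}\to\bA^{(\lambda-\mu)}$, I would substitute the definition $\sigma(w_{i,r}) = -w_{\tau i,\bv_i+1-r}$ into $\gkloa_i(z) = \prod_{r=1}^{\bv_i}(z-w_{i,r})$. Reindexing $s = \bv_i+1-r$ and using $\bv_{\tau i} = \bv_i$ gives $\prod_{s=1}^{\bv_{\tau i}}(z+w_{\tau i,s}) = (-1)^{\bv_i}\gkloa_{\tau i}(-z)$, which matches the formula \eqref{eq:sigmaingklo} for $\sigma$ on the base $\bA^{(\lambda-\mu)}$.

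Next, to identify ${}^\imath\bA^{(\lambda-\mu)}$ with ${}^\imath\bA^{|\lambda-\mu|}/{}^\imath S_{\lambda-\mu}$, I would compare invariant rings using Lemma \ref{lem:symdirac}. Choosing the coordinates $\{w_{i,r} : i \in \iI,\, 1\lle r\lle \fkv_i\}$ on ${}^\imath\bA^{|\lambda-\mu|}$, a point of ${}^\imath\bA^{|\lambda-\mu|}$ encodes the remaining roots of $\gkloa_i(z)$ via $w_{i,\bv_i+1-r} = -w_{i,r}$ for $i \in \I_0$ (with $w_{i,\fkv_i+1}=0$ forced when $\theta_i=1$), and $w_{\tau i,\bv_i+1-r} = -w_{i,r}$ for $i \in \I_1$. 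Substituting into $\gkloa_i(z) = \prod_r (z-w_{i,r})$, one finds for $i\in\I_0$ the factorization $\gkloa_i(z) = z^{\theta_i}\prod_{r=1}^{\fkv_i}(z^2-w_{i,r}^2)$, whose nonzero coefficients (at powers $z^{\bv_i-2k}$) give the elementary symmetric polynomials in the $w_{i,r}^2$; for $i\in\I_1$, $\gkloa_i(z) = \prod_{r=1}^{\fkv_i}(z-w_{i,r})$ yields the elementary symmetric polynomials in the $w_{i,r}$. These are precisely the invariants of $S_{\fkv_i}\ltimes(\bZ/2\bZ)^{\fkv_i}$ and $S_{\fkv_i}$ respectively, by the classical theorems on symmetric and hyperoctahedral invariants, so the map of coordinate rings ${}^\imath\bA^{(\lambda-\mu)} \to ({}^\imath\bA^{|\lambda-\mu|})^{{}^\imath S_{\lambda-\mu}}$ is an isomorphism. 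Applying Lemma \ref{lem:fixedfp} would be an alternative route, compatible with the fibre product description in Theorem \ref{prop:gklobirat}(2).

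For the open subsets, $\sigma$-equivariance of $\mathring{\bA}^{|\lambda-\mu|}$ follows since $c_{\tau i,\tau j} = c_{ij}$ means the involution permutes the deleted diagonals. To analyze non-emptiness of ${}^\imath\mathring{\bA}^{|\lambda-\mu|}$, observe that if the parity condition \eqref{parity} fails there exist distinct $i,j\in\I_0$ with $c_{ij}=-1$ and $\theta_i=\theta_j=1$; then $0 = w_{i,\fkv_i+1} = w_{j,\fkv_j+1}$ is forced on every $\sigma$-fixed point, placing it on a deleted diagonal. Conversely, if \eqref{parity} holds then for every connected pair at most one endpoint has $0$ as a forced root, and the remaining free coordinates $\{w_{i,r} : i\in\iI,\ 1\lle r\lle\fkv_i\}$ can be chosen generically to avoid the finitely many diagonal loci, producing a non-empty fixed locus; the image in ${}^\imath\mathring{\bA}^{(\lambda-\mu)}$ then witnesses non-emptiness there too.

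The main obstacle will be the combinatorial bookkeeping in the middle step: verifying that under the forced relations among the $w_{i,r}$ on ${}^\imath\bA^{|\lambda-\mu|}$, the restrictions of the coordinates $\gkloa_i^{(s)}$ are precisely the standard generators of the ${}^\imath S_{\lambda-\mu}$-invariants, handling separately the cases $i\in\I_1$, $i\in\I_0$ with $\theta_i=0$, and $i\in\I_0$ with $\theta_i=1$.
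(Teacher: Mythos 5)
Your proposal is correct and follows essentially the same route as the paper's proof: verify $\sigma$-equivariance by direct substitution; compute the restricted polynomials $\gkloa_i(z)$ on the fixed locus and recognize their coefficients as $S_{\fkv_i}$- (resp. hyperoctahedral-) invariants to obtain the quotient description; and observe that a connected pair $i,j\in\I_0$ with $\theta_i\theta_j=1$ forces $z=0$ as a common root, landing on a deleted diagonal. The paper argues the parity obstruction first in ${}^\imath\mathring{\bA}^{(\lambda-\mu)}$ via odd-degree polynomials and remarks that ${}^\imath\mathring{\bA}^{|\lambda-\mu|}$ is similar, while you argue directly in the root coordinates and also sketch the converse non-emptiness; both are minor variants of the same argument.
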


\begin{proof}
    The map $\bA^{|\lambda-\mu|} \rightarrow \bA^{(\lambda-\mu)}$ is determined by \eqref{eq:rootsofgkloa}. This map is $\sigma$-equivariant, since
    $$
    \sigma\Big(\prod_{r} (z-w_{i,r}) \Big) = \prod_r (z - \sigma(w_{i,r}))= \prod_r (z+ w_{\tau i, \tau r}) = \prod_r (z+ w_{\tau i, r})
    $$
    which agrees with the action  $\sigma\big( \gkloa_i(z)\big) = (-1)^{\bv_i} \gkloa_{\tau i}(-z)$ on $\bA^{(\lambda-\mu)}$.

    Now consider the restriction ${}^\imath \bA^{|\lambda-\mu|} \rightarrow {}^\imath \bA^{(\lambda-\mu)}$ of this map to fixed point loci. We use the coordinates \eqref{eq:wcoords1} on ${}^\imath \bA^{|\lambda-\mu|}$. If $i \in \I_1$, then we have $\gkloa_i(z) = \prod_{r=1}^{\fkv_i}(z-w_{i,r})$.     The coefficients of this polynomial generate the invariants for the action of $S_{\fkv_i}$ on $\bC[w_{i,1},\ldots,w_{i,\fkv_i}]$. Meanwhile, if $i \in \I_0$ then we have
    $$
    \gkloa_i(z) = z^{\theta_i}\prod_{r=1}^{\fkv_i} (z^2-w_{i,r}^2).
    $$
    The coefficients of this polynomial generate the invariants for the action of $S_{\fkv_i} \ltimes (\bZ/2\bZ)^{\fkv_i}$ on $\bC[w_{i,1},\ldots,w_{i,\fkv_i}]$. It follows that ${}^\imath \bA^{(\lambda-\mu)} = {}^\imath \bA^{|\lambda-\mu|} / {}^\imath S_{\lambda-\mu}$.
    
    Finally, observe that if there exist $i, j \in \I_0$ which are connected and have $\theta_i \theta_j = 1$, then this means that $\gkloa_i(z)$ and $\gkloa_j(z)$ both have odd degree.  Since $\gkloa_i(z) = - \gkloa_i(-z)$ and $\gkloa_j(z) = - \gkloa_j(-z)$, this means that these two polynomials must  have a common zero $z=0$. Therefore ${}^\imath \mathring{\bA}^{(\lambda-\mu)}$ is empty.  If no such $i,j$ exist, then there is no obstruction. The case of ${}^\imath \mathring{\bA}^{|\lambda-\mu|}$ is similar.
\end{proof}

\begin{cor} \label{cor:Aflat}
    The map ${}^\imath \bA^{|\lambda-\mu|} \rightarrow {}^\imath \bA^{(\lambda-\mu)}$ is finite and faithfully flat.
\end{cor}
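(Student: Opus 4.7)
The plan is to leverage the identification ${}^\imath \bA^{(\lambda-\mu)} = {}^\imath \bA^{|\lambda-\mu|} / {}^\imath S_{\lambda-\mu}$ from Lemma \ref{lem:fixedcoloureddiv} and invoke the Chevalley--Shephard--Todd theorem. First I would observe that, using the coordinates $\{w_{i,r} : i \in \iI, 1 \lle r \lle \fkv_i\}$ described in \eqref{eq:wcoords1}, the fixed point locus ${}^\imath \bA^{|\lambda-\mu|}$ is an affine space, so its coordinate ring is a polynomial ring on which ${}^\imath S_{\lambda-\mu}$ acts linearly.

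Next I would verify that ${}^\imath S_{\lambda-\mu}$ acts as a finite complex reflection group on ${}^\imath \bA^{|\lambda-\mu|}$. Indeed, for each $i \in \I_1$ the symmetric group $S_{\fkv_i}$ permutes $\{w_{i,r}\}_{r=1}^{\fkv_i}$ through transpositions, which are genuine reflections; for each $i \in \I_0$ the semidirect product $S_{\fkv_i} \ltimes (\bZ/2\bZ)^{\fkv_i}$ acts as the standard hyperoctahedral (type $BC$) reflection group on $\{w_{i,r}\}_{r=1}^{\fkv_i}$, with the $\bZ/2\bZ$-factors generated by sign changes $w_{i,r} \mapsto -w_{i,r}$ (reflections across the coordinate hyperplanes). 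The product of these reflection groups is again a reflection group.

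By the Chevalley--Shephard--Todd theorem, the coordinate ring of ${}^\imath \bA^{|\lambda-\mu|}$ is a free module of finite rank over its subring of ${}^\imath S_{\lambda-\mu}$-invariants, which is precisely the coordinate ring of ${}^\imath \bA^{(\lambda-\mu)}$. Thus the map is finite and flat. Finally, since quotient maps by finite group actions are surjective, the map is also surjective, and a flat surjective morphism is faithfully flat, completing the proof.

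I do not anticipate any genuine obstacle: the argument is a direct application of a classical invariant-theoretic theorem once the reflection-group structure of ${}^\imath S_{\lambda-\mu}$ is recognized. The only mild subtlety is identifying that the action of the $(\bZ/2\bZ)$-factors, which arises from the involution $\sigma(w_{i,r}) = -w_{\tau i, \tau r}$ together with the pairing of $r$ with $\tau r = \bv_i + 1 - r$ when $i \in \I_0$, corresponds to sign changes in the reduced coordinate system \eqref{eq:wcoords1}; this is a direct unpacking of the definitions.
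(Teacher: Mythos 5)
Your proof is correct and follows essentially the same route as the paper's: identify ${}^\imath S_{\lambda-\mu}$ as a product of reflection groups (hyperoctahedral for $\I_0$-factors, symmetric for $\I_1$-factors) acting linearly on the affine space ${}^\imath\bA^{|\lambda-\mu|}$, invoke Chevalley's theorem on reflection-group invariants to obtain that the coordinate ring is a finite free module over the invariants, and then transfer the conclusion across the identification ${}^\imath\bA^{(\lambda-\mu)} \cong {}^\imath\bA^{|\lambda-\mu|}/{}^\imath S_{\lambda-\mu}$ from Lemma~\ref{lem:fixedcoloureddiv}. Your additional remark that flat plus surjective gives faithfully flat is a harmless elaboration of what the paper leaves implicit.
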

\begin{proof}
    The group ${}^\imath S_{\lambda-\mu}$ is a product of Weyl groups of types B/C and A, acting on the product  $\bA^{|\lambda-\mu|}$ of affine spaces in the standard ways. Thus the quotient map ${}^\imath \bA^{|\lambda-\mu|} \rightarrow  {}^\imath \bA^{|\lambda-\mu|} / {}^\imath S_{\lambda-\mu}$ has the desired properties, by Chevalley's Theorem on Weyl group invariants. The claim now follows from the previous lemma.
\end{proof}

Recall that we define the open subset $U_\mu^\lambda \subseteq \overline{\cW}_\mu^\lambda$ as the preimage of $\mathring{\bA}^{(\lambda-\mu)} \subseteq \bA^{(\lambda-\mu)}$.  It  follows from the previous lemma that $U_\mu^\lambda$ is preserved by the involution $\sigma$.  We will study its fixed point locus ${}^\imath U_\mu^\lambda$, via the simpler space  $X_\mu^\lambda$ from Theorem \ref{prop:gklobirat}. 

\begin{lem}
    \label{lem:sigmaonX}
    There is a Poisson involution $\sigma$ of $X_\mu^\lambda$ defined by
    \begin{align}  \label{sigma:yir}
    \sigma(\ow_{i,r}) = - \ow_{\tau i, \tau r}, \qquad \sigma(\ox_{i,r}^\pm) = (-1)^{\bv_i} \ox_{\tau i, \tau r}^\mp
    \end{align}
    such that the map $X_\mu^\lambda \rightarrow \bA^{|\lambda-\mu|}$ is $\sigma$-equivariant.
\end{lem}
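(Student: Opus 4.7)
The plan is to verify three things in turn: (i) the formulas define an involution of the ambient affine space $\bA^{3|\lambda-\mu|}$ that preserves the locally closed subvariety $X_\mu^\lambda$; (ii) the projection $X_\mu^\lambda \to \bA^{|\lambda-\mu|}$ intertwines this $\sigma$ with the $\sigma$ defined in \eqref{sigma:wir}; (iii) the resulting involution of $X_\mu^\lambda$ is Poisson.

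For (i), the involution property $\sigma^2 = \mathrm{id}$ is immediate once one notes that $\tau(\tau r) = r$ and $\bv_{\tau i} = \bv_i$ (the latter follows from $\tau\lambda = \lambda$ and $\tau\mu = \mu$, so $\lambda-\mu$ is $\tau$-invariant); in particular $(-1)^{2\bv_i} = 1$. The key content is that $\sigma$ preserves the defining relation \eqref{eq:gklorel3}. Applying $\sigma$ to $\gkloa_j(\ow_{i,r}) = \prod_s (\ow_{i,r} - \ow_{j,s})$ and reindexing $s \mapsto \tau s$ gives $\sigma\bigl(\gkloa_j(\ow_{i,r})\bigr) = (-1)^{\bv_j}\gkloa_{\tau j}(\ow_{\tau i,\tau r})$. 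Combined with $\sigma(\ow_{i,r}^{\bw_i}) = (-1)^{\bw_i}\ow_{\tau i,\tau r}^{\bw_i}$, one finds that the right-hand side of \eqref{eq:gklorel3} is scaled by $(-1)^{\bw_i + \sum_{j\leftrightarrow i}\bv_j}$, while the left-hand side $\ox_{i,r}^+\ox_{i,r}^-$ is scaled by $(-1)^{2\bv_i} = 1$. From $\lambda-\mu = \sum_j \bv_j\alpha_j^\vee$ one computes
\[
\bw_i + \sum_{j\leftrightarrow i} \bv_j \;=\; 2\bv_i + \langle \mu,\alpha_i\rangle,
\]
which is even because $\mu$ is even. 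Hence the defining relation is preserved. The open condition $\ow_{i,r} \neq \ow_{j,s}$ for $c_{ij} \neq 0$ is clearly preserved since $c_{\tau i,\tau j} = c_{ij}$.

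Step (ii) is immediate: the projection reads off the $\ow$-coordinates, and the two $\sigma$'s agree on them by design.

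For (iii), I would proceed by direct calculation on each of the four families of brackets in Theorem \ref{prop:gklobirat}(1). The brackets $\{\ow,\ow\}$ are trivially preserved, and $\{\ow_{i,r},\ox_{j,s}^\pm\}$ is checked by a one-line sign bookkeeping using $\sigma(\ox^\pm) = (-1)^{\bv_i}\ox^\mp$. For $\{\ox^\pm,\ox^\pm\}$ one uses that $\sigma$ sends $(\ow_{i,r}-\ow_{j,s})^{-1}$ to $-(\ow_{\tau i,\tau r}-\ow_{\tau j,\tau s})^{-1}$, and the two signs of $(-1)^{\bv_i}(-1)^{\bv_j}$ from the $\ox$'s combine with this sign and with the swap of $\pm$ into $\mp$ to give equality. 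The most delicate case is $\{\ox_{i,r}^-,\ox_{j,s}^+\}$: one first shows $\sigma(F_{i,r}) = F_{\tau i,\tau r}$ as polynomials in the $\ow$'s, where $F_{i,r} = \ow_{i,r}^{\bw_i}\prod_{j\leftrightarrow i}\gkloa_j(\ow_{i,r})$, using exactly the same parity identity $\bw_i + \sum_{j\leftrightarrow i}\bv_j \in 2\bZ$ as in Step (i); then the chain rule $\sigma(\partial_{\ow_{i,r}} H) = -\partial_{\ow_{\tau i,\tau r}}\sigma(H)$ together with the asymmetry $\{\ox^-,\ox^+\} = -\{\ox^+,\ox^-\}$ provides the missing sign to match $\sigma\{\ox_{i,r}^-,\ox_{j,s}^+\}$ with $\{\sigma\ox_{i,r}^-,\sigma\ox_{j,s}^+\}$.

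As an alternative to the brute-force verification in (iii), one may invoke the finite faithfully flat Poisson map $\pi\colon X_\mu^\lambda \to U_\mu^\lambda$ of Theorem \ref{prop:gklobirat}(2): the Lagrange-interpolation formulas together with \eqref{eq:sigmaingklo} show that $\pi$ intertwines the $\sigma$ on $X_\mu^\lambda$ with the restriction of $\sigma$ on $\overline{\cW}_\mu^\lambda$, and since $\pi^*$ is injective and Poisson, the already-established Poisson-ness of $\sigma$ on $\overline{\cW}_\mu^\lambda$ (Theorem \ref{thm:fixedWlm}) transfers to $X_\mu^\lambda$. I expect the main potential pitfall to be sign tracking in Step (iii), especially the chain-rule sign in the $\{\ox^-,\ox^+\}$ computation; the rest reduces to the single parity observation in Step (i), which is where the hypothesis that $\mu$ is even intervenes essentially.
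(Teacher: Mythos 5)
Your verification is correct and supplies exactly the computations that the paper omits — its proof of this lemma is the one-line citation "Follows by Definition \ref{def:X} and Theorem \ref{prop:gklobirat}." The key parity identity $\bw_i + \sum_{j\leftrightarrow i}\bv_j = 2\bv_i + \langle\mu,\alpha_i\rangle \in 2\bZ$, the $\tau$-invariance of $\bv$ forced by $\tau\lambda = \lambda$ and $\tau\mu = \mu$, the reindexing $s\mapsto\tau s$ in the product for $\gkloa_j$, and the chain-rule sign $\sigma\circ\partial_{\ow_{i,r}} = -\partial_{\ow_{\tau i,\tau r}}\circ\sigma$ are all correctly identified and used. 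One caveat on the alternative you offer at the end of (iii): injectivity of $\pi^*$ together with $\pi$ being Poisson only establishes that $\sigma_X$ is Poisson on the subalgebra $\pi^*\bC[U_\mu^\lambda]$, which is a proper subalgebra of $\bC[X_\mu^\lambda]$. To actually transfer Poisson-ness to all of $X_\mu^\lambda$ you should use that $\pi$ is \emph{\'etale} (which the paper establishes in the proof of Theorem \ref{prop:gklobirat} via \cite[Theorem 10]{W22}): then $d\pi_x$ is an isomorphism at each point, the Poisson bivector of $X_\mu^\lambda$ is the pointwise pullback of that of $U_\mu^\lambda$, and $\sigma$-equivariance of $\pi$ plus Poisson-ness of $\sigma|_{U_\mu^\lambda}$ forces $\sigma_X$ to preserve the bivector. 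Since this is offered only as an alternative to your direct (and correct) computation, it does not affect the validity of the proof.
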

\begin{proof}
    Follows by Definition \ref{def:X} and Theorem \ref{prop:gklobirat}.
\end{proof}

Let ${}^\imath X_\mu^\lambda \subseteq X_\mu^\lambda$ denote the corresponding fixed point locus.  Viewed as functions on ${}^\imath X_\mu^\lambda$ by restriction, we thus have
\begin{equation}
    \label{eq:iXcoordrels}
    w_{i,r} = - w_{\tau i, \tau r}, \quad y_{i,r}^+ = (-1)^{\bv_i} y_{\tau i, \tau r}^-.
\end{equation}
Note that as $\sigma$ is a Poisson involution on $X_\mu^\lambda$, its fixed point locus ${}^\imath X_\mu^\lambda$ inherits a Poisson structure by (doubled) Dirac reduction.  The same is true of ${}^\imath U_\mu^\lambda$, and the map ${}^\imath X_\mu^\lambda \rightarrow {}^\imath U_\mu^\lambda$ is Poisson since Dirac reduction is functorial.

Recalling  the notations of \S \ref{ssec:quantumtorus}, observe that for any $i\in\I$, when restricted to ${}^\imath X_\mu^\lambda$ we have
\begin{equation}
    \gkloa_i(z) = \W_i(z).
\end{equation}

\begin{prop}
\label{prop:classicalgklo1}
Retain the Assumption \eqref{parity}. Then the space ${}^\imath X_\mu^\lambda$ has coordinates 
 $$
\{ w_{i,r}, \ox_{i,r}^- \mid i \in \I, 1 \lle r \lle \bv_i\},
 $$
 and is defined scheme-theoretically in these coordinates by the relations $(1)$--$(4)$:
\begin{enumerate}
    \item 
    $w_{\tau i, \tau r} = - w_{i,r}$, for $i\in \I$ and $1\lle r \lle \bv_i$;
    \item 
    \begin{equation}
        \label{eq:fixedgklo1}
        y_{i,r}^- y_{\tau i, \tau r}^- = (-1)^{\bv_i + 1} w_{i,r}^{\bw_i} \prod_{j \leftrightarrow i, \ j \in \I} \W_j(w_{i,r}),
        \qquad \text{for } i \in \iI,\, 1 \lle r \lle \fkv_i;
    \end{equation}
    \item or each $i \in \I_0$ with $\theta_i = 1$:
    \begin{equation}
        \label{eq:fixedgklo2}
        (y_{i,\fkv_i+1}^-)^2 = \left\{ \begin{array}{cl}         (-1)^{\sum_{j \leftrightarrow i, j \in \iI} \fkv_j} \prod_{j \leftrightarrow i,~j \in \iI} \prod_{s = 1}^{\fkv_j} w_{j,s}^2, & \text{ if } \bw_i= 0
        \\
        0, & \text{ if } \bw_i > 0;
        \end{array} \right.
    \end{equation}
    
    \item 
    $w_{i,r} - w_{j,s} \neq 0$, for all pairs $(i,r) \neq (j,s)$ with $i,j \in \I$ and $c_{ij} \neq 0$.
    % \item 
    % $w_{i,r} + w_{j,s} \neq 0$, for all pairs $(i,r) \neq (j,s)$ with $i, j \in \I_0$ and $c_{ij} \neq 0$. 
\end{enumerate}
In terms of these coordinates, the Poisson structure on ${}^\imath X_\mu^\lambda$ is determined by
\begin{align*}
\{ w_{i,r}, w_{j,s} \} &= 0, \qquad \{ w_{i,r}, y_{j,s}^-\} = (\delta_{(i,r), (j,s)} - \delta_{(\tau i, \tau r), (j,s)}) y_{i,r}^-,
\\
\{ y_{i,r}^-, y_{j,s}^-\} &= \frac{-\delta_{i\leftrightarrow j}}{w_{i,r}-w_{j,s}} y_{i,r}^- y_{j,s}^- + \delta_{(i,r), (\tau j, \tau s)} (-1)^{\bv_i} \frac{\partial}{\partial x}\Big( x^{\bw_i} \prod_{j \leftrightarrow i, j \in \I} \W_j(x) \Big) \Big|_{x = w_{i,r}}.
\end{align*}
\end{prop}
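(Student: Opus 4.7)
The plan is to identify ${}^\imath X_\mu^\lambda$ scheme-theoretically via Lemma \ref{lem:symdirac} applied to the ambient linear $\sigma$-action, and then to compute its Poisson structure by doubled Dirac reduction from Theorem \ref{prop:gklobirat}(1).

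First, I would observe that the defining ideal of the fixed point scheme ${}^\imath X_\mu^\lambda$ inside $\bC[X_\mu^\lambda]$ is generated by $\sigma(f) - f$ as $f$ ranges over a generating set. Applied to the coordinates $w_{i,r}$ and $y_{i,r}^\pm$ using the formulas of Lemma \ref{lem:sigmaonX}, this produces
\[
w_{\tau i, \tau r} + w_{i,r} = 0 \qquad \text{and}\qquad y_{i,r}^+ = (-1)^{\bv_i}\,y_{\tau i, \tau r}^-.
\]
The first identity is relation $(1)$, and the second allows us to eliminate every $y^+$ generator in terms of $y^-$ generators, leaving $\{w_{i,r}, y_{i,r}^-\}$ as a generating set for $\bC[{}^\imath X_\mu^\lambda]$. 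Substituting these two identities, together with $\gkloa_j = \W_j$, into the defining equation $y_{i,r}^+ y_{i,r}^- = -w_{i,r}^{\bw_i}\prod_{j \leftrightarrow i}\gkloa_j(w_{i,r})$ of $X_\mu^\lambda$ then yields relation $(2)$. The special indices $i \in \I_0$, $\theta_i = 1$, $r = \fkv_i + 1$ satisfy $\tau i = i$ and $\tau r = r$, so that $w_{i,r} = 0$; I would handle this case separately by splitting on $\bw_i > 0$ versus $\bw_i = 0$ and evaluating $\prod_{j \leftrightarrow i}\W_j(0)$ via the parity condition \eqref{parity} (which forces $\theta_j = 0$ for all $j$ connected to $i$), pairing the $\I_1$ and $\I_{-1}$ contributions $j, \tau j$ to obtain relation $(3)$. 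Relation $(4)$ is the restriction of the open condition defining $X_\mu^\lambda$.

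For completeness of the presentation, namely that no relations beyond $(1)$--$(4)$ arise, I would invoke Lemma \ref{lem:symdirac} carefully: the ambient coordinate ring is a symmetric algebra $S(V)$ on which $\sigma$ acts linearly, and one may pick a complement $U$ to $V(-1)$ whose natural generators are precisely $\{w_{i,r}, y_{i,r}^-\}$. The lemma then identifies $S(U)$ with the Dirac-reduction ring at the ambient level, and the kernel of the induced map onto $\bC[{}^\imath X_\mu^\lambda]$ is cut out by $(2)$ and $(3)$ together with the open condition $(4)$.

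Finally, for the Poisson structure I would apply doubled Dirac reduction. The $\sigma$-invariant lifts
\[
\tilde w_{i,r} = \tfrac{1}{2}\bigl(w_{i,r} - w_{\tau i, \tau r}\bigr), \qquad \tilde y_{i,r}^- = \tfrac{1}{2}\bigl(y_{i,r}^- + (-1)^{\bv_i}\,y_{\tau i, \tau r}^+\bigr)
\]
descend to the claimed coordinates on ${}^\imath X_\mu^\lambda$, and I would expand their Poisson brackets using Theorem \ref{prop:gklobirat}(1), restrict to the fixed locus via the identifications above, and multiply by $2$. The principal obstacle will be the $\{y^-, y^-\}$ bracket, which mixes a ``diagonal'' contribution from the $\{y^-, y^+\}$-brackets in $X_\mu^\lambda$ (producing the derivative term) with an ``off-diagonal'' contribution from $\{y^-, y^-\}$ and $\{y^+, y^+\}$; these combine and double after restriction. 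Careful tracking of the Kronecker deltas $\delta_{(i,r),(j,s)}$ and their $\sigma$-conjugates $\delta_{(\tau i, \tau r),(j,s)}$, together with separate treatment of the special fixed indices $(i,r) = (\tau i, \tau r)$, then yields the stated Poisson formulas.
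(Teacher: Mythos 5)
Your proposal is correct and follows essentially the same route as the paper: establish the fixed-point scheme by working in the ambient linear space $\bA^{3|\lambda-\mu|}$, read off relations (1) and the elimination of $y^+$ from Lemma \ref{lem:sigmaonX} (i.e., \eqref{eq:iXcoordrels}), substitute into the defining equation \eqref{eq:gklorel3} to produce (2) and (3), keep the open condition (4), and then compute the bracket via doubled Dirac reduction using the identical $\sigma$-invariant lift $\tfrac{1}{2}(y_{i,r}^- + (-1)^{\bv_i} y_{\tau i, \tau r}^+)$. One small imprecision: the set $\{w_{i,r}, y_{i,r}^- \mid i \in \I, 1 \lle r \lle \bv_i\}$ is not a linear complement to $V(-1)$ in the sense of Lemma \ref{lem:symdirac} (it has $2\sum_i \bv_i$ elements while $\dim V(1) = \sum_i \bv_i + \sum_{i \in \iI} \fkv_i$, and the $w$'s are not linearly independent in the quotient); a genuine complement $U$ would be spanned by, e.g., $\{w_{i,r} : i \in \iI, 1 \lle r \lle \fkv_i\} \cup \{y_{i,r}^- : i \in \I, 1 \lle r \lle \bv_i\}$, with the remaining $w_{i,r}$ recovered from relation (1). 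This does not affect the validity of the argument but should be stated with more care.
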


\begin{proof}
    The same map from Lemma \ref{lem:sigmaonX} defines an involution of the space $\bA^{3|\lambda-\mu|}$ from Definition~\ref{def:X}, and its fixed point locus ${}^\imath \bA^{3|\lambda-\mu|}$ is defined simply by imposing \eqref{eq:iXcoordrels}. Put differently, the fixed point locus is an affine space with coordinates $w_{i,r}, \ox_{i,r}^-$ for $i\in \I$, $1\lle r \lle \bv_i$, with the relation (1).  Now consider the open subset $\mathring\bA^{3|\lambda-\mu|} \subseteq \bA^{3|\lambda-\mu|}$ where $w_{i,r} - w_{j,s} \neq 0$ for all pairs $(i,r) \neq (j,s)$ with $c_{ij} \neq 0$. Then its fixed point variety ${}^\imath \mathring{\bA}^{3|\lambda-\mu|} \subseteq {}^\imath \bA^{3|\lambda-\mu|}$ is also open, defined by imposing condition (4).  Now observe by Definition \ref{def:X} that $X_\mu^\lambda \subseteq \mathring \bA^{3|\lambda-\mu|}$ is a  closed subscheme. It follows that ${}^\imath X_\mu^\lambda \subseteq {}^\imath \mathring{\bA}^{3|\lambda-\mu|}$ is also closed and is defined by imposing the defining relations \eqref{eq:gklorel3} of $X_\mu^\lambda$.  Keeping in mind the relations \eqref{eq:iXcoordrels}, this is equivalent to conditions (2) and (3) above, proving the first claim.

    The Poisson structure is calculated using (doubled) Dirac reduction via Remark \ref{rem: Dirac reduction Poisson}, cf.~the proof of Lemma \ref{lem:iWmuDrinfeld}.  For example, let us compute the bracket $\{y_{i,r}^-, y_{j,s}^-\}$ in $\bC[{}^\imath X_\mu^\lambda]$.  We  lift $y_{i,r}^- \in \bC[{}^\imath X_\mu^\la]$ to the $\sigma$--invariant element $\tfrac{1}{2} (y_{i,r}^- + (-1)^{\bv_i} y_{\tau i, \tau r}^+) \in \bC[X_\mu^\lambda]$, and compute the Poisson bracket of these lifts using Theorem \ref{prop:gklobirat}:
    \begin{align*}
        & 2 \left\{ \tfrac{1}{2}(y_{i,r}^- + (-1)^{\bv_i} y_{\tau i, \tau r}^+), \tfrac{1}{2}(y_{j,s}^- + (-1)^{\bv_j} y_{\tau j, \tau s})\right\} \\
        =~& \tfrac{1}{2} \frac{- \delta_{i \leftrightarrow j}}{w_{i,r} -w_{j,s}} y_{i,r}^- y_{j,s}^- + \tfrac{1}{2} \delta_{(i,r), (\tau j, \tau s)} (-1)^{\bv_i}  \frac{\partial}{\partial x} \Big( x^{\bw_i} \prod_{k \leftrightarrow i, k \in \I} \W_k(x) \Big) \Big|_{x = w_{i,r}} \\
        &\hskip 3.2cm- \tfrac{1}{2} \delta_{(\tau i, \tau r), (j,s)} (-1)^{\bv_j} \frac{\partial}{\partial v} \Big( v^{\bw_j} \prod_{ \ell \leftrightarrow j, \ell \in \I} \W_\ell(v) \Big) \Big|_{v = w_{j,s}} \\&\hskip 3.25cm+ \tfrac{1}{2} (-1)^{\bv_i + \bv_j} \frac{\delta_{\tau i \leftrightarrow \tau j}}{w_{\tau i, \tau r}-w_{\tau j, \tau s}}y_{\tau i, \tau r}^+ y_{\tau j, \tau s}^+.
    \end{align*}
    Upon restricting to ${}^\imath X_\mu^\lambda$, the first and last terms above become equal because of \eqref{eq:iXcoordrels}. Let us show that the second and third terms also become equal upon restriction. If $ (\tau i, \tau r)= (j, s)$ then we know that $w_{j,s} = - w_{i,r}$, $\bv_j = \bv_i$, and $\bw_i = \bw_j$.  There is also a bijection between the sets $\{\ell \in \I : \ell \leftrightarrow j\}$ and $\{k \in \I : k \leftrightarrow i\}$ defined by $k = \tau \ell$, under which the corresponding polynomials satisfy $\W_\ell(v) = (-1)^{\bv_k} \W_k(-v)$. With this in mind, and making the substitution $x = - v$ of dummy variables, we get:
    \begin{align*}
    - (-1)^{\bv_j} \frac{\partial}{\partial v}&\Big( v^{\bw_j} \prod_{ \ell \leftrightarrow j, \ell \in \I} \W_\ell(v) \Big) \Big|_{v = w_{j,s}}\\& = (-1)^{\bv_i} \frac{\partial}{\partial x}\Big( (-x)^{\bw_i} \prod_{ k \leftrightarrow i, k \in \I} (-1)^{\bv_k} \W_k(x) \Big) \Big|_{x=w_{i,r}}.
    \end{align*}
    Within  the derivative on the right side there is a sign of $(-1)^{\bw_i + \sum_{k \leftrightarrow i} \bv_k} = (-1)^{\mu_i} = 1$ since $\mu$ is even. This gives the claimed equality of the second and third terms.
\end{proof}

 As in \S \ref{ssec:quantumtorus}, for each $i\in \I_1$ we fix a choice of $\zeta_i \in \bN$ with $1 \lle \zeta_i \lle \fkv_i = \bv_i$, and extend it to $i \in \I_1\cup\I_{-1}$ by $\zeta_{\tau i} = \fkv_i - \zeta_i$. Recall also our conventions from \S \ref{ssec:GKLOfpl}.  The following result defines the classical iGKLO homomorphism.
\begin{lem}
\label{lem:classicalgklo2}
    Retain the Assumption \eqref{parity} and fix an orientation of the Dynkin diagram $\I$ satisfying the conditions from \S\ref{ssec:GKLOqs}. Then there is a Poisson homomorphism 
    \begin{equation} \label{eq:XtoA}
    \bC[{}^\imath X_\mu^\lambda] \longrightarrow \operatorname{gr} \mc A_{\bm z = 0} 
    \end{equation}
    defined by (1)--(3) below:
    \begin{enumerate}
        \item 
        $w_{i,r} \mapsto w_{i,r}$, for $i\in \I$ and $ 1\lle r \lle \bv_i$;
        \item 
        $$
        y_{i,r}^- \mapsto \left\{ \begin{array}{cl} - w_{i,r}^{\fkw_i} \prod_{j \rightarrow i} \W_j(w_{i,r}) \dfo_{i,r}^{-1} & \text{if } 1 \lle r \lle \zeta_i, \\ - w_{i,r}^{\fkw_i} \prod_{\tau j \leftarrow \tau i} \W_{\tau j}(w_{\tau i, r}) \dfo_{\tau i, r}, & \text{if } \zeta_i < r \lle \bv_i, \end{array} \right.
        $$
        for $i \in \I_1 \cup \I_{-1}$ and $1 \lle r \lle \bv_i$; 
        \item 
        $$
        y_{i,r}^- \mapsto \left\{ 
        \begin{array}{cl} - w_{i,r}^{\fkw_i} \prod_{j \rightarrow i} \W_j(w_{i,r}) \prod_{\substack{j \leftarrow i, \\ j \in \I_0}} \overline{W}_j^-(w_{i,r})\dfo_{i,r}^{-1}, & \text{if } 1 \lle r \lle \fkv_i, \\ 
        - w_{i,r}^{\fkw_i+\varsigma_i} \prod_{\substack{j \rightarrow  i, \\ j \in \I_{\pm 1}}} \W_{j}(w_{i, \fkv_i-r}) \prod_{\substack{j \leftarrow i, \\ j \in \I_0}} W_j(w_{i,\fkv_i-r}) \dfo_{i, \fkv_i-r}, & \text{if } \bv_i - \fkv_i< r \lle  \bv_i, \\ 
        \sqrt{(-1)^{\sum_{j\leftrightarrow i, j\in \iI} \fkv_i}}  \prod_{j \leftrightarrow i} W_j(0), & \begin{array}{c}\text{if } r = \fkv_i + 1,  \text{ } \theta_i = 1 \\ \text{and } \bw_i=0, \end{array} \\ 0, &  \begin{array}{c}\text{if } r = \fkv_i + 1,  \text{ } \theta_i = 1 \\ \text{and } \bw_i>0, \end{array} \end{array} \right.
        $$
        for $i \in \I_0$ and $1 \lle r \lle \bv_i$.
    \end{enumerate}
\end{lem}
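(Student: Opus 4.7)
My plan is to view the asserted map as the classical counterpart of the iGKLO homomorphism $\Phi_\mu^\lambda$ of Theorem~\ref{thm:GKLOquasisplit}, expressed in the birational étale coordinates $w_{i,r}, y_{i,r}^\pm$ on $X_\mu^\lambda$ (compare Remark~\ref{rem:ogcgklo} and the Lagrange interpolation formulas \eqref{eq:lagrange}). Since these coordinates are obtained from $\gkloa_i, \gklob_i, \gkloc_i$, which are themselves polynomial functions of $h_i^{(r)}, e_i^{(s)}, f_i^{(s)}$ (hence of $h_i^{(r)}, b_i^{(s)}$ after $\sigma$-restriction), the classical iGKLO map on these coordinates is forced by the image formulas of Theorem~\ref{thm:GKLOquasisplit}. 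What must be checked is that the resulting formulas (1)–(3) of Lemma~\ref{lem:classicalgklo2} land in $\gr \mc A_{\bm z=0}$, respect the defining relations of $\bC[{}^\imath X_\mu^\lambda]$ listed in Proposition~\ref{prop:classicalgklo1}, and respect the Poisson brackets.

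For the algebra relations, I would verify them in the order (1), (4), (2), (3). Relation (1), namely $w_{\tau i, \tau r} = -w_{i,r}$, holds by the very conventions \eqref{signzw2}–\eqref{signzw}. Relation (4) is automatic since the differences $w_{i,r} \pm w_{j,s}$ are invertible in $\mc A$, and all the more so in $\gr \mc A_{\bm z=0}$. Relation (2) is the key content: computing $y_{i,r}^- y_{\tau i, \tau r}^-$ on images, the $\dfo$-factors cancel pairwise thanks to $\dfo_{\tau i, r} = \dfo_{i, \fkv_i+1-r}^{-1}$, and one is left with a product of polynomial values. A careful bookkeeping of the edges $j \to i$ versus $\tau j \leftarrow \tau i$ (governed by the orientation axiom preceding \eqref{s-def}), combined with the identities $\W_{\tau j}(u) = \W_j^-(u)$ and $\overline W_j^-(u)\overline W_j^-(-u) = -u^{2\theta_j}\W_j^\circ(u)$ (for $i \in \I_0$), reduces the product to $(-1)^{\bv_i+1} w_{i,r}^{\bw_i}\prod_{j\leftrightarrow i}\W_j(w_{i,r})$ as required. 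Relation (3) is the squaring of the square-root term; its square matches \eqref{eq:fixedgklo2} by direct inspection of \eqref{WiZi}–\eqref{WiZibold}, the evenness of $\bw_i$ under \eqref{parity} (Remark~\ref{rem:parity}), and the parity condition forcing $\theta_j = 0$ whenever $j \in \I_0$ is connected to an $i \in \I_0$ with $\theta_i = 1$.

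For the Poisson brackets I would compute directly in $\gr \mc A_{\bm z=0}$, where $\{\dfo_{i,r}^{\pm 1}, w_{j,s}\} = \pm \delta_{(i,r),(j,s)}\dfo_{i,r}^{\pm 1}$ and all other brackets among coordinates and $\dfo$'s vanish. The bracket $\{w_{i,r}, w_{j,s}\} = 0$ is immediate. The bracket $\{w_{i,r}, y_{j,s}^-\}$ produces precisely the two delta contributions $\delta_{(i,r),(j,s)} - \delta_{(\tau i, \tau r),(j,s)}$ coming from the two $\dfo$-containing summands in the image of $y_{j,s}^-$ (once that image is rewritten uniformly via \eqref{signzw}). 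The most delicate bracket $\{y_{i,r}^-, y_{j,s}^-\}$ splits into three cases: a generic case where the bracket of logarithmic derivatives of $\W_\ell$-factors with $\dfo$'s gives the rational term $\frac{-\delta_{i\leftrightarrow j}}{w_{i,r}-w_{j,s}}y_{i,r}^- y_{j,s}^-$; a ``diagonal'' case $(i,r)=(\tau j, \tau s)$ where opposite $\dfo$-powers produce the derivative term $\partial_x\bigl(x^{\bw_i}\prod_{j\leftrightarrow i}\W_j(x)\bigr)\vert_{x=w_{i,r}}$ via the classical limit of the commutator of $\dfo_{i,r}$ with its inverse modulo polynomial factors; and otherwise zero.

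The main obstacle will be a uniform bookkeeping across the three families $\I_1$, $\I_{-1}$, $\I_0$: in particular, keeping track of the orientation-dependent sign $\wp_i$, the bar-polynomials $\overline W_j^-$ and the square-root prefactor in case $\theta_i = 1$, and ensuring that the rational contribution to $\{y_{i,r}^-, y_{j,s}^-\}$ for $i \leftrightarrow j$ produces exactly the single expected term (rather than the naive sum of four terms from the two $\dfo^{\pm 1}$-summands in each image). This cancellation forces the parity condition \eqref{parity} to be used in an essential way; without it, for instance, the squaring relation (3) at $r=\fkv_i+1$ would fail, matching the ``if and only if'' in Theorem~\ref{thm:ctgklo}(1).
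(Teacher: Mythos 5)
Your proposal takes essentially the same approach as the paper's proof: directly verify that the assignment respects the defining relations (1)--(4) of $\bC[{}^\imath X_\mu^\lambda]$ from Proposition~\ref{prop:classicalgklo1}, then establish Poisson compatibility in $\operatorname{gr}\mc A_{\bm z=0}$ by a case-by-case bracket computation. The paper's own proof is even terser---it declares the algebra check ``straightforward'' and supplies a single basic bracket identity for $\{P,Q\}$ with $P,Q$ monomials in the $w$'s times $\dfo^{-1}$'s to drive the Poisson verification, leaving the details to the reader---but the strategy is identical to yours, and your outline spells out exactly the bookkeeping the paper elides (modulo minor sign imprecisions, e.g.\ your identity for $\overline W_j^-(u)\overline W_j^-(-u)$ is off by a factor $(-1)^{\fkv_j}$).
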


\begin{proof}
After unwinding the definitions it is straightforward to see that this assignment satisfies the relations (1)--(4) from Proposition \ref{prop:classicalgklo1}, and thus defines an algebra homomorphism $\bC[{}^\imath X_\mu^\lambda] \rightarrow \operatorname{gr} \mc A_{\bm z=0}$.  One proves that it is a Poisson map by a direct case-by-case calculation, essentially an application of the following basic identity: denoting $P = (w_{i,r} - w_{j,s})^A (w_{i,r} + w_{j,s})^B \dfo_{i,r}^{-1}$ and $Q = (w_{j,s} - w_{i,r})^C ( w_{j,s}+ w_{i,r})^D \dfo_{j,s}^{-1}$ where $A,B,C,D \in \bN$, one finds that
$$
\{ P, Q\} = \frac{-A-C}{w_{i,r} - w_{j,s}}PQ + \frac{B-D}{w_{i,r} + w_{j,s}} PQ.
$$
We leave the details to the reader.
%
% For example, in the case where $i, j \in \I_1 \cup \I_{-1}$ with $i \neq j$, and $1\lle r \lle \zeta_i, 1 \lle s \lle \zeta_j$ we need to compute
% \begin{equation}
%     \label{eq:PoissoncalcGKLO}    
% \left\{ - w_{i,r}^{\fkw_i} \prod_{k \rightarrow i} \W_k(w_{i,r}) \dfo_{i,r}^{-1},  - w_{j,s}^{\fkw_j} \prod_{\ell \rightarrow i} \W_\ell(w_{j,s}) \dfo_{j,s}^{-1}\right\}
% \end{equation}
% If $c_{ij} = 0$, then this Poisson bracket is zero.  Else suppose $c_{ij} = -1$, and assume  that our Dynkin diagram is oriented $i\rightarrow j$.  Then $k = j$ appears in the first factor of this Poisson bracket
\end{proof}

\begin{prop}
\label{prop:classicalgklo3}
    The space ${}^\imath X_\mu^\lambda$ is non-empty if and only if the parity condition \eqref{parity} holds.  
    In this case $\dim {}^\imath X_\mu^\lambda = 2\sum_{i \in \iI} \fkv_i$ and the map $\bC[{}^\imath X_\mu^\lambda] \rightarrow \operatorname{gr} \mc A_{\bm z = 0} $ from \eqref{eq:XtoA} defines a top-dimensional irreducible component of ${}^\imath X_\mu^\lambda$, i.e.,~the kernel of this map is its defining ideal.
\end{prop}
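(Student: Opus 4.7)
The plan is to combine the scheme-theoretic description of ${}^\imath X_\mu^\la$ from Proposition~\ref{prop:classicalgklo1} with the classical iGKLO homomorphism of Lemma~\ref{lem:classicalgklo2}. First, I would handle the non-emptiness dichotomy. If \eqref{parity} fails, then there exist connected $i \ne j \in \I$ with $\theta_i = \theta_j = 1$, forcing $i, j \in \I_0$ and both $\bv_i, \bv_j$ odd. At the middle positions $r = \fkv_i+1$ and $r = \fkv_j+1$ one has $\tau r = r$, so relation~(1) of Proposition~\ref{prop:classicalgklo1} yields $w_{i, \fkv_i+1} = -w_{i, \fkv_i+1} = 0$ and similarly $w_{j, \fkv_j+1} = 0$, contradicting the inequality in relation~(4); whence ${}^\imath X_\mu^\la = \emptyset$. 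Conversely, when \eqref{parity} holds, Lemma~\ref{lem:classicalgklo2} produces a well-defined homomorphism from $\bC[{}^\imath X_\mu^\la]$ to the nonzero integral domain $\operatorname{gr} \mc A_{\bm z=0}$, forcing $\bC[{}^\imath X_\mu^\la] \ne 0$ and ${}^\imath X_\mu^\la \ne \emptyset$.

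Next, I would establish the dimension formula $\dim {}^\imath X_\mu^\la = 2\sum_{i\in\iI}\fkv_i$ from the scheme-theoretic description. The ambient affine space in $\{w_{i,r}, y_{i,r}^-\}_{i\in\I, 1\lle r\lle\bv_i}$ has dimension $2\sum_i \bv_i$. Relation~(1) cuts out the $\sigma$-fixed subspace of the $w$'s, of dimension $\sum_{i\in\iI}\fkv_i$, while the $y^-$-variables remain unconstrained and contribute $\sum_i \bv_i$. Relations~(2) and~(3) jointly impose $\sum_{i\in\iI}\fkv_i + \sum_{i\in\I_0}\theta_i$ hypersurface conditions, each involving a distinct pair $y_{i,r}^- y_{\tau i, \tau r}^-$ (for~(2)) or a distinct square $(y_{i,\fkv_i+1}^-)^2$ (for~(3)). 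Since these act on disjoint sets of $y^-$-variables they form a regular sequence and cut the expected codimension; the open condition~(4) leaves the dimension unchanged.

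Finally, I would analyze the image of the classical iGKLO map. The target $\operatorname{gr} \mc A_{\bm z=0}$ is a localized polynomial ring in $\{w_{i,r}, \dfo_{i,r}^{\pm 1}\}_{i\in\iI, 1\lle r\lle \fkv_i}$, hence an integral domain of Krull dimension $2\sum_{i\in\iI}\fkv_i$. Let $I \subset \bC[{}^\imath X_\mu^\la]$ denote the kernel of the map from Lemma~\ref{lem:classicalgklo2}. Since the target is a domain, $I$ is prime, so $C_\mu^\la := V(I)$ is an irreducible closed subscheme of ${}^\imath X_\mu^\la$ defined by $I$. Inspecting Lemma~\ref{lem:classicalgklo2}, the images of the $w_{i,r}$ with $i\in\iI$, $1\lle r \lle \fkv_i$, yield all $w$-generators of the target, while the images of appropriate $y_{i,r}^-$ and $y_{\tau i, r}^-$ take the form (unit in the $w$'s)~$\cdot~\dfo_{i,r}^{\pm 1}$; indeed each $\W_j(w_{i,r})$ is a product of sums and differences of the $w$'s that are invertible in $\operatorname{gr} \mc A_{\bm z=0}$, making the prefactors units. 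Consequently $\bC[{}^\imath X_\mu^\la]/I$ generates $\operatorname{gr} \mc A_{\bm z=0}$ after further localization, and so shares the Krull dimension $2\sum_{i\in\iI}\fkv_i$. As this matches $\dim {}^\imath X_\mu^\la$, $C_\mu^\la$ is a top-dimensional irreducible component and $I$ is its defining ideal.

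The main obstacle I anticipate is the dimension formula: justifying that relations~(2) and~(3) form a regular sequence after imposing~(1), so that they indeed cut down by the full expected codimension. This reduces to verifying that, generically in the open locus defined by~(4), the right-hand sides of~(2) and~(3) do not vanish identically, an assertion that rests on the parity condition~\eqref{parity} and the genericity afforded by~(4).
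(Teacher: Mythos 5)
Your proposal is correct and mirrors the paper's proof in all essentials: non-emptiness fails precisely when the parity condition fails because clashing constraints $w_{i,\fkv_i+1}=w_{j,\fkv_j+1}=0$ violate condition~(4); non-emptiness in the good case comes from the map to $\operatorname{gr}\mc A_{\bm z=0}$; and the component argument rests on $\operatorname{gr}\mc A_{\bm z=0}$ being a domain whose dimension equals $\dim{}^\imath X_\mu^\la$. The one place you diverge from the paper is the dimension upper bound: the paper covers ${}^\imath X_\mu^\la$ by finitely many locally closed pieces indexed by which of $y_{i,r}^-$ or $y_{\tau i,\tau r}^-$ serves as a coordinate (the other being solved for or set to zero, the $y_{i,\fkv_i+1}^-$ being finite over the $w$'s), while you invoke a regular-sequence/fiber-dimension argument on the equations of Proposition~\ref{prop:classicalgklo1}. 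Both routes work, but note that the ``obstacle'' you flag at the end is not real: whether or not the right-hand side of an equation from~(2) or~(3) vanishes identically on some locus, the left-hand side $y_{i,r}^- y_{\tau i,\tau r}^-$ (resp.\ $(y_{i,\fkv_i+1}^-)^2$) is a nonzero polynomial in variables disjoint from all the other equations, so each equation cuts the fiber over a fixed $w$ by exactly one dimension and no genericity argument about the $w$'s is needed. Phrased as a bound on fiber dimension over the $w$-base (rather than as a regular sequence claim in a possibly non-reduced quotient ring), your calculation goes through with no caveats.
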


\begin{proof}
    If the condition \eqref{parity} does not hold, then  ${}^\imath X_\mu^\lambda$ is empty by Lemma \ref{lem:fixedcoloureddiv}.  On the other hand, if \eqref{parity} holds, then the map $\bC[{}^\imath X_\mu^\lambda] \rightarrow \operatorname{gr} \mc A_{\bm z = 0}$ from \eqref{eq:XtoA} shows that ${}^\imath X_\mu^\lambda$ is non-empty.  Moreover, this ring homomorphism becomes surjective if we localize at all $w_{i,r}$ and $w_{i,r} \pm w_{j,s}$, which shows that $\dim {}^\imath X_\mu^\lambda \gge \dim \operatorname{gr} \mc A_{\bm z=0} = 2 \sum_{i \in \iI} \fkv_i$.

    Next, we prove the opposite inequality $\dim {}^\imath X_\mu^\lambda \lle 2 \sum_{i \in \iI} \fkv_i$. Recall the coordinates on ${}^\imath X_\mu^\lambda$ from Proposition  \ref{prop:classicalgklo1}. Because of the relations among the $w_{i,r}$, we need only those elements from \eqref{eq:wcoords1}, of which there are $\sum_{i \in \iI}\fkv_i$. In the relation \eqref{eq:fixedgklo1} observe that either $\ox_{i,r}^- \neq 0$, in which case we can uniquely solve for $\ox_{\tau i, \tau r}^-$, or else $\ox_{i,r}^- = 0$, in which case there is no constraint on $\ox_{\tau i, \tau r}^-$. This dichotomy applies for each $y_{i,r}^-$ with $i\in \iI$ and $1\lle r \lle \fkv_i$, and in each case only one of  $y_{i,r}^-$ or $y_{\tau i, \tau r}^-$ is needed as a coordinate. Moreover, the relation \eqref{eq:fixedgklo2} shows that any  remaining coordinates $\ox_{i,\fkv_i+1}^-$ are finite over the $w_{i,r}$, and thus may ignored  for computing dimension. This shows that ${}^\imath X_\mu^\lambda$ is covered by finitely many locally-closed subvarieties, each of which has dimension at most $2 \sum_{i \in \iI} \fkv_i$. This gives the claimed upper bound on $\dim {}^\imath X_\mu^\lambda$.
    
    Finally, since $\operatorname{gr} \mc A_{\bm z=0}$ is a domain of the correct dimension, we conclude that it corresponds to an irreducible component of ${}^\imath X_\mu^\lambda$ of top dimension.
\end{proof}

Armed with this understanding of ${}^\imath X_\mu^\lambda$, we now return to studying the open subvariety ${}^\imath U_\mu^\lambda \subseteq \iWbar_\mu^\lambda.$

\begin{lem}
    \label{lem:iXU:flat}
    The map $X_\mu^\lambda \rightarrow U_\mu^\lambda$ is $\sigma$-equivariant. The corresponding map ${}^\imath X_\mu^\lambda \rightarrow {}^\imath U_\mu^\lambda$ on fixed point loci identifies
    $$
    {}^\imath X_\mu^\lambda = {}^\imath U_\mu^\lambda \times_{{}^\imath \bA^{(\lambda-\mu)}} {}^\imath \bA^{|\lambda-\mu|} = {}^\imath U_\mu^\lambda \times_{{}^\imath \mathring{\bA}^{(\lambda-\mu)}} {}^\imath \mathring{\bA}^{|\lambda-\mu|}.
    $$
    In particular, the map ${}^\imath X_\mu^\lambda \rightarrow {}^\imath U_\mu^\lambda$ is finite and faithfully flat.
\end{lem}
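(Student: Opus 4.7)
The plan is to reduce this lemma to results already established in the paper by systematically combining three ingredients: $\sigma$-equivariance of the three maps, the fiber-product description of $X_\mu^\la$ from Theorem \ref{prop:gklobirat}(2), and the fact that fixed-point loci commute with fiber products.

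First, I would verify that the map $X_\mu^\la \to U_\mu^\la$ is $\sigma$-equivariant. By Theorem \ref{prop:gklobirat}(2), this map is determined by the formulas
\[
\gkloa_i(z) = \prod_r (z-\ow_{i,r}), \qquad \gklob_i(z) = \sum_r \prod_{s\ne r}\tfrac{z-\ow_{i,s}}{\ow_{i,r}-\ow_{i,s}}\, \ox_{i,r}^-,
\]
together with the analogous expression for $\gkloc_i(z)$. Applying $\sigma$ via Lemma \ref{lem:sigmaonX} using $\sigma(\ow_{i,r}) = -\ow_{\tau i, \tau r}$ and $\sigma(\ox_{i,r}^\pm) = (-1)^{\bv_i}\ox_{\tau i, \tau r}^\mp$, one computes directly that $\sigma\big(\gkloa_i(z)\big) = (-1)^{\bv_i} \gkloa_{\tau i}(-z)$ and $\sigma\big(\gklob_i(z)\big) = (-1)^{\bv_i}\gkloc_{\tau i}(-z)$, which matches the involution \eqref{eq:sigmaingklo} on $\overline{\cW}_\mu^\la$ restricted to $U_\mu^\la$. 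This $\sigma$-equivariance, combined with the $\sigma$-equivariance of $X_\mu^\la \to \bA^{|\la-\mu|}$ from Lemma \ref{lem:sigmaonX} and of $U_\mu^\la \to \bA^{(\la-\mu)}$ (inherited from $\overline{\cW}_\mu^\la \to \bA^{(\la-\mu)}$ as shown in Lemma \ref{lem:fixedcoloureddiv}), makes the entire square
\[
X_\mu^\la = U_\mu^\la \times_{\bA^{(\la-\mu)}} \bA^{|\la-\mu|}
\]
of Theorem \ref{prop:gklobirat}(2) a diagram of $\sigma$-equivariant morphisms.

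Next, I would apply Lemma \ref{lem:fixedfp} on the commutation of fixed-point loci with fiber products. Taking $\sigma$-fixed points of both sides of the Theorem \ref{prop:gklobirat}(2) identification yields
\[
{}^\imath X_\mu^\la = (X_\mu^\la)^\sigma = (U_\mu^\la)^\sigma \times_{(\bA^{(\la-\mu)})^\sigma} (\bA^{|\la-\mu|})^\sigma = {}^\imath U_\mu^\la \times_{{}^\imath \bA^{(\la-\mu)}} {}^\imath \bA^{|\la-\mu|},
\]
and the analogous identification with the open loci $\mathring{\bA}^{(\la-\mu)}$ and $\mathring{\bA}^{|\la-\mu|}$ follows identically (these are open subsets preserved by $\sigma$ by Lemma \ref{lem:fixedcoloureddiv}).

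Finally, finiteness and faithful flatness of ${}^\imath X_\mu^\la \to {}^\imath U_\mu^\la$ follow by base change from Corollary \ref{cor:Aflat}, which asserts exactly that ${}^\imath \bA^{|\la-\mu|} \to {}^\imath \bA^{(\la-\mu)}$ enjoys these properties; both are preserved under arbitrary base change. I expect the main (minor) obstacle to be the bookkeeping in verifying $\sigma$-equivariance of $X_\mu^\la \to U_\mu^\la$, in particular tracking the sign $(-1)^{\bv_i}$ and the index reversal $r \mapsto \tau r$ through the Lagrange interpolation formulas, but this is a routine direct computation.
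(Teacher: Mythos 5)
Your proposal is correct and follows essentially the same route as the paper: you check $\sigma$-equivariance of $X_\mu^\la \to U_\mu^\la$ by applying $\sigma$ to the Lagrange interpolation formulas, invoke the fiber-product identification $X_\mu^\la = U_\mu^\la \times_{\bA^{(\la-\mu)}} \bA^{|\la-\mu|}$ from Theorem \ref{prop:gklobirat}(2) together with Lemma \ref{lem:fixedfp} to pass to fixed points, and deduce finiteness and faithful flatness by base change from Corollary \ref{cor:Aflat}. (The paper cites Lemma \ref{lem:fixedcoloureddiv} for the last step; your citation of Corollary \ref{cor:Aflat} is the more precise reference.)
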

\begin{proof}
Under the action of $\sigma$  on $X_\mu^\lambda$, we have
$$
\sigma \left(\sum_r \Big( \prod_{s \neq r} \frac{z-\ow_{i,s}}{\ow_{i,r} - \ow_{i,s}}\Big) \ox_{i,r}^\pm \right)  = (-1)^{\bv_i} \sum_r \Big( \prod_{s \neq r} \frac{-z-\ow_{\tau i,\tau s}}{\ow_{\tau i, \tau r} - \ow_{\tau i, \tau s}}\Big) \ox_{\tau i,\tau r}^\mp.
$$
This matches with $\sigma( \gklob_i(z)) = (-1)^{\bv_i} \gkloc_{\tau i}(-z)$ and $\sigma(\gkloc_i(z)) = (-1)^{\bv_i} \gklob_{\tau i}(-z)$ from \eqref{eq:sigmaingklo}.  Using Theorem \ref{prop:gklobirat}(2) we conclude that $X_\mu^\lambda \rightarrow U_\mu^\lambda$ is $\sigma$-equivariant.

Recall from Theorem \ref{prop:gklobirat} that $X_\mu^\lambda = U_\mu^\lambda \times_{\bA^{(\lambda-\mu)}} \bA^{|\lambda-\mu|}$.  The maps defining this fiber product are all $\sigma$-equivariant, so by Lemma \ref{lem:fixedfp} we conclude that the fixed point scheme ${}^\imath X_\mu^\lambda$ is the fiber product ${}^\imath U_\mu^\lambda \times_{{}^\imath \bA^{(\lambda-\mu)}} {}^\imath \bA^{|\lambda-\mu|}$ of corresponding fixed point loci.  Since the map ${}^\imath \bA^{|\lambda-\mu|} \rightarrow {}^\imath \bA^{(\lambda-\mu)}$ is finite and faithfully-flat by Lemma \ref{lem:fixedcoloureddiv}, its base change ${}^\imath X_\mu^\lambda \rightarrow {}^\imath U_\mu^\lambda$ is also finite and faithfully flat.
\end{proof}

\begin{proof}[Proof of Theorem \ref{thm:ctgklo}]
    For Part (1),  first note that the locus $U_\mu^\lambda$ is preserved by $\sigma$ because of Lemma \ref{lem:fixedcoloureddiv}.  Since the map ${}^\imath X_\mu^\lambda \rightarrow {}^\imath U_\mu^\lambda$ is finite and faithfully flat by Lemma \ref{lem:iXU:flat}, the rest of Part (1) follows from Proposition \ref{prop:classicalgklo3}.

    For Part (2), it is a tedious but straightforward calculation to show that the filtered degrees of the generators $B_i^{(r)}, H_i^{(r)}$ are preserved under $\Phi_\mu^{\la, \bm z=0}$. The map $\Phi_\mu^{\la, \bm z=0}$ is therefore filtered by the same argument as in \cite[\S B(vii)]{BFN19}.   Next we look at the composed map   
    $$
    {}^\imath X_\mu^\lambda \rightarrow {}^\imath U_\mu^\lambda \hookrightarrow \iWbar_\mu^\lambda \hookrightarrow \iW_\mu,
    $$
    which corresponds to a Poisson algebra homomorphism $\bC[\iW_\mu] \rightarrow \bC[{}^\imath X_\mu^\lambda]$. Composed with Lemma \ref{lem:classicalgklo2}, we obtain a Poisson map $\bC[\iW_\mu] \rightarrow \operatorname{gr} \mc A_{\bm z=0}$.  Its effect on the Poisson generators $b_i^{(r)}, h_i^{(s)} \in \bC[\iW_\mu]$ is given by substituting Lemma \ref{lem:classicalgklo2} into the formulas from Remark \ref{rem:ogcgklo}.  

    We can obtain a second Poisson map $\bC[\iW_\mu]\rightarrow \operatorname{gr} \mc A_{\bm z=0}$ from the iGKLO homomorphism: the map $\Phi_\mu^\lambda$ is filtered by Part (1), and we know that $\operatorname{gr}^{F_{\mu_1}^\bullet} \Yt_\mu \cong \bC[\iW_\mu]$ by Theorem \ref{thm:iWmuPoisson}.  Inspecting leading terms in Theorem \ref{thm:GKLOquasisplit}, we see that both homomorphisms agree on the Poisson generators $b_i^{(r)}, h_i^{(s)} \in \bC[\iW_\mu]$.  Thus, the two Poisson maps are equal.

    Finally, recall that the map from Proposition \ref{prop:classicalgklo3} defines a top-dimensional irreducible component of ${}^\imath X_\mu^\lambda$, and thus a top-dimensional irreducible component $C_\mu^\lambda \subseteq {}^\imath U_\mu^\lambda$ under the finite and faithfully flat map ${}^\imath X_\mu^\lambda \rightarrow {}^\imath U_\mu^\lambda$. The map $\bC[\iW_\mu] \rightarrow \operatorname{gr} \mc A_{\bm z=0}$ therefore defines its closure $\overline{C}_\mu^\lambda \subseteq \iWbar_\mu^\lambda$, completing the proof of Part (2).
\end{proof}

\section{Affine Grassmannian 
$\mathrm{i}$slices of type AI and nilpotent Slodowy slices}
\label{sec:slodowy}

In this section, we restrict ourselves to affine Grassmannian slices for $G$ of $\mathrm{PGL}$ type, and the diagram automorphism $\tau = \operatorname{id}$, corresponding to the Satake diagram of type AI.  We identify the affine Grassmannian islices with specific nilpotent Slodowy slices of type BCD.

\subsection{Nilpotent orbits and transversal slices ABC$^+$}
\label{ssec:recollectionnilp}

Let $\fka$ be a reductive Lie algebra over $\bC$, and let $\mc N_{\fka}$ denote its nilpotent cone. Then $\mc N_{\fka}\subset\fka$ is an irreducible Poisson subvariety.  It decomposes as a union of finitely many nilpotent orbits under the action of the adjoint type group corresponding to $\fka$, which are also precisely the symplectic leaves of $\mc N_{\fka}$. We recall aspects of this theory in classical types, following \cite{CM93}.

For $\fksl_N$, nilpotent orbits are parametrized by the set $\ParN$ of partitions of $N$: each $\pi \in \ParN$ corresponds to the orbit $\mathbb{O}_\pi \subset \mc N_{\fksl_N}$ through any nilpotent element $e_\pi \in \fksl_N$ with Jordan type $\pi$.  Write $\overline{\mathbb{O}}_\pi$ for the closure.  Then there are decompositions
\begin{equation}
    \mc N_{\fksl_N} = \bigsqcup_{\pi \in \ParN} \mathbb{O}_\pi, \quad \quad \overline{\mathbb{O}}_\pi = \bigsqcup_{\substack{\pi' \in \ParN, \\\pi \unrhd \pi'} } \mathbb{O}_{\pi'}.
\end{equation}
Here $\pi \unrhd  \pi'$ denotes the dominance order on partitions.

To treat the classical types uniformly, we shall denote $\epsilon=+$ for orthogonal types and $\epsilon = -$ for symplectic.  Let $J_\epsilon$ be an invertible $N\times N$ matrix which is symmetric if $\epsilon=+$ and skew-symmetric if $\epsilon=-$, and define an involution $\sigma_\epsilon$ of $\fksl_N$ by 
\begin{equation}
\label{eq:tinvo}
    \sigma_\epsilon: X \mapsto - J_\epsilon^{-1} X^T J_\epsilon.
\end{equation}
Denoting by $\fksl_N^\epsilon = (\fksl_N)^{\sigma_\epsilon}$ the corresponding fixed point subalgebra, we thus have
\begin{align} \label{signosp}
\fksl_N^\epsilon = 
\begin{cases}
\mathfrak{so}_N, & \text{ if } \epsilon=+, \\ 
\mathfrak{sp}_N, & \text{ if } \epsilon=-. 
\end{cases}
\end{align}
The involution $\sigma_\epsilon$ preserves each nilpotent orbit $\mathbb{O}_\pi$ of $\fksl_N$. We will consider the corresponding fixed point loci:
\begin{equation}
    \mathbb{O}_\pi^\epsilon := (\mathbb{O}_\pi)^{\sigma_\epsilon}, \qquad \overline{\mathbb{O}}_\pi^\epsilon := ( \overline{\mathbb{O}}_\pi)^{\sigma_\epsilon}.
\end{equation}

Recall that a partition is {\em orthogonal} (resp.~{\em symplectic}) if all its even (resp.~odd) parts occur with even multiplicity.  Denote by $\text{Par}_\epsilon(N)$ the set of orthogonal (resp. symplectic) partitions of $N$, for $\epsilon =+$ (resp. $-$).  
The following summarizes results of Gerstenhaber and of Hesselink, see \cite[Theorems 5.1.6,  6.2.5 \& 6.3.3]{CM93}:
\begin{prop}
\label{prop:cloc}
    The fixed point locus $\mathbb{O}_\pi^\epsilon$ is non-empty if and only if $\pi \in \ParNepsilon$, and in this case, $\mathbb{O}_\pi^\epsilon$ is an $O_N$-orbit for $\epsilon=+$ and a $Sp_N$-orbit for $\epsilon=-$. There are decompositions
    $$
(\mc{N}_{\fksl_N})^{\sigma_\epsilon} = \mc N_{\fksl_N^\epsilon} = \bigsqcup_{\pi \in \ParNepsilon} \mathbb{O}_\pi^\epsilon, \quad \quad \overline{\mathbb{O}}_\pi^\epsilon = \bigsqcup_{\substack{\pi_1 \in \ParNepsilon, \\ \pi_1 \unlhd \pi}} \mathbb{O}_{\pi_1}^\epsilon.
    $$
    Moreover, for any $\pi \in \ParN$, the variety $\overline{\mathbb{O}}^\epsilon_\pi$ is the closure of the orbit $\mathbb{O}_{\pi'}^\epsilon$ where $\pi' \in \ParNepsilon$ is the unique maximal element satisfying $\pi' \unlhd \pi$. In particular, if $\pi \in \ParNepsilon$ then $\pi' = \pi$.
    % orbit closure:
    % \begin{itemize}
    %     \item If $\pi \in \ParNepsilon$, then $\overline{\mathbb{O}}_\pi^\epsilon$ is the closure of $\mathbb{O}_\pi^\epsilon$.
    %     \item If $\pi \notin \ParNepsilon$, then $\overline{\mathbb{O}}_\pi^\epsilon$ is the closure of $\mathbb{O}_{\pi'}^\epsilon$, where $\pi' \in \ParNepsilon$ is the unique maximal element satisfying $\pi \unrhd \pi'$. 
    % \end{itemize}
\end{prop}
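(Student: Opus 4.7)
The plan is to reduce the proposition to the classical theorems of Gerstenhaber and Hesselink on nilpotent orbits in classical Lie algebras, taking some care about the relationship between $\sigma_\epsilon$-fixed point loci of $GL_N$-orbits on one hand, and the intrinsic $O_N$- (resp.\ $Sp_N$-) orbits inside $\fksl_N^\epsilon$ on the other.

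First, I would handle the non-emptiness criterion and the orbit description. If a point $e \in \mathbb{O}_\pi$ lies in the $\sigma_\epsilon$-fixed locus, then $e \in \fksl_N^\epsilon$ is a nilpotent element of the orthogonal (resp.\ symplectic) Lie algebra whose Jordan type as an $N \times N$ matrix is $\pi$; by Gerstenhaber's theorem (see \cite[Thms.~5.1.2--5.1.4]{CM93}) this forces $\pi \in \ParNepsilon$. Conversely, every $\pi \in \ParNepsilon$ is realized by some $e_\pi \in \fksl_N^\epsilon$, so $\mathbb{O}_\pi^\epsilon$ is then nonempty. The same classical theorem says that any two such fixed points are $O_N$- (resp.\ $Sp_N$-) conjugate, with no splitting in the orthogonal very-even case because we use the full orthogonal group rather than $SO_N$. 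Combined with the elementary observation that a matrix in $\fksl_N^\epsilon$ is nilpotent iff it is nilpotent as an element of $\fksl_N$, this gives that $\mathbb{O}_\pi^\epsilon$ is a single $O_N$- or $Sp_N$-orbit and yields the decomposition of $\mc N_{\fksl_N^\epsilon} = (\mc N_{\fksl_N})^{\sigma_\epsilon}$.

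Second, for the closure statement I would argue directly from the $GL_N$-picture: since $\sigma_\epsilon$ preserves every $GL_N$-orbit, fixed points and closures commute, giving
\[
\overline{\mathbb{O}}_\pi^\epsilon \;=\; (\overline{\mathbb{O}}_\pi)^{\sigma_\epsilon} \;=\; \bigsqcup_{\pi_1 \unlhd \pi} (\mathbb{O}_{\pi_1})^{\sigma_\epsilon} \;=\; \bigsqcup_{\pi_1 \in \ParNepsilon,\ \pi_1 \unlhd \pi} \mathbb{O}_{\pi_1}^\epsilon,
\]
where the last step uses the nonemptiness criterion from the first paragraph. This is valid for any $\pi \in \ParN$, not just $\pi \in \ParNepsilon$; for $\pi \in \ParNepsilon$ the induced closure order on $O_N$- or $Sp_N$-orbits is exactly the restriction of dominance, recovering Hesselink's theorem \cite[Thm.~6.2.5]{CM93}.

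Finally, for the \emph{moreover} statement, the remaining content is that the poset $\{\pi_1 \in \ParNepsilon : \pi_1 \unlhd \pi\}$ has a unique maximal element $\pi'$; this is the classical $\epsilon$-collapse, established in \cite[\S 6.3]{CM93}. The orbit $\mathbb{O}_{\pi'}^\epsilon$ is then open dense in $\overline{\mathbb{O}}_\pi^\epsilon$ by the closure formula above. I do not anticipate a real obstacle here, since the proposition is a dictionary between the existing $SL_N$-formulation and the $\fksl_N^\epsilon$-intrinsic one and the substantive input is already in \cite{CM93}; the only care point is verifying that the full $O_N$ (rather than $SO_N$) is used throughout, so that very-even orthogonal partitions do not split the orbit and the given decomposition is exactly right.
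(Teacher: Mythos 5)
Your argument is correct. The paper offers no proof of this proposition beyond the sentence preceding it, which declares it to be a summary of the theorems of Gerstenhaber and of Hesselink, citing \cite[Theorems 5.1.6, 6.2.5 \& 6.3.3]{CM93}; your write-up simply supplies the dictionary between the $\sigma_\epsilon$-fixed-point formulation and the intrinsic $O_N$/$Sp_N$-orbit picture that the paper leaves implicit (including the observation that $\sigma_\epsilon$ preserves each $GL_N$-orbit, the nonemptiness criterion, the use of the full orthogonal group so that very-even orbits do not split, and the $\epsilon$-collapse for the unique maximal $\pi'$), and it matches the paper's intended reduction exactly.
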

\begin{rem}
    \label{rem:ve}
    For $\pi \in \operatorname{Par}_+(N)$, the $O_N$-orbit $\mathbb{O}_\pi^+$ is connected except for $\pi$ \emph{very even}, in which case it has two components (each of which is an $SO_N$-orbit). See \cite[Theorem 5.1.4]{CM93} for details.
\end{rem}

Fix a partition $\pi \in \ParNepsilon$ where $\epsilon \in \{\emptyset, +,-\}$, and choose an element $e \in \mathbb{O}_\pi^\epsilon$; it is understood that the superscript $\emptyset$ denotes type A and can be dropped.  By the Jacobson-Morozov Theorem there exists a corresponding $\fksl_2$-triple $\{e,h,f\}\subset \fksl_N^\epsilon$, which is unique up to conjugation by a result of Kostant \cite[\S 3]{CM93}.  We consider the corresponding Slodowy slice in $\fksl_N^\epsilon$, defined by:
\begin{equation}
    \mathcal{S}_\pi^\epsilon = e + \operatorname{Ker}( \operatorname{ad} f).
\end{equation}
Then $\mathcal{S}_\pi^\epsilon$ is a transversal slice to the nilpotent orbit $\mathbb{O}_\pi^\epsilon$ at $e \in \fksl_N^\epsilon$, and more generally it intersects all nilpotents in $\fksl_N^\epsilon$ transversely \cite[\S 2.2]{GG02}. It also follows that for any inclusion $\mathbb{O}^\epsilon_{\pi_2} \subseteq \overline{\mathbb{O}}_{\pi_1}^\epsilon$, there are non-empty intersections
\begin{equation}
    \mathbb{O}^\epsilon_{\pi_1} \cap \mc S_{\pi_2}^\epsilon \neq \emptyset, \quad \quad \overline{\mathbb{O}}^\epsilon_{\pi_1} \cap \mc S_{\pi_2}^\epsilon \neq \emptyset.
\end{equation}
All of these spaces inherit Poisson structures from $\fksl_N^\epsilon$ by \cite[\S 3]{GG02}.  Note that $\mathcal{S}_\pi^\epsilon$ is uniquely determined from $\mathbb{O}^\epsilon_\pi$, up to conjugation.

\subsection{Results of Lusztig and Mirkovi\'c-Vybornov}
Throughout this section we will consider affine Grassmannian slices $\overline{\cW}_\mu^\la$ where $G$ is of $\mathrm{PGL}$ type, and their relation to nilpotent orbits in $\fksl_N$.

First consider $G = \mathrm{PGL}_N$. Lusztig \cite[\S 2]{Lu81} constructed an isomorphism 
\begin{equation}
    \label{eq:nilp1}
    \mathcal{N}_{\fksl_N} \cong \overline{\cW}_0^{N \varpi_1^\vee},
\end{equation}
given explicitly by $X \in \mathcal{N}_{\fksl_N} \mapsto I + z^{-1}X \in G_1[\![z^{-1}]\!] = \cW_0$. Moreover, for each partition $\pi =(\la_1,\ldots, \la_N) \in \ParN$ this map restricts to isomorphisms of strata $\mathbb{O}_\pi \cong \cW_0^\la$ and their closures $\overline{\mathbb{O}}_\pi \cong \overline{\cW}_0^\lambda$, where $\pi$ and $\lambda$ are related by $\lambda = \la_1 \varepsilon_1^\vee +\ldots+\la_N\varepsilon_N^\vee$.  

More generally, let $G = \mathrm{PGL}_n$ and let $\lambda\gge \mu$ be dominant coweights. Label the nodes of the  Dynkin diagram by  $\I =\{1,\ldots,n-1\}$ in the standard way.  Recall our notation $\bw_i = \langle \lambda, \alpha_i\rangle$ for $i\in \I$, and define an integer $N := \sum_{i=1}^{n-1} i \bw_i$. Define a partition  $\pi_1 \in \ParN$  in exponential notation followed by a transpose $t$ by letting
\begin{equation}
    \label{eq:MViso1}
    \pi_1 = \big(1^{\bw_1} 2^{\bw_2} \cdots (n-1)^{\bw_{n-1}}\big)^t.
\end{equation}
Note that its length $\ell(\pi_1) \lle n-1$. For example, for $n=4$, $\bw_1=4, \bw_2=3,$ $\bw_3=2$, the Young diagram of $\pi_1$ can be depicted as follows:
\begin{align} \label{YDiagram}
 &   \begin{ytableau}
    *(brown) & *(brown) & *(lightgray)  & *(lightgray) & *(lightgray)  & &&& \\
    *(brown) & *(brown) & *(lightgray)  & *(lightgray) & *(lightgray)  & \none &\none &\none & \none \\
    *(brown) & *(brown) & \none &\none &\none & \none  &\none &\none & \none 
\end{ytableau}   
\end{align}

A second partition $\pi_2 = (p_n \gge p_{n-1} \gge \cdots \gge p_1) \in \ParN$  may be defined uniquely by requiring
\begin{equation}
    \label{eq:MViso2}
    \langle \mu, \alpha_{n-i}\rangle = p_{i+1} - p_i, \quad \text{ for } 1\lle i\lle n-1.
\end{equation}
The dominance condition $\la \gge \mu$ translates into the partition dominance condition $\pi_1\unrhd \pi_2$. 
Conversely, given a pair of partitions $\pi_1,\pi_2 \in \ParN$ such that $\ell(\pi_1) \lle n-1$, $\ell(\pi_2) \lle n$ and $\pi_1\unrhd \pi_2$, we can reconstruct a pair of dominant coweights $\la,\mu$ for $G=\mathrm{PGL}_n$ such that $\la \gge \mu$.
    
\begin{prop} {\rm (Mirkovi\'c-Vybornov \cite[Proposition 4.3.4]{MV22})}
\label{MViso}
With notation as above, there is an isomorphism
$$
\mc N_{\fksl_N} \cap \mc S_{\pi_2} \cong \overline{\cW}_\mu^{N\varpi_1^\vee}.
$$
It restricts to isomorphisms of strata and their closures:
$$
    {\mathbb{O}}_{\pi_1} \cap \mathcal{S}_{\pi_2} \cong {\cW}_\mu^\lambda, \quad \quad \overline{\mathbb{O}}_{\pi_1} \cap \mathcal{S}_{\pi_2} \cong \overline{\cW}_\mu^\lambda.
$$
\end{prop}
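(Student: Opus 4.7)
\emph{Proof plan.} My strategy is to generalize Lusztig's isomorphism $\mc N_{\fksl_N} \cong \overline{\cW}_0^{N \varpi_1^\vee}$ given by $X \mapsto I + z^{-1}X$, which handles the case $\mu = 0$ and $\pi_2 = (1^N)$. The task for general dominant $\mu$ is to construct a matrix-valued map from $\mc N_{\fksl_N} \cap \mc S_{\pi_2}$ into $\cW_\mu$ that incorporates the Jordan structure of $\pi_2$ into the dominant factor $z^\mu$ of the Gauss decomposition $U_1^+[\![z^{-1}]\!]\, T_1[\![z^{-1}]\!]\, z^\mu\, U_1^-[\![z^{-1}]\!]$.

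Fix an $\fksl_2$-triple $\{e,h,f\}$ with $e$ of Jordan type $\pi_2 = (p_1 \lle \cdots \lle p_n)$ and the standard decomposition $\bC^N = \bigoplus_{i=1}^n U_i$ into Jordan blocks $U_i$ of size $p_i$. This endows every $X \in \mc S_{\pi_2}$ with an $n \times n$ block decomposition of rectangular matrices; I would define
\[
\Psi : \mc N_{\fksl_N} \cap \mc S_{\pi_2} \longrightarrow G(\!(z^{-1})\!), \qquad X \longmapsto M(X,z),
\]
where $M(X,z)$ is a block matrix whose $(i,j)$-block is an explicit polynomial in $z^{-1}$ and the entries of the $(i,j)$-block of $X$, normalized by shifts so that the diagonal factor matches $z^{\mu}$ with $\mu$ determined from $\pi_2$ via \eqref{eq:MViso2}. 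The $\mu = 0$ case recovers Lusztig's formula. To show that $\Psi(X) \in \cW_\mu$, I would factor $M(X,z)$ as $n^+ \cdot h \cdot z^\mu \cdot n^-$ with $n^\pm \in U_1^\pm[\![z^{-1}]\!]$ and $h \in T_1[\![z^{-1}]\!]$; the entries of $n^\pm, h$ are explicit rational functions of the block entries of $X$, invertible outside a Zariski closed proper subset. A direct computation of the matrix coefficients $\gkloa_i(z), \gklob_i(z), \gkloc_i(z), \gklod_i(z)$ from \S 5.2 applied to $\Psi(X)$ produces polynomials in $z$ of the expected degrees, satisfying the fundamental relation \eqref{eq:gklorel}; this places $\Psi(X)$ in $\overline{\cW}_\mu^{N\varpi_1^\vee}$.

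For the refinement to strata, I would identify the $G[z]$-orbit of $z^\nu$ passing through $\Psi(X)$ with the Jordan type $\pi'$ of $X$, matching dominant coweights $\mu \lle \nu \lle \la$ with partitions $\pi_2 \unlhd \pi' \unlhd \pi_1$ under the dictionary \eqref{eq:MViso1}--\eqref{eq:MViso2}. This matches the symplectic leaves on both sides and in particular gives the restriction $\mathbb{O}_{\pi_1} \cap \mc S_{\pi_2} \cong \cW_\mu^\lambda$, from which the closure statement follows. The main obstacle is establishing bijectivity of $\Psi$: injectivity follows because block entries of $X$ can be recovered from the $z^{-1}$-expansion of $\Psi(X) z^{-\mu}$, but surjectivity is delicate. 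I would resolve it by combining three facts: (i) both $\overline{\mathbb{O}}_{\pi_1} \cap \mc S_{\pi_2}$ (normal by Kraft--Procesi) and $\overline{\cW}_\mu^\la$ (normal by \cite{KWWY14,Z20}) are normal irreducible Poisson varieties of the same dimension $\langle 2\rho, \la - \mu\rangle = \dim \mathbb{O}_{\pi_1} - \dim \mathbb{O}_{\pi_2}$; (ii) $\Psi$ is a Poisson morphism respecting the stratification; (iii) the induced map on top-dimensional strata is étale and injective, hence an isomorphism, which extends to an isomorphism of normal closures by Zariski's Main Theorem.
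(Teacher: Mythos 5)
The paper does not prove this proposition at all — it is cited verbatim from Mirkovi\'c--Vybornov (with the clarification in the remark that the implementation used is really the Cautis--Kamnitzer reformulation \cite[\S 3.3]{CK18}, upgraded to a Poisson isomorphism via \cite[Theorem A]{WWY20}). So there is no ``paper's own proof'' here to compare against; you are attempting to reconstruct the original argument of \cite{MV22}.

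Your reconstruction has the right shape in broad outline (extend Lusztig's $X \mapsto I + z^{-1}X$ block-wise to a given Jordan type, check the image satisfies the Gauss-decomposition and degree constraints, match the stratification by Jordan type to the stratification of $\overline{\cW}_\mu^\lambda$ by dominant coweights), but there are two substantive gaps. First, you never actually write down the map $\Psi$; you only describe it as ``a block matrix whose $(i,j)$-block is an explicit polynomial in $z^{-1}$ and the entries of the $(i,j)$-block of $X$, normalized by shifts.'' The entire content of the Mirkovi\'c--Vybornov construction is this formula, and without it nothing downstream (the fact that $\Psi(X)$ has a Gauss decomposition in $\cW_\mu$, the degree and divisibility constraints on $\gkloa_i, \gklob_i, \gkloc_i, \gklod_i$, the identification of Jordan types with $G[z]$-double-coset types) can be checked. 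Second, the bijectivity argument via Zariski's Main Theorem does not close as stated: ZMT upgrades a quasi-finite birational map to a normal variety to an open immersion, and then to an isomorphism only if the map is also \emph{proper} (or finite). You establish neither. A map between irreducible normal affine varieties of equal dimension which is injective with dense image is not in general an isomorphism ($\bA^1\smallsetminus\{0\}\hookrightarrow\bA^1$ is the standard obstruction), so invoking normality plus dimension-count plus injectivity is not enough. The actual arguments in \cite{MV22} and \cite{CK18} avoid this by constructing an explicit two-sided inverse (or by realizing both sides inside a common moduli-theoretic or quiver-theoretic picture), rather than by an abstract bijectivity argument. You would need to supply that explicit inverse, or prove properness of $\Psi$, to complete the proof along your lines.
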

Strictly speaking, we will use an alternate formulation of this isomorphism  by Cautis-Kamnitzer \cite[\S 3.3]{CK18}.  Note that the above isomorphisms are Poisson  by \cite[Theorem A]{WWY20}.    
\begin{rem}
    In fact, the above works use transverse slices which are generally different from Slodowy's $\mathcal{S}_\pi$. Thankfully this is not important: all reasonable slices (``MV slices'') are Poisson isomorphic by \cite[Theorem 5.5]{WWY20}, and these isomorphisms are easily seen to restrict to isomorphisms of intersections with nilpotent orbits and their closures.  Similarly the precise choice of $\fksl_2$-triple $\{e,h,f\} \in \fksl_N^\epsilon$ used to define $\mc S_{\pi_2}^\epsilon$ is irrelevant, up to an isomorphism induced by conjugation.
\end{rem}

\subsection{$\mathrm{i}$Slices and $O_N$-orbit closures}

 Let $G=\mathrm{PGL}_N$, and on the loop group $G(\!(z^{-1})\!)$ we have $\sigma( g(z)) = g(-z)^T $ as in Example \ref{eg:transpose}. Transporting $\sigma$ via the isomorphism $\mathcal{N}_{\fksl_N} \cong \overline{\cW}_0^{N \varpi_1^\vee}$ from \eqref{eq:nilp1}, we obtain the involution $\sigma:X \mapsto -X^T$ on $\mathcal{N}_{\fksl_N}$. Its fixed point locus is $\mc N_{\mathfrak{so}_N}$, as follows from \cite[Proposition 1.1.3]{CM93}. By restriction, we thus obtain  an isomorphism of  fixed point loci:
$$
    \mathcal{N}_{\mathfrak{so}_N} \cong \iWbar_0^{N \varpi_1^\vee}.
$$

Recall that the isomorphism $\mc N_{\fksl_N} \cong \overline{\cW}_0^{N \varpi_1^\vee}$ restricts to isomorphisms of strata and their closures. Since our involutions preserve strata, we obtain isomorphisms of fixed point loci: 
\begin{equation}
    \label{eq:MViso3}
    {\mathbb{O}}_\pi^\epsilon \cong  \iW_0^\lambda, \quad \quad \overline{\mathbb{O}}_\pi^\epsilon \cong  \iWbar_0^\lambda.
\end{equation}
By Proposition \ref{prop:cloc} the locus $\mathbb{O}_\pi^\epsilon$ is non-empty if and only if the partition $\pi$ is orthogonal.  Recall also that $\overline{\mathbb{O}}_\pi^\epsilon$ is the closure of an orbit $\mathbb{O}_{\pi'}^\epsilon$ (with $\pi = \pi'$ if $\pi$ is orthogonal, but $\pi\neq\pi'$ otherwise), and thus $\iWbar_0^\la$ is the closure of a corresponding stratum $\iW_0^{\la'}$. We also deduce that the variety $\iWbar_0^\la$ is reducible for some $\la$ (see Remark \ref{rem:ve}), or even non-normal since this is true of classical nilpotent orbits by \cite[Theorem 1]{KP82}.

\subsection{$\mathrm{i}$Slices and nilpotent Slodowy slices}

Next, we move on to the setting of Proposition \ref{MViso} with $G=\mathrm{PGL}_n$.  First, we focus on the special case where $\la = N \varpi_1^\vee$.  In  this case, from Lemma~\ref{MViso} we have
\begin{align} \label{eq:MViso}
\mathcal{N}_{\fksl_N} \cap \mathcal{S}_{\pi_2} \cong \overline{\cW}_\mu^{N \varpi_1^\vee}.
\end{align}
To consider the fixed point locus $\iWbar_\mu^{N \varpi_1^\vee}$, recall from Lemma \ref{lem: gradings and shifts iWmu} that we need to assume that $\mu$ is even (and spherical, but this is automatic since $\tau=\id$). 
Inspecting Equation \eqref{eq:MViso2}, we observe that $\mu$ being even  corresponds to the partition $\pi_2$ having all parts of the same parity.  Following \cite[\S 4.1]{T23}, we let $\epsilon = +$ if all parts of $\pi_2$ are odd, and $\epsilon = -$ if all parts of $\pi_2$ are even.  Observe that $\pi_2$ is orthogonal when $\epsilon=+$, and symplectic when $\epsilon=-$, just as in \S \ref{ssec:recollectionnilp}.

Define the following subsets of $\ParNepsilon$:
\begin{align} \label{eq:3Par}
    \ParNepsilon^\diamond &: =\{\pi \in \ParNepsilon \mid  \text{all parts are odd (resp. even) for $\epsilon=+$ (resp. $\epsilon=-$)} \},
    \\
    \ParNepsilon_{\lle n} &:=\{\pi \in \ParNepsilon \mid \ell(\pi)\lle n \},
    \\
    \ParNepsilon^\diamond_{\lle n} &:= \ParNepsilon^\diamond \cap \ParNepsilon_{\lle n}. 
\end{align}
Given $\pi_2 \in \ParNepsilon$, we choose $J_\epsilon$ in \eqref{eq:tinvo} for the involution $\sigma_\epsilon$ of $\fksl_N$ exactly as in \cite[\S 4.2]{T23} (the assumption that $\pi_2 \in \ParNepsilon^\diamond$ can be relaxed for now). This allows Topley \cite[\S 4.2]{T23} to choose $e \in \fksl_N^\epsilon$ of Jordon form $\pi_2$ and construct an $\fksl_2$-triple $\{e,h,f\}$ which is fixed pointwise by $\sigma_\epsilon$, i.e., lies inside $\fksl_N^\epsilon$.  We shall use this $\fksl_2$-triple throughout the rest of this section.
Then we see that the involution $\sigma_\epsilon$ on $\fksl_N$ in \eqref{eq:tinvo} restricts to an involution on the Slodowy slice $ \mathcal{S}_{\pi_2}$, and the corresponding fixed point locus is the Slodowy slice $ \mathcal{S}_{\pi_2}^\epsilon$ in the classical Lie algebra $\fksl_N^\epsilon$: \begin{align} \label{fixSlodowy}
(\mathcal{S}_{\pi_2})^{\sigma_\epsilon} = \mathcal{S}_{\pi_2}^\epsilon.
\end{align}
While we did not find this formulation in literature, an isomorphism (instead of equality) of this form seems implicit in \cite[(4.24)]{T23} by passing through the Dirac reduction. 

\begin{prop} \label{prop:onlypi2}
Let $\mu$ be the even dominant coweight for $G = \mathrm{PGL}_n$ such that $N \varpi_1^\vee \gge \mu$, with corresponding partition $\pi_2 \in \ParNepsilon^\diamond_{\lle n}$ via \eqref{eq:MViso2}. Then the isomorphism $\overline{\cW}_\mu^{N \varpi_1^\vee} \cong \mathcal{N}_{\fksl_N} \cap \mathcal{S}_{\pi_2}$ in \eqref{eq:MViso} is $\sigma_\epsilon$-equivariant, giving rise to a Poisson isomorphism of fixed-point loci:
    \begin{align} \label{iMV:N}
    \iWbar_\mu^{N \varpi_1^\vee} \cong \mathcal{N}_{\fksl_N^\epsilon} \cap \mathcal{S}_{\pi_2}^\epsilon. 
\end{align}
\end{prop}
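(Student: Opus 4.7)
The plan is to promote the Mirković-Vybornov isomorphism \eqref{eq:MViso} to a $\sigma_\epsilon$-equivariant isomorphism of Poisson varieties, and then to deduce the claim by taking fixed point loci on both sides. The fixed point locus on the left is $\iWbar_\mu^{N\varpi_1^\vee}$ by definition. On the right, using that $\mc N_{\fksl_N} \cap \mc S_{\pi_2}$ is the fiber product of $\mc N_{\fksl_N}$ and $\mc S_{\pi_2}$ over $\fksl_N$, Lemma \ref{lem:fixedfp} yields
\[
(\mc N_{\fksl_N} \cap \mc S_{\pi_2})^{\sigma_\epsilon} \ = \ (\mc N_{\fksl_N})^{\sigma_\epsilon} \cap (\mc S_{\pi_2})^{\sigma_\epsilon} \ = \ \mc N_{\fksl_N^\epsilon} \cap \mc S_{\pi_2}^\epsilon,
\]
where the second equality combines Proposition \ref{prop:cloc} with \eqref{fixSlodowy}.

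The heart of the argument is therefore to show that, for an appropriate choice of $J_\epsilon$, the involution $\sigma: g(z) \mapsto g(-z)^T$ on $\overline{\cW}_\mu^{N\varpi_1^\vee}$ corresponds under MV to the involution $\sigma_\epsilon: X \mapsto -J_\epsilon^{-1} X^T J_\epsilon$ on $\mc N_{\fksl_N} \cap \mc S_{\pi_2}$. I would approach this using the Cautis-Kamnitzer formulation \cite[\S 3.3]{CK18}, which reconstructs $X \in \fksl_N$ from explicit block data extracted from $g(z) \in \cW_\mu$, with the block sizes controlled by the parts of $\pi_2$. Transposing $g(z)$ and substituting $z \mapsto -z$ has a computable combinatorial effect on these block data: it transposes each block and introduces signs determined by row/column parities, which in turn are governed by the parts of $\pi_2$. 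One should check that the resulting involution on $\fksl_N$ has the form $X \mapsto -K^{-1}X^T K$ for some invertible $N \times N$ matrix $K$, which is symmetric precisely when all parts of $\pi_2$ are odd (case $\epsilon = +$) and skew-symmetric precisely when all parts are even (case $\epsilon = -$). Since any two admissible choices of $J_\epsilon$ differ by conjugation by an element of the centralizer of the $\fksl_2$-triple, one can absorb the discrepancy between $K$ and Topley's $J_\epsilon$ from \cite[\S 4.2]{T23} by composing the MV isomorphism with an inner automorphism of $\fksl_N$ (which preserves $\mc N_{\fksl_N} \cap \mc S_{\pi_2}$ up to the allowed choice of $\fksl_2$-triple). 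This bookkeeping of sign conventions and normalizations is the main obstacle.

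Once equivariance is in place, the scheme-theoretic isomorphism $\iWbar_\mu^{N\varpi_1^\vee} \cong \mc N_{\fksl_N^\epsilon} \cap \mc S_{\pi_2}^\epsilon$ follows directly from the fixed point analysis above. For the Poisson structure, the MV isomorphism is Poisson by \cite[Theorem A]{WWY20}, and both $\sigma$ and $\sigma_\epsilon$ are Poisson involutions of their respective ambient Poisson varieties (the former by Theorem~\ref{thm:fixedWlm}, the latter since $\sigma_\epsilon$ is a Lie algebra involution and hence preserves the Kirillov-Kostant-Souriau bracket, restricting to the slice by standard arguments as in \cite[\S 3]{GG02}). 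Since we endow both fixed point loci with the doubled Dirac reduction Poisson structure coming from their ambient spaces, functoriality of Dirac reduction as in \S\ref{ssec: dirac reduction and fixed points} upgrades the equivariant isomorphism to a Poisson isomorphism, completing the proof.
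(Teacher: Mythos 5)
Your proposal and the paper agree completely on the bookkeeping steps: identifying the fixed-point locus on the right-hand side via Proposition~\ref{prop:cloc} and \eqref{fixSlodowy}, and upgrading the scheme-theoretic isomorphism to a Poisson one via functoriality of Dirac reduction. Your invocation of Lemma~\ref{lem:fixedfp} via the fiber-product description of $\mc N_{\fksl_N}\cap\mc S_{\pi_2}$ is a nice explicit framing of what the paper treats implicitly.

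Where you diverge is in the crux: proving $\sigma_\epsilon$-equivariance of the MV isomorphism. You propose to unwind the Cautis--Kamnitzer explicit model and directly track how $g(z)\mapsto g(-z)^T$ acts on the block data reconstructing $X\in\fksl_N$, then identify the resulting involution as $X\mapsto -K^{-1}X^TK$, verify the parity of $K$, and absorb the discrepancy with Topley's $J_\epsilon$ by conjugation. This is a genuinely different and in principle viable route, but you have not carried out the central computation; as you acknowledge yourself, ``this bookkeeping of sign conventions and normalizations is the main obstacle,'' and as written it remains unverified. The paper sidesteps this entirely: it recognizes the MV isomorphism as the classical limit of the Brundan--Kleshchev isomorphism (factoring $\bC[\overline{\cW}_\mu^{N\varpi_1^\vee}] \xrightarrow{\sim} S(\mathfrak{gl}_N,\pi_2)_0 \xrightarrow{\sim} \bC[\mc N_{\fksl_N}\cap\mc S_{\pi_2}]$), and cites \cite[Proposition~4.7]{T23}, where Topley has already proved exactly the required equivariance of the classical Brundan--Kleshchev map for his choice of $J_\epsilon$ and $\fksl_2$-triple. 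The trade-off is clear: your route would yield a self-contained, coordinate-level proof at the cost of a delicate sign computation; the paper's route is shorter because it off-loads that computation to Topley's work, at the cost of needing to verify compatibility of conventions between \cite{WWY20}, \cite{T23}, and the present setup. One small caution about your proposal: the claim that any two admissible $J_\epsilon$'s ``differ by conjugation by an element of the centralizer of the $\fksl_2$-triple'' needs care -- what you actually want is that the two resulting involutions $\sigma_\epsilon$, $\sigma'_\epsilon$, both fixing the triple pointwise, are conjugate by an automorphism preserving the triple -- so the discrepancy-absorption step would need to be stated more precisely before the sketch becomes a proof.
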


\begin{proof}
    In brief, by \cite[Theorem 4.3(d)]{WWY20} the isomorphism \eqref{eq:MViso}  can be understood as the classical limit of the Brundan-Kleshchev isomorphism \cite[Theorem 10.1]{BK06} between a finite W-algebra and a quotient of a shifted Yangian.  Topley has proven \cite[Proposition 4.7]{T23} that the classical Brundan-Kleshchev isomorphism is $\sigma_\epsilon$-equivariant. Putting these two pieces together proves our claim.
    
    More precisely,  following the conventions of \cite[Theorem 4.3]{WWY20} the Brundan-Kleshchev isomorphism is an isomorphism of filtered algebras:
    \begin{equation}
        \label{eq:BKiso}
        Y_\mu^{N \varpi_1^\vee} \xrightarrow{\sim} W(\mathfrak{gl}_N, \pi_2).
    \end{equation}
    The left side is a truncated shifted Yangian for $\fksl_n$, while the right side is a finite W-algebra for $\mathfrak{gl}_N$.  These algebras have non-trivial centers, and we should pass to central quotients as in \cite[Theorem 4.9]{WWY20}. (The precise choice of central quotient is not important, since all choices yield the same associated graded algebras.)
    Taking a central quotient on the left side, we obtain an algebra $Y_\mu^{N\varpi_1^\vee}(\mathbf{R})$ whose associated graded algebra is the coordinate ring $\bC[\overline{\cW}_\mu^{N\varpi_1^\vee}]$ by \cite[Theorem~ 2.5]{WWY20}. On the right side, we obtain a central quotient $W(\mathfrak{gl}_N, \pi_2)_{\mathbf{R}}$ defined as in \cite[\S 3.3.2]{WWY20}. 
    
    Now consider  the classical finite W-algebra $S(\mathfrak{gl}_N, \pi_2) = \operatorname{gr} W(\mathfrak{gl}_N, \pi_2)$.  By \cite[Theorem~ 4.1]{GG02}, there is an isomorphism $S(\mathfrak{gl}_N, \pi_2) \cong \bC[\mathcal{S}'_{\pi_2}]$, where $\mathcal{S}'_{\pi_2}$ denotes the Slodowy slice defined in $\mathfrak{gl}_N$.  Consider also the quotient algebra $S(\mathfrak{gl}_N, \pi_2)_0 = S(\mathfrak{gl}_N, \pi_2) / J$, where $J$ is the ideal generated by the coefficients of the characteristic polynomial.  Then we have $S(\mathfrak{gl}_N, \pi_2)_0 = \operatorname{gr} W(\mathfrak{gl}_N, \pi_2)_{\mathbf{R}}$, as follows from \cite[(4.5)]{WWY20} combined with \cite[Lemma 3.7]{BK08}. There is an isomorphism
    \begin{equation*}
    S(\mathfrak{gl}_N, \pi_2)_0 \cong \bC[ \mathcal{N}_{\fksl_N} \cap \mathcal{S}_{\pi_2}]
    \end{equation*}
    as in \cite[Remark 3.18]{WWY20}. This is related to the well-known fact that if $K \subset \bC[\mathfrak{gl}_N]$ denotes the ideal generated by the coefficients of the characteristic polynomial, then $\bC[\mathcal{N}_{\fksl_N}] = \bC[\mathfrak{gl}_N]/K$.
    
    Altogether, the isomorphism \eqref{eq:MViso} is induced by a composition of algebra isomorphisms:
    \begin{equation}
    \label{eq:MVisoviaBK}
    \xymatrix{
    \bC[\overline{\cW}^{N\varpi_1^\vee}_\mu] \ar[r]^-{\sim} & S(\mathfrak{gl}_N, {\pi_2})_0 \ar[r]^-{\sim} & \bC[\mathcal{N}_{\fksl_N} \cap \mathcal{S}_{\pi_2}].
    }
    \end{equation}

    Finally, \eqref{eq:tinvo} naturally induces an involution on $S(\mathfrak{gl}_N, \pi_2)$, see \cite[\S 2.3]{T23}.  By  \cite[Proposition~4.7]{T23}, the classical limit of \eqref{eq:BKiso} is $\sigma_\epsilon$-equivariant.  Note that Topley works with classical shifted Yangians for $\mathfrak{gl}_n$, whereas we work with $\mathfrak{sl}_n$.  These algebras are easily compared, as in (the classical limit) of \cite[\S 4.2]{WWY20}.  Tracing through conventions, one can see that  \cite[(3.58)]{T23} matches with Lemma \ref{lem: gradings and shifts iWmu}(4).  As a result, we see that the first map in \eqref{eq:MVisoviaBK} is $\sigma_\epsilon$-equivariant.  By construction, the second isomorphism in \eqref{eq:MVisoviaBK} is also $\sigma_\epsilon$-equivariant.  This completes the proof.
\end{proof}

We now return to the case of general dominant coweights  $\la \gge \mu$ for $\mathrm{PGL}_n$. Using  equations \eqref{eq:MViso1} and \eqref{eq:MViso2} we extract corresponding partitions  $\pi_1\in \ParN_{\lle n-1}$ and $\pi_2\in \ParN_{\lle n}$ such that $\pi_1 \unrhd \pi_2$.  Recall that this correspondence is reversible. Since we wish to consider fixed points we must assume that  $\mu$ is even, which corresponds to $\pi_2 \in \ParNepsilon^\diamond_{\lle n}$. Note  however that there is no need to impose any additional condition on $\la$ or $\pi_1$.  

The following is the main result of this section.

\begin{thm}
\label{thm:loci=sliceBCD}
Let $\tau =\id$. Let $\la \gge \mu$ be dominant coweights for $\mathrm{PGL}_n$ with $\mu$ even, corresponding to partitions $\pi_1 \in \ParN_{\lle n-1}$ and $\pi_2 \in\ParNepsilon^\diamond_{\lle n}$ with $\pi_1 \unrhd \pi_2$. Then the isomorphism $\iWbar_\mu^{N \varpi_1^\vee} \cong \mc N^\epsilon_{\fksl_N} \cap \mathcal{S}^\epsilon_{\pi_2}$ from \eqref{iMV:N} restricts to Poisson isomorphisms
    \[
    \iW_\mu^{\la}\cong {\mathbb{O}^{\epsilon}_{\pi_1}} \cap \mathcal{S}_{\pi_2}^\epsilon, \quad \quad \iWbar_\mu^{\la}\cong \overline{\mathbb{O}}^{\epsilon}_{\pi_1} \cap \mathcal{S}_{\pi_2}^\epsilon.
    \]
    Moreover,
    \begin{enumerate}
        \item $\iW_\mu^\la$ is non-empty if and only if $\pi_1 \in \ParNepsilon$.  
        \item The variety $\iWbar_\mu^\la$ is the closure of its stratum $\iW_\mu^{\la'} \cong \mathbb{O}^\epsilon_{\pi_1'} \cap \mc S^\epsilon_{\pi_2}$, where $\la'$ is the unique coweight corresponding to the maximal element $\pi_1' \in \ParNepsilon$ satisfying $\pi_1\unrhd \pi_1'$.  
    \end{enumerate}
\end{thm}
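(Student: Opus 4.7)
The plan is to bootstrap the special case $\la=N\varpi_1^\vee$ (handled by Proposition \ref{prop:onlypi2}) to arbitrary dominant $\la$, by restricting the $\sigma_\epsilon$-equivariant Mirkovi\'c--Vybornov isomorphism and then taking fixed points. By Proposition \ref{MViso}, the MV isomorphism $\overline{\cW}_\mu^{N\varpi_1^\vee}\cong \mc N_{\fksl_N}\cap\mc S_{\pi_2}$ restricts to identifications $\overline{\cW}_\mu^\la\cong \overline{\mathbb{O}}_{\pi_1}\cap\mc S_{\pi_2}$ and $\cW_\mu^\la\cong \mathbb{O}_{\pi_1}\cap\mc S_{\pi_2}$ as closed (resp.~locally closed) subschemes. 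Both sides are $\sigma_\epsilon$-stable: the left-hand sides because $\tau\la=\la$ forces $\sigma$-stability via Theorem \ref{thm:fixedWlm}, and the right-hand sides because $\sigma_\epsilon$ preserves every nilpotent orbit of $\fksl_N$ setwise by Proposition \ref{prop:cloc}. The $\sigma_\epsilon$-equivariance established in Proposition \ref{prop:onlypi2} for the ambient varieties therefore descends to these sub-identifications.

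Applying Lemma \ref{lem:fixedfp} to each intersection, viewed as a fiber product over $\fksl_N$, and combining with $(\mc S_{\pi_2})^{\sigma_\epsilon}=\mc S_{\pi_2}^\epsilon$ from \eqref{fixSlodowy} together with $(\overline{\mathbb{O}}_{\pi_1})^{\sigma_\epsilon}=\overline{\mathbb{O}}_{\pi_1}^\epsilon$ and $(\mathbb{O}_{\pi_1})^{\sigma_\epsilon}=\mathbb{O}_{\pi_1}^\epsilon$ from Proposition \ref{prop:cloc}, we immediately obtain
\[
\iWbar_\mu^\la \,\cong\, \overline{\mathbb{O}}_{\pi_1}^\epsilon\cap\mc S_{\pi_2}^\epsilon,\qquad \iW_\mu^\la \,\cong\, \mathbb{O}_{\pi_1}^\epsilon\cap\mc S_{\pi_2}^\epsilon.
\]
These are Poisson isomorphisms since the MV iso is Poisson and the (doubled) Dirac reduction is functorial with respect to closed embeddings of $\sigma_\epsilon$-stable Poisson subvarieties.

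For part (1), Proposition \ref{prop:cloc} yields $\mathbb{O}_{\pi_1}^\epsilon\ne\emptyset$ if and only if $\pi_1\in\ParNepsilon$; in that case $\pi_2\unlhd\pi_1$ with $\pi_2\in\ParNepsilon^\diamond\subseteq\ParNepsilon$ forces $\mathbb{O}_{\pi_2}^\epsilon\subseteq\overline{\mathbb{O}}_{\pi_1}^\epsilon$ (Proposition \ref{prop:cloc} applied to the orthogonal/symplectic chain), and standard transversality of the Slodowy slice ensures $\mathbb{O}_{\pi_1}^\epsilon\cap\mc S_{\pi_2}^\epsilon\ne\emptyset$. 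For part (2), Proposition \ref{prop:cloc} identifies $\overline{\mathbb{O}}_{\pi_1}^\epsilon$ with the closure of $\mathbb{O}_{\pi_1'}^\epsilon$, and the contracting $\mathbb{G}_m$-action on $\mc S_{\pi_2}^\epsilon$ (coming from the $\fksl_2$-triple) preserves this closure and its open stratum, thereby realizing $\iWbar_\mu^\la$ as the closure of $\mathbb{O}_{\pi_1'}^\epsilon\cap\mc S_{\pi_2}^\epsilon\cong \iW_\mu^{\la'}$; here the existence of $\la'$ as a dominant coweight in $[\mu,\la]$ is guaranteed by $\pi_2\unlhd\pi_1'\unlhd\pi_1$ together with the MV parametrization. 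The argument involves no serious obstacle, as the essential inputs -- the $\sigma_\epsilon$-equivariance of the MV iso and the control of fixed-point loci of nilpotent orbits -- have been supplied by Propositions \ref{prop:onlypi2} and \ref{prop:cloc}.
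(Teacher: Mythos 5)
Your proof follows essentially the same route as the paper's: start from the $\sigma_\epsilon$-equivariance of the Mirkovi\'c--Vybornov isomorphism established in Proposition \ref{prop:onlypi2}, use Proposition \ref{MViso} to restrict it to strata and their closures, observe both sides are $\sigma_\epsilon$-stable, pass to fixed points, and then deduce parts (1) and (2) from Proposition \ref{prop:cloc}. The paper's proof is a terse three-sentence version of exactly this; you fill in the details (the explicit appeal to Lemma \ref{lem:fixedfp} for the fiber-product computation, the identification $(\mc S_{\pi_2})^{\sigma_\epsilon}=\mc S_{\pi_2}^\epsilon$ from \eqref{fixSlodowy}, the functoriality of Dirac reduction for the Poisson claim, the transversality argument for non-emptiness in part (1), and the contracting $\mathbb{G}_m$-action for the closure statement in part (2)), all of which are correct and indeed the intended elaborations hidden behind ``follows from Proposition \ref{prop:cloc}''.
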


\begin{proof}
    By Proposition \ref{prop:onlypi2}, the isomorphism $\overline{\cW}_\mu^{N \varpi_1^\vee} \cong \mc N_{\fksl_N} \cap \mathbb{O}_{\pi_2}$ is $\sigma_\epsilon$-equivariant.  We also know from Proposition \ref{MViso} that this isomorphism restricts to isomorphisms of strata and their closures.  Since these strata and closures are preserved by $\sigma_\epsilon$, by restriction we obtain isomorphisms of their respective fixed point loci. This proves  the first claim, and the remaining claims then follow from Proposition \ref{prop:cloc}.
\end{proof}

\begin{eg}  \label{ex:rank1:islice}
    We return once again to $G= \operatorname{PGL}_2$ and $\tau = \id$, as in Examples \ref{eg:SL2slices}  and  \ref{eg:SL2slices2}.  Let $\la \gge \mu$ be dominant coweights for $G$, with $\mu$ even (or equivalently, with $\la$ even).  We extract partitions $\pi_1 = (N) = (\bw)$ and $\pi_2 = (\bw-\bv, \bv)$.  We have $\epsilon = +$ for $\bv$ odd, and $\epsilon=-$ for $\bv $ even, and an isomorphism
    $\iWbar_\mu^\la \cong \mc N_{\fksl_N^\epsilon} \cap \mc S_{\pi_2}^\epsilon,$ where the right-hand side is identified with the space of matrices from Example \ref{eg:SL2slices2}.
\end{eg}

\section{$\mathrm{i}$Coulomb branches and affine Grassmannian $\mathrm{i}$slices}
  \label{sec:Coulomb}

In this section, starting with quiver gauge theory data under some mild $\tau$-symmetry/parity conditions suggested in earlier sections, we produce a new symplectic representation with new gauge group and flavor symmetry group. We speculate that the resulting iCoulomb branch (typically not of cotangent type) provides a normalization of the affine Grassmannian islices constructed earlier.

\subsection{Quiver datum}

Let $Q=(\I,\Omega)$ be an ADE quiver, and an arrow $h$ in $\Omega$ sends a vertex $h'$ in $\I$ to another vertex $h''$ in $\I$, that is, we denote $h' \stackrel{h}{\rightarrow} h''$. We have 

$\blacktriangleright$ the double quiver $\Qbar =(\I,\Omega\cup \overline{\Omega})$, with a reversed arrow $\overline{h}$ for each $h\in \Omega$;

$\blacktriangleright$ the framed quiver $Q^f =(\I \cup \I', \Omega \cup \Omega_{\I})$, where $\I' =\{i'|i\in \I\}$ is an isomorphic copy of $\I$ and $\Omega_\I$ consists of arrows from $i'$ to $i$, one for each $i\in\I$;

$\blacktriangleright$ the framed double quiver $\Qfbar =(\I \cup \I', \Omega \cup \overline{\Omega} \cup \Omega_{\I}\cup \overline\Omega_{\I})$. 

Let $(V_i; W_i)_{i\in \I}$ be a representation of the framed double quiver $\Qfbar$, with dimension vectors $\bv=(\bv_i)_{i\in \I}$ and $\bw=(\bw_i)_{i\in \I}$; it is understood that $W_i$ is attached to the vertex $i'$, for $i\in\I$. We have a symplectic vector space:
\begin{align}  \label{E_VW}
    E_{V,W} %= E_{\Qfbar;V,W}
    = \bigoplus_{h \in \Omega \cup \overline{\Omega}} \Hom (V_{h'}, V_{h''}) \bigoplus \bigoplus_{i\in \I} \big(\Hom (W_i, V_i)
\oplus \Hom (V_i, W_i) \big), 
\end{align} 
which can be identified with the cotangent space $T^* (\oplus_{h \in \Omega} \Hom (V_{h'}, V_{h''}) \bigoplus \oplus_{i\in \I} \Hom (W_i, V_i) )$. 
Denote the gauge group by $GL_{\bv} =GL(V)=\prod_{i\in \I} GL(\bv_i)$ and the flavor symmetry group by $GL_{\bw} =GL(W)=\prod_{i\in \I} GL(\bw_i)$. 

Associated to such data, one attaches a Nakajima quiver variety $\mc M_H(\bv, \bw)$  as well a Coulomb branch $\mc M_C(\bv, \bw)$ (of cotangent type) corresponding to type A quiver gauge theories \cite{BFN19}.

\subsection{iCoulomb branches}

Following earlier sections, we let $\tau$ be a bijection on $\I$ such that $\tau^2=\id$. This induces a bijection $\ddot{\tau}$ on $\Omega \cup \overline{\Omega}$ such that, for $h\in \Omega \cup \overline{\Omega}$,  
\begin{enumerate}
\item 
    $(\ddot{\tau} h)' ={\tau}(h'')$ and $(\ddot{\tau} h)'' ={\tau} (h')$;
\item
$\ddot{\tau} h =\overline{h}$ whenever both ends of $h$ are fixed by $\tau$, i.e., $\tau (h')=h', \tau(h'')=h''$;
\item 
$\ddot{\tau} h=h$ whenever $\tau \big(h'\big)= h''$.
\end{enumerate}
Note that $\ddot{\tau}$ is always a non-identity involution, and this makes the double quiver equipped with $\ddot{\tau}$, denoted by 
$(\Qbar,\tau,\ddot{\tau})$ or $(\Qbar,\ddot{\tau})$,  a symmetric quiver \`a la \cite{DW02}. Examples of symmetric quivers  with two neighboring vertices fixed by $\tau$, see Case (2) above (e.g., $\tau=\id$), were not explicitly considered {\em loc. cit.}, as they do not occur if one considers ADE quivers $Q$ (instead of $\Qbar$); but they occur frequently and form an important family in our setting (see Example \ref{ex:iquivers}) as we consider symmetric quivers on the double quivers $\Qbar$. 

Applying the above construction to the double quiver $\Qfbar$, we obtain a symmetric quiver $(\Qfbar, \ddot{\tau})$.

%\begin{enumerate}
%\item $\tau h=h$ if either (i) both ends of $h$ are fixed by $\tau$ or (ii) $\tau \big(h'\big)= h''$;
%    \item     $s(\tau h) =\tau\big(h''\big)$ and $t(\tau h) =\tau \big(h'\big)$, unless both ends of $h$ are fixed by $\tau$. 
%\end{enumerate}

%A representation of the symmetric quiver $(Q,\tau)$ is defined as follows:  

%We will impose the following condition on the (framed) symmetric quiver $(Q,\tau)$ and its representation $(V,W)$. 

Recall $\I_0=\{i\in \I\mid \tau i=i\}$, and the partition $\I =\I_1 \cup \I_0 \cup \I_{-1}$ so that  $\I_1$ (and resp. $\I_{-1}$) a set of representatives of $\tau$-orbits in $\I$ of length $2$. We shall choose $\I_1$ such that the underlying Dynkin subdiagram on $\I_1$ is connected. Recall $\iI =\I_1 \cup \I_0$. 
Introduce the following distinguished subsets of the arrow set $\Omega$:
\begin{align} \label{Omega3}
\begin{split}
\Omega_0 &=\{h\in \Omega\mid h',h''\in \I_0\} \\
\Omega_{1\texttt{-}0} &=\{h\in\Omega\mid h'\in \I_1, h''\in \I_0\}, \\
\Omega_{\texttt{qs}} &=\{h \in \Omega \mid \tau \big(h'\big)= h''\}. 
\end{split}
\end{align}
Denote 
\begin{align}
    \label{I1:qs} \I_1^{\texttt{qs}} =\{i\in \I_1 \mid i, \tau i \text{ are connected by an edge  }h\in \Omega\}. 
\end{align}
There is a bijection 
$\I_1^{\texttt{qs}} \rightarrow\Omega_{\texttt{qs}}$, sending $i$ to the edge $h$ connecting $i$ and $\tau i$.

Throughout this section, we make the following $\tau$-symmetry and ``not-$2$-odd" parity assumption:
\begin{align} \label{symmetricVW:I1}
\begin{cases}
\dim V_i =\dim V_{\tau i}, \quad \dim W_i =\dim W_{\tau i}, &
    \qquad \text{ for } i \in \I_1\cup \I_{-1};
    \\
\dim V_i \text{ or } \dim W_i  \text{ must be even}, &
    \qquad \text{ for } i \in \I_0;
    \\
\dim V_{h'} \text{ or } \dim V_{h''} \text{ must be even}, & \qquad \text{ for } h\in \Omega_0.
\end{cases}
\end{align}
The third assumption here is a rephrasing of the assumption \eqref{parity}, while the second one comes from Remark \ref{rem:parity}.

Denote by $\Qfbar_1$ the full subquiver of $\Qfbar$ with vertex set $\I_1 \cup \I_1'$, where $\I_1'$ is the subset of $\I'$ which is in natural bijection with $\I_1$. Let 
\begin{align} \label{VW:A}
(V^{\texttt{A}},W^{\texttt{A}}) :=(V_i;W_i)_{i\in \I_1}
\end{align}
be the representation of $\Qfbar_1$, and $E_{V^{\texttt{A}},W^{\texttt{A}}}$ be the corresponding symplectic vector space; cf. \eqref{E_VW}. Note that the gauge group for the subquiver $\Qfbar_1$ is type A. 

We fix a bipartite partition of $\I_0$:
\begin{align} \label{bipartite}
\I_0 =\I_0^\oplus \cup \I_0^\ominus
\end{align}
such that each arrow in $Q$ connects a vertex in $\I_0^\oplus$ and a vertex in $\I_0^\ominus$. (There are exactly two bipartite partitions for $\I$ of ADE type and rank $\gge 2$.) 
We shall denote 
\begin{equation} \label{parityI}
\epsilon_i=\begin{cases}
     +, & \text{ if } i \in \I_0^\oplus
    \\
    -, & \text{ if } i \in \I_0^\ominus.
\end{cases}
\end{equation}

For $U=V$ or $U=W$, $U_i^+$ denotes an orthogonal vector space (i.e., equipped with a non-degenerate symmetric bilinear form) of the same dimension as $U_i$, and $U_i^-$ denotes a symplectic vector space of dimension  $2 \lfloor \frac12 \dim U_i \rfloor$.
For $i\in \I_0$, we define 
\begin{align} \label{VWimath}
V_i^\imath := V_i^{\epsilon_i} =
    \begin{cases}
        V_i^+, & \text{ for } i\in \I_0^\oplus
        \\
        V_i^-  ,& \text{ for } i\in \I_0^\ominus, 
    \end{cases}
    \qquad\qquad
W_i^\imath := W_i^{\overline{\epsilon}_i} =
    \begin{cases}
        W_i^-, & \text{ for } i\in \I_0^\oplus
        \\
        W_i^+  ,& \text{ for } i\in \I_0^\ominus.
    \end{cases}
\end{align}  
We also define $G^{\epsilon_i}(V_i^\imath)$ to be the $\epsilon_i$-isometry group of $V^\imath$, which means special orthogonal for $\epsilon_i=+$ and symplectic for $\epsilon_i=-$. 
Similarly, we define $G^{\overline{\epsilon}_i}(W_i^\imath)$ to be the $\overline{\epsilon}_i$-isometry group of $W^\imath$.

Define the new gauge group and flavor symmetry group
\begin{align} \label{iGroups}
\begin{split}
    G^\imath(V^\imath) &:= \prod_{i\in\I_1} GL(V_i) \times \prod_{i\in\I_0} G^{\epsilon_i}(V_i^\imath),
    \\
    G^\imath(W^\imath) &:=   \prod_{i\in\I_1} GL(W_i) \times \prod_{i\in\I_0} G^{\overline{\epsilon}_i}(W_i^\imath).
    \end{split}
\end{align}

Given a quiver representation $(V,W)$ of $\Qfbar$ subject to the $\tau$-symmetry and parity condition \eqref{symmetricVW:I1}, with the above preparation we can now construct a new vector space 
\begin{align} \label{iEVW}
    \iEVW  =&\, E_{V^{\texttt{A}},W^{\texttt{A}}}
    \bigoplus 
    \big(
    \oplus_{h \in \Omega_{1\text{-}0}} \Hom (V_{h'}, V_{h''}^\imath) 
    \oplus
    \oplus_{h \in \overline{\Omega}_{1\text{-}0}} \Hom (V_{h'}^\imath, V_{h''}) \big)
    \notag \\
    &    \bigoplus
    \big( \oplus_{h \in \Omega_0} \Hom (V_{h'}^\imath, V_{h''}^\imath) 
   \oplus 
    \oplus_{i\in \I_0} \Hom (W_i^\imath, V_i^\imath) \big)
    \\
    & \bigoplus_{i \in \I_1^{\texttt{qs}}} (\wedge^2V_i\oplus \wedge^2 V_i^*).
    \notag 
\end{align}

\begin{rem} 
\label{rem:SSX}
If $\I_1^{\texttt{qs}} \neq \emptyset$, then $\I_1^{\texttt{qs}}$ is a singleton and this only occurs in the Satake diagram $(\I,\tau)$ of type AIII$_{2r}$; see Table \ref{tab:Satakediag}. This case corresponds to the last rank one framed iquiver in Table~ \ref{tab:framed osp:rk1} with ${\wedge^2V_i\oplus \wedge^2 V_i^*}$ attached. We learned how to include the last summand in \eqref{iEVW} when $\I_1^{\texttt{qs}} \neq \emptyset$ from \cite{SSX25}.
\end{rem}

Given (natural) representations $M_a$ of groups $G_a$, for $a=1,2$, $G_1\times G_2$ acts on $\Hom (M_1,M_2)$ naturally by $\big((g_1,g_2).f \big)(m_1) =g_2. f(g_1^{-1}m_1)$, for $g_a\in G_a$ and $m_1\in M_1$. All the smaller components of the first three big summands of $\iEVW$ in \eqref{iEVW} are of the form $\Hom (M_1,M_2)$ and so the (suitable components of) groups $G^\imath(V^\imath), G^\imath(W^\imath)$ act on them naturally. Only $GL(V_i)$ for $i\in \I_1^{\texttt{qs}}$ acts nontrivially on the $i$th component $\wedge^2V_i\oplus \wedge^2 V_i^*$ of the fourth summand in \eqref{iEVW}.
In this way we have defined the action of the groups $G^\imath(V^\imath)$ and $G^\imath(W^\imath)$ on $\iEVW$.

\begin{lem}
\label{lem:iEVW}
   The vector space $\iEVW$ in \eqref{iEVW} is naturally symplectic and endowed with actions of $G^\imath(V^\imath)$ and $G^\imath(W^\imath)$. 
\end{lem}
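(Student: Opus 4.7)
The plan is to decompose $\iEVW$ as the direct sum of its four distinguished pieces in \eqref{iEVW} and equip each summand separately with a natural symplectic form and compatible actions of the relevant factors of $G^\imath(V^\imath)$ and $G^\imath(W^\imath)$. The actions then assemble into actions of the full groups \eqref{iGroups}, and the symplectic forms assemble into a symplectic form on $\iEVW$.

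First, the summand $E_{V^{\texttt{A}},W^{\texttt{A}}}$ is the standard cotangent-type symplectic representation for the framed double quiver $\Qfbar_1$ on $\I_1\cup\I_1'$, and it carries the natural actions of $\prod_{i\in\I_1}GL(V_i)$ and $\prod_{i\in\I_1}GL(W_i)$, which are the type-A factors of $G^\imath(V^\imath)$ and $G^\imath(W^\imath)$. Second, for each $h \in \Omega_{1\text{-}0}$ the summand $\Hom(V_{h'},V_{h''}^\imath)\oplus\Hom(V_{h''}^\imath,V_{h'})$ (the latter coming from $\overline h\in\overline\Omega_{1\text{-}0}$) is a cotangent pair equipped with the natural symplectic pairing $\omega\big((f_1,g_1),(f_2,g_2)\big)=\operatorname{tr}(g_2 f_1)-\operatorname{tr}(g_1 f_2)$; this is preserved by $GL(V_{h'})\times G^{\epsilon_{h''}}(V_{h''}^\imath)$. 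Third, for $i \in \I_1^{\texttt{qs}}$ the cotangent pair $\wedge^2 V_i\oplus \wedge^2 V_i^*$ is symplectic with its natural $GL(V_i)$-action (the only $G^\imath$-factor acting nontrivially here).

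The substantive step is the third summand, built from $\Hom(V_{h'}^\imath,V_{h''}^\imath)$ for $h\in\Omega_0$ and $\Hom(W_i^\imath,V_i^\imath)$ for $i\in\I_0$. In both cases, the bipartite choice \eqref{bipartite}--\eqref{parityI} and the definition \eqref{VWimath} of $V_i^\imath,W_i^\imath$ guarantee that source and target carry non-degenerate bilinear forms of \emph{opposite} parity — one symmetric, one skew. The general linear-algebraic fact I will invoke is that for two vector spaces $A,B$ equipped with non-degenerate bilinear forms $\beta_A,\beta_B$ of signs $\epsilon_A,\epsilon_B\in\{+,-\}$, the isomorphism $\Hom(A,B)\cong A^*\otimes B\cong A\otimes B$ induced by the forms transports the tensor form $\beta_A\otimes\beta_B$ to a non-degenerate bilinear form on $\Hom(A,B)$ which is symmetric when $\epsilon_A=\epsilon_B$ and skew-symmetric when $\epsilon_A\neq\epsilon_B$. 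Explicitly,
\[
\omega(\phi,\psi)=\operatorname{tr}(\phi^\dagger\psi),
\qquad \phi^\dagger:=\beta_A^{-1}\phi^{T}\beta_B:B\to A,
\]
and a direct calculation shows $\omega(\psi,\phi)=\epsilon_A\epsilon_B\,\omega(\phi,\psi)$. In our setting $\epsilon_A\epsilon_B=-1$, so $\omega$ is a symplectic form, and by construction it is invariant under the isometry groups of $\beta_A$ and $\beta_B$ acting on $\Hom(A,B)$ in the natural way. Applied to the pairs $(V_{h'}^\imath,V_{h''}^\imath)$ for $h\in\Omega_0$ and $(W_i^\imath,V_i^\imath)$ for $i\in\I_0$, this yields a symplectic form on each factor, invariant under the corresponding isometry-group factors of $G^\imath(V^\imath)\times G^\imath(W^\imath)$.

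The hard part is purely bookkeeping: checking that the bipartite partition \eqref{bipartite} combined with the parity assumption \eqref{symmetricVW:I1} really does force $V_{h'}^\imath$ and $V_{h''}^\imath$ to have opposite signature at every edge $h\in\Omega_0$ (for $\Omega_0$ this is essentially the definition of bipartite), and that $V_i^\imath,W_i^\imath$ have opposite signature at every $i\in\I_0$ (immediate from \eqref{VWimath}). Once this is in place, summing the four pieces gives the symplectic vector space $\iEVW$ together with the commuting symplectic actions of $G^\imath(V^\imath)$ and $G^\imath(W^\imath)$, completing the proof.
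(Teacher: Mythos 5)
Your proof is correct and follows essentially the same route as the paper's: decompose $\iEVW$ into the four summands, note that the first, second, and fourth are cotangent-type (hence symplectic with evident $GL$-actions), and verify that each $\Hom(A,B)$ in the third summand is symplectic because the bipartite choice and the $\imath$-fication rule put an orthogonal form on one of $A,B$ and a symplectic form on the other. You spell out the explicit form $\omega(\phi,\psi)=\operatorname{tr}(\phi^\dagger\psi)$ and its equivariance, which the paper takes as standard, but the underlying argument is identical.
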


\begin{proof}
Clearly the first and fourth summands $E_{V^{\texttt{A}},W^{\texttt{A}}}$ (see \eqref{VW:A})
and $\wedge^2V_i\oplus \wedge^2 V_i^* \cong T^* (\wedge^2V_i)$ of the vector space $\iEVW$ are symplectic. The second summand $\big(
    \oplus_{h \in \Omega_{1\text{-}0}} \Hom (V_{h'}, V_{h''}^\imath) 
    \oplus
    \oplus_{h \in \overline{\Omega}_{1\text{-}0}} \Hom (V_{h'}^\imath, V_{h''}) \big)$ is isomorphic to $T^*\big(\oplus_{h \in \Omega_{1\text{-}0}} \Hom (V_{h'}, V_{h''}^\imath)\big)$, and hence symplectic too.

By the $\imath$-fication rule, exactly one of the two vector spaces $W_i^\imath$ and $V_i^\imath$ in \eqref{VWimath}, for $i\in\I_0$, is symplectic while the other one is orthogonal. Hence $\Hom (W_i^\imath, V_i^\imath)$ is naturally symplectic. 
By the bipartite partition requirement \eqref{bipartite}, exactly one of the two vector spaces $V_{h'}^\imath$ and $V_{h''}^\imath$, for $h \in \Omega_0$, is symplectic while the other one is orthogonal. Hence, $\Hom (V_{h'}^\imath, V_{h''}^\imath)$ is symplectic as well. Therefore, the third summand
$\big( \oplus_{h \in \Omega_0} \Hom (V_{h'}^\imath, V_{h''}^\imath) 
   \oplus 
\oplus_{i\in \I_0} \Hom (W_i^\imath, V_i^\imath) \big)$ of $\iEVW$ is naturally symplectic. This proves that $\iEVW$ is symplectic. 
\end{proof}

By Lemma \ref{lem:iEVW}, we can define a Coulomb branch $\mathcal M_C^\imath(V^\imath,W^\imath)$. Note that this Coulomb branch is not of cotangent type in most cases, and thus for its definition one should follow one of the competing mathematical definitions \cite{BDFRT,TCB,BF24}. Here we gloss over verifying necessary homotopy vanishing conditions in these constructions, and we wonder if the $\tau$-symmetry and not-$2$-odd condition \eqref{symmetricVW:I1} helps.
 To distinguish from $\mathcal M_C(V,W)$ arising from type A quiver gauge theories, we shall refer to $\mathcal M_C^\imath(V^\imath,W^\imath)$ as {\em iCoulomb branches}.

 \begin{rem} \label{rem:ivector:rank}
 We reinterpret \eqref{ell_theta}--\eqref{varsigma} as defining new dimension vectors $(\fkv_i)_{i\in \iI}$ and $(\fkw_i)_{i\in \iI}$ from $\bv=(\bv_i)_{i\in\I}$ and $\bw=(\bw_i)_{i\in\I}$, respectively. Then the ranks of the component groups of the gauge group $G^\imath(V^\imath)$ and the flavor symmetry group $G^\imath(W^\imath)$ (see \eqref{VWimath}--\eqref{iGroups}) are exactly given by these dimension vectors $(\fkv_i)_{i\in \iI}$ and $(\fkw_i)_{i\in \iI}$, respectively. See Remark \ref{rem:SameDimension} for more numerology!
 \end{rem}

\begin{rem}
    The discussions and the formulation of $\iEVW$ in this section are valid for general quivers $Q$ without loops under some mild assumptions; for example, they are valid for affine quivers (except the cyclic quiver $A_{2r}^{(1)}$ due to \eqref{bipartite}). We have restricted ourselves to ADE type in order to match with earlier sections and to be more specific with the subsets defined in \eqref{Omega3}.
\end{rem}

\begin{eg} \label{ex:iquivers}
The construction of $\iEVW$ is much simplified in two cases. 
    \begin{enumerate}
    \item (Split type, i.e., $\tau=\id$).  In this case, we have $\I_0=\I$, $\Omega_0=\Omega$ and thus
    \[
    \iEVW = \bigoplus_{h \in \Omega} \Hom (V_{h'}^\imath, V_{h''}^\imath) 
   \bigoplus \bigoplus_{i\in \I} \Hom (W_i^\imath, V_i^\imath).
\]
In this case, the gauge group is purely ortho-symplectic. 
\item In another extreme case with $\I_0=\emptyset$, we have
\[
\iEVW =E_{V^{\texttt{A}},W^{\texttt{A}}} \bigoplus \bigoplus_{i\in\I_1^{\texttt{qs}}} ( \wedge^2V_i\oplus \wedge^2 V_i^* ). 
\]
In and only in this case, the gauge group is purely type A and $\iEVW$ is of cotangent type; the new case with $\I_1^{\texttt{qs}} \neq \emptyset$ is studied in \cite{SSX25}.
    \end{enumerate}
\end{eg}

\subsection{$\imath$-fication on (framed) Satake double quivers}

We can visualize the construction of $\iEVW$ in \eqref{iEVW} via a new iquiver ${}^\imath\Qbar =(\iI, {}^\imath \Omega)$ and the corresponding framed iquiver ${}^\imath\Qfbar =(\iI \cup \iI', {}^\imath \Omega \cup {}^\imath \Omega_{{}^\imath\I })$, by applying $\imath$-fication rules to $\Qbar$ and $\Qfbar$, respectively. We first focus on $\imath$-fications of quivers.

Recall the genuinely quasi-split ADE Satake diagrams $(\I, \tau)$ from Table \ref{tab:Satakediag}; the split Satake diagrams (with $\tau=\id$) formally look the same as ADE Dynkin diagrams. By Satake double quivers, we simply mean double quivers enhanced with a diagram involution $\tau$. Rank one framed Satake double quivers, which are building blocks of general framed Satake double quivers, are listed in Table \ref{tab:framed Satake:rk1}. In the first diagram where no $\tau$ is visible, it is understood that $\tau =\id$. We denote nodes in the quiver $Q$ by $\bigcirc$ and nodes on the framing by $\Box$. 

%%%%%%%%%%%
%rank one framed
\begin{table}[h]  
\begin{center}
\centering
\setlength{\unitlength}{0.6mm}		
\begin{picture}(70,35)(0,5)
%(i) begin:split rank 1
\put(-38,30){$\bigcirc$}
\put(-38,6){\large $\Box$}
    
\put(-35,36){\small$i$}
\put(-35,0){\small$i$}
				
{\color{blue}
\put(-33.5,12.5){\vector(0,1){15.5}} 
\put(-35.5,28){\vector(0,-1){15.5}} }
%
%(ii) begin:diagonal rank 1
    \put(19,30){$\bigcirc$}
	\put(42,30){$\bigcirc$}
	\put(19,6){\large $\Box$}
	\put(42.3,6){\large $\Box$}

	\put(21,0){\small$i$}
    \put(21,36){\small$i$}
	\put(43,0){\small$\tau i$}
    \put(43,36){\small$\tau i$}

%\multiput(26,32)(3,0){6}{-}
%\multiput(26,29)(3,0){6}{-}
%\put(26,33.5){\vector(-1,0){0.5}}
%\put(42,30.5){\vector(1,0){0.5}}
%%%    \multiput(0,0)(0,11){10}{-}
%\thinlines	

%	\put(26,30.5){\vector(1,0){15}}
%	\put(41,32.5){\vector(-1,0){15}}

{\color{purple}
    \qbezier(26,34)(33,39)(42,34)
    \put(26,34){\vector(-3,-1){0.5}}
    \put(42,34){\vector(3,-1){0.5}}
	\put(32,37){$\tau$}
    \qbezier(26,10)(33,15)(42,10)
    \put(26,10){\vector(-3,-1){0.5}}
    \put(42,10){\vector(3,-1){0.5}}
	\put(32,13){$\tau$}
    }
				
{\color{blue}
\put(21.7,12.5){\vector(0,1){15.5}}
\put(19.7,28){\vector(0,-1){15.5}}
\put(44.7,12.5){\vector(0,1){15.5}}
\put(42.7,28){\vector(0,-1){15.5}}
}
%
%(iii) begin:quasi-split rank 1
    \put(89,30){$\bigcirc$}
	\put(112,30){$\bigcirc$}
	\put(89,6){\large $\Box$}
	\put(112.3,6){\large $\Box$}

	\put(91,0){\small$i$}
    \put(91,36){\small$i$}
	\put(113,0){\small$\tau i$}
    \put(113,36){\small$\tau i$}

%\multiput(96,32)(3,0){6}{-}
%\multiput(96,29)(3,0){6}{-}
%\put(96,33.5){\vector(-1,0){0.5}}
%\put(112,30.5){\vector(1,0){0.5}}
%%%%%    \multiput(0,0)(0,11){10}{-}
%\thinlines	

	\put(96,30.5){\vector(1,0){15}}
	\put(111,32.5){\vector(-1,0){15}}

{\color{purple}
    \qbezier(96,34)(103,39)(112,34)
    \put(96,34){\vector(-3,-1){0.5}}
    \put(112,34){\vector(3,-1){0.5}}
	\put(102,37){$\tau$}
    \qbezier(96,10)(103,15)(112,10)
    \put(96,10){\vector(-3,-1){0.5}}
    \put(112,10){\vector(3,-1){0.5}}
	\put(102,13){$\tau$}
    }
				
{\color{blue}
\put(91.7,12.5){\vector(0,1){15.5}}
\put(89.7,28){\vector(0,-1){15.5}}
\put(114.7,12.5){\vector(0,1){15.5}}
\put(112.7,28){\vector(0,-1){15.5}}
}
%end:quasisplit rank 1
\end{picture}
\end{center}
\vspace{0.5cm}
\caption{Rank one framed Satake double quivers}
\label{tab:framed Satake:rk1}
\end{table}

Rank two Satake double quivers are listed in Table \ref{tab:Satake:rk2}. To build rank two {\em framed} Satake double quivers, we simply need to copy and paste suitable frames from Table \ref{tab:framed Satake:rk1} to Table \ref{tab:Satake:rk2}.

%%%%%%%%%%%
%rank two Satake
\begin{table}[h]  
\begin{center}
\centering
\setlength{\unitlength}{0.6mm}		
\begin{picture}(70,40)(0,1)
%
%
%
%(i) split rank 2
    \put(-44,30){$\bigcirc$}
    \put(-21,30){$\bigcirc$}

    \put(-42,23.9){\small$j$}
    \put(-18,23.8){\small$i$}

	\put(-37,30.5){\vector(1,0){15}}
	\put(-22,32.5){\vector(-1,0){15}}
%
%
%
%(ii) begin:quasi-split rank 2
    \put(43,30){$\bigcirc$}
    \put(66,30){$\bigcirc$}
    \put(89,30){$\bigcirc$}
	\put(112,30){$\bigcirc$}

    \put(45,24){\small$\tau i$}
    \put(68,24){\small$\tau j$}
    \put(91,23.9){\small$j$}
    \put(114,23.7){\small$i$}

	\put(50,30.5){\vector(1,0){15}}
	\put(96,30.5){\vector(1,0){15}}
	\put(65,32.5){\vector(-1,0){15}}
	\put(111,32.5){\vector(-1,0){15}}

{\color{purple}
    \qbezier(73,35)(80,40)(89,35)
    \put(73,35){\vector(-3,-1){0.5}}
    \put(89,35){\vector(3,-1){0.5}}
    \qbezier(50,35)(76,52)(112,35)
    \put(50,35){\vector(-3,-1){0.5}}
    \put(112,35){\vector(3,-1){0.5}}
    \put(79,39){$\tau$}
    }
%
%
%
%(iii) split-quasi-split rank 2
    \put(-44,0){$\bigcirc$}
    \put(-21,0){$\bigcirc$}
	\put(2,0){$\bigcirc$}

    \put(-42,-6){\small$\tau j$}
    \put(-19,-6.3){\small$i$}
    \put(4,-6.1){\small$j$}

	\put(-37,0.5){\vector(1,0){15}}
	\put(-14,0.5){\vector(1,0){15}}
	\put(-22,2.5){\vector(-1,0){15}}
	\put(1,2.5){\vector(-1,0){15}}

{\color{purple}
    \qbezier(-37,5)(-21,12)(2,5)
    \put(-37,5){\vector(-3,-1){0.5}}
    \put(2,5){\vector(3,-1){0.5}}
    \put(-21,9){$\tau$}
    }
%
%
%
%
%(iv) quasi-split rank 2
    \put(43,0){$\bigcirc$}
    \put(66,0){$\bigcirc$}
    \put(89,0){$\bigcirc$}
	\put(112,0){$\bigcirc$}

    \put(45,-5){\small$\tau i$}
    \put(68,-6){\small$\tau j$}
    \put(91,-6.1){\small$j$}
    \put(114,-6.3){\small$i$}

	\put(50,0.5){\vector(1,0){15}}
	\put(73,0.5){\vector(1,0){15}}
	\put(96,0.5){\vector(1,0){15}}
	\put(65,2.5){\vector(-1,0){15}}
	\put(88,2.5){\vector(-1,0){15}}
	\put(111,2.5){\vector(-1,0){15}}

{\color{purple}
    \qbezier(73,5)(80,10)(89,5)
    \put(73,5){\vector(-3,-1){0.5}}
    \put(89,5){\vector(3,-1){0.5}}
    \qbezier(50,5)(76,22)(112,5)
    \put(50,5){\vector(-3,-1){0.5}}
    \put(112,5){\vector(3,-1){0.5}}
    \put(79,9){$\tau$}
    }
\end{picture}
\end{center}
\vspace{0.5cm}
\caption{Rank two Satake double quivers}
\label{tab:Satake:rk2}
\end{table}

In  Tables \ref{tab:framed osp:rk1}--\ref{tab:osp:rk2}, we present the iquivers obtained by $\imath$-fication of Satake double quivers from Tables \ref{tab:framed Satake:rk1}--\ref{tab:Satake:rk2}, respectively. Here and below we set \begin{align*} 
\epsilon\in \{\pm \}, \quad \text{ and }\;\;
\bar{\epsilon}:=-\epsilon,
\end{align*}
and we add signs to (framed or not) nodes, e.g., $\oplus, \ominus, \boxplus,$ or $\boxminus$, in iquivers obtained from modifying a $\tau$-fixed node in Satake double quivers. Note that the signs are always opposite in neighboring connected nodes.

%%%%%%%%%%%
%rank one framed i-quivers
\begin{table}[h]  
\begin{center}
\centering
\setlength{\unitlength}{0.6mm}		
\begin{picture}(70,35)(0,5)
%(i) osp rank 1
\put(-26.2,29.8){\small $\bar{\epsilon}$}
\put(-28,30){$\bigcirc$}
\put(-28,6){\large $\Box$}
\put(-26.2,7.2){\small ${\epsilon}$}    
\put(-25,36){\small$i$}
\put(-25,0){\small$i$}				
{\color{blue}
\linethickness{.25mm}
\put(-24.5,28){\line(0,-1){15.5}}  
}
%
%
%(ii) U rank 1
    \put(39,30){$\bigcirc$}
	\put(39,6){\large $\Box$}

	\put(41,0){\small$i$}
    \put(41,36){\small$i$}
				
{\color{blue}
\put(43.7,12.5){\vector(0,1){15.5}}
\put(41.7,28){\vector(0,-1){15.5}}
}
%
%
%(iii) quasi-split U rank 1
\put(112,30){$\bigcirc$}
\put(112,6){\large $\Box$}

\put(115,0){\small$i$}
\put(115,36){\small$i$}
\put(112.8,30){$\purple\circlearrowleft$}
%\put(112.8,30){$\purple{\circlearrowright}$}
%    \qbezier(110,32)(111,44)(112,36)
%    \put(96,8){\vector(-3,-1){0.5}}
%    \put(112,8){\vector(3,-1){0.5}}

{\color{blue}
\put(116.5,12.5){\vector(0,1){15.5}}
\put(114.5,28){\vector(0,-1){15.5}}
}
%end:quasisplit rank 1
\end{picture}
\end{center}
\vspace{0.5cm}
\caption{Rank one framed iquivers}
\label{tab:framed osp:rk1}
\end{table}

%%%%%%%%%%%
%rank two i-quivers
\begin{table}[h]  
\begin{center}
\centering
\setlength{\unitlength}{0.6mm}		
\begin{picture}(70,40)(0,1)
%
%
%
%(i) split rank 2
    \put(-24,30){$\bigcirc$}
    \put(-1,30){$\bigcirc$}
    \put(-22.4,29.8){\small $\bar{\epsilon}$}
    \put(1,30.3){\small ${\epsilon}$}

    \put(-22,23.9){\small$j$}
    \put(1,23.8){\small$i$}

{    \linethickness{.25mm}
	\put(-17,31.5){\line(1,0){15}}
}
%
%
%
%(ii) begin:quasi-split rank 2
    \put(63,30){$\bigcirc$}
    \put(86,30){$\bigcirc$}
    \put(65,23.9){\small$j$}
    \put(89,23.8){\small$i$}

	\put(70,30.5){\vector(1,0){15}}
\put(85,32.5){\vector(-1,0){15}}
%
%
%
%(iii) split-quasi-split rank 2
    \put(-24,0){$\bigcirc$}
    \put(-1,0){$\bigcirc$}
    \put(-22,-6.1){\small$j$}
    \put(1,-6.2){\small$i$}
    \put(1,0.5){\small ${\epsilon}$}

	\put(-17,0.5){\vector(1,0){15}}
	\put(-2,2.5){\vector(-1,0){15}}

%
%
%
%
%(iv) quasi-split rank 2
    \put(63,0){$\bigcirc$}
    \put(86,0){$\bigcirc$}

    \put(65,-6.1){\small$j$}
    \put(89,-6.2){\small$i$}
\put(86.8,0){$\purple\circlearrowleft$}
\put(86.8,0){$\purple{\circlearrowright}$}

	\put(70,0.5){\vector(1,0){15}}
	\put(85,2.5){\vector(-1,0){15}}
\end{picture}
\end{center}
\vspace{0.5cm}
\caption{Rank two iquivers}
\label{tab:osp:rk2}
\end{table}

Following the $\imath$-fication rules on rank one and rank two Satake double quivers, we can assemble them together naturally to obtain modifications of arbitrary Satake double quivers of split and quasi-split ADE type.

Now we illustrate diagrammatically how we modify the representations $(V,W)$ of framed double quivers to become $(V^\imath, W^\imath)$ of the corresponding frame iquivers; this provides the input for the symplectic vector space $\iEVW$ in \eqref{iEVW}. It suffices to do so on rank one framed Satake double quivers. By attaching vector spaces $V_i,W_i$ to the nodes in Table \ref{tab:framed Satake:rk1} and $V_i^\imath, W_i^\imath$ defined in \eqref{VWimath} to the nodes in Table \ref{tab:framed osp:rk1}, we obtain Table \ref{tab:framed Satake:rk1:VW_i} and Table \ref{tab:framed osp:rk1:VWi} below. The $\purple\circlearrowleft$ inside $\bigcirc$
in Table \ref{tab:framed osp:rk1:VWi} is used to indicate the summand $\wedge^2V_i\oplus \wedge^2 V_i^*$ in $\iEVW$ in \eqref{iEVW} which corresponds to the case when $\I_1^{\texttt{qs}} =\{i\}$ in Remark \ref{rem:SSX}. Applying the $\imath$-fication rule to Table \ref{tab:framed Satake:rk1:VW_i} yields Table \ref{tab:framed osp:rk1:VWi}. Note that no arrow is indicated in the first diagram in Table \ref{tab:framed osp:rk1:VWi} thanks to a natural identification $\Hom(V_i^\imath,W_i^\imath) \cong \Hom(W_i^\imath,V_i^\imath)$.

%%%%%%%%%%%
%rank one framed VW_i
\begin{table}[h]  
\begin{center}
\centering
\setlength{\unitlength}{0.6mm}		
\begin{picture}(70,35)(0,5)
%(i) begin:split rank 1
\put(-38,30){$\bigcirc$}
\put(-38,6){\large $\Box$}
    
\put(-37,37){\small$V_i$}
\put(-37,0){\small$W_i$}
				
{\color{blue}
\put(-33.5,12.5){\vector(0,1){15.5}} 
\put(-35.5,28){\vector(0,-1){15.5}} }
%
%(ii) begin:diagonal rank 1
    \put(19,30){$\bigcirc$}
	\put(42,30){$\bigcirc$}
	\put(19,6){\large $\Box$}
	\put(42.3,6){\large $\Box$}

	\put(19,0){\small$W_i$}
    \put(19,37){\small$V_i$}
	\put(43,0){\small$W_{\tau i}$}
    \put(43,37){\small$V_{\tau i}$}

{\color{purple}
    \qbezier(26,34)(33,39)(42,34)
    \put(26,34){\vector(-3,-1){0.5}}
    \put(42,34){\vector(3,-1){0.5}}
	\put(32,37){$\tau$}
    \qbezier(26,10)(33,15)(42,10)
    \put(26,10){\vector(-3,-1){0.5}}
    \put(42,10){\vector(3,-1){0.5}}
	\put(32,13){$\tau$}
    }
				
{\color{blue}
\put(21.7,12.5){\vector(0,1){15.5}}
\put(19.7,28){\vector(0,-1){15.5}}
\put(44.7,12.5){\vector(0,1){15.5}}
\put(42.7,28){\vector(0,-1){15.5}}
}
%
%(iii) begin:quasi-split rank 1
    \put(89,30){$\bigcirc$}
	\put(112,30){$\bigcirc$}
	\put(89,6){\large $\Box$}
	\put(112.3,6){\large $\Box$}

	\put(89,0){\small$W_i$}
    \put(89,37){\small$V_i$}
	\put(113,0){\small$W_{\tau i}$}
    \put(113,37){\small$V_{\tau i}$}

	\put(96,30.5){\vector(1,0){15}}
	\put(111,32.5){\vector(-1,0){15}}

{\color{purple}
    \qbezier(96,34)(103,39)(112,34)
    \put(96,34){\vector(-3,-1){0.5}}
    \put(112,34){\vector(3,-1){0.5}}
	\put(102,37){$\tau$}
    \qbezier(96,10)(103,15)(112,10)
    \put(96,10){\vector(-3,-1){0.5}}
    \put(112,10){\vector(3,-1){0.5}}
	\put(102,13){$\tau$}
    }
				
{\color{blue}
\put(91.7,12.5){\vector(0,1){15.5}}
\put(89.7,28){\vector(0,-1){15.5}}
\put(114.7,12.5){\vector(0,1){15.5}}
\put(112.7,28){\vector(0,-1){15.5}}
}
\end{picture}
\end{center}
\vspace{0.5cm}
\caption{Representations of rank one framed Satake double quivers}
\label{tab:framed Satake:rk1:VW_i}
\end{table}

%%%%%%%%%%%
%rank one framed i-quivers VWi
\begin{table}[h]  
\begin{center}
\centering
\setlength{\unitlength}{0.6mm}		
\begin{picture}(70,35)(0,5)
%(i) osp rank 1
\put(-26.2,29.8){\small $\bar{\epsilon}$}
\put(-28,30){$\bigcirc$}
\put(-28,6){\large $\Box$}
\put(-26.2,7.2){\small ${\epsilon}$}    
\put(-27,37){\small$V_i^\imath$}
\put(-27,0){\small$W_i^\imath$}				
{\color{blue}
\linethickness{.25mm}
\put(-24.5,28){\line(0,-1){15.5}}  
}
%
%
%(ii) U rank 1
    \put(39,30){$\bigcirc$}
	\put(39,6){\large $\Box$}

	\put(40,0){\small$W_i$}
    \put(40,37){\small$V_i$}
				
{\color{blue}
\put(43.7,12.5){\vector(0,1){15.5}}
\put(41.7,28){\vector(0,-1){15.5}}
}
%
%
%(iii) quasi-split U rank 1
\put(112,30){$\bigcirc$}
\put(112,6){\large $\Box$}

\put(113,0){\small$W_i$}
\put(113,37){\small$V_i$}
\put(112.8,30){$\purple\circlearrowleft$}
%\put(112.8,30){$\purple{\circlearrowright}$}
\put(83,33){\tiny $\purple{\wedge^2V_i\oplus \wedge^2 V_i^*}$}
%    \qbezier(110,32)(111,44)(112,36)
%    \put(96,8){\vector(-3,-1){0.5}}
%    \put(112,8){\vector(3,-1){0.5}}

{\color{blue}
\put(116.5,12.5){\vector(0,1){15.5}}
\put(114.5,28){\vector(0,-1){15.5}}
}
%end:quasisplit rank 1
\end{picture}
\end{center}
\vspace{0.5cm}
\caption{Representations of rank one framed iquivers}
\label{tab:framed osp:rk1:VWi}
\end{table}

\subsection{Discussions and examples}

\begin{eg} \label{ex:slN}
    In Table \ref{tab:ConeslN}, we specify the dimension vectors $\bv$ on $\bigcirc$ and $\bw$ on $\Box$ on framed double quiver of type A$_{N-1}$. It is well known that the corresponding Coulomb branch is identified with the nilpotent cone $\mathcal N_{\fksl_N}$ for $\fksl_N$, see for example \cite[\S 2(v)]{BFNro}.

%%%%%%%%%%%
%sl_N
\begin{table}[h]  
\begin{center}
\centering
\setlength{\unitlength}{0.6mm}		
\begin{picture}(30,35)(0,5)
	\put(-64,30){$\bigcirc$}
	\put(-43,30){$\bigcirc$}
	\put(-22,30){$\bigcirc$}
	\put(-1,30){$\bigcirc$}
	\put(49,30){$\bigcirc$}
	\put(70,30){$\bigcirc$}
	\put(71,6){$\Box$}

	\put(-62,36){\small$1$}
	\put(-41,36){\small$2$}
	\put(-20,36){\small$3$}
	\put(1,36){\small$4$}
    \put(41,36){\small${N-2}$}
	\put(66,36){\small${N-1}$}
	\put(71,0){\small${N}$}
				
	\put(-57,30.5){\vector(1,0){14}}
	\put(-36,30.5){\vector(1,0){14}}
	\put(-15,30.5){\vector(1,0){14}}
	\put(6,30.5){\vector(1,0){14}}
	\put(23,30){$\cdots$}
	\put(33.5,30.5){\vector(1,0){15}}
	\put(56,30.5){\vector(1,0){14}}
				
	\put(-43,32.5){\vector(-1,0){14}}
	\put(-22,32.5){\vector(-1,0){14}}
	\put(-1,32.5){\vector(-1,0){14}}
	\put(20,32.5){\vector(-1,0){14}}
	\put(23,30){$\cdots$}
	\put(48,32.5){\vector(-1,0){15}}
	\put(70,32.5){\vector(-1,0){14}}
				
	\color{blue}\put(74.3,11.5){\vector(0,1){16.5}}
	\color{blue}\put(72.5,28){\vector(0,-1){16.5}}
\end{picture}
\end{center}
\vspace{0.5cm}
\caption{Nilpotent cone for $\mathfrak{sl}_N$}
\label{tab:ConeslN}
\end{table}
\end{eg}

\begin{eg} \label{ex:so_2n}
Fix $N=2n$ in Table \ref{tab:ConeslN}, and specify the bipartite partition of $\I$ with the leftmost node to be in $\I^\ominus$. Note that the leftmost node $\bigcirc$ got removed under the $\imath$-fication  (see \eqref{VWimath}--\eqref{iGroups}) since the corresponding $V^\imath_i =0$.
Then applying the $\imath$-fication rules, we obtain  the (orth-symplectic) iquiver in Table \ref{tab:cone:so_2n}, where the new orthogonal/symplectic (or $\oplus/\ominus$, $\boxplus/\boxminus$ diagramatically) vector spaces $V_i^\imath$ and $W_i^\imath$ and their dimensions are specified. 

The corresponding iCoulomb branch $\mathcal M_C^\imath(V^\imath,W^\imath)$ is indeed the nilpotent cone $\mathcal N_{\mathfrak{so}_{2n}}$ for $\mathfrak{so}_{2n}$. In physics this is well-known due to the foundational work of Gaiotto-Witten \cite[Figure 54]{GaioW09}, and is verified mathematically in the upcoming work of Finkelberg-Hanany-Nakajima \cite{FHN25}.

%%%%%%%%%%%
%so(2n)
\begin{table}[h]  
\begin{center}
\centering
\setlength{\unitlength}{0.6mm}			
\begin{picture}(30,35)(0,5)
	\put(-64,30){\large $\oplus$}
	\put(-43,30){\large $\ominus$}
	\put(-22,30){\large $\oplus$}
	\put(-1,30){\large $\ominus$}
	\put(49,30){\large $\oplus$}
	\put(70,30){\large $\ominus$}
	\put(70.3,10){$\boxplus$}

	\put(-63,36){\small$2$}
	\put(-42,36){\small$2$}
	\put(-21,36){\small$4$}
	\put(0,36){\small$4$}
	\put(40,36){\small${2n-2}$}
	\put(64,36){\small${2n-2}$}
	\put(69,4){\small${2n}$}
				
    \linethickness{.2mm}
	\put(-57.6,32){\line(1,0){14.5}}
	\put(-36.6,32){\line(1,0){14.5}}
	\put(-15.6,32){\line(1,0){14.5}}
	\put(5.6,32){\line(1,0){15}}
	\put(23,30){$\cdots$}
	\put(33.6,32){\line(1,0){15}}
	\put(55.6,32){\line(1,0){14.5}}
				
	\color{blue}
\linethickness{.25mm}
    \put(72.9,15.5){\line(0,1){13}}
			\end{picture}
\end{center}
\vspace{0.5cm}
\caption{Nilpotent cone for $\mathfrak{so}_{2n}$}
\label{tab:cone:so_2n}
\end{table}
\end{eg}

\begin{eg}  \label{ex:so_2n+1}
Fix $N=2n+1$ in Table \ref{tab:ConeslN}, and specify the bipartite partition of $\I$ with the leftmost node to be in $\I^\ominus$. The leftmost node $\bigcirc$ is again removed under $\imath$-fication  (see \eqref{VWimath}--\eqref{iGroups}) since the corresponding $V^\imath_i =0$.
Then applying the $\imath$-fication rules, we obtain  the (ortho-symplectic) iquiver in Table \ref{tab:cone:so_2n+1}, where the new orthogonal/symplectic vector spaces $V_i^\imath$ and $W_i^\imath$ and their dimensions are specified. 

The corresponding iCoulomb branch $\mathcal M_C^\imath(V^\imath,W^\imath)$ is indeed the nilpotent cone $\mathcal N_{\mathfrak{so}_{2n+1}}$; we refer the reader again to Gaiotto-Witten \cite{GaioW09} and the forthcoming work of Finkelberg-Hanany-Nakajima \cite{FHN25}.

%%%%%%%%%%%
%so(2n+1)
 \begin{table}[h]  
\begin{center}
\centering
\setlength{\unitlength}{0.6mm}			
\begin{picture}(30,35)(0,5)
	\put(-64,30){\large $\oplus$}
	\put(-43,30){\large $\ominus$}
	\put(-22,30){\large $\oplus$}
	\put(-1,30){\large $\ominus$}
	\put(49,30){\large $\oplus$}
	\put(70.5,30){\large $\ominus$}
	\put(92,30){\large $\oplus$}
	\put(92.2,10){$\boxminus$}

	\put(-63,36){\small$2$}
	\put(-42,36){\small$2$}
	\put(-21,36){\small$4$}
	\put(0,36){\small$4$}
	\put(40,36){\small $2n-2$}
	\put(64,36){\small $2n-2$}
	\put(91,36){\small $2n$}
	\put(91,4){\small $2n$}
				
    \linethickness{.2mm}
	\put(-57.6,32){\line(1,0){14.5}}
	\put(-36.6,32){\line(1,0){14.5}}
	\put(-15.6,32){\line(1,0){14.5}}
	\put(5.6,32){\line(1,0){15}}
	\put(23.6,30){$\cdots$}
	\put(34.1,32){\line(1,0){15}}
	\put(55.6,32){\line(1,0){14.5}}
	\put(77.1,32){\line(1,0){14.5}}
				
	\color{blue}
    \linethickness{.25mm}
\put(94.9,15.5){\line(0,1){13}}
			\end{picture}
\end{center}
\vspace{0.5cm}
\caption{Nilpotent cone for $\mathfrak{so}_{2n+1}$}
\label{tab:cone:so_2n+1}
\end{table}
\end{eg}

We continue to assume that $V,W$ satisfy the $\tau$-symmetry/parity conditions \eqref{symmetricVW:I1}. 

By \cite[Theorem 3.10]{BFN19},  generalized affine Grassmannian slices $\overline{\cW}_\mu^\lambda$ are realized as Coulomb branches $\mc M_C(V, W)$ for specified dimension vectors $\bv, \bw$ or equivalently for specific groups $GL(V), GL(W)$. Such group datum give rise to new groups $G^\imath(V^\imath)$ and $G^\imath(W^\imath)$ as in \eqref{iGroups} and then the corresponding iCoulomb branch $\mathcal M_C^\imath(V^\imath,W^\imath)$.

\begin{conj}  \label{conj:iCBr:islices}
\begin{enumerate}
    \item 
    The iCoulomb branch $\mathcal M_C^\imath(V^\imath,W^\imath)$ is a normalization of a top-dimensional component of the affine Grassmannian islice ${}^\imath\overline{\cW}_\mu^\lambda$.
    \item 
    Truncated shifted iYangians are (subalgebras of) quantized iCoulomb branches.  
    \item 
    Truncated shifted affine iquantum groups are (subalgebras of) K-theoretic iCoulomb branch algebras.  
\end{enumerate}
\end{conj}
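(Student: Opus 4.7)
The plan is to establish a diagram of algebra homomorphisms linking the quantized iCoulomb branch $\mc A_C^\imath(V^\imath,W^\imath)$ with the TSTY $\Yt_\mu^\la$ through a Coulomb-branch version of the iGKLO homomorphism $\Phi_\mu^\la$. First I would construct, following the abelianization procedure of \cite{BFN18, BDFRT}, a homomorphism $\Psi^\imath : \mc A_C^\imath(V^\imath,W^\imath) \rightarrow \mc A$ into the difference operator ring of \S \ref{ssec:quantumtorus}. The image of this map should be computed explicitly by analyzing the contributions of each arrow and framing of the iquiver using the $\imath$-fication data in Tables \ref{tab:framed osp:rk1}--\ref{tab:osp:rk2}; the leading terms of the simple Cartan and ``dressed'' monopole operators should reproduce the expressions for $H_i(u)$ and $B_i(u)$ in Theorem \ref{thm:GKLOquasisplit}, giving rise to a comparison map $\Yt_\mu^\la \hookrightarrow \mc A_C^\imath(V^\imath,W^\imath)$ whose composition with $\Psi^\imath$ is exactly $\Phi_\mu^\la$ restricted to $\Yt_\mu^\la$.

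For parts (1) and (2), the next step is to take the classical limit of $\Psi^\imath$. On one hand, the classical limit of $\mc A_C^\imath(V^\imath,W^\imath)$ is the iCoulomb branch itself, which is normal by construction. On the other hand, Theorem \ref{thm:ctgklo:deform} identifies $\operatorname{gr}\Phi_\mu^\la$ with the defining map of the top-dimensional irreducible component $\overline{C}_\mu^{\underline{\la}} \subseteq {}^\imath\overline{\cW}_\mu^{\underline{\la}}$. Combined with the numerology of Remark \ref{rem:ivector:rank} (rank vectors of $G^\imath(V^\imath)$ and $G^\imath(W^\imath)$ matching $(\fkv_i)$ and $(\fkw_i)$), one checks that both $\mathcal M_C^\imath(V^\imath,W^\imath)$ and $\overline{C}_\mu^{\underline{\la}}$ have the same dimension $\sum_{i\in\iI}(2\fkv_i+\fkw_i)$. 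A dominance/generic-fiber argument, analogous to the type A case \cite[\S B]{BFN19}, would then give the desired normalization map $\mathcal M_C^\imath(V^\imath,W^\imath) \rightarrow \overline{C}_\mu^{\underline{\la}}$, proving (1). Part (2) follows by feeding this identification back into $\Psi^\imath$ and showing it is surjective onto a subalgebra isomorphic to $\Yt_\mu^\la$; this of course would also settle Conjecture~ \ref{conj:TSTY:islices}.

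For part (3), the strategy is parallel but in the equivariant $K$-theoretic setting: use the shifted affine iquantum groups constructed in \cite{LW21, mLWZ24} and their anticipated $q$-iGKLO representations (a $q$-deformation of \S \ref{sec:GKLO}) to define truncated algebras; then match these with $K$-theoretic monopole operators for the iquiver gauge theory, treating $\mathcal M_C^\imath$ now as the classical limit of its $K$-theoretic counterpart.

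The principal obstacles are threefold. First, since $\iEVW$ is typically \emph{not} of cotangent type, one cannot directly invoke the BFN construction; one must verify the homotopy-vanishing hypotheses of the BDFRT formalism for this class of symplectic representations, which we speculate is precisely where the ``not-$2$-odd'' parity assumption \eqref{symmetricVW:I1} becomes essential. Second, identifying the Cartan/GKLO-type subalgebra of $\mc A_C^\imath(V^\imath,W^\imath)$ with the image of the Gelfand-Tsetlin subalgebra of \S \ref{ssec:commsub} requires an explicit abelianized localization formula for ortho-symplectic (and hybrid) gauge groups that is more subtle than the type A case; Satake diagrams of type AIII$_{2r}$ (where $\I_1^{\texttt{qs}} \neq \emptyset$ and the $\wedge^2 V_i \oplus \wedge^2 V_i^*$ summand appears, cf.\ \cite{SSX25}) require special attention. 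Third, controlling the comparison of filtrations — the quotient filtration from $\Yt_\mu[\bm z]$ versus the subspace filtration inherited from $\mc A$ — is exactly the technical point flagged in the discussion after Conjecture~ \ref{conj:TSTY:islices}, and is likely to be the deepest part of the argument.
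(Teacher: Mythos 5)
The statement you have been asked to prove is Conjecture \ref{conj:iCBr:islices}, not a theorem: the paper does not offer a proof of it, only supporting evidence (the dimension match in Remark \ref{rem:SameDimension}, the physics literature in Remark \ref{rem:physics}) together with explicit warnings in Remark \ref{rem:subtle} that the conjecture ``as formulated is perhaps too general to take literally.'' So there is no proof in the paper to compare your proposal against, and any honest evaluation has to be of your proposal as a research plan rather than as a completed argument.

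As a plan, your outline correctly identifies the main ingredients the authors have set up — the iGKLO homomorphism of Theorem \ref{thm:GKLOquasisplit}, the abelianization of the Coulomb branch following \cite{BFN18,BDFRT}, the classical-limit identification in Theorem \ref{thm:ctgklo:deform}, and the rank numerology of Remark \ref{rem:ivector:rank} — and it correctly names the filtration-comparison issue underlying Conjecture \ref{conj:TSTY:islices} and the cotangent-type obstruction in the BDFRT formalism. These are exactly the missing pieces the authors gesture at. However, there are genuine gaps you should not gloss over. First, you treat the assertion that a ``dominance/generic-fiber argument, analogous to the type A case'' produces the normalization map $\mathcal M_C^\imath(V^\imath,W^\imath)\to\overline{C}_\mu^{\underline\la}$ as a routine step; in fact this is essentially the entire content of part (1) and nothing in the type A argument \cite[\S B]{BFN19} transfers automatically once the gauge group has orthogonal or symplectic factors (for instance, the étale coordinate computation of Theorem \ref{prop:gklobirat} relies on the $\mathrm{GL}$-torus and its $S_{\lambda-\mu}$-quotient, whereas the $\imath$-side involves type B/C Weyl group quotients as in Lemma \ref{lem:fixedcoloureddiv} and Corollary \ref{cor:Aflat}; making the Coulomb-branch abelianization match these combinatorics is unresolved). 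Second, and more fundamentally, Remark \ref{rem:subtle} records that the computations of \cite{CHZ17} appear to \emph{contradict} the conjecture in some type AI cases, and flags sign ambiguities (choice of bipartite partition \eqref{bipartite}, $O$ versus $SO$); your plan would need to begin by resolving these ambiguities and possibly correcting the statement before any proof could close. Finally, you assert that proving part (2) ``would also settle Conjecture~\ref{conj:TSTY:islices},'' but the logical dependence runs the other way: the authors note that Conjecture \ref{conj:TSTY:islices} is a filtration-comparison statement internal to $\Yt_\mu^\la$ and is needed \emph{before} one can identify the TSTY with a quantized Coulomb branch, not a consequence of doing so.

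\end{document}
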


\begin{rem}  \label{rem:SameDimension}
By Theorem \ref{thm:ctgklo}(2) and Remark \ref{rem:ivector:rank}, the dimension of ${}^\imath\overline{\cW}_\mu^\lambda$ is equal to twice the rank of the group $G^\imath(V^\imath)$, which is the known dimension of the iCoulomb branch $\mathcal M_C^\imath(V^\imath,W^\imath)$. In addition, the deformation space ${}^\imath \bA^{|\lambda|}$ in \eqref{isliceDeform} has dimension equal to $\sum_{i\in\iI} \fkw_i$, the rank of the group $G^\imath(W^\imath)$ by Remark \ref{rem:ivector:rank} again. This is consistent with Conjecture ~\ref{conj:iCBr:islices}. 
\end{rem}

\begin{rem}  \label{rem:physics}
 For type AI, in light of Theorem \ref{thm:loci=sliceBCD}, Conjecture \ref{conj:iCBr:islices}(1) can be rephrased as a connection between iCoulomb branches and a family of nilpotent Slodowy slices of type BCD. This generally agrees with physical predictions \cite{GaioW09,CDT13,CHMZ15,HK16,CHZ17,CHananyK19}, modulo some subtleties discussed below.
\end{rem}

\begin{rem} \label{rem:subtle}
Conjecture \ref{conj:iCBr:islices} as formulated is perhaps too general to take literally.  For example, Cabrera-Hanany-Zhong \cite{CHZ17} computed the monopole formulas for Coulomb branches of certain orthosymplectic quivers of type AI and identified these with Hilbert series for affinizations of classical nilpotent orbits; combining these with Theorem \ref{thm:loci=sliceBCD}, it is not compatible with our conjecture. There are also ambiguities in our conjecture, the most immediate being a sign ambiguity arising from the choice of bipartite partition \eqref{bipartite}. There is also a subtle issue of choosing $G^{+}(V_i^\imath)$ or $G^{+}(W_i^\imath)$ to be orthogonal or special orthogonal, see e.g.~\cite{CHZ17}.
Nevertheless, we hope that it points toward a meaningful connection between seemingly unrelated subjects which can be made rigorous, in particular putting previous physical expectations in a general framework of Satake diagrams. 
\end{rem}

A natural sufficient condition for the third assumption of \eqref{symmetricVW:I1}
 is that $\bv=(\bv_i)_{i\in\I}$ is parity-alternating. Under such a parity-alternating condition, we can fix the ambiguity of  \eqref{bipartite} by letting $\I^\oplus$ (resp. $\I^\ominus$) correspond to even (resp. odd) $\bv_i$. In this case, the type B group does not appear as a component group of $G^\imath(V^\imath)$. 

Let us specialize to the type A$_{N-1}$ quiver with $\I=\{1,\ldots, N-1\}$. We denote $\la =\sum_{a=1}^N\la_a\epsilon^\vee_a$, $\mu=\sum_{a=1}^N\mu_a\epsilon^\vee_a$, and write $\la-\mu=\sum_{i\in\I} \bv_i \alpha_i^\vee =\sum_{a=l}^u c_a \epsilon^\vee_a$, with $c_a\in \Z$ and $c_l,c_u\ne 0$. Then one checks that 
%$\bv$ is parity-alternating if and only if $c_a$ is odd, for each $l<a<u$, and $\#\{l \lle a\lle u\mid c_a \text{ is odd} \}$ is even.
\begin{align*}
    \text{(a) $\bv$ is parity-alternating} \Leftrightarrow \text{ (b) }
    \begin{cases}
        \text{$c_a \, (l<a<u)$ are odd, and}
        \\
        \text{$\#\{l \lle a\lle u\mid c_a \text{ is odd} \}$ is even.}    
    \end{cases}
\end{align*}
Actually the set of $\bv$ satisfying (a) is in bijection with the set of $(c_a\mid 1\lle a\lle N)$ satisfying (b) together with $\sum_{a} c_a=0$.

For example, let $\la=(N)$. In this case, if $\mu\vdash N$ has all parts odd, then $\bv$ is parity-alternating. 

Take another example with $\mu=(1^N)$. In this case, the partition $\la=(\la_1, \la_2, \ldots) \vdash N$ satisfies that $\la_i$ are even for all $i\gge 2$ if and only if $\bv$ is parity-alternating. 
In Examples~ \ref{ex:slN}, \ref{ex:so_2n} and \ref{ex:so_2n+1}, we have $\la=(N)$ and $\mu=(1^N)$.

We end with raising a question of geometric symmetric pairs. 

\begin{problem}
Is $\mathcal M_C(V^\imath,W^\imath)$ a normalization of a Dirac reduction of $\mc M_C(V,W)$? 
\end{problem} 
We may refer to $\big(\mc M_C(V, W)$, $\mathcal M_C^\imath(V^\imath,W^\imath)\big)$ as a geometric symmetric pair if the answer to the above problem is affirmative. It might be of interest to study geometric symmetric pairs under symplectic duality (see also \cite{Li19} on Higgs side, and the recent work of Nakajima \cite{Nak25}). If Conjecture \ref{conj:iCBr:islices} holds for islices corresponding to the nilpotent Slodowy slices of type BCD covered by Theorem \ref{thm:loci=sliceBCD}, then we obtain a large class of geometric symmetric pairs. Already in this case, understanding the Poisson involution $\sigma$ within the framework of iCoulomb branches could be a good starting point.

\bibliographystyle{amsalpha}
\bibliography{references}
\end{document}